\documentclass[10pt, reqno]{amsart}
\usepackage{amsmath, amstext, amsbsy, amssymb, amscd}
\usepackage[mathscr]{eucal}

\usepackage{color}

\setlength{\textheight}{8.6in} \setlength{\textwidth}{35pc}
\setlength{\topmargin}{-0.1in} \setlength{\footskip}{0.2in}
\setlength{\oddsidemargin}{.573125pc}
\setlength{\evensidemargin}{\oddsidemargin}

\newtheorem{theorem}{Theorem}[section]
\newtheorem{lemma}[theorem]{Lemma}
\newtheorem{proposition}[theorem]{Proposition}
\newtheorem{corollary}[theorem]{Corollary}
\theoremstyle{definition}
\newtheorem{definition}[theorem]{Definition}

\theoremstyle{remark}
\newtheorem{remark}[theorem]{Remark}
\newtheorem{conjecture}[theorem]{Conjecture}

\newtheorem{convention}[theorem]{Convention}

\numberwithin{equation}{section}

\newcommand{\ad}{\text{ad}}

\newcommand{\ch}{\mbox{ch} }
\newcommand{\C}{ \mathbb C }

\newcommand{\End}{{\rm End}}

\newcommand{\fock}{{\mathbb H}_X}
\newcommand{\Fock}{{\mathcal F}_X}

\newcommand{\Hn}{H^*(\Xn)}

\newcommand{\im}{ {\rm Im} }

\newcommand{\KX}{K_X}
\newcommand{\la}{\lambda}
\newcommand{\lambsq}{s(\lambda)}

\newcommand{\orbsym}{H^*_{\text{orb}}(X^{(n)})}

\newcommand{\Pee}{{\mathbb P}}

\newcommand{\Supp}{\text{\rm Supp}}

\newcommand{\vac}{|0\rangle}
\newcommand{\w}{\tilde}
\newcommand{\W}{\widetilde}

\newcommand{\Xn}{ X^{[n]}}

\newcommand{\Z}{ \mathbb Z }

\def\Yad{Y^{[\![\alpha,\delta]\!]}}
\def\Ybe{Y^{[\![\beta,\eta]\!]}}

\def\cU{\mathcal U}
\def\cV{{\mathcal V}}
\let\sub=\subset
\def\mapright#1{\,\smash{\mathop{\lra}\limits^{#1}}\,}
\def\pr{\mathrm{pr}}
\def\and{\quad\mathrm{and}\quad}

\def\sta{^*}
\def\adbracket{{[\![\alpha,\delta]\!]}}

\def\ev{\mathrm{ev}}
\def\cP{\mathcal P}
\def\ti{\tilde}
\def\virt{^{\mathrm{vir}}}
\def\lsta{_{\ast}}
\def\eps{\epsilon}

\def\sF{\mathscr F}
\def\sO{{\mathscr O}}
\def\upmo{^{-1}}
\def\QQ{{\mathbb Q}}
\def\RR{\mathbb R}
\def\bs{\bold s}

\def\beq{\begin{equation}}
\def\eeq{\end{equation}}
\def\st{^{\text{st}}}
\def\lra{\longrightarrow}
\def\image{\mathrm{Im}}
\def\ind{\mathfrak{in}}

\def\Po{{\mathbf P}^1}
\def\fD{{\mathfrak D}}
\let\lam=\Lambda
\def\dual{^\vee}

\def\NN{\mathbb N}

{\vskip-\lastskip\medskip
  \noindent
  {\em #1.}\enspace
  }%
{\qed\par\medskip
  }

\begin{document}
\title[Cohomological Crepant Resolution Conjecture]
{The Cohomological Crepant Resolution Conjecture for the Hilbert-Chow morphisms}


\author[Wei-Ping Li]{Wei-Ping Li}
\address{Department of Mathematics, HKUST, Clear Water Bay, Kowloon, Hong Kong} 
\email{mawpli@ust.hk}

\author[Zhenbo Qin]{Zhenbo Qin$^\dagger$}
\address{Department of Mathematics, University of Missouri, Columbia, MO 65211, USA} 
\email{qinz@missouri.edu}
\thanks{${}^\dagger$Partially supported by an NSF grant}

\keywords{Cohomological Crepant Resolution Conjecture, Hilbert schemes, orbifolds,
Gromov-Witten invariants, Hilbert-Chow morphisms, Heisenberg algebras.}
\subjclass{Primary 14C05; Secondary 14N35, 17B69.}
\subjclass[2000]{Primary 14C05; Secondary 14N35, 17B69.}

\begin{abstract}
In this paper, we prove that Ruan's Cohomological Crepant Resolution Conjecture holds 
for the Hilbert-Chow morphisms. There are two main ideas in the proof. The first one
is to use the representation theoretic approach proposed in \cite{QW} which involves 
vertex operator techniques. The second is to prove certain universality structures
about the $3$-pointed genus-$0$ extremal Gromov-Witten invariants of the Hilbert schemes
by using the indexing techniques from \cite{LiJ}, the product formula from \cite{Beh2} 
and the co-section localization from \cite{KL1, KL2, LL}. We then reduce Ruan's Conjecture from 
the case of an arbitrary surface to the case of smooth projective toric surfaces 
which has already been proved in \cite{Che}.
\end{abstract}

\maketitle

\section{\bf Introduction}

In \cite{ChR}, Chen and Ruan defined the orbifold cohomology ring $H^*_{\rm CR}(Z)$
for an orbifold $Z$. Motivated by orbifold string theory from physics, 
Ruan \cite{Ruan} proposed the Cohomological Crepant Resolution Conjecture.
It eventually evolved into the Crepant Resolution Conjecture after
the work of Bryan-Graber, Coates-Corti-Iritani-Tseng and Coates-Ruan 
\cite{BG, CCIT, CoR}. Roughly speaking, assuming that an orbifold $Z$ has 
a crepant resolution $W$, then the Crepant Resolution Conjecture
predicts that the orbifold Gromov-Witten theory of $Z$ is 
ring isomorphic (in the sense of analytic continuations, 
symplectic transformations and change of variables of type $q = -e^{i \theta}$) to 
the ordinary cohomology ring of $W$ plus those quantum corrections 
on $W$ which are related to curves contracted by the crepant resolution. 
We refer to \cite{BG, Che, Coa} and the references there for other 
excellent examples confirming the Crepant Resolution Conjecture.

In this paper, we prove that Ruan's Cohomological Crepant Resolution Conjecture holds 
for the Hilbert-Chow morphisms. Let $X$ be a smooth projective complex surface, 
and $\Xn$ be the Hilbert scheme of points in $X$. Sending an element in $\Xn$ to
its support in the symmetric product $X^{(n)}$, we obtain the Hilbert-Chow morphism 
$\rho_n: \Xn \rightarrow X^{(n)}$, which is a crepant resolution of singularities.
Let $H^*_{\rho_n}(\Xn)$ be the quantum corrected cohomology ring (see Sect.~4 for details).

\begin{theorem}  \label{Intro-Thm1}
Let $X$ be a simply connected smooth projective surface. Then, Ruan's Cohomological 
Crepant Resolution Conjecture holds for the Hilbert-Chow morphism
$\rho_n$, i.e., the rings $H^*_{\rho_n}(\Xn)$ and $H^*_{\rm CR}(X^{(n)})$ are isomorphic.
\end{theorem}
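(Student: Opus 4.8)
The plan is to model both rings as graded pieces of Fock spaces and then to show that their structure constants are governed by the \emph{same} universal expressions in the cohomological data of $X$, so that the whole comparison collapses to a single well-understood family of surfaces. First I would invoke the Nakajima--Grojnowski Heisenberg description of $\fock = \bigoplus_{n \ge 0} H^*(\Xn)$ together with the parallel Fock-space model of $\bigoplus_{n \ge 0} H^*_{\rm CR}(X^{(n)})$. Because $X$ is simply connected we have $H^{\rm odd}(X) = 0$, so every creation operator is indexed by an even class and the two Fock spaces are canonically isomorphic as bigraded vector spaces. Following the representation-theoretic approach of \cite{QW}, I would write the candidate ring isomorphism as a rescaling of the Heisenberg generators by signs and powers of $\sqrt{-1}$; this is exactly the normalization that implements the change of variables $q = -e^{i\theta}$ at $\theta = \pi$, i.e.\ the specialization $q = -1$ of the quantum parameter. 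With this dictionary fixed, proving the theorem becomes proving an identity between the two families of structure constants.

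Next I would establish that each family of structure constants is universal. On the orbifold side the Chen--Ruan product is assembled from integrals of Euler classes of obstruction bundles over the inertia (twisted-sector) components of $X^{(n)}$, and a standard bootstrap expresses these integrals as universal polynomials in the intersection numbers of the input classes with one another and with $K_X$. On the Hilbert-scheme side the quantum-corrected product is controlled by the $3$-pointed genus-$0$ extremal Gromov--Witten invariants of $\Xn$, and this is where the real work lies. The extremal curves are precisely those contracted by $\rho_n$, so the stable maps are supported over the diagonal strata of $X^{(n)}$; I would use J.~Li's indexing and degeneration formalism \cite{LiJ} to organize the contributions stratum by stratum, Behrend's product formula \cite{Beh2} to split the invariants along products associated to these strata, and the co-section localization of \cite{KL1, KL2, LL} to replace the intractable virtual class by a reduced cycle supported on a sharply smaller locus. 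The co-section is built from the $K_X$-twisting data carried by the Heisenberg action, and its degeneracy locus is cut out by the geometry along which the contracted maps live; running localization against it should force the invariants to factor as universal local contributions, computed on a fixed local model, times the very intersection numbers that already appear on the orbifold side.

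With universality in hand on both sides, the entire identity of structure constants reduces to an equality of finitely many universal expressions in $K_X$ and the intersection form of $X$. The crucial feature, to be extracted from the localization analysis, is that these expressions involve only $K_X$ and the intersection pairing of the input classes, and not the Euler number $\int_X c_2(X)$ as an independent parameter; this is what makes the rational (hence $\chi(\sO_X) = 1$) toric family adequate despite its rigidity. Smooth projective toric surfaces --- $\Pee^2$, the Hirzebruch surfaces, and their successive equivariant blow-ups --- realize canonical classes and intersection lattices rich enough to pin the universal expressions down. Since \cite{Che} has already proved $H^*_{\rho_n}(\Xn) \cong H^*_{\rm CR}(X^{(n)})$ for toric surfaces, the two families of expressions agree there, and hence agree for every simply connected smooth projective $X$, giving the theorem.

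The main obstacle I anticipate is the universality and precise factorization of the extremal Gromov--Witten invariants. The moduli spaces of stable maps to contracted curves are non-reduced and carry excess dimension, so the ordinary virtual class is unmanageable; everything hinges on producing a co-section whose degeneracy locus is tight enough both to annihilate the unwanted components and to exhibit the surviving contributions as genuinely $X$-independent local integrals. The subtle points are to coordinate J.~Li's indexing with Behrend's product formula so that the local models do not secretly depend on $X$, and --- most delicately --- to show that the resulting universal expressions see $X$ only through $K_X$ and the intersection form, with no independent dependence on $\int_X c_2(X)$, since it is exactly this $c_2$-independence that legitimizes replacing an arbitrary surface by the rigid toric family of \cite{Che}.
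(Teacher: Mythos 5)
Your proposal follows essentially the same route as the paper: the $\sqrt{-1}$-rescaled Heisenberg dictionary from \cite{QW} reducing the comparison to structure constants, universality of the extremal Gromov--Witten contributions via J.~Li's indexing, Behrend's product formula and co-section localization, and the final reduction to Cheong's toric case. You also correctly isolate the one genuinely delicate point — that the universal expressions must depend on $X$ only through $K_X$ and the intersection form, with no independent $\deg(e_X)$ term, since $K_X^2 + c_2 = 12$ on the rational toric family — which is exactly what the paper must prove (the vanishing of the $\deg(e_X)$ coefficient) before its Lemma on universal polynomials can close the argument.
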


This theorem has been proved earlier when $n = 2, 3$ \cite{ELQ, LQ}, when $K_X$ is trivial 
\cite{FG, LS}, and when $X$ is a smooth toric surface \cite{Che}. We also refer to 
\cite{LQW4, MO, OP, QW, Zho} for discussions when $X$ is quasi-projective.

There are two main ingredients in our proof of Theorem~\ref{Intro-Thm1}. The first one is 
the axiomatization approach originated from \cite{Leh, LQW1} and formulated in \cite{QW}. 
This approach involves Heisenberg algebra actions and vertex operator techniques pioneered 
in \cite{Gro, Nak}. We recall that a graded Frobenius algebra over a field $k$ is 
a finite dimensional graded vector space $A$ with 
a graded associative multiplication $A \otimes A \to A$ and unit element $1_A$
together with a linear form $T: A \to k$ such that the induced bilinear form 
$\langle a, b \rangle := T(ab)$ is nondegenerate. For $k \ge 1$, the $k$-th co-product 
$\tau_{k*}: A \to A^{\otimes k}$ is defined by requiring 
$
\langle \tau_{k*}(a), b_1 \otimes \cdots \otimes b_k \rangle 
= T(ab_1 \cdots b_k).
$
Now the axiomatization in \cite{QW} states 
that the algebra structure on each $A^{[n]}$ in a sequence of 
graded Frobenius algebras $A^{[n]}$ ($n \ge 0$) is determined if 
\begin{enumerate}
\item[{(A1)}] the direct sum $\bigoplus_n A^{[n]}$ affords the structure 
of the Fock space of a Heisenberg algebra modeled on $A := A^{[1]}$.

\item[{(A2)}] There exists a sequence
of elements $\W G_k(\alpha,n) \in A^{[n]}$ depending on $\alpha \in
A$ (linearly) and a non-negative integer $k$. Define the operators $\W {\mathfrak G}_k(\alpha)$
on $\bigoplus_n A^{[n]}$ which act on the component $A^{[n]}$ via 
multiplication by $\W G_k(\alpha,n) \in A^{[n]}$. The operators $\W {\mathfrak G}_k(\alpha)$ and 
the Heisenberg generators satisfy:
\begin{eqnarray}  
\W {\mathfrak G}_1(1_A) &=& -\displaystyle{\frac16} 
  :\mathfrak a^3:_0 (\tau_{3*}1_A),                \label{Axiom21}    \\
{[} \W {\mathfrak G}_k(\alpha), \mathfrak a_{-1}(\beta)]
  &=& {1 \over k!} \, \mathfrak a^{\{k\}}_{-1}(\alpha \beta)            \label{Axiom22}
\end{eqnarray}
where $:\mathfrak a^3:_0$ is the zero mode in the normally ordered product 
$:\mathfrak a^3:$, and $\mathfrak a^{\{k\}}_{-1}(\alpha)$ denotes the $k$-th 
derivative with $\mathfrak a^{\{ 0 \}}_{-1}(\alpha) = \mathfrak a_{-1}(\alpha)$
and $\mathfrak a^{\{k\}}_{-1}(\alpha) = [\W {\mathfrak G}_1(1_A), 
\mathfrak a^{\{k-1\}}_{-1}(\alpha)]$ for $k \ge 1$.
\end{enumerate}

When (A1) and (A2) are satisfied, the algebra $A^{[n]}$ is generated by the elements
$$
\W G_k(\alpha,n) \in A^{[n]}, \quad \alpha \in A, k \ge 0.
$$
In addition, the product is determined by (\ref{Axiom21}) and (\ref{Axiom22}).
On one hand, with $A^{[n]} = H^*_{\rm CR}(X^{(n)})$ (viewed as an algebra over $\mathbb C$), 
the results in \cite{QW} (see Theorem~\ref{orb:heis} below) indicate that (A1) and (A2) 
hold for the rings $H^*_{\rm CR}(X^{(n)})$. On the other hand, by \cite{Gro, Nak} and \cite{LL}, 
the rings $A^{[n]} = H^*_{\rho_n}(\Xn) (= H^*(\Xn)$ as vector spaces) also satisfy (A1) and 
(\ref{Axiom21}). Moreover, using \cite{Che}, we prove that (\ref{Axiom22}) 
holds when $X$ is a smooth projective toric surface.

To prove that the rings $A^{[n]} = H^*_{\rho_n}(\Xn)$ satisfy (\ref{Axiom22})
for an arbitrary surface $X$, our second main ingredient comes into play. 
It involves finer analysis of the virtual fundamental cycle using the method in \cite{LiJ} and 
the co-section localization technique in \cite{KL1, KL2, LL}. Let $X^{[n, d]}$ be 
the moduli space of $3$-pointed genus-$0$ degree-$d$ stable maps to $\Xn$.
By \cite{LL}, every stable map $(\varphi, C) \in X^{[n, d]}$ has a standard decomposition
$\varphi = (\varphi_1, \ldots, \varphi_l) \in X^{[n, d]}$ where the stable reduction
$\varphi_i\st$ is contained in $X^{[n_i, d_i]}$ for some $n_i$ and $d_i$, 
$\rho_{n_i}(\im(\varphi_i)) = n_i x_i$, the points $x_1, \ldots, x_l$ are distinct,
and $\varphi(p) = \sum_i \varphi _i(p)$ for all $p \in C$. We use the ideas from \cite{LiJ} to
index the support of $\rho_n(\im(\varphi)) = \sum_i n_i x_i \in X^{(n)}$. 
This is done by introducing the notion of 
$3$-pointed genus-$0$ degree-$\delta$ $\alpha$-stable maps to $\Xn$, where $\alpha =
(\alpha_1, \cdots, \alpha_l)$ denotes a partition of the set $[n] = \{1, \ldots, n\}$
and $\delta = (\delta_1, \cdots, \delta_l)$ with $\delta_i$'s being nonnegative integers. 
The set of such pairs $(\alpha, \delta)$ with $\sum_i \delta_i = d$ is denoted by 
$\mathcal P_{[n], d}$.  The techniques in \cite{LiJ} and the product formula in \cite{Beh2} for 
Gromov-Witten invariants enable us to express the virtual fundamental cycle
$[X^{[n, d]}]^{\rm vir}$ in terms of certain discrepancy cycles 
$[\Theta^{[\![\alpha, \delta]\!]}]$, $(\alpha, \delta) \in \mathcal P_{[n], d}$.
{In fact, one of the key points in the paper is to study such decomposition of $[X^{[n, d]}]^{\rm vir}$ as a sum of cycles indexed by the partition type of $\rho_n(\im(\varphi))\in X^{(n)}$. However, this cannot  be done on the moduli space $X^{[n, d]} $. The technique to overcome this impasse is to introduce the Hilbert scheme of $\alpha$-points $X^{[\![n]\!]}$ and an non-separated space $X^{[\![\le n]\!]}$ following \cite{LiJ}. Then the cycle $\ev_*([X^{[\![n, d]\!]}]^{\rm vir})$ is a sum of various $[\Theta^{[\![\alpha, \delta]\!]}]$ in $(X^{[\![\le n]\!]})^3$. Even though the space $X^{[\![\le n]\!]}$ is not Hausdorff in analytic topology, all the operations involving $X^{[\![\le n]\!]}$  in this paper are all algebraic topological, such as pullbacks of cohomology classes and cap products, which are defined on any topological spaces.}
Combining with the co-section localization theory in \cite{KL1, KL2, LL},
pairings with $[\Theta^{[\![\alpha, \delta]\!]}]$ can be studied via $C^\infty$-maps from 
$X$ to the Grassmannians. For $d \ge 1$, we assemble those $[\Theta^{[\![\alpha, \delta]\!]}]$, 
$(\alpha, \delta) \in \mathcal P_{[n], d}$ with $\delta_i > 0$ for every $i$ into a homology class
$\mathfrak Z_{n, d} \in H_*((\Xn)^3)$. {Note that we are back to the original Hilbert scheme $X^{[n]}$. }
Now the structure of the $3$-pointed genus-$0$ extremal Gromov-Witten invariants of 
$\Xn$ is given by the following two theorems. 

\begin{theorem}  \label{A1ToAk}
Let $A_1, A_2, A_3 \in H^*(\Xn)$ be Heisenberg monomial classes, and  
$\pi_{m, i}$ be the $i$-th projection on $(X^{[m]})^3$.
Then, $\langle A_1, A_2, A_3 \rangle_{0, d \beta_n}$ is equal to
\begin{eqnarray}  \label{A1ToAk.0}
\sum_{m \le n} \sum_{A_{1,1} \circ A_{1,2} = A_1 \atop {A_{2,1} \circ A_{2,2} = A_2 \atop
A_{3,1} \circ A_{3,2} = A_3}}  \langle A_{1,1}, A_{2,1}, A_{3,1}  \rangle \cdot 
\left \langle  {\mathfrak Z}_{m, d}, \, \prod_{i=1}^3 \pi_{m, i}^*A_{i,2} \right\rangle.
\end{eqnarray}
\end{theorem}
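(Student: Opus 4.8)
The plan is to start from the definition of the extremal invariant,
\[
\langle A_1, A_2, A_3 \rangle_{0, d\beta_n}
= \int_{[X^{[n,d]}]\virt} \ev_1^* A_1 \cup \ev_2^* A_2 \cup \ev_3^* A_3,
\]
and, via the projection formula, to rewrite it as a pairing of the pushed-forward virtual cycle against $\prod_i \pi_i^* A_i$ on $(\Xn)^3$. Passing to the Hilbert scheme of $\alpha$-points, which computes the same invariant, I would replace $\ev_*[X^{[n,d]}]\virt$ by $\ev_*([X^{[\![n, d]\!]}]\virt)$ and feed in its decomposition $\sum_{(\alpha,\delta)\in\cP_{[n],d}} [\Theta^{\adbracket}]$ inside $(X^{[\![\le n]\!]})^3$, already supplied by the indexing technique together with Behrend's product formula. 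Since every operation involved (pullback of the monomial classes, cup and cap products) is purely algebraic-topological, the non-Hausdorffness of $X^{[\![\le n]\!]}$ is harmless and the pairing distributes over the sum over $(\alpha,\delta)$.

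Next I would exploit the geometry of the standard decomposition. For a fixed type $(\alpha,\delta)$ with $\alpha=(\alpha_1,\dots,\alpha_l)$ the supports $x_1,\dots,x_l$ are forced to be distinct, so locally along $\Theta^{\adbracket}$ the Hilbert scheme factors as a product of Hilbert schemes supported near the separate points $x_i$. I would split the index set into the static part $\{i:\delta_i=0\}$, on which the stable map is constant, and the moving part $\{i:\delta_i>0\}$, which carries the full degree $d$. Because the supports are disjoint, the discrepancy cycle factors accordingly as an external product of a static cycle and a moving cycle; writing $m$ for the total length $\sum_{\delta_i>0}|\alpha_i|$ of the moving part, the moving factor lives on $(X^{[m]})^3$ and the static factor on $(X^{[n-m]})^3$.

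It then remains to identify the two factors. On the static locus the map is constant, so its cycle is the small diagonal pushed from $X^{[n-m]}$, and pairing it against the corresponding pulled-back classes produces exactly the classical triple intersection $\langle A_{1,1}, A_{2,1}, A_{3,1}\rangle$ on $X^{[n-m]}$. Summing the moving factors over all $(\alpha,\delta)$ whose moving coordinates have every $\delta_i>0$ reassembles precisely the class $\mathfrak Z_{m,d}\in H_*((X^{[m]})^3)$, and pairing it against the remaining classes yields $\langle \mathfrak Z_{m,d}, \prod_i \pi_{m,i}^* A_{i,2}\rangle$. The splitting $A_i = A_{i,1}\circ A_{i,2}$ is exactly the bookkeeping of how each Heisenberg monomial $A_i$ restricts to the product locus: its creation operators get distributed between the static factor $X^{[n-m]}$ and the moving factor $X^{[m]}$, and summing over all such distributions produces the sum over $A_{i,1}\circ A_{i,2}=A_i$ in the statement.

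The hard part will be making the factorization of $[\Theta^{\adbracket}]$ into static and moving pieces precise and tracking the combinatorics of the $\circ$-splitting, so that the restriction of the K\"unneth components of the monomial classes matches the factored cycle term by term; one must also verify that no cross terms survive and that the static contribution is genuinely the classical triple product rather than a quantum-corrected one, which relies on the constancy of the map on the static components.
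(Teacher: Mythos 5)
Your proposal is correct and follows essentially the same route as the paper: pass to the ordered Hilbert scheme, decompose $\ev_*[X^{[\![n,d]\!]}]^{\rm vir}$ into the discrepancy cycles $[\Theta^{[\![\alpha,\delta]\!]}]$, factor each cycle over the parts of $\alpha$ via the product lemma, and regroup into a static factor (all $\delta_i=0$) giving the classical triple product and a moving factor (all $\delta_i>0$) reassembling $\mathfrak Z_{m,d}$, with the $\circ$-splitting tracked by the extension lemma for Heisenberg monomials. The one place the paper is slicker than your sketch is the identification of the static contribution: rather than arguing directly that the constant-map cycle is the small diagonal, it simply specializes the general decomposition to $d=0$ to recognize the sum of static pairings over all partitions $\alpha^0$ as $(n-m)!\cdot\langle A_{1,1},A_{2,1},A_{3,1}\rangle$, which also supplies the factorials needed to match the $1/n!$ and the binomial count of subsets $\Lambda\subset[n]$.
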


\begin{theorem}  \label{KX2-universal}
Let $A_1, A_2, A_3 \in H^*(\Xn)$ be Heisenberg monomial classes.
\begin{enumerate}
\item[{\rm (i)}] If $A_i$ contains a factor $\mathfrak a_{-j}(x)$ for some $i$, then 
$\langle {\mathfrak Z}_{n, d} ,\,\,\prod_{i=1}^3 \pi_{n, i}^*A_i \rangle  = 0$.

\item[{\rm (ii)}] For $1 \le i \le 3$, let $A_i = \mathfrak a_{-\la^{(i)}}(1_X) 
\mathfrak a_{-n_{i,1}}(\alpha_{i,1}) \cdots \mathfrak a_{-n_{i,u_i}}(\alpha_{i,u_i}) \vac$
where $u_i \ge 0$ and $|\alpha_{i,1}| = \ldots = |\alpha_{i,u_i}| = 2$. Then, 
\begin{eqnarray}   \label{KX2-universal.0}
\left\langle {\mathfrak Z}_{n, d}, \, \prod_{i=1}^3 \pi_{n, i}^*A_i \right\rangle
= \prod_{i=1}^3 \prod_{j=1}^{u_i} \langle K_X, \alpha_{i,j} \rangle \cdot p
\end{eqnarray}
where $p$ is a polynomial in $\langle K_X, K_X \rangle$ whose degree is at most 
$(n - \sum_{i, j} n_{i, j})/2$, and whose coefficients depend only on $d, n, \la^{(i)}, n_{i,j}$ 
(and hence are independent of the surface $X$ and the classes $\alpha_{i,j}$).
\end{enumerate}
\end{theorem}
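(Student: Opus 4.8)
The plan is to reduce the pairing $\langle \mathfrak Z_{n,d}, \prod_{i=1}^3 \pi_{n,i}^* A_i\rangle$ to pairings with the individual discrepancy cycles $[\Theta^{\adbracket}]$, $(\alpha,\delta)\in\cP_{[n],d}$, and then to evaluate each such pairing by the co-section localization of \cite{KL1, KL2, LL}. Recall from the discussion above that $\mathfrak Z_{n,d}$ is the assembly of those $[\Theta^{\adbracket}]$ with $\delta_i>0$ for every $i$, so that every cluster in the relevant configurations carries positive degree. The cosection used in the localization is built from the canonical class $\KX$; thus after localization each $[\Theta^{\adbracket}]$ is represented, on the $X$-factors of $(\Xn)^3$ indexed by the parts of $\alpha$, by a cycle supported on a fixed geometric representative of $\KX$ in each cluster, and pairings against it become characteristic numbers of the $C^\infty$-maps from $X$ to the Grassmannians. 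This is the normal form in which all of the surface-dependence is to be isolated.

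For part (i), I would argue geometrically. A factor $\mathfrak a_{-j}(x)$ with $x$ the point class of $X$ pins the corresponding cluster to a fixed point of $X$, imposing a codimension-two constraint there. After co-section localization the support of $\mathfrak Z_{n,d}$ on each cluster lies on a fixed representative of $\KX$; choosing the point $x$ off the support of that representative, the localized cycle misses the constraint entirely and the pairing vanishes. Equivalently, in the degree bookkeeping a point class in $H^4(X)$ overshoots: the localized integrand over each $X$-factor lies in the image of $\cup\,[\KX]$, which meets divisor insertions in $H^2(X)$ but not the point class, forcing the contribution to zero.

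For part (ii), the same localized description should give the factorization directly. A divisor insertion $\alpha_{i,j}\in H^2(X)$ meets the $\KX$-supported cluster cycle in $\langle \KX,\alpha_{i,j}\rangle$ points and therefore contributes the linear factor $\langle \KX,\alpha_{i,j}\rangle$; since the $\alpha_{i,j}$ sit in distinct clusters, these factors multiply to give $\prod_{i=1}^3\prod_{j=1}^{u_i}\langle \KX,\alpha_{i,j}\rangle$. The remaining clusters carry only the trivial class $1_X$ coming from $\mathfrak a_{-\la^{(i)}}(1_X)$ together with the free points; their contribution is a characteristic number of the $C^\infty$-map $X\to\Grass$ with cosection built from $\KX$. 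Because this map depends on $X$ only through the homotopy data fixed by $\KX$, its characteristic numbers are computed by integrating a universal polynomial in $\KX$ over the $X$-factors; integrating to top degree forces the $\KX$'s to pair into copies of $\langle \KX,\KX\rangle$, and counting the $n-\sum_{i,j}n_{i,j}$ free points bounds the number of such pairs by $(n-\sum_{i,j}n_{i,j})/2$. The coefficients then come only from the combinatorics of $\cP_{[n],d}$, the product formula \cite{Beh2}, and the Grassmannian characteristic classes, all of which are independent of $X$ and of the $\alpha_{i,j}$, yielding the claimed polynomial $p$.

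The main obstacle I expect lies in the first step: making the co-section-localized representation of each $[\Theta^{\adbracket}]$ precise enough that all surface-dependence is manifestly channeled through $\KX$. Concretely, one must track the $\KX$-cosection through the standard decomposition of stable maps and the product formula of \cite{Beh2}, and show that the $C^\infty$-map $X\to\Grass$ produced by the indexing of \cite{LiJ} depends on $X$ only up to the homotopy class determined by $\KX$, so that its characteristic numbers are genuinely polynomials in $\langle\KX,\KX\rangle$ with $X$-independent coefficients, along with the stated degree bound. Once this localized normal form is established, parts (i) and (ii) follow from the dimension count and the linear and self-intersection pairings described above.
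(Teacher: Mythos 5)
Your overall strategy coincides with the paper's: decompose $\langle\mathfrak Z_{n,d},\prod_i\pi_{n,i}^*A_i\rangle$ into pairings with the individual cycles $[\Theta^{[\![\alpha,\delta]\!]}]$, localize by the $K_X$-cosection, and extract the universal coefficients from the classifying map to the Grassmannian. However, two steps in your argument are stated in a form that would fail as written.

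First, the cosection localization does not confine the cycle to ``a fixed geometric representative of $K_X$'': it confines it to $D_0\cup D_\infty$, the zero \emph{and} pole divisors of a meromorphic section $\theta$ of $\mathcal O_X(K_X)$. A divisor insertion $\alpha_{i,j}$ therefore a priori contributes $e\cdot\langle D_0,\alpha_{i,j}\rangle+e'\cdot\langle D_\infty,\alpha_{i,j}\rangle$ with two independent universal constants, and only $D_0-D_\infty=K_X$ is canonical. To conclude that the contribution is a multiple of $\langle K_X,\alpha_{i,j}\rangle$ one needs an extra argument; the paper varies the section (replacing the pole divisor $D_\infty$ by $mD_\infty$ for $m\gg0$) to force the $D_\infty$-coefficient to vanish. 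Second, the characteristic-number computation over the Grassmannian does not ``force the $K_X$'s to pair into copies of $\langle K_X,K_X\rangle$'': the rank-two bundle $T_X\cong g^*F$ has two Chern classes, so the universal polynomial a priori has the form $p\cdot\langle K_X,K_X\rangle+q\cdot\deg(e_X)$, and killing the Euler-class term requires a further specialization (the paper restricts to surfaces with a smooth member of $|K_X|$ and localizes the whole construction to that curve, then compares on two surfaces with independent pairs $(\langle K_X,K_X\rangle,\deg(e_X))$). A smaller omission: your assertion that the $\alpha_{i,j}$ ``sit in distinct clusters'' is not automatic in the sum over factorizations $A_{j,1}\circ\cdots\circ A_{j,l}=A_j$; the terms placing two degree-two insertions in a single positive-weight cluster must be shown to vanish (by putting the two curves and $D_0\cup D_\infty$ in general position relative to the small diagonal), and this vanishing is also what ultimately yields the degree bound $(n-\sum_{i,j}n_{i,j})/2$, since each cluster contributing a $\langle K_X,K_X\rangle$ has $|\alpha_i|\ge2$.
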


We refer to Definition~\ref{AOverB} for the operation $\circ$ appearing in 
\eqref{A1ToAk.0}, and to Definition~\ref{partition} for the notation  
$\mathfrak a_{-\la^{(i)}}(1_X)$ appearing in Theorem~\ref{KX2-universal}~(ii). Geometrically, 
we may think of the pairing $\langle {\mathfrak Z}_{m, d},  \,\prod_{i=1}^3 \pi_{m, i}^*A_{i,2}\rangle$ in
\eqref{A1ToAk.0} as the contributions of the non-constant components $\varphi_i$ 
in the standard decomposition of $\varphi = (\varphi_1, \ldots, \varphi_l) \in X^{[n, d]}$,
while those constant components $\varphi_i$ contribute to the factor
$\langle A_{1,1}, A_{2,1}, A_{3,1} \rangle$ in \eqref{A1ToAk.0}.

Using Theorem~\ref{A1ToAk} and Theorem~\ref{KX2-universal},
we are able to reduce the proof of (\ref{Axiom22}) for $A^{[n]} = H^*_{\rho_n}(\Xn)$ 
from an arbitrary surface $X$ to the case when $X$ is a smooth projective toric surface.
This proves (\ref{Axiom22}) for $A^{[n]} = H^*_{\rho_n}(\Xn)$ and hence 
completes the proof of Theorem~\ref{Intro-Thm1}.



Finally, this paper is organized as follows. In Sect.~2, we review the Hilbert schemes of 
points on surfaces and Heisenberg algebras. In Sect.~3, 
we recall from \cite{QW} the results regarding $H^*_{\rm CR}(X^{(n)})$.
In Sect.~4, we review Ruan's Cohomological Crepant Resolution Conjecture.
In Sect.~5, we prove Theorem~\ref{A1ToAk} and Theorem~\ref{KX2-universal}. 
In Sect.~6, we verify (\ref{Axiom22}) and Theorem~\ref{Intro-Thm1}.

\medskip\noindent
{\bf Conventions}: All the homology and cohomology groups are in $\C$-coefficients 
unless otherwise specified. For a subvariety $Z$ of a smooth projective variety $Y$, 
we will use $Z$ or $[Z]$ to denote the corresponding cycle/cohomology class, 
and use $1_Y$ to denote the fundamental cohomology class of $Y$. 
The symbol $A \cdot B$ denotes  the cup product for $A, B \in H^*(Y)$.
For $A_1, \ldots, A_k \in H^*(Y)$, let
$\langle A_1, \ldots, A_k \rangle = \int_Y A_1 \cdots A_k$. By abuse of notation, for 
$A\in H_*(W)$ and $B\in H^*(W)$ of an arbitrary topological space $W$, $\langle A, B\rangle$ also stands for the natural paring between the homology group and the cohomology
group. For subsets $A$ and $B$ of  $W$, $A\cap B\subset W$ stands for the intersection of the two subsets; for $A\in H_*(W)$ and 
$B\in H^*(W)$, $A\cap B\in H_*(W)$ stands for the cap product.

\medskip\noindent
{\bf Acknowledgment}: The authors thank Professor Jun Li for offering enormous helps 
and suggesting valuable ideas, without which this paper would be impossible to complete. 
In particular, the crucial Lemma~\ref{lem0.5}, Lemma~\ref{lma5.6-LiJ} and their proofs 
are due to him. The authors also thank Professors Wan Keng Cheong, Yongbin Ruan and 
Weiqiang Wang for stimulating discussions.

\section{\bf Hilbert schemes of points on surfaces}
\label{sect_Hilbert}

Let $X$ be a smooth projective complex surface with the canonical
class $\KX$ and the Euler class $e_X$, and $\Xn$ be the Hilbert
scheme of points in $X$. An element in $\Xn$ is represented by a
length-$n$ $0$-dimensional closed subscheme $\xi$ of $X$. 
It is well known that $\Xn$ is smooth. 
For a subset $Y \subset X$, define
$$
M_n(Y) = \{ \xi \in \Xn|\, \Supp(\xi) = \{x\} \text{ for some $x \in Y$} \}.
$$
Let ${\mathcal Z}_n=\{(\xi, x) \subset \Xn\times X \, | \, x\in {\rm Supp}{(\xi)}\}$
be the universal codimension-$2$ subscheme of $\Xn\times X$.
Let $p_1$ and $p_2$ be the two projections of $\Xn \times X$. Let
\begin{eqnarray*}
\fock = \bigoplus_{n=0}^{+\infty} \Hn
\end{eqnarray*}
be the direct sum of total cohomology groups of the Hilbert schemes $\Xn$.

For $m \ge 0$ and $n > 0$, let $Q^{[m,m]} = \emptyset$ and define
$Q^{[m+n,m]}$ to be the closed subset:
$$
\{ (\xi, x, \eta) \in X^{[m+n]} \times X \times X^{[m]} \, | \,
\xi \supset \eta \text{ and } \mbox{Supp}(I_\eta/I_\xi) = \{ x \} \}.
$$

We recall Nakajima's definition of the Heisenberg operators
\cite{Nak}. Let $n > 0$. The linear operator $\mathfrak
a_{-n}(\alpha) \in \End(\fock)$ with $\alpha \in H^*(X)$ is defined by
$$
\mathfrak a_{-n}(\alpha)(a) = \tilde{p}_{1*}([Q^{[m+n,m]}] \cdot
\tilde{\rho}^*\alpha \cdot \tilde{p}_2^*a)
$$
 for $a \in H^*(X^{[m]})$, where $\tilde{p}_1, \tilde{\rho},
\tilde{p}_2$ are the projections of $X^{[m+n]} \times X \times
X^{[m]}$ to $X^{[m+n]}, X, X^{[m]}$ respectively. Define the linear operator
$\mathfrak a_{n}(\alpha) \in \End(\fock)$ to be $(-1)^n$ times the
operator obtained from the definition of $\mathfrak
a_{-n}(\alpha)$ by switching the roles of $\tilde{p}_1$ and $\tilde{p}_2$. 
We also set $\mathfrak a_0(\alpha) =0$.

For $n > 0$ and a homogeneous class $\alpha \in H^*(X)$, let
$|\alpha| = s$ if $\alpha \in H^s(X)$, and let $G_i(\alpha, n)$ be
the homogeneous component in $H^{|\alpha|+2i}(\Xn)$ of
\[
 G(\alpha, n) = p_{1*}(\ch({\mathcal O}_{{\mathcal Z}_n}) \cdot
 p_2^*{\rm td}(X) \cdot p_2^*\alpha) \in \Hn
\]
where $\ch({\mathcal O}_{{\mathcal Z}_n})$ denotes the Chern character of 
${\mathcal O}_{{\mathcal Z}_n}$ and ${\rm td}(X) $ denotes the Todd class. 
Set $G_i(\alpha, 0) =0$. We extend the notion $G_i(\alpha, n)$ linearly to 
an arbitrary class $\alpha \in H^*(X)$. 
The {\it Chern character operator} ${\mathfrak G}_i(\alpha) \in
\End({\fock})$ is defined to be the operator acting on the
component $H^*(\Xn)$ by the cup product with $G_i(\alpha, n)$. It
was proved in \cite{LQW1} that the cohomology ring of $\Xn$ is
generated by the classes $G_{i}(\alpha, n)$ where $0 \le i < n$
and $\alpha$ runs over a linear basis of $H^*(X)$. Let $\mathfrak
d = \mathfrak G_1(1_X)$ where $1_X$ is the fundamental cohomology
class of $X$. The operator $\mathfrak d$ was first introduced in \cite{Leh}.
For a linear operator $\mathfrak f \in \End(\fock)$, define its
{\it derivative} $\mathfrak f'$ by $\mathfrak f' = [\mathfrak d,
\mathfrak f]$. The $k$-th derivative $\mathfrak f^{(k)}$ is
defined inductively by $\mathfrak f^{(k)} = [\mathfrak d, \mathfrak f^{(k-1)}]$.

Let $:\frak a_{m_1}\frak a_{m_2}:$ be $\frak a_{m_1}\frak a_{m_2}$
when $m_1 \le m_2$ and $\frak a_{m_2}\frak a_{m_1}$ when 
$m_1 > m_2$. For $k \ge 1$, $\tau_{k*}: H^*(X) \to H^*(X^k)$ is the linear
map induced by the diagonal embedding $\tau_k: X \to X^k$, and
$\mathfrak a_{m_1} \cdots \mathfrak a_{m_k}(\tau_{k*}(\alpha))$ denotes
$
\sum_j \mathfrak a_{m_1}(\alpha_{j,1}) \cdots \mathfrak a_{m_k}(\alpha_{j,k})
$
when $\tau_{k*}\alpha = \sum_j \alpha_{j,1} \otimes \cdots \otimes
\alpha_{j, k}$ via the K\"unneth decomposition of $H^*(X^k)$.

The following is a combination of various theorems from \cite{Nak,
Gro, Leh, LQW1}. Our notations and convention of signs are
consistent with \cite{LQW2}.

\begin{theorem} \label{commutator}
Let $k \ge 0, n,m \in \Z$ and $\alpha, \beta \in H^*(X)$. Then,
\begin{enumerate}
\item[{\rm (i)}] the operators $\mathfrak a_n(\alpha)$ satisfy
a Heisenberg algebra commutation relation:
\begin{eqnarray*}
\displaystyle{[\mathfrak a_m(\alpha), \mathfrak a_n(\beta)] = -m
\; \delta_{m,-n} \cdot \langle \alpha, \beta \rangle \cdot {\rm Id}_{\fock}}.
\end{eqnarray*}
The space $\fock$ is an irreducible module over the Heisenberg
algebra generated by the operators $\mathfrak a_n(\alpha)$ with a
highest~weight~vector $\vac=1 \in H^0(X^{[0]}) \cong \mathbb C$.

\item[{\rm (ii)}] $\displaystyle{\mathfrak G_1(\alpha) 
= - \frac16 :\mathfrak a^3:_0(\tau_{3*}\alpha) - \sum_{n>0} 
  \frac{n-1}2 :\mathfrak a_n \mathfrak a_{-n}:(\tau_{2*}(K_X \alpha))}$.

\item[{\rm (iii)}] $\displaystyle{[\mathfrak G_k(\alpha),
\mathfrak a_{-1}(\beta)] = \frac{1}{k!} \cdot \mathfrak
a_{-1}^{(k)}(\alpha \beta)}$.
\end{enumerate}
\end{theorem}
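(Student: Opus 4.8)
The plan is to treat the three parts separately, reducing each to a geometric computation on nested Hilbert schemes and then reconciling the sign and normalization conventions with those of \cite{LQW2}. For (i), I would compute the commutator of the correspondences defining $\mathfrak a_m(\alpha)$ and $\mathfrak a_n(\beta)$ directly. The creation operator $\mathfrak a_{-n}(\beta)$ is integration against the cycle $[Q^{[m+n,m]}]$, and the annihilation operator $\mathfrak a_n(\alpha)$ is, up to the sign $(-1)^n$, the transpose correspondence. Composing these in both orders and subtracting localizes the difference to the locus where the subscheme being created and the subscheme being removed meet, which forces their supports to collide along a diagonal. The main point is a dimension count: when $m \ne -n$ the two composite correspondences have complementary excess and agree, so the commutator vanishes; when $m = -n$ the residual contribution is carried by the small diagonal in $X \times X$ and evaluates to $-m\,\langle\alpha,\beta\rangle$ times the identity on $\fock$. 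Irreducibility is then standard: the operators $\mathfrak a_n(\alpha)$ with $n>0$ annihilate $\vac \in H^0(X^{[0]})$, and the monomials $\mathfrak a_{-n_1}(\alpha_1)\cdots\mathfrak a_{-n_r}(\alpha_r)\vac$ span $\fock$ because, by the Heisenberg relations, they are linearly independent and their count matches the Betti numbers of $\Xn$ via G\"ottsche's formula.

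For (ii), I would determine $\mathfrak G_1(\alpha)$ through its commutators with the creation operators, since an operator on the irreducible module $\fock$ is pinned down by its commutators with $\mathfrak a_{-n}(\beta)$ together with its value on $\vac$. The operator $\mathfrak G_1(\alpha)$ is cup product by the degree-$(|\alpha|+2)$ Chern character class $G_1(\alpha,n)$, and Lehn's computation of the boundary operator $\mathfrak d = \mathfrak G_1(1_X)$ expresses $[\mathfrak d, \mathfrak a_{-n}(\beta)]$ geometrically via the incidence locus. One finds that $\mathfrak G_1(\alpha)$ splits into a cubic term $-\frac16 :\mathfrak a^3:_0(\tau_{3*}\alpha)$, coming from the generic collision of three points, and a quadratic correction carrying the canonical class, coming from the tangent-bundle/self-intersection contribution along the diagonal of the nested Hilbert scheme. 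Extracting the exact coefficients $\frac{n-1}2$ and the K\"unneth components of $\tau_{2*}(K_X\alpha)$ and $\tau_{3*}\alpha$ requires careful bookkeeping of the normally-ordered products and the zero-mode extraction; this is precisely the content of \cite{Leh, LQW1}.

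For (iii), I would argue by induction on $k$ using the recursive definition $\mathfrak f^{(k)} = [\mathfrak d, \mathfrak f^{(k-1)}]$ and the Jacobi identity. The base case $k=0$ is the identity $[\mathfrak G_0(\alpha), \mathfrak a_{-1}(\beta)] = \mathfrak a_{-1}(\alpha\beta)$, reflecting that cup product with $G_0(\alpha,n)$ acts on a single created point by multiplication by $\alpha$. For the inductive step I would invoke the Chern-character recursion of \cite{LQW1}, which relates $\mathfrak G_k$ to the commutator $[\mathfrak d, \mathfrak G_{k-1}]$, and combine it with the inductive hypothesis and the relation $[\mathfrak d, \mathfrak a_{-1}(\gamma)] = \mathfrak a_{-1}^{(1)}(\gamma)$; the iterated commutator with $\mathfrak d$ then produces exactly $\frac1{k!}\,\mathfrak a_{-1}^{(k)}(\alpha\beta)$, the factorial arising from the $k$-fold application of $\ad(\mathfrak d)$.

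The hardest part will be (ii), namely fixing the quadratic correction term with the correct coefficient $\frac{n-1}2$ and the precise canonical-class dependence. This rests on Lehn's analysis of the first Chern class of the tautological bundle and the local geometry of the nested Hilbert scheme near its diagonal locus, where sign and normalization errors most easily arise; all of this is carried out in \cite{Leh, LQW1}, and in practice I would assemble the statement by quoting Nakajima and Grojnowski for (i), Lehn and Li-Qin-Wang for the boundary operator underlying (ii), and the Chern-character recursion of Li-Qin-Wang for (iii), reconciling conventions with \cite{LQW2} throughout.
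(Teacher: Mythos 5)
The paper offers no proof of this theorem: it is introduced as ``a combination of various theorems from [Nak, Gro, Leh, LQW1],'' so your closing plan --- assemble the statement by quoting Nakajima--Grojnowski for (i), Lehn and Li--Qin--Wang for (ii), and Li--Qin--Wang for (iii) --- is exactly what the paper does. Your sketches of (i) and (ii) are faithful to the arguments in those references. One small caveat on (i): the correspondence/dimension-count argument by itself only shows that $[\mathfrak a_m(\alpha),\mathfrak a_{-m}(\beta)]$ is a universal constant $c_m$ times $\langle\alpha,\beta\rangle\cdot{\rm Id}_{\fock}$; pinning down $c_m=-m$ is the delicate step, and it is the comparison with G\"ottsche's formula (which you invoke only for spanning/irreducibility) or the Ellingsrud--Str\o mme excess computation that actually fixes it. Your outline of (ii) --- determine $\mathfrak G_1(\alpha)$ from its commutators with creation operators on the irreducible module, with the cubic term from triple collisions and the $K_X$-correction from the diagonal geometry --- is the correct shape of Lehn's argument.

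Your sketch of (iii), however, has a genuine flaw. You propose an induction on $k$ driven by ``the Chern-character recursion \dots which relates $\mathfrak G_k$ to the commutator $[\mathfrak d,\mathfrak G_{k-1}]$.'' No such recursion can exist: $\mathfrak d=\mathfrak G_1(1_X)$ and $\mathfrak G_{k-1}(\alpha)$ are both cup-product operators on each $H^*(\Xn)$, so they commute and $[\mathfrak d,\mathfrak G_{k-1}(\alpha)]=0$. The Jacobi identity then only yields $[\mathfrak d,[\mathfrak G_{k-1}(\alpha),\mathfrak a_{-1}(\beta)]]=[\mathfrak G_{k-1}(\alpha),\mathfrak a_{-1}^{(1)}(\beta)]$, i.e.\ information about $[\mathfrak G_{k-1}(\alpha),\mathfrak a_{-1}^{(1)}(\beta)]$, not about $[\mathfrak G_{k}(\alpha),\mathfrak a_{-1}(\beta)]$; the induction does not close without an independent input relating $\mathfrak G_k$ to $\mathfrak G_{k-1}$, which is precisely what is missing. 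The actual proof in [LQW1] is a single geometric computation, not an induction: the commutator $[\mathfrak G_k(\alpha),\mathfrak a_{-1}(\beta)]$ is realized by the incidence correspondence $Q^{[n,n-1]}$ weighted by the degree-$k$ part of the difference of the pulled-back Chern characters of the tautological sheaves, that difference is the Chern character $e^{c_1(L)}$ of a tautological line bundle $L$ on the nested Hilbert scheme, and Lehn's lemma identifies the correspondence weighted by $c_1(L)^k\cdot(\alpha\beta)$ with $\mathfrak a_{-1}^{(k)}(\alpha\beta)$. The factor $1/k!$ is the coefficient of $c_1(L)^k$ in the exponential, not the bookkeeping of a $k$-fold iterated commutator with $\mathfrak d$.
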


The Lie brackets in Theorem~\ref{commutator} are understood in the super
sense according to the parity of the degrees of the
cohomology classes involved. Also, Theorem~\ref{commutator}~(i) implies that 
$\fock$ is linearly spanned by the cohomology classes
$
\mathfrak a_{-n_1}(\alpha_1) \cdots \mathfrak a_{-n_k}(\alpha_k) \vac
$
where $k \ge 0$ and $n_1, \ldots, n_k > 0$. These classes are called
{\it Heisenberg monomial classes}.

\begin{definition} \label{partition}
Let $\alpha \in H^*(X)$, and $\lambda = (\cdots
(-2)^{m_{-2}}(-1)^{m_{-1}} 1^{m_1}2^{m_2} \cdots)$ be a {\em
generalized partition} of the integer $n = \sum_i i m_i$ whose
part $i\in \Z$ has multiplicity $m_i$. Define $\ell(\lambda) =
\sum_i m_i$, $|\lambda| = \sum_i i m_i = n$, $\lambsq  = \sum_i
i^2 m_i$, $\lambda^! = \prod_i m_i!$, and
$$
   \mathfrak a_{\lambda}(\alpha) 
=\prod_i \big ( \mathfrak a_i(\alpha) \big )^{m_i}, \quad
   \mathfrak a_{\lambda}(\tau_*\alpha) 
=\left ( \prod_i (\mathfrak a_i)^{m_i} \right ) (\tau_{\ell(\lambda)*}\alpha)
$$
where $\prod_i (\mathfrak a_i)^{m_i} $ is understood to be
$\cdots \mathfrak a_{-2}^{m_{-2}} \mathfrak a_{-1}^{m_{-1}}
 \mathfrak a_{1}^{m_{1}} \mathfrak a_{2}^{m_{2}}\cdots$.
A generalized partition becomes a {\em partition} in the usual sense if  
$m_ i = 0$ for every $i < 0$. A partition $\lambda$ of $n$ is denoted by $\lambda \vdash n$.
\end{definition}

The next three theorems were proved in \cite{LQW3}.

\begin{theorem} \label{deriv_th}
Let $k \ge 0$, $n \in \Z$, and $\alpha\in H^*(X)$. Then,
$\mathfrak a_n^{(k)}(\alpha)$ equals
\begin{eqnarray*}
& &(-n)^k k! \left ( \sum_{\ell(\lambda) = k+1, |\lambda|=n}
   {1 \over \lambda^!} \mathfrak a_{\lambda}(\tau_{*}\alpha) -
   \sum_{\ell(\lambda) = k-1, |\lambda|=n}
   {\lambsq - 1 \over 24 \lambda^!}
   \mathfrak a_{\lambda}(\tau_{*}(e_X\alpha)) \right )   \\
&+&\sum_{{\epsilon} \in \{K_X, K_X^2\}}  \,\,\,
   \sum_{\ell(\lambda) = k+1-|{\epsilon}|/2, |\lambda|=n}
   {f_{|{\epsilon}|}(\lambda) \over \lambda^!}
   \mathfrak a_{\lambda}(\tau_{*}(\epsilon\alpha))
\end{eqnarray*}
where all the numbers $f_{|{\epsilon}|}(\lambda)$ are independent of $X$ and $\alpha$.
\end{theorem}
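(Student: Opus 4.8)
The plan is to prove the formula by induction on $k$, using the explicit description of $\mathfrak d = \mathfrak G_1(1_X)$ supplied by Theorem~\ref{commutator}~(ii) together with the fact that $[\mathfrak d, -]$ acts as a (super)derivation on products of Heisenberg operators. For the base case $k = 0$ the right-hand side collapses to its first summand with $\ell(\lambda) = 1$, forcing $\lambda = (n)$ and hence $\mathfrak a_n^{(0)}(\alpha) = \mathfrak a_n(\tau_{1*}\alpha) = \mathfrak a_n(\alpha)$, the remaining contributions being empty since no generalized partition of $n$ meets their length constraints. The inductive step amounts to applying $[\mathfrak d, -]$ to the asserted expression for $\mathfrak a_n^{(k)}(\alpha)$ and checking that the result reorganizes into the asserted expression for $\mathfrak a_n^{(k+1)}(\alpha)$.

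First I would isolate the two homogeneous pieces $\mathfrak d = \mathfrak d_3 + \mathfrak d_2$, where $\mathfrak d_3 = -\tfrac{1}{6} :\mathfrak a^3:_0(\tau_{3*}1_X)$ is cubic and $\mathfrak d_2 = -\sum_{m>0}\tfrac{m-1}{2}:\mathfrak a_m\mathfrak a_{-m}:(\tau_{2*}K_X)$ is quadratic, and compute the action of each on a typical diagonal monomial $\mathfrak a_\lambda(\tau_*(\epsilon\alpha))$ with $\epsilon \in \{1, e_X, K_X, K_X^2\}$. Using the derivation property and the Heisenberg relation of Theorem~\ref{commutator}~(i), three elementary moves arise. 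The cubic $\mathfrak d_3$ contracted once against a factor $\mathfrak a_i$ \emph{splits} that part into two parts $i_1 + i_2 = i$ carrying the enlarged diagonal $\tau_{(\ell+1)*}$, raising $\ell(\lambda)$ by one with trivial twist. The quadratic $\mathfrak d_2$ contracted once against a factor $\mathfrak a_i$ leaves the length unchanged and multiplies the twisting class by $K_X$, with an $i$-dependent scalar coming from the $\tfrac{m-1}{2}$ weights. The crucial third move is a \emph{secondary} contraction: after $\mathfrak d_3$ splits a part and produces a new operator, one of the two new operators may normal-order against a pre-existing factor of the monomial; because both the split class and the monomial class are supported on diagonals, this diagonal-on-diagonal pairing produces the self-intersection of the diagonal, namely the Euler class $e_X$, and lowers $\ell(\lambda)$ by one.

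With these three moves in hand, closing the induction is a bookkeeping of lengths and twists. Passing from order $k$ to order $k+1$, the split sends $(1 \mapsto 1,\ e_X \mapsto e_X,\ K_X \mapsto K_X,\ K_X^2 \mapsto K_X^2)$ with length $+1$; the $K_X$-twist sends $(1 \mapsto K_X,\ K_X \mapsto K_X^2)$ with length $0$; and the secondary rejoin sends $1 \mapsto e_X$ with length $-1$. One checks directly that these exactly produce the prescribed lengths $k+1$, $k-1$, $k$, $k-1$ for $1, e_X, K_X, K_X^2$, and that no other combination survives: every would-be higher product ($e_X^2$, $e_X K_X$, $K_X^3$) lies in $H^{\ge 6}(X) = 0$ since $X$ is a surface, so the induction is automatically confined to the four classes $\{1, e_X, K_X, K_X^2\}$. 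Each move contributes only a numerical factor coming from the indices $i$, from the Heisenberg scalars $-m\delta_{m,-n}$, and from the fixed structure classes $\tau_{3*}1_X$ and $\tau_{2*}K_X$ whose universal contractions are $\tau_*$, $e_X$, $K_X$; hence every coefficient that arises, including the numbers $f_{|\epsilon|}(\lambda)$ and the overall factor $(-n)^k k!$, satisfies a recursion with rational coefficients independent of $X$ and of $\alpha$, which is precisely the claimed universality.

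The principal obstacle is the precise evaluation of the secondary-contraction move and of the accompanying combinatorial constants. Extracting the Euler-class term requires identifying $\tau_2^*\tau_{2*}(\,\cdot\,) = e_X\cdot(\,\cdot\,)$ inside the normal-ordering of $\mathfrak d_3$ against the monomial and tracking the super-signs, and then summing the resulting index contributions over all ways of splitting and rejoining; this is what pins down the distinctive coefficient $\tfrac{\lambsq - 1}{24}$ of the $e_X$ term and the explicit shape of the $f_{|\epsilon|}(\lambda)$. The existence and $X$-independence of these constants follow formally from the recursion, but producing them in closed form is the genuinely computational heart of the argument; once the single-step action of $[\mathfrak d,-]$ on $\mathfrak a_\lambda(\tau_*(\epsilon\alpha))$ is established with explicit constants, the induction propagates it mechanically.
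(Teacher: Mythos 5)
Your proposal cannot be compared against a proof in the paper because the paper gives none: Theorem~\ref{deriv_th} is quoted from \cite{LQW3} (Theorem~4.4 there). However, your induction on $k$ using the explicit form of $\mathfrak d = \mathfrak G_1(1_X)$ from Theorem~\ref{commutator}~(ii) and the commutator of diagonal Heisenberg monomials (Lemma~\ref{tau_k_tau_{k-1}}) is exactly the argument of \cite{LQW3}, and your identification of the three elementary moves --- splitting (length $+1$, twist unchanged), $K_X$-twist (length $0$), and the double contraction giving $\tau_2^*\tau_{2*} = e_X\cdot$ (length $-1$) --- together with the observation that $e_X^2$, $e_XK_X$, $K_X^3$ vanish on a surface is precisely the bookkeeping that closes the induction and confines the twists to $\{1,e_X,K_X,K_X^2\}$ with the stated lengths; the only part left open is the explicit evaluation of the constants $(-n)^kk!/\lambda^!$, $(s(\lambda)-1)/24$ and $f_{|\epsilon|}(\lambda)$, which you correctly flag as the computational core rather than a conceptual gap.
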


\begin{theorem} \label{char_th}
Let $k \ge 0$ and $\alpha\in H^*(X)$. Then, $\mathfrak
G_k(\alpha)$ is equal to
\begin{eqnarray*}
& &- \sum_{\ell(\lambda) = k+2, |\lambda|=0}
   {1 \over \lambda^!} \mathfrak a_{\lambda}(\tau_{*}\alpha)
   + \sum_{\ell(\lambda) = k, |\lambda|=0}
   {\lambsq - 2 \over 24\lambda^!}
   \mathfrak a_{\lambda}(\tau_{*}(e_X\alpha))  \\
&+&\sum_{{\epsilon} \in \{K_X, K_X^2\}}  \,\,\,
   \sum_{\ell(\lambda) = k+2-|{\epsilon}|/2, |\lambda|=0}
   {g_{|{\epsilon}|}(\lambda) \over \lambda^!}
   \mathfrak a_{\lambda}(\tau_{*}(\epsilon\alpha))
\end{eqnarray*}
where all the numbers $g_{|{\epsilon}|}(\lambda)$ are independent of $X$
and $\alpha$.
\end{theorem}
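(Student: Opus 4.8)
The plan is to realize $\mathfrak G_k(\alpha)$ as a universal normally ordered expression in the Heisenberg operators and then to compute its coefficients by induction on $k$. By the transfer/universality machinery of \cite{LQW1, LQW2}, the operator $\mathfrak G_k(\alpha)$ is a normally ordered linear combination of monomials $\mathfrak a_\lambda(\tau_*(\epsilon\alpha))$ whose coefficients are independent of $X$ and of $\alpha$, with $\epsilon$ ranging over universal characteristic classes of $X$. Three observations fix the shape. Since $\mathfrak G_k(\alpha)$ acts on each $H^*(\Xn)$ by cup product with $G_k(\alpha,n)$ it preserves the grading by $n$, forcing $|\lambda|=0$. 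Since $G_k(\alpha,n)\in H^{|\alpha|+2k}(\Xn)$, a bidegree count — using $|\lambda|=0$ and the fact that $\tau_{\ell(\lambda)*}$ raises cohomological degree by $4(\ell(\lambda)-1)$ — shows the total cohomological degree of $\mathfrak a_\lambda(\tau_*(\epsilon\alpha))$ to be $2\ell(\lambda)+|\epsilon|+|\alpha|-4$; equating with $|\alpha|+2k$ gives $\ell(\lambda)=k+2-|\epsilon|/2$, exactly the length attached to each $\epsilon$ in the statement. Finally, on an arbitrary surface the only characteristic classes surviving in $H^{\le 4}(X)$ are the monomials $1,\,K_X,\,e_X,\,K_X^2$ in $c_1(X),c_2(X)$ (all higher products vanish), which is precisely the index set $\{1,e_X,K_X,K_X^2\}$.

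It remains to determine the numerical coefficients, which I would do by induction on $k$, the base cases $k=0$ and $k=1$ being the class $\mathfrak G_0$ and the explicit formula for $\mathfrak G_1(\alpha)$ in Theorem~\ref{commutator}(ii). The engine is the commutator identity Theorem~\ref{commutator}(iii), $[\mathfrak G_k(\alpha),\mathfrak a_{-1}(\beta)]=\tfrac1{k!}\mathfrak a_{-1}^{(k)}(\alpha\beta)$, whose right-hand side is completely known from Theorem~\ref{deriv_th}. Bracketing the Ansatz with $\mathfrak a_{-1}(\beta)$ and applying the Heisenberg relations of Theorem~\ref{commutator}(i) strips one part equal to $1$ from each $\mathfrak a_\lambda$ and lowers $\ell(\lambda)$ by one; matching the outcome monomial by monomial against $\tfrac1{k!}\mathfrak a_{-1}^{(k)}(\alpha\beta)$ yields linear equations that determine the coefficient of every $\mathfrak a_\lambda$ whose $\lambda$ contains a part $1$. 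In this way one reads off the leading coefficient $-1/\lambda^!$ and the coefficient $(s(\lambda)-2)/(24\lambda^!)$ of $\tau_*(e_X\alpha)$, and exhibits the coefficients of $\tau_*(K_X\alpha)$ and $\tau_*(K_X^2\alpha)$ as universal constants $g_{|\epsilon|}(\lambda)$.

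The main obstacle is that identity (iii) constrains only the commutator with $\mathfrak a_{-1}(\beta)$, whereas the operator $\mathfrak G_k(\alpha)$ is pinned down only by its commutators $C_m:=[\mathfrak G_k(\alpha),\mathfrak a_{-m}(\beta)]$ with all creation operators $\mathfrak a_{-m}(\beta)$, $m\ge 1$: given every $C_m$, one commutes $\mathfrak G_k(\alpha)$ to the right through an arbitrary monomial $\mathfrak a_{-m_1}(\beta_1)\cdots\vac$ until it meets $\mathfrak G_k(\alpha)\vac=0$, which determines the operator — in particular the \emph{diagonal} coefficients such as those of $:\!\mathfrak a_m\mathfrak a_{-m}\!:$ with $m\ge 2$, which (iii) cannot detect since $:\!\mathfrak a_m\mathfrak a_{-m}\!:$ commutes with every $\mathfrak a_{-1}(\beta)$. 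To produce the $C_m$ for $m\ge 2$ I would set up a recursion from the $m=1$ case using the Jacobi identity against $\mathfrak d=\mathfrak G_1(1_X)$, whose commutators with all $\mathfrak a_{-m}(\beta)$ are explicit (Theorem~\ref{deriv_th} with $k=1$, via the closed form in Theorem~\ref{commutator}(ii)); self-adjointness of $\mathfrak G_k(\alpha)$ with respect to the Poincar\'e pairing simultaneously supplies the commutators with the annihilation operators. Making this recursion close — tracking the super-commutator signs and confirming, through the transfer principle, that each resulting coefficient is a genuine constant independent of $X$ and $\alpha$ — is the delicate and decisive step.
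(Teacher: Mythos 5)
First, a point of reference: the paper does not prove Theorem~\ref{char_th} at all --- it is imported verbatim from \cite{LQW3} (``The next three theorems were proved in \cite{LQW3}''), so there is no in-paper argument to compare against. Judged on its own terms, your skeleton is the right one and matches the architecture of \cite{LQW1, LQW2, LQW3}: the universal normally ordered ansatz with coefficients independent of $X$ and $\alpha$, the constraint $|\lambda|=0$ because $\mathfrak G_k(\alpha)$ preserves each $H^*(\Xn)$, the bidegree count giving $\ell(\lambda)=k+2-|\epsilon|/2$, and the index set $\{1,e_X,K_X,K_X^2\}$ are all correct. You also correctly locate the crux: the commutator with $\mathfrak a_{-1}(\beta)$ only detects monomials $\mathfrak a_\lambda$ containing a part equal to $1$.

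The gap is in your proposed repair of that crux. Since $\mathfrak d=\mathfrak G_1(1_X)$ and $\mathfrak G_k(\alpha)$ are both cup-product operators, $[\mathfrak d,\mathfrak G_k(\alpha)]=0$, so iterating the Jacobi identity yields only the commutators $[\mathfrak G_k(\alpha),\mathfrak a_{-1}^{(j)}(\beta)]$, $j\ge 0$. By Theorem~\ref{deriv_th}, each $\mathfrak a_{-1}^{(j)}(\beta)$ is a sum of normally ordered products of $j+1$ Heisenberg operators with many different indices, so each such identity entangles infinitely many of the unknown commutators $C_m$ (and their adjoints) simultaneously; there is no evident recursion that isolates an individual $C_m$ for $m\ge 2$, hence no access along this route to the diagonal coefficients of $:\mathfrak a_m\mathfrak a_{-m}:$. (Note also that your intended base case $k=0$ already requires all of these diagonal coefficients, so it cannot serve as a free starting point.) The mechanism that actually closes the argument in the cited literature is different: one first proves that $\fock$ is spanned by the classes $\mathfrak a_{-1}^{(j_1)}(\beta_1)\cdots\mathfrak a_{-1}^{(j_r)}(\beta_r)\vac$ (an upper-triangularity argument with respect to the leading term of $\mathfrak a_{-1}^{(j)}$, essentially the main theorem of \cite{LQW1}); since $\mathrm{ad}\,\mathfrak G_k(\alpha)$ is a derivation known on every $\mathfrak a_{-1}^{(j)}(\beta)$ and $\mathfrak G_k(\alpha)\vac=0$, the operator is then pinned down by its action on a spanning set, without ever recovering the individual $C_m$. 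Separately, the ``universality machinery'' you invoke at the outset (constancy of the coefficients and the restriction to characteristic classes of $X$) is itself the stability theorem of \cite{LQW2}, not a formality; as written your proposal treats as given the two inputs that carry most of the weight.
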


\begin{theorem} \label{g_{1_X, e}}
Let $n \ge 1$, $k \ge 0$, and $\alpha \in H^*(X)$. Then, $G_k(\alpha, n)$ is equal to
\begin{eqnarray*}
& &\sum_{0 \le j \le k} \sum_{\lambda
   \vdash (j+1) \atop \ell(\lambda)=k-j+1}
   {(-1)^{|\lambda|-1} \over \lambda^! \cdot |\lambda|!}
   \cdot {\bf 1}_{-(n-j-1)} \mathfrak a_{-\lambda}(\tau_*\alpha)\vac \\
&+&\sum_{0 \le j \le k} \sum_{\lambda \vdash (j+1) \atop \ell(\lambda)=k-j-1}
   {(-1)^{|\lambda|} \over \lambda^! \cdot |\lambda|!}
   \cdot {|\lambda| + \lambsq - 2 \over 24}
   \cdot {\bf 1}_{-(n-j-1)}
   \mathfrak a_{-\lambda}(\tau_*(e_X \alpha))\vac  \\
&+&\sum_{\epsilon \in \{K_X, K_X^2\} \atop 0 \le j \le k}
   \sum_{\lambda \vdash (j+1) \atop \ell(\lambda)=k-j+1-|{\epsilon}|/2}
   {(-1)^{|\lambda|}g_{|{\epsilon}|}(\lambda+(1^{j+1}))
   \over \lambda^! \cdot |\lambda|!} \cdot {\bf 1}_{-(n-j-1)}
   \mathfrak a_{-\lambda}(\tau_*(\epsilon\alpha))\vac
\end{eqnarray*}
where ${\bf 1}_{-(n-j-1)}$ denotes $\mathfrak
a_{-1}(1_X)^{n-j-1}/(n-j-1)!$ when $(n-j-1) \ge 0$ and is $0$ when
$(n-j-1) < 0$, the universal function $g_{|{\epsilon}|}$ is from
Theorem~\ref{char_th}, and $\lambda+(1^{j+1})$ is the partition
obtained from $\lambda$ by adding $(j+1)$ to the multiplicity of $1$.
\end{theorem}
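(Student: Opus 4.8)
The plan is to obtain $G_k(\alpha, n)$ as the image of the fundamental class of $\Xn$ under the Chern character operator. Since $\mathfrak G_k(\alpha)$ acts on $H^*(\Xn)$ by cup product with $G_k(\alpha, n)$, and since the fundamental class is the top creation state $1_{\Xn} = \frac{1}{n!}\mathfrak a_{-1}(1_X)^n\vac$ (a standard consequence of Nakajima's construction \cite{Nak}), we have
\[
G_k(\alpha, n) = G_k(\alpha, n)\cdot 1_{\Xn} = \mathfrak G_k(\alpha)\left(\frac{1}{n!}\,\mathfrak a_{-1}(1_X)^n\vac\right).
\]
Everything then reduces to substituting the operator formula of Theorem~\ref{char_th} and evaluating each Heisenberg monomial $\mathfrak a_\lambda(\tau_*(\epsilon\alpha))$ (with $|\lambda| = 0$) on $\mathfrak a_{-1}(1_X)^n\vac$.

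First I would isolate which generalized partitions $\lambda$ contribute. In the ordering of Definition~\ref{partition}, the annihilation operators $\mathfrak a_i$ ($i>0$) sit to the right and act first on $\mathfrak a_{-1}(1_X)^n\vac$. By Theorem~\ref{commutator}~(i), $\mathfrak a_i$ commutes with $\mathfrak a_{-1}(1_X)$ whenever $i>1$ and hence annihilates the state, so only those $\lambda$ whose positive parts are all equal to $1$ survive. Writing $s$ for the multiplicity of the part $+1$ and $\mu$ for the partition formed by the negative parts in absolute value, the constraint $|\lambda| = 0$ forces $|\mu| = s$, i.e.\ $\mu \vdash s$; the surviving monomial is $\mathfrak a_{-\mu}\,\mathfrak a_1^{\,s}$ carrying the K\"unneth legs of $\tau_{\ell(\lambda)*}(\epsilon\alpha)$.

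Next I would carry out the contraction bookkeeping. Each of the $s$ operators $\mathfrak a_1(\gamma)$ contracts with one of the $n$ identical factors $\mathfrak a_{-1}(1_X)$, producing a sign $(-1)^s$, an injective count $n!/(n-s)!$, and the pairing $\langle\gamma,1_X\rangle = \int_X\gamma$ on each positive leg; summed over the K\"unneth decomposition this realizes the reduction $\tau_{\ell(\lambda)*}(\epsilon\alpha)\mapsto \tau_{\ell(\mu)*}(\epsilon\alpha)$, which is the identity $\pi_*\tau_{\ell*} = \tau_{(\ell-1)*}$ for the projection $\pi$ forgetting one factor. The leftover creation operators commute freely with $\mathfrak a_{-1}(1_X)^{n-s}$ (all remaining legs are $1_X$, degree $0$, so no signs intervene). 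Collecting the prefactor $1/n!$, the coefficient from Theorem~\ref{char_th}, and $\lambda^! = s!\,\mu^!$, the $n!$'s cancel and $\mathfrak a_{-1}(1_X)^{n-s}/(n-s)! = {\bf 1}_{-(n-s)}$ emerges. Setting $s = j+1$ (so $\mu = \lambda\vdash(j+1)$, $s! = |\lambda|!$, $\mu^! = \lambda^!$, and $s(\lambda_{\mathrm{full}}) = |\lambda| + s(\lambda)$) turns the three families of terms of Theorem~\ref{char_th}, with coefficients $-1$, $(s(\lambda_{\mathrm{full}})-2)/24$, and $g_{|\epsilon|}(\lambda_{\mathrm{full}})$, into the three sums of the asserted formula; the length constraints $\ell(\lambda_{\mathrm{full}}) = k+2,\,k,\,k+2-|\epsilon|/2$ become $\ell(\lambda) = k-j+1,\,k-j-1,\,k-j+1-|\epsilon|/2$, and the range $0\le j\le k$ is forced by $s\ge 1$ and $\ell(\lambda)\ge 1$. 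A direct check of the first family, for instance, yields coefficient $(-1)^{|\lambda|-1}/(\lambda^!|\lambda|!)$, matching the statement on the nose.

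The main obstacle, beyond keeping every sign and factorial consistent, is the correct indexing of the universal functions: the surviving generalized partition $\lambda_{\mathrm{full}}$ has negative parts $-\lambda$ and positive part $1$ with multiplicity $j+1$, and one must identify its value $g_{|\epsilon|}(\lambda_{\mathrm{full}})$ with $g_{|\epsilon|}(\lambda + (1^{j+1}))$ in the ordinary-partition notation, tracking precisely how $g_{|\epsilon|}$ from \cite{LQW3} is defined on generalized partitions. The degenerate case $n-j-1<0$ needs no separate treatment: it would require $s = j+1 > n$ contractions against only $n$ factors $\mathfrak a_{-1}(1_X)$, which vanishes, matching the convention ${\bf 1}_{-(n-j-1)} = 0$.
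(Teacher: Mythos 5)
Your derivation is correct and is the standard one: the paper does not actually prove this statement but quotes it from \cite{LQW3}, and the computation you give --- writing $G_k(\alpha,n)=\mathfrak G_k(\alpha)\bigl(\mathfrak a_{-1}(1_X)^n\vac/n!\bigr)$, substituting the operator formula of Theorem~\ref{char_th}, and contracting the annihilation modes (only parts $+1$ survive, each contraction contributing $-\int_X\gamma$ and realizing $\tau_{\ell*}\mapsto\tau_{(\ell-1)*}$) --- is exactly the argument behind that result, and is the same computation the paper performs without detail in the orbifold setting to pass from \eqref{Oprt-OkAlphaN} to \eqref{OkAlphaN}. All your signs, factorials, and length constraints check out, and the one genuine caveat you flag --- that $g_{|\epsilon|}$ evaluated on the generalized partition with negative parts $-\lambda$ and $j+1$ parts equal to $1$ must be matched with the notation $g_{|\epsilon|}(\lambda+(1^{j+1}))$ --- is purely a matter of the indexing convention fixed in \cite{LQW3}.
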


\begin{lemma} \label{tau_k_tau_{k-1}}
$[\mathfrak a_{n_1} \cdots \mathfrak a_{n_{k}} (\tau_{k*}\alpha),
\mathfrak a_{m_1} \cdots \mathfrak a_{m_{s}}(\tau_{s*}\beta)]$ is equal to
\begin{eqnarray*}
-\sum_{t=1}^k \sum_{j=1}^s n_t \delta_{n_t,-m_j} \cdot \left (
\prod_{l=1}^{j-1} \mathfrak a_{m_l} \prod_{1 \le u \le k, u
\ne t} \mathfrak a_{n_u} \prod_{l=j+1}^{s} \mathfrak a_{m_l}
\right )(\tau_{(k+s-2)*}(\alpha\beta)).
\end{eqnarray*}
\end{lemma}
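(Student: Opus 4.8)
The plan is to derive the formula from two essentially independent ingredients: a purely algebraic expansion of the commutator of two monomials in Heisenberg operators, and a contraction identity for the diagonal push-forwards $\tau_{k*}$. The crucial simplifying observation is that, by Theorem~\ref{commutator}~(i), each elementary commutator $[\mathfrak a_{n_t}(\gamma), \mathfrak a_{m_j}(\gamma')] = -n_t\,\delta_{n_t,-m_j}\,\langle\gamma,\gamma'\rangle\,\mathrm{Id}$ is a scalar, hence central. Consequently, commuting the two products reduces---via the super-Leibniz rule---to a sum of single contractions, each contraction being one of these scalars. I therefore expect the proof to factor cleanly into (a) isolating these single-contraction terms with the correct ordering and sign, and (b) reassembling the surviving K\"unneth factors into $\tau_{(k+s-2)*}(\alpha\beta)$.

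For ingredient (a), I would first record the generalized Leibniz (Wick) formula: in any associative superalgebra in which all the super-commutators $[A_t, B_j]$ are central, one has
\[
[A_1\cdots A_k,\, B_1\cdots B_s] = \sum_{t=1}^k\sum_{j=1}^s \pm\, [A_t, B_j]\cdot \Big(\prod_{l=1}^{j-1} B_l \prod_{1\le u\le k,\, u\ne t} A_u \prod_{l=j+1}^{s} B_l\Big),
\]
where the signs are the Koszul signs produced by the reordering. This is proved by an easy induction, peeling off $B_s$ via the super-Leibniz rule $[Z, B_1\cdots B_s] = [Z, B_1\cdots B_{s-1}]B_s \pm (B_1\cdots B_{s-1})[Z,B_s]$ and then peeling off each $A_t$ similarly; centrality is what lets every $[A_t,B_j]$ be pulled to the front. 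Applying this with $A_t = \mathfrak a_{n_t}(\alpha_{(t)})$ and $B_j = \mathfrak a_{m_j}(\beta_{(j)})$, where $\tau_{k*}\alpha = \sum \alpha_{(1)}\otimes\cdots\otimes\alpha_{(k)}$ and $\tau_{s*}\beta = \sum\beta_{(1)}\otimes\cdots\otimes\beta_{(s)}$, and substituting the scalar value of $[A_t,B_j]$, reproduces the prefactor $-n_t\delta_{n_t,-m_j}$ and the interleaved arrangement $\prod_{l<j}\mathfrak a_{m_l}\,\prod_{u\ne t}\mathfrak a_{n_u}\,\prod_{l>j}\mathfrak a_{m_l}$ displayed in the statement, with the scalar $\langle\alpha_{(t)},\beta_{(j)}\rangle$ still to be contracted.

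For ingredient (b), the key is the elementary contraction identity: for any $\gamma\in H^*(X)$,
\[
\sum_{(\alpha)}\langle \alpha_{(t)}, \gamma\rangle\,\bigotimes_{i\ne t}\alpha_{(i)} = \tau_{(k-1)*}(\alpha\gamma),
\]
which I would prove by pairing both sides against an arbitrary $\bigotimes_{i\ne t}\delta_i$: the left side becomes $\langle\tau_{k*}\alpha,\, \delta_1\otimes\cdots\otimes\gamma\otimes\cdots\otimes\delta_k\rangle = \int_X \alpha\gamma\prod_{i\ne t}\delta_i$ (with $\gamma$ in slot $t$), which equals $\langle\tau_{(k-1)*}(\alpha\gamma),\, \bigotimes_{i\ne t}\delta_i\rangle$, so nondegeneracy of the Poincar\'e pairing finishes it. Iterating this identity, the joint contraction of slot $t$ of $\tau_{k*}\alpha$ against slot $j$ of $\tau_{s*}\beta$ gives $\sum_{(\beta)} \tau_{(k-1)*}(\alpha\beta_{(j)})\otimes\bigotimes_{l\ne j}\beta_{(l)}$, and a second pairing-against-test-classes computation (again collapsing the remaining $\beta_{(l)}$ factors by the same identity) identifies this with $\tau_{(k+s-2)*}(\alpha\beta)$. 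Since the small-diagonal class $\tau_{(k+s-2)*}(\alpha\beta)$ is symmetric under permutation of its tensor factors (up to Koszul sign), the particular interleaved order coming from ingredient (a) produces the same operator, which is exactly the right-hand side of the Lemma.

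The main obstacle I anticipate is not the conceptual content---ingredient (b) is forced by nondegeneracy of the pairing, and ingredient (a) is a standard superalgebra manipulation---but the careful bookkeeping of Koszul signs in the graded setting. One must check that the reordering signs in the Wick formula, the K\"unneth signs hidden in $\tau_{k*}\alpha$ and $\tau_{s*}\beta$, and the sign built into the very definition of $\mathfrak a_{m_1}\cdots\mathfrak a_{m_k}(\tau_{k*}\alpha)$ combine so that the only surviving sign is the factor $-n_t$, consistent with the sign conventions fixed in \cite{LQW2}. Verifying this cancellation, together with confirming that the symmetry of the diagonal class legitimately absorbs the interleaving, is the delicate part of the argument.
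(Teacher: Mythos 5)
Your argument is correct, and it is the standard one: since every elementary bracket $[\mathfrak a_{n_t}(\gamma),\mathfrak a_{m_j}(\gamma')]$ is a central scalar, the super-Leibniz expansion collapses to single contractions in exactly the interleaved order displayed, and the contraction identity $\sum_{(\alpha)}\langle\alpha_{(t)},\gamma\rangle\bigotimes_{i\ne t}\alpha_{(i)}=\tau_{(k-1)*}(\alpha\gamma)$ (forced by nondegeneracy of the Poincar\'e pairing, and compatible with co-associativity of $\tau_*$) reassembles the surviving K\"unneth factors into $\tau_{(k+s-2)*}(\alpha\beta)$. Note that the paper itself gives no proof of this lemma -- it simply cites \cite{LQW2} -- so your write-up is a self-contained reconstruction of that reference's computation rather than a divergence from the paper; the only point genuinely requiring care, as you say, is the Koszul sign bookkeeping against the conventions of \cite{LQW2}, and your plan for handling it is sound.
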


The above lemma was proved in \cite{LQW2}, and will be used implicitly in 
many proofs throughout the paper.
The following geometric result was proved in \cite{LQW5}.

\begin{proposition} \label{prop_geom}
Let the classes $\alpha_1, \ldots, \alpha_k \in \oplus_{i=1}^4 H^i(X)$ be 
respectively represented by the cycles $X_1, \ldots, X_k \subset X$ 
in general position. Then, the Heisenberg monomial class 
\begin{eqnarray*}
\displaystyle{\left (
\prod_{i=1}^t {\frak a_{-i}(1_X)^{s_i}\over s_i!} \right )
\left ( \prod_{j=1}^k \frak a_{-n_j}(\alpha_j) \right ) \vac}
\end{eqnarray*}
is represented by the closure of the subset consisting of elements of the form
\begin{eqnarray}  \label{prop_geom.1}
\sum_{i=1}^t (\xi_{i, 1} + \ldots + \xi_{i, s_i}) + \sum_{j=1}^k \xi_j
\end{eqnarray}
where $\xi_{i, m} \in M_i(x_{i, m})$ for some $x_{i, m} \in X$, 
$\xi_j \in M_{n_j}(x_j)$ for some $x_j \in X_j$, 
and all the points $x_{i, m}$, $1 \le i \le t$, $1 \le m \le s_i$ 
and $x_j$, $1 \le j \le k$ are distinct.
\end{proposition}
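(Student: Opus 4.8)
The plan is to argue by induction on the total number $N = s_1 + \cdots + s_t + k$ of Heisenberg creation operators, using the geometric definition of Nakajima's operators together with the fact that near a length-$N$ subscheme whose support consists of several distinct points the Hilbert scheme factors, \'etale- (or analytic-) locally, as a product of Hilbert schemes of the individual punctual pieces. Since every operator occurring is a creation operator $\mathfrak a_{-n}(\cdot)$, the operators super-commute among themselves by Theorem~\ref{commutator}~(i), so the order of the factors is immaterial up to the sign built into our conventions, and I am free to apply them one at a time in any convenient order. The base case $N=0$ is the statement that $\vac \in H^0(X^{[0]})$ is represented by the single reduced point of $X^{[0]}$, i.e.\ the empty configuration.

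For the inductive step, suppose a class $b \in H^*(X^{[m]})$ is already represented by the reduced closure $Z$ of a locus of configurations of the stated form, all support points distinct. I would apply one further operator, say $\mathfrak a_{-n}(\alpha)$ with $\alpha$ represented by a cycle $W \subset X$ in general position relative to the data already used, and unwind the definition
\[
\mathfrak a_{-n}(\alpha)(b) = \tilde p_{1*}\big([Q^{[m+n,m]}] \cdot \tilde\rho^*[W] \cdot \tilde p_2^*[Z]\big).
\]
The support of the cycle inside the brackets consists of triples $(\xi, x, \eta)$ with $\eta \in Z$, $x \in W$, $\xi \supset \eta$ and $\Supp(I_\eta/I_\xi)=\{x\}$ of length $n$; its image under $\tilde p_1$ is exactly the locus of subschemes obtained from a configuration in $Z$ by attaching a punctual length-$n$ piece supported at a point $x \in W$. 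The key point is that, by general position, this new point $x$ is distinct from the support of the generic $\eta \in Z$, so the augmented configuration again has distinct support and lies in the locus described by \eqref{prop_geom.1}.

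The heart of the matter is transversality and multiplicity bookkeeping. Using the general position of $W$ and of the cycles $X_1, \ldots, X_k$, I would check first that the three cycles meet properly, so that the bracketed intersection has the expected dimension and is generically reduced; and second that, over a generic point of the image, the projection $\tilde p_1$ is one-to-one, because a configuration with distinct supports uniquely determines the piece that was attached. This shows $\mathfrak a_{-n}(\alpha)(b)$ is the reduced closure of the augmented locus with multiplicity one. The only place a genuine correction appears is among the $s_i$ equal operators $\mathfrak a_{-i}(1_X)$: these attach $s_i$ interchangeable length-$i$ punctual pieces at unconstrained points of $X$, so the incidence geometry is generically $s_i!$-to-one onto the reduced configuration locus (one copy for each ordering of the identical pieces), and dividing by $s_i!$ restores multiplicity one. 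The local factorization of the Hilbert scheme near configurations with distinct support reduces all of these assertions to the single-point model, where $M_i(x)$ is the punctual Hilbert scheme and the incidence variety is standard.

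I expect the main obstacle to be precisely this multiplicity-one and symmetry-factor verification: proving that, after imposing general position, the pushed-forward cycle is reduced and that the factor $1/s_i!$ exactly cancels the overcounting from permuting identical punctual pieces. This is where a careful local analysis is unavoidable --- one must produce, near a generic configuration with distinct supports, a chart exhibiting the Hilbert scheme as a product of local punctual Hilbert schemes and the incidence variety $Q^{[m+n,m]}$ as the corresponding product model, and then read off the degree of $\tilde p_1$ from this chart. By contrast, the dimension count matching the cohomological degree of the Heisenberg monomial is comparatively routine and serves mainly as a consistency check that the described locus has the correct codimension.
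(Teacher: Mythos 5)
The paper does not prove this proposition itself --- it is quoted from \cite{LQW5} --- but your inductive argument via the geometric definition of the Nakajima correspondences, the analytic-local product structure of $\Xn$ near configurations with disjoint supports, and the degree computation for $\tilde p_1$ is essentially the standard proof given there, and your outline is correct. Two bookkeeping points deserve explicit care when you write it up: first, the factor $s_i!$ does not arise from a single $s_i!$-to-one map but accumulates as the product of degrees $1,2,\ldots,s_i$ over the successive applications of $\mathfrak a_{-i}(1_X)$, since at the $j$-th step the generic fiber of $\tilde p_1$ consists of the $j$ choices of which interchangeable length-$i$ piece to delete; second, to conclude that the pushforward is exactly the closure of the distinct-support locus with no excess components, you should invoke the irreducibility of $Q^{[m+n,m]}$ (of dimension $2m+n+1$, with generic point having $x \notin \Supp(\eta)$), which guarantees that the part of the incidence cycle lying over the collision locus has strictly smaller dimension and contributes nothing.
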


Theorem~2.9 in \cite{LQW4} expresses a Heisenberg monomial class in terms of 
a polynomial of the classes $G_k(\gamma, n)$. The following lemma is a special case.

\begin{lemma}      \label{PolyGkAlphaRmk}
Let $\la \vdash n_0$, $\alpha \in H^*(X)$ with $|\alpha| = 2$, and $m \ge 1$. 
\begin{enumerate}
\item[{\rm (i)}] Then, the class ${\bf 1}_{-(n - n_0)} \mathfrak a_{-\la}(x) \vac \in H^*(\Xn)$
can be written as a polynomial of the classes $G_k(x, n), k \ge 0$. Moreover, the coefficients 
and the integers $k$ depend only on $\la$ (hence, are independent of $n$ and $X$);

\item[{\rm (ii)}] If the odd Betti numbers of the surface $X$ are equal to zero, then 
$$
{\bf 1}_{-(n - n_0-m)} \mathfrak a_{-\la}(x) \mathfrak a_{-m}(\alpha)\vac =
\langle K_X, \alpha \rangle \cdot F_1(n) + \sum_i G_{k_i}(\alpha, n) \cdot F_{2, i}(n)
$$ 
where $F_1(n)$ and $F_{2, i}(n)$ are polynomials of the classes $G_k(x, n), k \ge 0$. Moreover, 
the coefficients of $F_1(n), F_{2, i}(n)$ and the integers $k, k_i$ depend only on $\la$ and $m$ 
(hence, are independent of $n, \alpha$ and $X$).
\end{enumerate}
\end{lemma}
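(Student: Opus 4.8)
The plan is to obtain both statements as the inversion of the explicit universal formula of Theorem~\ref{g_{1_X, e}}; indeed the lemma is exactly the special case of Theorem~2.9 in \cite{LQW4} in which at most one Heisenberg factor carries a class of positive degree, and the real content is to extract the claimed independence of $n$ and $X$. The first step is to specialize to the point class $x \in H^4(X)$. Here $x^2 = 0$, while $e_X\, x = K_X\, x = K_X^2\, x = 0$ by degree reasons, and $\tau_{\ell*}(x) = x^{\otimes \ell}$, so that by Definition~\ref{partition} one has $\mathfrak a_{-\lambda}(\tau_* x) = \mathfrak a_{-\lambda}(x)$. Substituting $\alpha = x$ into Theorem~\ref{g_{1_X, e}} therefore kills the $e_X$-term and both $\epsilon$-terms and collapses the formula to
\begin{eqnarray*}
G_k(x, n) = \sum_{0 \le j \le k} \sum_{\lambda \vdash (j+1) \atop \ell(\lambda) = k-j+1}
\frac{(-1)^{|\lambda|-1}}{\lambda^! \cdot |\lambda|!}\cdot {\bf 1}_{-(n-j-1)}\, \mathfrak a_{-\lambda}(x)\, \vac,
\end{eqnarray*}
a linear combination of single-block point-class monomials whose coefficients are manifestly independent of $n$ and $X$.

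For part~(i) I would invert this relation together with the multiplicative structure. Taking $k=0$ gives $G_0(x,n) = {\bf 1}_{-(n-1)}\mathfrak a_{-1}(x)\vac$, and in general the single block $\mathfrak a_{-p}(x)$ occurs in $G_{p-1}(x,n)$ (the $j=p-1$ summand), accompanied only by terms with strictly more blocks. By the geometric description behind Proposition~\ref{prop_geom}, the cup product of several classes ${\bf 1}_{-\ast}\mathfrak a_{-\mu}(x)\vac$ equals, to leading order, the class obtained by superimposing their blocks at distinct generic points, the collision contributions from coinciding supports being of strictly lower order. Fixing a partial order on the monomials ${\bf 1}_{-(n-n_0)}\mathfrak a_{-\lambda}(x)\vac$ (refining the number of blocks together with partition dominance) under which both the collapsed formula and this superposition rule are triangular, an arbitrary such monomial appears as the leading term of a suitable product of the $G_k(x,n)$ and can be solved for by induction with respect to the order. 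Every structure constant used comes from the collapsed formula or the superposition rule, both universal, so the resulting polynomial in the $G_k(x,n)$ has coefficients and indices depending only on $\lambda$.

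For part~(ii) the only new feature is the single degree-$2$ factor $\mathfrak a_{-m}(\alpha)$. Since the odd Betti numbers vanish, $H^*(X) = H^0(X) \oplus H^2(X) \oplus H^4(X)$ and the monomials built from a homogeneous basis $\{1_X\} \cup (\text{basis of } H^2(X)) \cup \{x\}$ behave purely bosonically, which is what makes the inversion clean. I would use one generator $G_{k_i}(\alpha, n)$ to supply the $\alpha$-carrying block and polynomials $F_{2,i}(n)$ in the $G_k(x,n)$ of part~(i) for the remaining point-class blocks, again inverting the analog of the displayed formula — this time retaining the terms of Theorem~\ref{g_{1_X, e}} involving $\tau_*(\alpha)$ and $\tau_*(K_X\alpha)$, because $e_X\,\alpha = K_X^2\,\alpha = 0$ while $K_X\,\alpha = \langle K_X, \alpha\rangle\, x \in H^4(X)$. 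The $\tau_*(\alpha)$ contributions feed the part $\sum_i G_{k_i}(\alpha, n)\, F_{2,i}(n)$, whereas any contraction that removes $\alpha$ must pair it against the unique degree-$2$ class occurring in the universal formulas, namely $K_X$, and hence yields the scalar $\langle K_X,\alpha\rangle$ times a pure point-class polynomial $F_1(n)$. Because $\alpha$ never meets a general $H^2(X)$ class (only $K_X$), no other pairing appears, and the coefficients and indices depend only on $\lambda$ and $m$.

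I expect the main obstacle to be the bookkeeping in this triangular inversion: making the order precise so that both the collapsed expression for $G_k(x,n)$ and the superposition product of Proposition~\ref{prop_geom} are genuinely triangular, and then checking that the prefactors ${\bf 1}_{-\ast}$ match up so that the solved coefficients stabilize in $n$ and the $n$-dependence drops out. The conceptual input — the vanishing of every $X$-dependent class upon multiplication by $x$, and the isolation in part~(ii) of the single surviving scalar $\langle K_X, \alpha\rangle$ — is immediate; the labor is in organizing the linear algebra so that the universality of all structure constants is transparent.
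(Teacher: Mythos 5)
Your proposal is, in substance, the argument that the paper invokes but does not write out: the published proof is a one-line reduction to Theorem~2.9 of \cite{LQW4}, applied with $\mathcal I = \mathbb C \cdot x$ for part~(i) and $\mathcal I = \mathbb C \cdot x + \mathbb C \cdot \alpha$ for part~(ii), and that cited theorem is proved by exactly the triangular inversion of the universal expansion of $G_k(\gamma, n)$ that you describe. Your degree bookkeeping is correct and is precisely what makes the specialization work: $e_X x = K_X x = K_X^2 x = 0$ collapses Theorem~\ref{g_{1_X, e}} to the displayed formula for $G_k(x,n)$; and for $|\alpha|=2$ one has $e_X\alpha = K_X^2\alpha = 0$, $K_X\alpha = \langle K_X,\alpha\rangle x$, while $b_1=b_3=0$ forces every K\"unneth component of $\tau_{\ell*}\alpha$ to carry exactly one degree-two factor, which is what separates the answer into the two shapes $\langle K_X,\alpha\rangle F_1(n)$ and $\sum_i G_{k_i}(\alpha,n) F_{2,i}(n)$.

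The one place your sketch is softer than it needs to be is the triangularity of the cup product. Proposition~\ref{prop_geom} only supplies cycle representatives, and the intersections of those cycles are not generically proper (the free ${\bf 1}_{-*}$ part of one cycle meets the punctual part of another), so the ``superposition at distinct points plus lower-order collision terms'' rule cannot be read off from it. The rigorous route, and the one taken in \cite{LQW4}, is algebraic: expand the multiplication operator $\mathfrak G_k(x)$ (resp.\ $\mathfrak G_k(\alpha)$) via Theorem~\ref{char_th} and compute its action on Heisenberg monomials using Theorem~\ref{commutator}~(i) and Lemma~\ref{tau_k_tau_{k-1}}; triangularity with respect to your order then follows because $\langle x,x\rangle = \langle \alpha, x\rangle = \langle\alpha,1_X\rangle = 0$, so the only surviving contractions are of $x$ against $1_X$ (and, for $\mathfrak G_k(\alpha)$, of $\alpha$ against $H^2(X)$), each of which strictly decreases your order. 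Relatedly, the scalar $\langle K_X,\alpha\rangle$ in part~(ii) does not arise from ``contracting $\alpha$ against $K_X$'' in the Heisenberg algebra --- those commutators vanish --- but is already present in the $\tau_*(K_X\alpha) = \langle K_X,\alpha\rangle\,\tau_* x$ terms of the universal formula for $G_k(\alpha,n)$ and of $\mathfrak G_k(\alpha)$. With these two points repaired, your inversion closes up and reproduces the cited proof, including the universality of the coefficients and of the indices $k, k_i$.
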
 
\begin{proof}
These follow from the same proof of Theorem~2.9 in \cite{LQW4} 
by setting $\mathcal I = \mathbb C \cdot x \subset H^*(X)$ and 
$\mathcal I = \mathbb C \cdot x + \mathbb C \cdot \alpha \subset H^*(X)$ respectively.
\end{proof}

Next, we define some convenient operations which will be used intensively.

\begin{definition}   \label{AOverB}
Let $A = \mathfrak a_{-n_1}(\alpha_1) \cdots \mathfrak a_{-n_l}(\alpha_l) \vac$ 
where $n_1, \ldots, n_l > 0$. 
\begin{enumerate}
\item[{\rm (i)}] If $B = \mathfrak a_{-m_1}(\beta_1) \cdots \mathfrak a_{-m_s}(\beta_s) \vac$ 
with $m_1, \ldots, m_s > 0$, then we define
\begin{eqnarray}  \label{AOverB.1}
A \circ B = \mathfrak a_{-n_1}(\alpha_1) \cdots \mathfrak a_{-n_l}(\alpha_l) 
\mathfrak a_{-m_1}(\beta_1) \cdots \mathfrak a_{-m_s}(\beta_s) \vac.
\end{eqnarray}

\item[{\rm (ii)}] We use the symbol $B \subset A$ if 
$B = \mathfrak a_{-n_{i_1}}(\alpha_{i_1}) \cdots \mathfrak a_{-n_{i_s}}(\alpha_{i_s}) \vac$ 
with $1 \le i_1 < \ldots < i_s \le l$. In this case, we use $A/B$ or $AB^{-1}$ 
or $\displaystyle{A \over B}$ to denote the cohomology class obtained from 
$A$ by deleting the factors $\mathfrak a_{-n_{i_1}}(\alpha_{i_1}),  \ldots ,
\mathfrak a_{-n_{i_s}}(\alpha_{i_s})$.
\end{enumerate}
\end{definition}

\section{\bf The ring $H^*_{\rm CR}(X^{(n)})$}
\label{sec:orbkey}

For an orbifold $Z$, the ring $H^*_{\rm CR}(Z)$ was defined by 
Chen and Ruan \cite{ChR}. For a global orbifold $M/G$ where $M$ is 
a complex manifold with a finite group $G$ action, the ring structure of
$H^*_{\rm CR}(M/G)$ was further clarified in \cite{FG, Uri}.

Next, let $X$ be a closed complex manifold, and let $X^{(n)} = X^n/S_n$ be 
the $n$-th symmetric product of $X$. An explicit description of the
ring structure of $H^*_{\rm CR}(X^{(n)})$ has been obtained in \cite{FG}.
An alternative approach to the ring structure of $H^*_{\rm CR}(X^{(n)})$ 
is given in \cite{QW} via Heisenberg algebra actions. Put
$$
\Fock =\bigoplus_{n=0}^{+\infty} \orbsym. 
$$
In \cite{QW}, for $\alpha \in H^*(X)$ and $n \in \mathbb Z$, 
the Heisenberg operators $\mathfrak p_{n}(\alpha) \in \End (\Fock)$ were defined 
via the restriction and induction maps. Moreover for $k \ge 0$,
the elements $O^k(\alpha,n) \in H^*_{\rm CR}(X^{(n)})$ were introduced 
via the Jucys-Murphy elements in the symmetric groups. Put
$O_k(\alpha,n) = {1/k!} \cdot O^k(\alpha,n)$.
Let the operator $\mathfrak O_k(\alpha) \in \End (\Fock)$ be the orbifold ring
product with $O_k(\alpha,n)$ in $H^*_{\rm CR}(X^{(n)})$ for every $n \ge 0$.
The operator $\mathfrak O_1(1_X)$ plays the role of the boundary operator $\mathfrak
d = \mathfrak G_1(1_X)$ for the Hilbert schemes. 
Define $\mathfrak p^{\{k\}}_{m}(\alpha)$ inductively by putting 
$\mathfrak p^{\{ 0 \}}_m(\alpha) = \mathfrak p_m(\alpha)$ and 
$\mathfrak p^{\{k\}}_m(\alpha) = [\mathfrak O_1(1_X), \mathfrak p_{m}^{\{k-1\}}(\alpha)]$ for 
$k \ge 1$. The following result was proved in \cite{QW}.

\begin{theorem} \label{orb:heis}
Let $X$ be a closed complex manifold. Then,
\begin{enumerate}
\item[{\rm (i)}] the operators $\mathfrak p_n(\alpha) \in \End(\Fock)$ 
$(n\in \Z, \alpha \in H^*(X))$
generate a Heisenberg (super)algebra with commutation relations given by
\begin{eqnarray*}
[ \mathfrak p_{m} (\alpha), \mathfrak p_{n} (\beta)]
= m \delta_{m,-n} \cdot \langle \alpha, \beta \rangle \cdot \text{\rm Id}_{\Fock}
\end{eqnarray*}
where $n,m \in \Z, \;\alpha,\beta \in H^*(X)$, and
$\Fock$ is an irreducible representation of the Heisenberg algebra
with the vacuum vector $\vac =1 \in H^*(pt) \cong \C$.

\item[{\rm (ii)}] $\displaystyle{\mathfrak O_1(1_X) = -\frac16 :\mathfrak p^3:_0 (\tau_{*}1_X)}$.
In general, $\mathfrak O_k(\alpha)$ is equal to
\begin{eqnarray}       \label{Oprt-OkAlphaN}
(-1)^k \cdot \left ( \sum_{\ell(\lambda) = k+2, |\lambda|=0}
\frac{1}{\lambda^!} \mathfrak p_{\lambda}(\tau_{*}\alpha)
+ \sum_{\ell(\lambda) = k, |\lambda|= 0} \frac{\lambsq - 2}{24\lambda^!} 
\mathfrak p_{\lambda}(\tau_{*}(e_X \alpha)) \right ).
\end{eqnarray}

\item[{\rm (iii)}] $\displaystyle{[\mathfrak O_k(\alpha), \mathfrak p_{-1}(\beta)] 
= {1 \over k!} \,\mathfrak p^{\{ k \}}_{-1}(\alpha \beta)}$, and both sides are equal to
$$
(-1)^k \cdot \left ( \sum_{\ell(\lambda) = k+1, |\lambda| = -1}
   {1 \over \lambda^!} \mathfrak p_{\lambda}(\tau_{*}(\alpha \beta)) +
   \sum_{\ell(\lambda) = k-1, |\lambda|= -1}
   {\lambsq - 1 \over 24 \lambda^!}
   \mathfrak p_{\lambda}(\tau_{*}(e_X \alpha \beta)) \right ). 
$$
\end{enumerate}
\end{theorem}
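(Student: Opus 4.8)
The plan is to work directly on the global quotient orbifold $[X^n/S_n]$ and to exploit the decomposition of Chen--Ruan cohomology into twisted sectors. Writing $\orbsym = \big(\bigoplus_{g\in S_n} H^{*-2\,\mathrm{age}(g)}((X^n)^g)\big)^{S_n}$, one records the conjugacy class of $g$ by its cycle type $\lambda\vdash n$, notes that $(X^n)^g\cong X^{\ell(\lambda)}$, and computes that for a surface the shift attached to an $i$-cycle is $\mathrm{age}=i-1$. Collating these sectors over all $n$ identifies $\Fock$ as a graded vector space with the Fock space whose natural basis is given by the monomials $\mathfrak p_{-\lambda}(\alpha_\bullet)\vac$. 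First I would record the precise correspondences defining $\mathfrak p_{-n}(\alpha)$ (adjoin an $n$-cycle on $n$ new points, via the induction map, twisted by $\alpha$ along the new diagonal factor of $X$) and $\mathfrak p_n(\alpha)$ (its adjoint, which removes an $n$-cycle).

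For part (i), the commutator $[\mathfrak p_m(\alpha),\mathfrak p_n(\beta)]$ is computed by composing the creation and annihilation correspondences in both orders and applying a Mackey/double-coset analysis of the relevant induction--restriction. The two compositions differ only in the configuration where the cycle just created is immediately annihilated; this forces $m=-n$, produces the pairing $\langle\alpha,\beta\rangle$ from the diagonal restriction on the shared factor of $X$, and yields the integer $m$ from the combinatorial multiplicity of the matching cycle. Irreducibility is then immediate, since every monomial is obtained from $\vac$ by creation operators while annihilation operators lower $n$ and kill $\vac$, so $\Fock$ admits no proper nonzero invariant subspace.

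For part (ii), the point is to identify the orbifold-multiplication operators $\mathfrak O_k(\alpha)$ as Heisenberg operators. I would first establish the base case $\mathfrak O_1(1_X)=-\frac16 :\mathfrak p^3:_0(\tau_{*}1_X)$ by computing the three-point orbifold cup product with the degree-two transposition class: its matrix coefficients are orbifold structure constants on triples of twisted sectors, and the cubic normally ordered expression records exactly the ways a transposition can split or merge cycles. The general formula \eqref{Oprt-OkAlphaN} is then obtained from the Jucys--Murphy realization of $O^k(\alpha,n)$, the key algebraic input being the known expansion of the $k$-th symmetric function of the Jucys--Murphy elements $J_i=\sum_{j<i}(j\,i)$ into cycle-type class sums; this translates termwise, through the twisted-sector dictionary, into the normally ordered monomials $\mathfrak p_{\lambda}(\tau_{*}\alpha)$ of length $k+2$. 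The length-$k$ correction carrying $\tau_{*}(e_X\alpha)$ with coefficient $\frac{\lambsq-2}{24}$ arises precisely from the Euler class of the obstruction (excess) bundle integrated over the relevant fixed loci.

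For part (iii), with \eqref{Oprt-OkAlphaN} in hand the bracket $[\mathfrak O_k(\alpha),\mathfrak p_{-1}(\beta)]$ becomes a purely algebraic computation inside the Heisenberg algebra: one expands $\mathfrak O_k(\alpha)$ as a normally ordered monomial, uses the relation from part (i) to contract a single annihilation factor against $\mathfrak p_{-1}(\beta)$, and collects terms. Iterating $\mathfrak p^{\{k\}}_{-1}=[\mathfrak O_1(1_X),\mathfrak p^{\{k-1\}}_{-1}]$ from the cubic form of $\mathfrak O_1(1_X)$ gives $\mathfrak p^{\{k\}}_{-1}(\alpha\beta)$ in closed form, and comparing the two expressions by induction on $k$ shows they agree and match the stated sum. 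The main obstacle is part (ii): controlling the three-point orbifold invariants, pinning down the universal rational coefficients, and in particular showing that the obstruction-bundle contributions assemble exactly into the $e_X$-term with coefficient $\frac{\lambsq-2}{24}$. Once the base operator $\mathfrak O_1(1_X)$ and the Jucys--Murphy expansion are secured, parts (i) and (iii) are comparatively formal.
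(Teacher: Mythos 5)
This theorem is not proved in the paper at all: it is recalled verbatim from \cite{QW} (``The following result was proved in \cite{QW}''), so there is no in-paper argument to compare yours against line by line. That said, your outline reproduces the architecture that the paper attributes to \cite{QW}: the twisted-sector decomposition of $H^*_{\rm CR}(X^{(n)})$ indexed by cycle types, the creation/annihilation operators $\mathfrak p_{\pm n}(\alpha)$ defined via induction and restriction along the diagonal, the classes $O^k(\alpha,n)$ built from Jucys--Murphy elements, and the reduction of (iii) to a formal Heisenberg computation once (i) and the closed form \eqref{Oprt-OkAlphaN} are in hand. Parts (i) and (iii) of your sketch are essentially complete modulo routine bookkeeping (the contraction argument for (iii) is exactly the $\mathfrak p$-analogue of Lemma~\ref{tau_k_tau_{k-1}} and Lemma~\ref{derivLLlma}).

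The genuine gap is where you already locate it: part (ii). Asserting that the transposition class acts as $-\frac16:\mathfrak p^3:_0(\tau_*1_X)$ and that the obstruction-bundle contributions ``assemble exactly'' into the $\frac{\lambsq-2}{24}\,\mathfrak p_\lambda(\tau_*(e_X\alpha))$ correction is precisely the hard content of the theorem, not a step that follows from the setup; it requires computing the Euler class of the excess bundle on the fixed loci of triples of permutations (the Lehn--Sorger/Fantechi--G\"ottsche graph-defect computation) and matching normalizations between class sums of Jucys--Murphy symmetric functions and normally ordered monomials. As written, your argument for (ii) is a programme rather than a proof, and since (iii) is derived from \eqref{Oprt-OkAlphaN}, the whole chain rests on it. Two smaller points: the theorem is stated for an arbitrary closed complex manifold, whereas your age computation ($\mathrm{age}=i-1$ for an $i$-cycle) is the surface case, so you should either restrict the claim or redo the degree-shift bookkeeping in general dimension; and the commutator in (i) is a super-commutator, so the Mackey/double-coset analysis must track Koszul signs for odd-degree $\alpha,\beta$.
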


Note that there is a fundamental sign difference in the two commutators of
Theorems~\ref{commutator}~(i) and Theorems~\ref{orb:heis}~(i). 
Since $O_k(\alpha,n) = \mathfrak O_k(\alpha) \mathfrak p_{-1}(1_X)^n\vac/n!$, 
we see from formula (\ref{Oprt-OkAlphaN}) that the class $O_k(\alpha, n)$ is equal to
\begin{eqnarray}       \label{OkAlphaN}
& &(-1)^k \cdot \left ( \sum_{0 \le j \le k} \sum_{\lambda \vdash (j+1) \atop \ell(\lambda)=k-j+1}
   {1 \over \lambda^! \cdot |\lambda|!}
   \cdot {\bf 1}_{-(n-j-1)} \mathfrak p_{-\lambda}(\tau_*\alpha)\vac \right . \nonumber  \\
&+&\left . \sum_{0 \le j \le k} \sum_{\lambda \vdash (j+1) \atop \ell(\lambda)=k-j-1}
   {1 \over \lambda^! \cdot |\lambda|!} \cdot {|\lambda| + \lambsq - 2 \over 24}
   \cdot {\bf 1}_{-(n-j-1)} \mathfrak p_{-\lambda}(\tau_*(e_X \alpha))\vac \right ). \quad
\end{eqnarray}
Moreover, as noted in \cite{QW}, the ring $H^*_{\rm CR}(X^{(n)})$ is completely determined
by Theorem~\ref{orb:heis}~(i), the formula of $\mathfrak O_1(1_X)$ in Theorem~\ref{orb:heis}~(ii),
and Theorem~\ref{orb:heis}~(iii). In particular, the ring $H^*_{\rm CR}(X^{(n)})$ is 
generated by the classes $O_k(\alpha,n)$ where $k \ge 0$ and $\alpha$ runs over 
a fixed linear basis of $H^*(X)$.

\section{\bf Ruan's Cohomological Crepant Resolution Conjecture}
\label{sect_RuanCCRC}

In this section, we briefly review the definition of Gromov-Witten invariants,
and recall Ruan's Cohomological Crepant Resolution Conjecture for the Hilbert-Chow morphisms. 

Let $Y$ be a smooth projective variety.
For a fixed homology class $\beta \in H_2(Y, \mathbb Z)$,
let $\overline {\frak M}_{g, k}(Y, \beta)$ be the coarse moduli space
parameterizing all the stable maps $[\mu: (D; p_1, \ldots, p_k) \to Y]$
such that $\mu_*[D] = \beta$ and the arithmetic genus of $D$ is $g$.
The $i$-th evaluation map ${\rm ev}_i\colon \overline {\frak M}_{g, k}(Y, \beta) \to Y$
is defined by ${\rm ev}_i([\mu: (D; p_1, \ldots, p_k) \to Y]) = \mu(p_i) \in Y$.
It is known \cite{FP, LT1, LT2, Beh1, BF} that $\overline {\frak M}_{g, k}(Y, \beta)$ is projective 
and has a virtual fundamental cycle
$[\overline {\frak M}_{g, k}(Y, \beta)]^{\text{\rm vir}} \in
A_{d_0}(\overline {\frak M}_{g, k}(Y, \beta))$ where
$d_0 = -(K_Y \cdot \beta) + (\dim (Y) - 3)(1-g) + k.$
Let $\alpha_1, \ldots, \alpha_k \in H^*(Y)$, and ${\rm ev} = {\rm ev}_1 \times \cdots 
\times {\rm ev}_k: \overline {\frak M}_{g, k}(Y, \beta) \to Y^k$.
Then, the $k$-pointed Gromov-Witten invariant is defined by
\begin{eqnarray}\label{def-GW}
\langle \alpha_1, \ldots, \alpha_k \rangle_{g, \beta} \,\,
= \int_{[\overline {\frak M}_{g, k}(Y, \beta)]^{\text{\rm vir}}}
{\rm ev}^*(\alpha_1 \otimes \ldots \otimes \alpha_k).
\end{eqnarray}

Next, let $X$ be a smooth complex projective surface. 
Define the homology class 
\begin{eqnarray}     \label{def-betaN} 
\beta_n = M_2(x_1) + x_2 + \ldots + x_{n-1} \in H_2(\Xn; \Z) 
\end{eqnarray}
where $x_1, \ldots , x_{n-1}$ are fixed distinct points in $X$.
An irreducible curve $C \subset X^{[n]}$ is contracted to 
a point by $\rho_n$ if and only if $C \sim d \beta_n$ for some integer $d > 0$. 
Let $q$ be a formal variable. For $w_1, w_2, w_3 \in H^*(\Xn)$, define a function of $q$:
$$
\langle w_1, w_2, w_3 \rangle_{\rho_n}(q) = \sum_{d \ge 0} \,\,
\langle w_1, w_2, w_3 \rangle_{0, d \beta_n} \,\, q^d.
$$

\begin{definition}
The {\it quantum corrected 
cohomology ring} $H^*_{\rho_n}(\Xn)$ is the group $H^*(\Xn)$ together 
with the {\it quantum corrected product} $w_1 \cdot_{\rho_n} w_2$ defined by 
\begin{eqnarray}   \label{def:QuanCorrProd}
\langle w_1 \cdot_{\rho_n} w_2, w_3 \rangle = \langle w_1, w_2, w_3 \rangle_{\rho_n}(-1).
\end{eqnarray}
\end{definition}

\begin{conjecture} \label{conj_ruan}
{\rm (Ruan's Cohomological Crepant Resolution Conjecture)}
The quantum corrected cohomology ring $H^*_{\rho_n}(\Xn)$ is ring isomorphic to
$H^*_{\rm CR}(X^{(n)})$.
\end{conjecture}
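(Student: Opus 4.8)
The plan is to establish this for a simply connected $X$ (that is, Theorem~\ref{Intro-Thm1}) through the axiomatization of \cite{QW}: once both sequences of graded Frobenius algebras $H^*_{\rm CR}(X^{(n)})$ and $H^*_{\rho_n}(\Xn)$ are shown to satisfy (A1) and (A2) for the same Heisenberg datum, their products must agree under the induced linear identification of Fock spaces. First I would fix this identification $\Phi\colon \Fock \to \fock$ by matching Heisenberg generators $\mathfrak p_{-n}(\alpha) \leftrightarrow \mathfrak a_{-n}(\alpha)$, absorbing the sign discrepancy between Theorem~\ref{orb:heis}~(i) and Theorem~\ref{commutator}~(i) into the standard normalization forced by the $q = -1$ specialization in \eqref{def:QuanCorrProd}. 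Under $\Phi$ the integration pairings correspond, so the problem reduces to verifying (A2) on each side, with $\W{\mathfrak G}_k(\alpha)$ taken to be $\mathfrak O_k(\alpha)$ on the orbifold side and the $\cdot_{\rho_n}$-multiplication by $G_k(\alpha,n)$ on the Hilbert side.

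On the orbifold side everything is immediate: (A1) is Theorem~\ref{orb:heis}~(i), \eqref{Axiom21} is the formula for $\mathfrak O_1(1_X)$ in Theorem~\ref{orb:heis}~(ii), and \eqref{Axiom22} is Theorem~\ref{orb:heis}~(iii). On the Hilbert side, (A1) holds by \cite{Gro, Nak} since the Nakajima operators are classical and unaffected by the quantum correction, and \eqref{Axiom21} holds by \cite{LL}, the extremal one-point contribution cancelling exactly the term $-\sum_{n>0}\frac{n-1}{2}:\mathfrak a_n\mathfrak a_{-n}:(\tau_{2*}(K_X))$ in the classical boundary operator $\mathfrak d = \mathfrak G_1(1_X)$ of Theorem~\ref{commutator}~(ii) and leaving the orbifold expression $-\frac16:\mathfrak a^3:_0(\tau_{3*}1_X)$. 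The entire content then collapses to verifying \eqref{Axiom22} for the quantum corrected product.

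To do so I would write $\W{\mathfrak G}_k(\alpha) = \mathfrak G_k(\alpha) + \mathfrak c_k(\alpha)$, the sum of classical multiplication by $G_k(\alpha,n)$ and the Gromov-Witten correction $\mathfrak c_k(\alpha)$, so that $[\mathfrak G_k(\alpha),\mathfrak a_{-1}(\beta)] = \frac1{k!}\mathfrak a^{(k)}_{-1}(\alpha\beta)$ by Theorem~\ref{commutator}~(iii) and \eqref{Axiom22} becomes the single identity $[\mathfrak c_k(\alpha),\mathfrak a_{-1}(\beta)] = \frac1{k!}(\mathfrak a^{\{k\}}_{-1}(\alpha\beta) - \mathfrak a^{(k)}_{-1}(\alpha\beta))$ relating the correction to the discrepancy between the quantum and classical derivatives. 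Pairing both sides against Heisenberg monomial bases and using Theorem~\ref{A1ToAk} to factor off the constant components, the correction is expressed through the extremal classes $\mathfrak Z_{m,d}$, and Theorem~\ref{KX2-universal} shows these pairings to be universal polynomials in $\langle K_X, K_X\rangle$ times the factors $\langle K_X, \alpha_{i,j}\rangle$, with coefficients independent of $X$ and with no independent Euler-class contribution. Since the right-hand discrepancy is likewise universal in $K_X$ and $K_X^2$ by Theorems~\ref{deriv_th} and~\ref{char_th} (Lemma~\ref{PolyGkAlphaRmk} applies because $X$ simply connected forces $H^{\rm odd}(X)=0$), the identity to be checked is one between universal polynomials in $\langle K_X, K_X\rangle$. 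It therefore suffices to verify it on smooth projective toric surfaces, on which $\langle K_X, K_X\rangle$ takes infinitely many values and where \eqref{Axiom22} is known by \cite{Che}; a polynomial identity holding at infinitely many values holds identically.

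With (A1) and (A2) confirmed on both sides under $\Phi$, the axiomatization of \cite{QW} makes $\Phi$ a ring isomorphism $H^*_{\rho_n}(\Xn)\cong H^*_{\rm CR}(X^{(n)})$, proving the Conjecture for simply connected $X$. The main obstacle is the correction step, and specifically the proof of Theorem~\ref{KX2-universal}: the analysis of $[X^{[n,d]}]^{\rm vir}$ through the indexing of \cite{LiJ}, the product formula of \cite{Beh2} and co-section localization \cite{KL1, KL2, LL}. I expect the delicate points to be the vanishing in Theorem~\ref{KX2-universal}~(i) and the control of the degree of the universal polynomial $p$, since these are precisely what guarantee that the pairings carry no independent Euler-class dependence and hence reduce rigorously to the toric test family.
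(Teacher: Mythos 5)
Your proposal follows essentially the same route as the paper: the axiomatization of \cite{QW}, verification of (A1) and \eqref{Axiom21} via \cite{Gro, Nak, LL}, and reduction of \eqref{Axiom22} to the toric case of \cite{Che} through the universality statements of Theorems~\ref{A1ToAk} and~\ref{KX2-universal}. The only places where the paper is more careful than your sketch are the explicit $\sqrt{-1}^{\,\text{age}}$ twist in the linear identification (which is what actually reconciles the opposite signs in Theorems~\ref{commutator}~(i) and~\ref{orb:heis}~(i), rather than the $q=-1$ specialization) and the coefficient-isolation step of Lemma~\ref{VanishingUniPoly}, which uses many-point blow-ups of $\Pee^2$ with judiciously chosen classes $\alpha_{i,j}$ to separate the individual terms of the universal polynomial before applying the infinitely-many-values argument to $\langle K_X, K_X\rangle$.
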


Our idea to deal with Conjecture~\ref{conj_ruan} is to use the axiomatization approach
mentioned in the Introduction. On one hand, letting $A^{[n]} = H^*_{\rm CR}(X^{(n)})$ 
and $\W G_k(\alpha,n) = O_k(\alpha,n)$,
we see from Theorem~\ref{orb:heis} that both (A1) and (A2) in the Introduction hold for 
the rings $H^*_{\rm CR}(X^{(n)})$. On the other hand, by \cite{Gro, Nak}, the rings 
$A^{[n]} = H^*_{\rho_n}(\Xn)$ also satisfy (A1) with $A = A^{[1]} = H^*(X)$. 
To deal with Axiom (A2) for $H^*_{\rho_n}(\Xn)$, we now define the elements 
$\W G_k(\alpha,n) \in H^*_{\rho_n}(\Xn)$.

\begin{definition}
Let $k \ge 0$ and $\alpha \in H^*(X)$. Define $\W G_k(\alpha,n) \in H^*_{\rho_n}(\Xn)$ to be
\begin{eqnarray}     \label{WGkAlphaN}  
& &\sum_{0 \le j \le k} \sum_{\lambda
   \vdash (j+1) \atop \ell(\lambda)=k-j+1}
   {(-1)^{|\lambda|-1} \over \lambda^! \cdot |\lambda|!}
   \cdot {\bf 1}_{-(n-j-1)} \mathfrak a_{-\lambda}(\tau_*\alpha)\vac \nonumber  \\
&+&\sum_{0 \le j \le k} \sum_{\lambda \vdash (j+1) \atop \ell(\lambda)=k-j-1}
   {(-1)^{|\lambda|} \over \lambda^! \cdot |\lambda|!}
   \cdot {|\lambda| + \lambsq - 2 \over 24}
   \cdot {\bf 1}_{-(n-j-1)}
   \mathfrak a_{-\lambda}(\tau_*(e_X \alpha))\vac.     \quad
\end{eqnarray}
\end{definition}

\begin{remark}   \label{WGkAlphaNRmk}
By definition, $\W G_0(\alpha,n) = {\bf 1}_{-(n-1)} \mathfrak a_{-1}(\alpha) \vac
= G_0(\alpha,n)$. Also, 
$$
\W G_1(\alpha,n) = - {1 \over 2} \, {\bf 1}_{-(n-2)} \mathfrak a_{-2}(\alpha) \vac 
= G_1(\alpha,n).
$$
In general, we see from Theorem~\ref{g_{1_X, e}} that the class $\W G_k(\alpha,n)$ consists of
those terms in $G_k(\alpha,n)$ which do not contain the canonical divisor $K_X$.
\end{remark}

Note from the definition of the operator 
$\W {\mathfrak G}_k(\alpha)$ on $\oplus_n H^*_{\rho_n}(\Xn)$ that 
$$
    \langle \W {\mathfrak G}_k(\alpha)w_1, w_2 \rangle
= \langle \W G_k(\alpha,n) \cdot_{\rho_n} w_1, w_2 \rangle  
= \langle \W G_k(\alpha,n), w_1, w_2 \rangle_{\rho_n}(-1)
$$
for $w_1, w_2 \in H^*_{\rho_n}(\Xn)$. For convenience, we introduce the operator
$\W {\mathfrak G}_k(\alpha; q)$ by
\begin{eqnarray}  \label{OprtWGq}
\langle \W {\mathfrak G}_k(\alpha; q)w_1, w_2 \rangle
= \sum_{d \ge 0} \,\, \langle \W G_k(\alpha,n), w_1, w_2 \rangle_{0, d \beta_n} \,\, q^d.
\end{eqnarray} 

In the rest of this section, let the smooth projective surface $X$ be simply connected. 
By Remark~\ref{WGkAlphaNRmk}, $\W G_1(1_X,n) = G_1(1_X,n)$. Thus by \cite{LL}, 
\begin{eqnarray}   
\W {\mathfrak G}_1(1_X) 
 &=&-\displaystyle{\frac16} :\mathfrak a^3:_0 (\tau_{3*}1_X).
                     \label{OprtWG1X}
\end{eqnarray}
So (\ref{Axiom21}) holds for the rings $H^*_{\rho_n}(\Xn)$ as well. 
To verify Ruan's conjecture for $\rho_n$, it remains to show that 
(\ref{Axiom22}) holds for $H^*_{\rho_n}(\Xn)$. For the right-hand-side of 
(\ref{Axiom22}), we have the following which follows from (\ref{OprtWG1X}) and 
the same proof of Theorem~\ref{deriv_th} (i.e., Theorem~4.4 in \cite{LQW3}).

\begin{lemma} \label{derivLLlma}
Let $k \ge 0$, $m \in \Z$, and $\alpha\in H^*(X)$. Then, 
$\mathfrak a_m^{\{ k \}}(\alpha)$ is equal to
\begin{equation}
(-m)^k k!\, \left ( \sum_{\ell(\lambda) = k+1, |\lambda| = m}
   {1 \over \lambda^!} \mathfrak a_{\lambda}(\tau_{*}\alpha) -
   \sum_{\ell(\lambda) = k-1, |\lambda|=m}
   {\lambsq - 1 \over 24 \lambda^!}
   \mathfrak a_{\lambda}(\tau_{*}(e_X \alpha)) \right ).        \tag*{$\qed$}
\end{equation}
\end{lemma}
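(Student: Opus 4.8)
The plan is to reproduce the inductive computation behind Theorem~\ref{deriv_th} (Theorem~4.4 in \cite{LQW3}) essentially verbatim, the only change being that the boundary operator $\mathfrak d = \mathfrak G_1(1_X)$ is replaced throughout by $\W{\mathfrak G}_1(1_X)$. By \eqref{OprtWG1X} together with Theorem~\ref{commutator}~(ii) (taken at $\alpha = 1_X$, so $K_X \alpha = K_X$), the two operators differ by exactly the canonical-class summand:
\begin{equation*}
\mathfrak G_1(1_X) = \W{\mathfrak G}_1(1_X) - \sum_{n>0} \frac{n-1}{2} :\mathfrak a_n \mathfrak a_{-n}:(\tau_{2*}K_X),
\end{equation*}
so that $\W{\mathfrak G}_1(1_X) = -\frac16 :\mathfrak a^3:_0(\tau_{3*}1_X)$ is precisely $\mathfrak d$ with its $K_X$-part deleted. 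Since $\mathfrak a_m^{\{k\}}(\alpha)$ is by definition the $k$-fold iterated commutator $[\W{\mathfrak G}_1(1_X), [\,\cdots, [\W{\mathfrak G}_1(1_X), \mathfrak a_m(\alpha)]\cdots]]$, it is computed by the same procedure as $\mathfrak a_m^{(k)}(\alpha)$ but with every contribution of the $K_X$-summand suppressed.

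I would carry this out by induction on $k$. The base case $k=0$ is immediate: the formula collapses to the single term $\lambda = (m)$ in the first sum, giving $\mathfrak a_m(\alpha)$. For the inductive step I would substitute the formula for $\mathfrak a_m^{\{k-1\}}(\alpha)$ and evaluate $[\W{\mathfrak G}_1(1_X), \mathfrak a_m^{\{k-1\}}(\alpha)]$ using Lemma~\ref{tau_k_tau_{k-1}}. The crucial structural point is that $\W{\mathfrak G}_1(1_X)$ carries only the label $1_X$; since each application of Lemma~\ref{tau_k_tau_{k-1}} merely multiplies cohomology labels together, commuting with $\W{\mathfrak G}_1(1_X)$ can only produce labels of the form $\alpha$ and $e_X\alpha$ (the Euler class $e_X$ arising from diagonal restrictions of $\tau_{3*}1_X$) and never introduces a factor of $K_X$. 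Hence every $\mathfrak a_{\lambda}(\tau_*(\bullet))$ appearing in $\mathfrak a_m^{\{k\}}(\alpha)$ has $\bullet \in \{\alpha, e_X\alpha\}$, and the combinatorics governing the parts $\lambda$ and the coefficients $\frac{1}{\lambda^!}$, $\frac{\lambsq-1}{24\lambda^!}$ is identical to that in the proof of Theorem~\ref{deriv_th} once the $K_X$-terms are discarded, yielding exactly the displayed formula.

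The step I expect to require the most care is justifying that the $K_X$-free part of $\mathfrak a_m^{(k)}(\alpha)$ is computed by the all-$\W{\mathfrak G}_1(1_X)$ branch alone, with no cross-contamination from the $K_X$-summand. Expanding each of the $k$ brackets via $\mathfrak d = \W{\mathfrak G}_1(1_X) + (\text{$K_X$-part})$ produces $2^k$ branches; the branch that always selects $\W{\mathfrak G}_1(1_X)$ is exactly $\mathfrak a_m^{\{k\}}(\alpha)$, while every other branch contributes at least one explicit factor of $K_X$ and is therefore $K_X$-divisible. Because the coefficients produced by the entire computation are universal, i.e.\ independent of $X$, as established in \cite{LQW3}, one may treat $K_X$ as a formal indeterminate and group terms by $K_X$-degree unambiguously, so that the degree-zero part is forced to coincide with the displayed formula. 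Equivalently, one may verify the identity on surfaces with $K_X = 0$, where $\W{\mathfrak G}_1(1_X) = \mathfrak G_1(1_X)$ and Theorem~\ref{deriv_th} collapses to precisely its $K_X$-free part, and then invoke the universality of the coefficients to extend the identity to an arbitrary surface $X$.
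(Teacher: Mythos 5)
Your proposal is correct and takes the same route as the paper, which justifies the lemma in one line by saying it ``follows from (\ref{OprtWG1X}) and the same proof of Theorem~\ref{deriv_th}'' --- i.e.\ exactly your observation that $\W{\mathfrak G}_1(1_X)$ is $\mathfrak d$ with the $K_X$-summand deleted, and that rerunning the induction via Lemma~\ref{tau_k_tau_{k-1}} with the label $1_X$ alone can only produce labels $\alpha$ and $e_X\alpha$. (Your main inductive argument is the substantive one; the closing ``equivalently'' remark about checking on $K_X=0$ surfaces is dispensable, since the universality of the coefficients of $\mathfrak a_m^{\{k\}}(\alpha)$ is itself established by that direct computation.)
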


Comparing with Theorem~\ref{deriv_th}, we see that the operator $\mathfrak a_m^{\{ k \}}(\alpha)$ 
consists of those terms in $\mathfrak a_m^{(k)}(\alpha)$ which do not contain the canonical divisor $K_X$.

\begin{lemma} \label{Axiom22P2}
Let $X$ be a smooth toric surface. Then (\ref{Axiom22}) holds for $X^{[n]}$.
\end{lemma}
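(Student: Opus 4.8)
The plan is to obtain (\ref{Axiom22}) for the quantum corrected rings $H^*_{\rho_n}(\Xn)$ by transporting the orbifold identity of Theorem~\ref{orb:heis}~(iii) across the ring isomorphism supplied by \cite{Che}. For a smooth toric surface $X$, \cite{Che} proves Ruan's conjecture, so there is a graded ring isomorphism $\Phi\colon\bigoplus_n H^*_{\rho_n}(\Xn)\to\Fock$ carrying the quantum corrected product at $q=-1$ to the orbifold product. The relevant $\Phi$ is the age–grading map, sending a Heisenberg monomial $\mathfrak a_{-n_1}(\alpha_1)\cdots\mathfrak a_{-n_r}(\alpha_r)\vac$ to $(\sqrt{-1})^{\,n-r}\,\mathfrak p_{-n_1}(\alpha_1)\cdots\mathfrak p_{-n_r}(\alpha_r)\vac$ with $n=\sum_i n_i$; here $n-r=\sum_i(n_i-1)$ is exactly the age, and the factor $\sqrt{-1}$ is what reconciles the opposite signs of the Heisenberg commutators in Theorem~\ref{commutator}~(i) and Theorem~\ref{orb:heis}~(i).

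First I would record how $\Phi$ conjugates the Heisenberg generators. Writing $\Phi\,\mathfrak a_i(\gamma)\,\Phi^{-1}=c_i\,\mathfrak p_i(\gamma)$ and computing the action on monomials, the age bookkeeping gives $c_{-i}=(\sqrt{-1})^{\,i-1}$ for creation operators, while for annihilation operators the clash between the Hilbert sign $-i$ and the orbifold sign $+i$ in the commutation relation forces an extra sign, so that $c_i=-(\sqrt{-1})^{\,1-i}=(\sqrt{-1})^{\,3-i}$ for $i>0$. In particular $c_ic_{-i}=(\sqrt{-1})^2=-1$ (so the sign flip is built in), and $c_{-1}=1$, whence $\Phi\,\mathfrak a_{-1}(\beta)\,\Phi^{-1}=\mathfrak p_{-1}(\beta)$. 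The single computation that drives everything is the zero–mode identity
\[
c_ic_jc_l=\sqrt{-1}\qquad\text{whenever } i+j+l=0,
\]
checked by adding exponents in the two cases $(+,+,-)$ and $(+,-,-)$. Applied to (\ref{OprtWG1X}), it yields $\Phi\,\W{\mathfrak G}_1(1_X)\,\Phi^{-1}=\sqrt{-1}\,\mathfrak O_1(1_X)$, matching the orbifold boundary operator of Theorem~\ref{orb:heis}~(ii) up to the scalar $\sqrt{-1}$.

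Next I would match the two families of generators and their derivatives. Comparing (\ref{WGkAlphaN}) with (\ref{OkAlphaN}) term by term, each summand ${\bf 1}_{-(n-j-1)}\mathfrak a_{-\lambda}(\tau_*(\cdots))\vac$ has age $2j-k$ on the first sum and $2j+2-k$ on the second, and in both cases the sign in (\ref{WGkAlphaN}) combines with $(\sqrt{-1})^{\text{age}}$ into the single, level–independent constant $(\sqrt{-1})^k$; hence $\Phi(\W G_k(\alpha,n))=(\sqrt{-1})^k\,O_k(\alpha,n)$, and since $\Phi$ intertwines the products this promotes to $\Phi\,\W{\mathfrak G}_k(\alpha)\,\Phi^{-1}=(\sqrt{-1})^k\,\mathfrak O_k(\alpha)$. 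The same bookkeeping, comparing Lemma~\ref{derivLLlma} with Theorem~\ref{orb:heis}~(iii)—or inductively from $\Phi\,\W{\mathfrak G}_1(1_X)\,\Phi^{-1}=\sqrt{-1}\,\mathfrak O_1(1_X)$ and the recursive definition of the brackets—gives $\Phi\,\mathfrak a^{\{k\}}_{-1}(\alpha\beta)\,\Phi^{-1}=(\sqrt{-1})^k\,\mathfrak p^{\{k\}}_{-1}(\alpha\beta)$. (The term‑by‑term constants $c_\lambda=\prod c_{(\text{part})}$ are not individually constant, but for $|\lambda|=-1$ they collapse to $(\sqrt{-1})^{-k}$ on the first sum and $(\sqrt{-1})^{2-k}$ on the second, which reassemble to precisely the scalar above.)

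With these three intertwining relations in hand, (\ref{Axiom22}) follows by conjugating the commutator:
\begin{align*}
\Phi\,[\W{\mathfrak G}_k(\alpha),\mathfrak a_{-1}(\beta)]\,\Phi^{-1}
&=[(\sqrt{-1})^k\mathfrak O_k(\alpha),\,\mathfrak p_{-1}(\beta)]\\
&=(\sqrt{-1})^k\,\tfrac{1}{k!}\,\mathfrak p^{\{k\}}_{-1}(\alpha\beta)\\
&=\tfrac{1}{k!}\,\Phi\,\mathfrak a^{\{k\}}_{-1}(\alpha\beta)\,\Phi^{-1},
\end{align*}
where the middle step is Theorem~\ref{orb:heis}~(iii) and the scalar $(\sqrt{-1})^k$ passes through the bracket because it is level–independent; cancelling $\Phi$ gives (\ref{Axiom22}) for $\Xn$. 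The main obstacle is the geometric input from \cite{Che}: one must know that the age–grading map really does intertwine the quantum corrected product at $q=-1$ with the orbifold product for every toric $X$. Granting this, the proof is pure $\sqrt{-1}$‑bookkeeping, and its linchpin is the zero–mode identity $c_ic_jc_l=\sqrt{-1}$, which is exactly what makes the two boundary operators correspond with scalar $\sqrt{-1}$ and thereby forces the scalars on the two sides of (\ref{Axiom22}) to agree.
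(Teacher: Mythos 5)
Your proposal is correct and follows essentially the same route as the paper: both invoke Cheong's ring isomorphism in its explicit age-graded form, verify $\Psi_n(\sqrt{-1}^{\,k}O_k(\alpha,n))=\W G_k(\alpha,n)$ by comparing (\ref{OkAlphaN}) with (\ref{WGkAlphaN}), and then transport the commutator identity of Theorem~\ref{orb:heis}~(iii) across the isomorphism, with your operator-conjugation bookkeeping being a mild repackaging of the paper's element-by-element computation. The only presentational difference is that the paper first records the isomorphism equivariantly (via the toric structure and blow-ups at fixed points) before passing to ordinary cohomology, a step you implicitly absorb into the citation of \cite{Che}.
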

\begin{proof}
Recall that $\Pee^2$ and the Hirzebruch surfaces $\mathbb F_a$ are smooth toric surfaces,
and admit $\mathbb T = (\C^*)^2$-actions. By the Proposition in Subsection~2.5 of \cite{Ful},
$X$ is obtained from $\Pee^2$ or $\mathbb F_a$ by a succession of blow-ups at 
$\mathbb T$-fixed points.

Now let $\mathfrak a_m^{\mathbb T}(\alpha), H^{*, \mathbb T}_{\rho_n}(\Xn)$ and 
$\mathfrak p_m^{\mathbb T}(\alpha), H^{*, \mathbb T}_{\rm CR}(X^{(n)})$ be the equivariant 
versions of $\mathfrak a_m(\alpha), H^*_{\rho_n}(\Xn)$ and $\mathfrak p_m(\alpha), 
H^*_{\rm CR}(X^{(n)})$ respectively. By \cite{Che}, the equivariant version of 
Conjecture~\ref{conj_ruan} holds for $X$, i.e., there exists a ring isomorphism
$$
\Psi_n^{\mathbb T}: H^{*, \mathbb T}_{\rm CR}(X^{(n)}) \to H^{*, \mathbb T}_{\rho_n}(\Xn) 
$$
sending $\sqrt{-1}^{n_1 + \ldots + n_s -s} \mathfrak p_{-n_1}^{\mathbb T}(\alpha_1) \cdots 
\mathfrak p_{-n_s}^{\mathbb T}(\alpha_s)\vac$ to $\mathfrak a_{-n_1}^{\mathbb T}(\alpha_1)
\cdots \mathfrak a_{-n_s}^{\mathbb T}(\alpha_s)\vac$. Note that up to a scalar factor 
which depends only on the partition $\la = (n_1, \ldots, n_s)$ and the tuple 
$\overrightarrow \alpha = (\alpha_1, \ldots, \alpha_s)$, our notation 
$\mathfrak p_{-n_1}^{\mathbb T}(\alpha_1) \cdots \mathfrak p_{-n_s}^{\mathbb T}(\alpha_s)\vac$
coincides with the notation $\la \big ( \overrightarrow \alpha \big )$ used in \cite{Che}.
Also, our notation $\mathfrak a_m^{\mathbb T}(\alpha)$ coincides with the notation 
$\mathfrak p_m(\alpha)$ used in \cite{Che}. The integer $n_1 + \ldots + n_s -s$ is the age.
Passing the map $\Psi_n^{\mathbb T}$ to the ordinary cohomology, we obtain a ring isomorphism
$$
\Psi_n: H^{*}_{\rm CR}(X^{(n)}) \to H^{*}_{\rho_n}(\Xn) 
$$
which sends $\sqrt{-1}^{n_1 + \ldots + n_s -s} \mathfrak p_{-n_1}(\alpha_1) \cdots 
\mathfrak p_{-n_s}(\alpha_s)\vac$ to $\mathfrak a_{-n_1}(\alpha_1)
\cdots \mathfrak a_{-n_s}(\alpha_s)\vac$. Using (\ref{OkAlphaN}) and (\ref{WGkAlphaN}),
we see that $\Psi_n \big ( \sqrt{-1}^k O_k(\alpha, n) \big ) = \W G_k(\alpha, n)$. 

Next, let $A = \mathfrak a_{-n_1}(\alpha_1) \cdots \mathfrak a_{-n_s}(\alpha_s)\vac
\in H^*(X^{[n-1]})$. By definition, 
\begin{eqnarray*}    
   [\W {\mathfrak G}_k(\alpha), \mathfrak a_{-1}(\beta)]A
&=&\W {\mathfrak G}_k(\alpha)\mathfrak a_{-1}(\beta) A 
    - \mathfrak a_{-1}(\beta) \W {\mathfrak G}_k(\alpha) A  \\
&=&\W {\mathfrak G}_k(\alpha, n) \cdot \mathfrak a_{-1}(\beta) A 
    - \mathfrak a_{-1}(\beta) \big ( \W {\mathfrak G}_k(\alpha, n-1) \cdot A \big ).
\end{eqnarray*}
Put $P = \mathfrak p_{-n_1}(\alpha_1) \cdots \mathfrak p_{-n_s}(\alpha_s)\vac$
and $a = n_1 + \ldots + n_s -s$. Let $\bullet$ denote the orbifold ring product.
Then, $\Psi_n \Big ( \mathfrak p_{-1}(\beta) \big ( \sqrt{-1}^k O_k(\alpha, n-1) 
\bullet \sqrt{-1}^a P \big ) \Big )$ equals
$$
\mathfrak a_{-1}(\beta) \Psi_n \Big ( \sqrt{-1}^k O_k(\alpha, n-1) 
      \bullet \sqrt{-1}^a P \Big )  
= \mathfrak a_{-1}(\beta) \big ( \W {\mathfrak G}_k(\alpha, n-1) \cdot A \big ),
$$
and $\Psi_n \big ( \sqrt{-1}^a \mathfrak p_{-1}(\beta) P \big ) = \mathfrak a_{-1}(\beta) A$. 
So $[\W {\mathfrak G}_k(\alpha), \mathfrak a_{-1}(\beta)]A$ is equal to
$$
\Psi_n \big ( \sqrt{-1}^k O_k(\alpha, n) \bullet \sqrt{-1}^a \mathfrak p_{-1}(\beta) P \big ) 
- \Psi_n \big (\mathfrak p_{-1}(\beta) \big ( \sqrt{-1}^k O_k(\alpha, n-1) 
      \bullet \sqrt{-1}^a P \big ) \big ).
$$
Since $O_k(\alpha, n) \bullet \mathfrak p_{-1}(\beta) P 
= \mathfrak O_k(\alpha)\mathfrak p_{-1}(\beta) P$, we obtain
\begin{eqnarray*}  
[\W {\mathfrak G}_k(\alpha), \mathfrak a_{-1}(\beta)]A   
= \sqrt{-1}^{k+a} \cdot \Psi_n \big ( [\mathfrak O_k(\alpha), \mathfrak p_{-1}(\beta)] P \big ).
\end{eqnarray*}
By Theorem~\ref{orb:heis}~(iii), we conclude that 
$[\W {\mathfrak G}_k(\alpha), \mathfrak a_{-1}(\beta)]A$ is equal to
$$
\sqrt{-1}^{k+a} \cdot (-1)^k \cdot \Psi_n \left ( \sum_{\ell(\lambda) = k+1 \atop |\lambda| = -1}
   {1 \over \lambda^!} \mathfrak p_{\lambda}(\tau_{*}(\alpha \beta))P 
   + \sum_{\ell(\lambda) = k-1 \atop |\lambda|= -1} {\lambsq - 1 \over 24 \lambda^!}
   \mathfrak p_{\lambda}(\tau_{*}(e_X \alpha \beta))P   \right ).
$$
Finally, by the definition of $\Psi_n$ and Lemma~\ref{derivLLlma}, 
$[\W {\mathfrak G}_k(\alpha), \mathfrak a_{-1}(\beta)]A$ is equal to
$$
\sum_{\ell(\lambda) = k+1 \atop |\lambda| = -1}
   {1 \over \lambda^!} \mathfrak a_{\lambda}(\tau_{*}(\alpha \beta))A - 
   \sum_{\ell(\lambda) = k-1 \atop |\lambda|= -1} {\lambsq - 1 \over 24 \lambda^!}
   \mathfrak a_{\lambda}(\tau_{*}(e_X \alpha \beta))A  
= {1 \over k!} \, \mathfrak a^{\{ k \}}_{-1}(\alpha \beta)A.
$$
Therefore, $[\W {\mathfrak G}_k(\alpha), \mathfrak a_{-1}(\beta)] =
\displaystyle{1/k!} \cdot \mathfrak a^{\{ k \}}_{-1}(\alpha \beta)$. 
Hence (\ref{Axiom22}) holds.
\end{proof}

\section{\bf Extremal Gromov-Witten invariants of Hilbert schemes}
\label{sect_Extremal}

In this section, we study the structure of extremal Gromov-Witten 
invariants of $\Xn$ for a smooth projective surface $X$.
We will use the ideas and approaches in \cite{LiJ}, and
adopt many presentations, notations and results directly from \cite{LiJ}. In addition,
the product formula in \cite{Beh2} and the co-section localization in \cite{KL1, KL2, LL}
for Gromov-Witten theory will play important roles.
For convenience, we assume that $X$ is simply connected. 

%
%
%
%
%
%
%
\subsection{Hilbert schemes of $\alpha$-points and partial equivalence}
\def\Yn{Y^{[n]}_T}
\label{subsect_AlphaPts}
\def\hc{\mathfrak{hc}}
\def\Lam{\Lambda}
\def\fa{\mathfrak a}
\def\fd{\mathfrak d}
\def\cM{{\mathcal M}}
\def\be{{\beta,\eta}}
\def\ad{{\alpha,\delta}}
\def\EE{{\mathbb E}}
\def\LL{{\mathbb L}}
\def\cC{\mathcal C}
\def\sH{\mathscr H}
\def\bF{{\mathbf F}}
\def\sF{{\mathscr F}}
\def\bC{\mathbf C}
\def\sC{\mathscr C}
\def\AD{{\Lambda,d}}
\def\cZ{\mathcal Z}
\def\barM{\overline {\mathfrak M}}

{In this subsection, we introduce some new spaces related to Hilbert schemes to provide a platform where, in the subsequent subsections, we can construct cycles $\mathfrak Z_{n, d} \in H_*((\Xn)^3)$ derived from various virtual cycles of moduli spaces of stable maps to these new spaces. }

Let $Y \to T$ be a smooth family of projective surfaces over a smooth, projective base $T$. 
The relative Hilbert scheme of 
length-$n$ $0$-dimensional closed subschemes is denoted by $Y^{[n]}_T$. It is over $T$
and for any $t\in T$, $Y^{[n]}_T \times_T \{t\}=(Y_t)^{[n]}$.
Define its relative fiber product $Y^n_T =Y\times_T\cdots\times_T Y$ ($n$ times), 
and its relative symmetric product $Y^{(n)}_T =Y^n_T/S_n$.

Let $\Lambda$ be a finite set with $|\Lambda| = n$. We define $Y^{[\Lambda]}_T = \Yn$,
$Y^{(\Lambda)}_T = Y^{(n)}_T$, and for accounting purpose, denote
$$
Y^\Lambda_T = \{(x_a)_{a \in \Lambda}|\, x_a \in Y_t\  \text{for some $t\in T$} \}.
$$
Using the Hilbert-Chow morphism $\rho_\Lambda := \rho_n: Y^{[\Lambda]}_T \to 
Y^{(\Lambda)}_T$, we define the Hilbert scheme of $\Lambda$-points to be 
\begin{eqnarray}  \label{Yalpha}
Y^{[\![\Lambda]\!]}_T = Y^{[\Lambda]}_T \times_{Y_T^{(\Lam)}} Y^\Lambda_T.
\end{eqnarray}
{These spaces $Y^{[\![\Lambda]\!]}_T $ can be thought of as Hilbert schemes of ordered points.}

Let $\mathcal P_\Lambda$ be the set of partitions or equivalence relations on $\Lambda$.
When $\alpha \in \mathcal P_\Lambda$ consists of ${l}$ equivalence classes 
$\alpha_1, \ldots, \alpha_{l}$, we write $\alpha = (\alpha_1, \ldots, \alpha_{l})$. For such $\alpha$,
we form the {\it relative Hilbert scheme of $\alpha$-points} as follows:
\begin{eqnarray}  \label{alpha-pt}
Y^{(\alpha)}_T = \prod_{i=1}^{l} \! Y^{(\alpha_i)}_T, 
\quad Y^{[\alpha]}_T = \prod_{i=1}^{l} \! Y^{[\alpha_i]}_T,
\quad Y^{[\![\alpha]\!]}_T = \prod_{i=1}^{l} \! Y^{[\![\alpha_i]\!]}_T,
\end{eqnarray}
where the products are taken relative to $T$.
Note that $Y^{[\![\alpha]\!]}_T = Y^{[\alpha]}_T \times_{Y^{(\alpha)}_T} Y^\Lambda_T$.
The ``indexing'' morphism is defined to be the second projection
\beq\label{i}
\ind: Y^{[\![\alpha]\!]}_T\lra Y_T^\Lam.
\eeq


The spaces $Y^{[\![\alpha]\!]}_T$ and $Y^{[\![\beta]\!]}_T$ are birational. 
To make this precise, we first fix our convention on a partial ordering on $\cP_\Lambda$.
We agree
$$
``\alpha \ge \beta" \Longleftrightarrow ``a \sim_\beta b \Rightarrow a \sim_\alpha b".
$$
Namely, $\alpha \ge \beta$ if $\beta$ is finer than $\alpha$. 
When $\beta = (\beta_1, \ldots, \beta_r)$, we put 
$$
\alpha \wedge \beta = (\alpha_1 \cap \beta_1, \ldots, \alpha_{l} \cap \beta_r)
$$ 
which is the largest element among all that are less than or 
equal to both $\alpha$ and $\beta$. 
Note that $\mathcal P_\Lambda$ contains a maximal and a minimal element.
The maximal element is $\Lambda$
consisting of a single equivalence class $\Lambda$; the minimal element is 
$1^\Lambda$ whose equivalence classes are single element sets. 
 
For $\alpha > \beta \in \mathcal P_\Lambda$, define 
$$
\Xi^{\alpha}_\beta=\{x \in Y^\Lambda_T\mid \exists\ a, b \in \Lambda\ 
\text{so that}\ \ x_a = x_b,\ a \sim_\alpha b, \ a \not \sim_\beta b \}.
$$
For $\alpha \ne \beta \in \mathcal P_\Lambda$, define $\Xi^{\alpha}_{\beta} = 
\Xi^{\alpha}_{\alpha \wedge \beta} \cup \Xi^{\beta}_{\alpha \wedge \beta}$.
The discrepancy between $Y^{[\![\alpha]\!]}_T$ and $Y^{[\![\beta]\!]}_T$ 
(in  $Y^{[\![\alpha]\!]}_T$) and its complement are defined to be
\beq\label{Complmt}
\Xi_{\beta}^{[\![\alpha]\!]} 
= Y^{[\![\alpha]\!]}_T \times_{Y^\Lambda_T} \Xi^\alpha_{\beta},
\and 
Y_{\beta}^{[\![\alpha]\!]} = Y^{[\![\alpha]\!]}_T - 
\Xi_{\beta}^{[\![\alpha]\!]}. 
\eeq
More precisely, by Lemma~1.2 in \cite{LiJ}, there exists a functorial open embedding
$\zeta_\alpha^\beta: Y_{\beta}^{[\![\alpha]\!]} \to Y_T^{[\![\beta]\!]}$
induced by the universal property of the respective moduli spaces such that 
$\image(\zeta^\beta_\alpha) = Y^{[\![\beta]\!]}_\alpha$.
Thus we obtain an isomorphism (equivalence) 
$\zeta_\alpha^\beta: Y_{\beta}^{[\![\alpha]\!]} \mapright{\cong}
Y_\alpha^{[\![\beta]\!]}$. We define
\begin{eqnarray}  \label{YLeAlpha}
Y^{[\![\le \alpha]\!]}_T = \Bigl( \coprod_{\beta \le \alpha} Y^{[\![\beta]\!]}_T \Bigr) /\sim,
\end{eqnarray}
where the equivalence is by identifying $Y_{\gamma}^{[\![\beta]\!]} \subset Y^{[\![\beta]\!]}_T$
and $Y_{\beta}^{[\![\gamma]\!]} \subset Y^{[\![\gamma]\!]}_T$ via $\zeta_\beta^\gamma$ for 
all $\beta, \gamma \le \alpha$. Note that $Y^{[\![\le \alpha]\!]}_T$ is non-separated 
(except when $\alpha = 1^\Lambda$), and contains the spaces $Y^{[\![\le \beta]\!]}_T$, 
$\beta \le \alpha$, as open subschemes.

{ Even though the non-separated space $Y^{[\![\le \beta]\!]}_T$ comes into the picture, in later subsections, we only perform standard algebraic topological operations on these non-Hausdorff spaces such as pull-backs of cohomology classes and cap products. These operations  are allowed on any topological spaces (see \cite{GH, Iv, Sp}).}

\subsection{Stable maps to Hilbert schemes of ordered points}
\def\Ynd{Y^{[n,d]}_T}

We incorporate stable maps into the above constructions. This is motivated by
the standard decompositions of stable morphisms introduced in \cite{LL}.
For $d\ge 0$, we let 
$$
\Ynd :=\barM_{0, 3}(\Yn, d \beta_n)
$$
be the relative moduli space of $3$-pointed genus-$0$ stable maps to $\Yn$ of class $d\beta_n$.



We study the standard decomposition of $[u,C]\in Y^{[n,d]}_T$.
Given $[u,C]\in Y^{[n,d]}_T$, composed with the Hilbert-Chow morphism $\rho_n$, we obtain
$\rho_n \circ  u: C\to Y_T^{(n)}$. Since the fundamental class of $u(C)$ is a multiple of 
the null class $\beta_n$, and $C$ is connected, $\rho_n \circ u$ is a constant map.
We express $\rho_n \circ u(C)=\sum_{i=1}^l n_i x_i$,
where $n_i\in \NN_+$ such that $\sum n_i=n$, and $x_i$ are distinct.
With such data, for $p\in C$, we can decompose
$u(p)=z_1(p)\cup\cdots\cup z_l(p)$
such that $z_i(p)\in Y_T^{[n_i]}$, and $\rho_{n_i}(z_i(p))=n_i x_i$.
Because $x_i$ are distinct, such decomposition is unique. 
We define
\beq\label{ui}
u_i: C\to Y^{[n_i]}_T, \quad u_i(p)=z_i(p).
\eeq
Because of the uniqueness of the decomposition, one checks that $u_i$ are morphisms; 
since $u\lsta[C]=d\beta_n$, we have $u_{i\ast}[C]=d_i\beta_{n_i}$ for some $d_i\ge 0$ 
such that $\sum d_i=d$. Using such data, we can define the Hilbert-Chow map from 
$Y_T^{[n,d]}$ to the weighted symmetric product of $Y$.

For the pair $(n,d)$, we define the weighted symmetric product of $Y$ to be
$$Y_T^{(n,d)}=\bigl\{ \sum_{i=1}^ld_i[n_i x_i]\big |  1\le l\le n,
\ x_1,\cdots, x_l\in Y_t\ \text{distinct, for a $t\in T$}\bigr\}.
$$
Here the formal summation $\sum d_i[n_i x_i]$ is subject to the constraints 
$d_i\in \NN$, $\sum d_i=d$, $n_i\in \NN_+$ and $\sum n_i=n$. Also, 
$[n_ix_i]$ represents the multiplicity-$n_i$ $0$-cycle supported at $x_i$, 
and $d_i$ is its weight. Thus $d_i[n_ix_i]\ne [d_in_ix_i]$ and $0[x_i]$ is non-trivial.
Endow $Y_T^{(n,d)}$ with the obvious topology so that it is a stratified space 
such that the forgetful map $Y_T^{(n,d)}\to Y_T^{(n)}$ is 
continuous, proper and having finite fibers.

We define the Hilbert-Chow map:
\beq\label{HC} 
\hc: Y_T^{[n,d]}\lra Y^{(n,d)}_T,\quad [u]\mapsto \sum_{i=1}^l d_i [n_ix_i],
\eeq
where $(d_i, n_i, x_i)$ are data associated to $(u_i)$ from \eqref{ui}.
Define $\hc_1: Y_T^{[n,d]}\to Y^{(n)}_T$ to be the composite of $\hc$ with the forgetful
map $Y^{(n,d)}_T\to Y_T^{(n)}$. For a finite set $\Lambda$ (of order $n$), define
\begin{eqnarray}  \label{MYn}
Y^{[\![ \Lambda,d]\!]}_T = Y^{[n,d]}_T \times_{Y^{(n)}_T} Y^{\Lambda}_T.
\end{eqnarray}

\begin{definition}   \label{wted-ptn}
We call $(\alpha, \delta)$ {\it a weighted partition} of $\Lambda$ if $\alpha = 
(\alpha_1, \ldots, \alpha_{l}) \in \mathcal P_\Lambda$ and $\delta = (\delta_1, \ldots, 
\delta_{l})$, $\delta_i \ge 0$ for every $i$. We define $\sum_i \delta_i$ to be 
{\it the total weight} of $(\alpha, \delta)$. For $(\Lambda, d)$, we denote by 
$\mathcal P_{\Lambda, d}$ the set of all weighted partitions of $\Lambda$ with
total weight $d$. We say that $(\alpha, \delta) \ge (\beta, \eta)$ if 
$\alpha \ge \beta$ and $\sum_{\beta_i \subset \alpha_j} \eta_i = \delta_j$ for every $j$.
\end{definition}

For $(\alpha, \delta) \in \mathcal P_{\Lambda, d}$, we form the relative moduli space of $3$-pointed
genus-$0$ degree-$\delta$ $\alpha$-stable morphisms to the Hilbert scheme of points:
\beq\label{Y-alpha}
Y^{[\![\alpha,\delta]\!]}_T =Y^{[\![\alpha_1,\delta_1]\!]}_T \times_T\cdots\times_T 
Y^{[\![\alpha_{l},\delta_{l}]\!]}_T.
\eeq
To simplify notations, the composition of $Y^{[\![ \Lambda,d]\!]}_T \to Y^{[\Lambda,d]}_T$ 
and $\hc_1: Y^{[\Lambda,d]}_T \to Y_T^{(n)}$ will again be denoted by $\hc_1$.
\subsection{Birationality}
{The key result Lemma \ref{lem0.3} provides the comparison between  $\Yad_T$ and $\Ybe_T$, which will be used in later subsections for the comparison of normal cones for  $\Yad_T$ and $\Ybe_T$.}

For $(\alpha,\delta)>(\beta,\eta)$,\footnote{Without further mentioning $\alpha = 
(\alpha_1, \ldots, \alpha_{l})$ and $\beta=(\beta_1,\cdots,\beta_r)$.} 
the pair $\Yad_T$ and $\Ybe_T$ are ``birational''.
To make this more precise, we introduce some notations. Given an element 
$$\xi=([u,C], (y_a))\in Y_T^{[\![\Lambda,d]\!]} 
=Y_T^{[n,d]}\times_{Y^{(n)}_T}Y^\Lam_T,
$$
where $\hc([u])=\sum_{i=1}^l d_i[n_ix_i]$ and such that $\sum n_i x_i=\sum_a y_a$ 
(as $0$-cycles in $Y^{(n)}_T$), we define a pair $(\fa(\xi),\fd(\xi))\in \cP_{\Lam,d}$ by
$$ \fa(\xi)=(\fa_1,\cdots,\fa_l), \ \fa_i=\{a\in\Lam\mid y_a=x_i\}; \ \
\fd(\xi)=(d_1,\cdots, d_l).
$$

\begin{definition}
For $(\beta,\eta)\in\cP_{\Lam,d}$, we define
\begin{eqnarray*}
Y^{[\![\Lam,d]\!]}_{(\beta,\eta)} &=& \bigl\{ \xi\in Y^{[\![\Lam,d]\!]}_T\mid 
   (\fa(\xi),\fd(\xi))\le (\beta,\eta)\bigr\},  \\
Y^{[\![\be]\!]}_{(\Lam,d)} &=& \{(\xi_1,\cdots,\xi_r)\in Y_T^{[\![\be]\!]}\mid \hc_1(\xi_1),
\cdots, \hc_1(\xi_r)\ \text{mutually disjoint}\}.
\end{eqnarray*}
For $(\beta,\eta)\le (\alpha,\delta)$, we define (as fiber products over $T$)
$$
\Yad_{(\beta,\eta)}
=\prod_{i=1}^l Y^{[\![\alpha_i,\delta_i]\!]}_{(\beta\cap \alpha_i,\eta\cap \delta_i)}
\and
Y^{[\![\be]\!]}_{(\ad)}
=\prod_{i=1}^l Y^{[\![\beta\cap \alpha_i,\eta\cap \delta_i]\!]}_{(\alpha_i,\delta_i)}.
$$
\end{definition}

\begin{lemma}\label{lem0.3}
For $(\ad) >(\beta,\eta)$, we have a natural, proper surjective morphism 
\beq\label{Lma1.1-LiJ-Y}\zeta^{\beta, \eta}_{\alpha, \delta} : 
 \Yad_{(\beta,\eta)} \longrightarrow \Ybe_{(\ad)}.
\eeq
\end{lemma}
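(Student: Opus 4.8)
The plan is to reduce to the case of a single block and then realize $\zeta^{\beta,\eta}_{\alpha,\delta}$ as the standard decomposition of stable maps. By definition the two sides factor as
$$
\Yad_{(\beta,\eta)} = \prod_{i=1}^l Y^{[\![\alpha_i,\delta_i]\!]}_{(\beta\cap\alpha_i,\,\eta\cap\delta_i)},
\qquad
\Ybe_{(\ad)} = \prod_{i=1}^l Y^{[\![\beta\cap\alpha_i,\,\eta\cap\delta_i]\!]}_{(\alpha_i,\delta_i)},
$$
both fiber products over $T$. A finite product over $T$ of proper surjective morphisms is again proper and surjective, so it is enough to construct, for each single block $\alpha_i$, a proper surjective morphism from the $i$-th factor on the left to the $i$-th factor on the right. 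Hence I may assume $\alpha=\Lambda$ is the one-block partition, $\delta=(d)$, and $\beta=(\beta_1,\dots,\beta_r)<\Lambda$; the target is then the mutually-disjoint locus $Y^{[\![\beta,\eta]\!]}_{(\Lambda,d)}\subset Y^{[\![\beta,\eta]\!]}_T$.

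\emph{Construction of the morphism.} On the source $Y^{[\![\Lambda,d]\!]}_{(\beta,\eta)}$ the inequality $(\fa(\xi),\fd(\xi))\le(\beta,\eta)$ says precisely that the supports attached to distinct blocks $\beta_1,\dots,\beta_r$ are mutually disjoint and that the degrees $\fd(\xi)$ sum to $\eta_j$ over each $\beta_j$. Given $\xi=([u,C],(y_a))$, I invoke the standard decomposition of \cite{LL}: writing $\hc([u])=\sum_i d_i[n_i x_i]$ one has $u=(u_1,\dots)$ with $u_i\colon C\to Y^{[n_i]}_T$. Regrouping the $u_i$ according to the blocks $\beta_j$ (legitimate because the corresponding supports are disjoint) yields a morphism $u^{(j)}\colon C\to Y^{[|\beta_j|]}_T$ of class $\eta_j\beta_{|\beta_j|}$, whose stable reduction $(u^{(j)})\st$ is a $3$-pointed genus-$0$ stable map. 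Carrying along the ordered-point data $(y_a)$ grouped by $\beta_j$, and using the non-weighted birational identification of Lemma~1.2 of \cite{LiJ} to match the underlying Hilbert/point data on the disjoint locus, this assigns to $\xi$ a tuple $(\xi_1,\dots,\xi_r)\in Y^{[\![\beta,\eta]\!]}_T$ with $\hc_1(\xi_j)$ mutually disjoint, i.e. a point of $Y^{[\![\beta,\eta]\!]}_{(\Lambda,d)}$. That this assignment is a morphism of schemes is a functoriality statement: the decomposition $u\mapsto(u_i)$, the stabilization contraction, and the formation of the $\alpha$-point spaces as fiber products are all governed by universal properties and therefore operate in families over an arbitrary base.

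\emph{Properness and surjectivity.} Both spaces are locally closed (the block degrees are locally constant on the disjoint locus, so each is open in a union of degree strata) in separated finite-type $T$-schemes, whence $\zeta^{\beta,\eta}_{\alpha,\delta}$ is separated and of finite type. For properness I check the valuative criterion over a DVR: a family whose generic fiber lies in the source and whose special tuple lies in the target admits a limiting \emph{connected} stable map by the properness of $\barM_{0,3}(Y^{[n]}_T,d\beta_n)$ and of the stabilization contraction, and this limit stays in the disjoint locus; thus the morphism is universally closed. Since a proper morphism has closed image, surjectivity reduces to hitting a dense subset of each component of $Y^{[\![\beta,\eta]\!]}_{(\Lambda,d)}$, which one does by reversing the standard decomposition: from disjoint pieces $(\xi_j)$ one glues the domains into a connected genus-$0$ curve carrying the three marked points and extends the maps by the disjoint union of the $u^{(j)}$, obtaining a preimage in $Y^{[\![\Lambda,d]\!]}_{(\beta,\eta)}$.

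\emph{Main obstacle.} The crux is the surjectivity: reconstructing a \emph{connected} $3$-pointed genus-$0$ stable map from $r$ separate $3$-pointed pieces requires careful bookkeeping of the three marked points and of the connecting rational components, together with the verification that the decomposition of the reconstructed map returns the given tuple. This is precisely where I would transcribe the argument of \cite{LiJ} (the analog of its Lemma~1.1) to the present relative, weighted, and non-separated setting.
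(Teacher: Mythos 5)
Your proposal is correct and follows essentially the same route as the paper's proof: reduce to the single-block case $(\alpha,\delta)=(\Lambda,d)$, define $\zeta$ by regrouping the standard decomposition of \cite{LL} along the blocks of $\beta$ and stabilizing, prove surjectivity by gluing the $r$ pieces over a common $3$-pointed genus-$0$ domain, and prove properness by the valuative criterion using $T$-properness of $Y^{[\![\Lambda,d]\!]}_T$ together with separatedness of $Y^{(n)}_T$ to see the limit stays in the disjoint locus. The reconstruction step you flag as the main obstacle is carried out in the paper exactly as you sketch it.
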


\begin{proof}
By definition, we only need to prove the case $(\alpha,\delta)=(\Lam,d)$. 
Let $\xi=([u,C,p_i],(y_a))\in Y^{[\![\Lam,d]\!]}_{(\beta,\eta)}$, 
with $\hc([u])=\sum_{i=1}^l d_i[n_ix_i]$.
Let $u_i: C\to Y_T^{[n_i]}$ be as in \eqref{ui}. 
Denote $\fa(\xi)=(\fa_1,\cdots,\fa_l)$ and $\fd(\xi)=(d_1,\cdots, d_l)$.
Since $\xi\in Y^{[\![\Lam,d]\!]}_{(\beta,\eta)}$, we have $(\fa(\xi),\fd(\xi))
\le (\beta,\eta)$. Thus we can form
$$u_{\beta_i}: C\lra Y^{[\eta_i]}_T;\quad u_{\beta_i}(p)
=\cup_{\fa_j\sub \beta_i} u_j(p)\in Y_T^{[\eta_i]}.
$$
Because the degree of $u_j$ is $d_j$, and $(\fa(\xi), \fd(\xi))\le (\be)$, the degree of 
$u_{\beta_i}$ is $\eta_i$. For $1 \le i \le r$, let
$u_{\beta_i}\st: C_{\beta_i}\lra Y_T^{[\eta_i]}$
be the stabilization of $[u_{\beta_i}, C, p_i]$. Then 
$(u_{\beta_1}\st,\cdots,u_{\beta_{r}}\st)\in \Ybe_T$. 
It is routine to check that
$$\zeta_{\Lam,d}^{\beta,\eta}:  Y^{[\![\Lam,d]\!]}_{(\beta,\eta)} \longrightarrow \Ybe_T;
\quad ([u,C],(y_a)_\Lam)\mapsto (u_{\beta_1}\st,\cdots,u_{\beta_{r}}\st),
$$
defines a morphism. By the definition of $Y^{[\![\be]\!]}_{(\Lam,d)}$,
we have $\im(\zeta^{\be}_{\Lam,d}) \sub Y^{[\![\be]\!]}_{(\Lam,d)}$.

We now show that $\im(\zeta^{\be}_{\Lam,d}) = Y^{[\![\be]\!]}_{(\Lam,d)}$. Note that 
a closed point in $Y^{[\![\be]\!]}_{(\Lam,d)}$ is an $r$-tuple $(\xi_1,\cdots,\xi_r)$ with
$\xi_i\in Y^{[\![\beta_i,\eta_i]\!]}_T$ such that $\hc_1(\xi_1),\cdots\hc_1(\xi_r)$ are
mutually disjoint. Let $\xi_i=[u_i,C_i,p_{i,j}]$. 
Since $[C_i,p_{i,j}]$ are 3-pointed genus-$0$ nodal curves,
we can find a 3-pointed genus-$0$ $[C,p_j]$ and contraction morphisms $\phi_i: C\to C_i$ so
that $\phi_i(p_j)=p_{i,j}$, $j=1,2,3$. Since $\hc_1(\xi_1),\cdots\hc_1(\xi_r)$ are mutually disjoint, 
the assignment $p \mapsto u(p)=u_1\circ\phi_1(p)\cup\cdots\cup u_r\circ\phi_r(p)\in Y^{[n]}_T$
defines a morphism $u: C\to Y^{[n]}_T$. We let
$\xi=[u,C,p_j]\st$ be its stabilization. Then $\xi\in Y^{[\![\Lam,d]\!]}_{(\be)}$,
and $\zeta^{\be}_{\Lam,d}(\xi)=(\xi_1,\cdots,\xi_r)$. 
Hence $\im(\zeta^{\be}_{\Lam,d}) = Y^{[\![\be]\!]}_{(\Lam,d)}$.

We check that $\zeta^{\be}_{\Lam,d}$ is proper. Let $s_0\in S$ be a pointed smooth curve
over $T$; let $S\sta=S-s_0$. Suppose $\xi\sta$ is an $S\sta$-family in $Y^{[\![\Lam,d]\!]}_{(\beta,\eta)}$
so that $\zeta^{\be}_{\Lam,d}(\xi\sta)=(\xi_1\sta,\cdots,\xi_r\sta)$ extends to
an $S$-family $(\xi_1,\cdots,\xi_r)$, we need to show that, possibly after a base change, 
$\xi\sta$ extends to $\xi$ so that $\zeta^{\be}_{\AD}(\xi)=(\xi_1,\cdots,\xi_r)$.

Since $Y^{[\![\Lam,d]\!]}_T$ is $T$-proper, 
possibly after a base change, we can extend $\xi\sta$ to an $S$-family
$\xi$ in $Y^{[\![\Lam,d]\!]}_T$. 
Let $\xi$ be given by $([u,C,p_j], (y_a))$, where each term implicitly is an $S$-family.
Let $y_{\beta_i}=\sum_{a\in \beta_i} y_a: S\to Y^{(\beta_i)}_T$. 
By definition, $\xi(s_0)=\xi\times_S \{s_0\}\in Y^{[\![\Lam,d]\!]}_{(\be)}$
if $y_{\beta_1}(s_0),\cdots,y_{\beta_r}(s_0)$ are mutually disjoint. Since 
$\zeta^{\be}_{\Lam,d}(\xi\sta)=(\xi_1\sta,\cdots,\xi_r\sta)$, we have
$y_{\beta_i}|_{S\sta}=\hc_1\circ\xi_i\sta$.  Since $Y^{(n)}_T$ is separated, we have
$y_{\beta_i}(s_0)=\hc_1(\xi_i(s_0))$.
Further, since $(\xi_1(s_0),\cdots,\xi_r(s_0))\in Y^{[\![\be]\!]}_{(\Lam,d)}$, 
$\hc_1(\xi_1(s_0)),\cdots,\hc_1(\xi_r(s_0))$ 
are mutually disjoint. This proves that $\xi(s_0)\in Y^{[\![\Lam,d]\!]}_{(\be)}$.
Then $\xi$ lies in $Y^{[\![\AD]\!]}_{(\be)}$, and by the separatedness of 
$Y^{[\![\Lam,d]\!]}_T$, we have $\zeta^{\be}_{\Lam,d}(\xi)=(\xi_1,\cdots,\xi_r)$. 
This proves the properness.
\end{proof}

The morphism $\zeta^{\beta, \eta}_{\alpha, \delta}$ fits into a fiber diagram that will
be crucial for our virtual cycle comparison. As we only need the case where $(\beta,\eta) < (\ad)$ is
derived by a single splitting, meaning that $r=l+1$, we will state it in the case
$(\alpha,\delta)=(\Lam, d)$, and $(\beta,\eta)=((\beta_1,\beta_2),(d_1,d_2))$.

We first introduce necessary notation, following Behrend \cite{Beh2}. 
Given a semi-group $G=\NN$ or $\NN^{2}$, we call
a triple $(C,p_j,\tau)$ a pointed $G$-weighted nodal curve if $(C,p_i)$ is a pointed
nodal curve and $\tau$ is a map from the set of irreducible components of $C$ to $G$.
We say $(C,p_j,\tau)$ is stable if for any $C_0\cong \Po\sub C$, either $\tau([C_0])\ne 0$ or
$C_0$ contains at least three special points of $(C,p_j)$. (A special point of $(C,p_j)$ is either a node
or a marked point.) 

We denote by $\cM_{0,3}(d)$ the Artin stack of total weights $d$ $\NN$-weighted
$3$-pointed genus-$0$ nodal curves.
We denote by $\fD(d_1,d_2)$ the Artin stack of the data 
$$\bigl\{ (C,p_j,\tau)\to (C_1,p_{1,j},\tau_1),(C,p_j,\tau)\to (C_2,p_{2,j},\tau_2)\bigr\}
$$
so that $(C,p_j,\tau)$ is a stable total weight $(d_1,d_2)$ $\NN^{2}$-weighted
$3$-pointed genus-$0$ nodal curve, $(C_i,p_{i,j},\tau_i)\in \cM_{0,3}(d_i)$, and the two arrows
induce isomorphisms $(C,p_j,\pr_i\circ\tau)\st\cong (C_i,p_{i,j},\tau_i)$, where
$\pr_i: \NN^{2}\to\NN$ is the $i$-th projection. (See the diagram (3) in \cite{Beh2} for details.)

\begin{lemma}         \label{0.4}
Let $\beta=(\beta_1,\beta_2)$ be a partition of length two,
and let $\eta=(d_1,d_2)$ with $d=d_1+d_2$.
We have a Cartesian diagram
$$\begin{CD}
Y^{[\![\Lam,d]\!]}_{(\be)} @>>> Y^{[\![\beta, \eta]\!]}_{(\lam,d)}\\
@VVV @VVV\\
\fD(d_1,d_2) @>{(\epsilon_1,\epsilon_2)}>> \cM_{0,3}(d_1)\times\cM_{0,3}(d_2)
\end{CD}
$$
Further, $(\epsilon_1,\epsilon_2)$ is proper and birational.
\end{lemma}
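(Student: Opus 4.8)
The plan is to prove Lemma~\ref{0.4} by exhibiting the claimed square as a fiber product and then verifying properness and birationality of the bottom arrow separately, since the Cartesian property will transport these properties to the top arrow by base change. First I would pin down the four morphisms. The right vertical map $Y^{[\![\beta,\eta]\!]}_{(\Lam,d)} \to \cM_{0,3}(d_1) \times \cM_{0,3}(d_2)$ sends a pair $(\xi_1, \xi_2)$ to the underlying $\NN$-weighted stabilized pointed domains of $\xi_1$ and $\xi_2$, with the weight of each irreducible component recording the degree of the corresponding $u_{\beta_i}$ against $\beta_{n_i}$; this is just the standard ``forget the map, remember the curve with its degree decoration'' morphism. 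The left vertical map $Y^{[\![\Lam,d]\!]}_{(\be)} \to \fD(d_1,d_2)$ sends $\xi = ([u,C,p_j],(y_a))$ to the domain curve $(C, p_j)$ equipped with the $\NN^2$-weighting $\tau$ that assigns to each component the pair of degrees $(\deg u_{\beta_1}, \deg u_{\beta_2})$ on that component, together with the two stabilization arrows to the $\pr_i\circ\tau$-stabilizations; by construction these stabilized domains are exactly the domains of $u_{\beta_1}^{\mathrm{st}}$ and $u_{\beta_2}^{\mathrm{st}}$, which is precisely the compatibility needed for the square to commute (using $\zeta^{\be}_{\Lam,d}$ as the top arrow from Lemma~\ref{lem0.3}).

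Next I would establish the Cartesian property. The content is that a point of $Y^{[\![\Lam,d]\!]}_{(\be)}$ is equivalent to the data of a point $(\xi_1,\xi_2)$ of $Y^{[\![\beta,\eta]\!]}_{(\Lam,d)}$ together with an $\NN^2$-weighted curve $(C,p_j,\tau)$ in $\fD(d_1,d_2)$ whose image in $\cM_{0,3}(d_1)\times\cM_{0,3}(d_2)$ matches the stabilized domains of $\xi_1,\xi_2$. Given such compatible data, one reconstructs $u: C \to Y^{[n]}_T$ exactly as in the surjectivity part of the proof of Lemma~\ref{lem0.3}: pull back $u_1$ and $u_2$ along the contractions $C \to C_i$ furnished by the map to $\cM_{0,3}(d_i)$, and set $u(p) = u_1\circ\phi_1(p) \cup u_2\circ\phi_2(p)$, which is a well-defined morphism precisely because $(\xi_1,\xi_2)$ lies in the open locus $Y^{[\![\beta,\eta]\!]}_{(\Lam,d)}$ where $\hc_1(\xi_1)$ and $\hc_1(\xi_2)$ are disjoint. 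The disjointness is what makes the union of subschemes a length-$n$ subscheme and what makes $C$ the correct (nonstabilized) domain. I would phrase this reconstruction functorially, for families over an arbitrary test scheme, so that it yields an isomorphism of functors rather than merely a bijection on closed points; the uniqueness of the standard decomposition already proved for $\Ynd$ supplies the needed functoriality.

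Finally I would address properness and birationality of $(\epsilon_1,\epsilon_2)$, which is the part I expect to carry essentially no new geometric content because it is a statement purely about Behrend's stacks of weighted curves: $\epsilon_i$ forgets the $\NN^2$-weighting down to the $\pr_i$-component and stabilizes. Over the open dense locus where no unstable rational bridges are created by the splitting, the stabilization maps are isomorphisms, giving birationality; properness follows from the valuative criterion for these Artin stacks of nodal curves, which is standard and is recorded in \cite{Beh2} (indeed this is the content of the diagram (3) cited there). With $(\epsilon_1,\epsilon_2)$ proper and birational and the square Cartesian, the top arrow $\zeta^{\be}_{\Lam,d}$ inherits these properties by base change, consistent with Lemma~\ref{lem0.3}. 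The main obstacle, and the step deserving the most care, is verifying the Cartesian property at the level of families rather than points: one must check that the reconstructed $u$ varies in a flat family and that the two stabilization data in $\fD(d_1,d_2)$ genuinely glue the two separate maps $u_1, u_2$ into a single nonstabilized $u$ over a common domain, which is exactly where the disjointness hypothesis defining the open loci $Y^{[\![\be]\!]}_{(\Lam,d)}$ and $Y^{[\![\beta,\eta]\!]}_{(\Lam,d)}$ must be used to guarantee the union is length $n$ and the resulting map lands in $Y^{[\![\Lam,d]\!]}_{(\be)}$.
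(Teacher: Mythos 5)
Your proposal is correct and follows essentially the same route as the paper, which simply observes that the square is a direct application of Proposition~5 of \cite{Beh2} (whence also the properness and birationality of $(\epsilon_1,\epsilon_2)$) combined with the definition of $Y^{[\![\Lam,d]\!]}_{(\be)}$ and the forgetful morphisms $Y^{[\![\beta_i,d_i]\!]}_T\to\cM_{0,3}(d_i)$; your reconstruction of $u$ from $(u_1,u_2)$ and the $\NN^2$-weighted domain is exactly the content of that citation, made explicit. The only caveat is your closing aside that birationality of the top arrow would follow ``by base change'' --- properness does, but birationality is not in general preserved under base change; this is harmless here since Lemma~\ref{lem0.3} is proved independently and claims only properness and surjectivity.
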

\begin{proof}
The proof is a direct application of Proposition~5 in \cite{Beh2} plus the definition of 
$Y^{[\![\Lam,d]\!]}_{(\be)}$. Note that the second vertical arrow is induced by 
$Y^{[\![\beta, \eta]\!]}_{(\lam,d)}\sub 
Y^{[\![\beta_1, d_1]\!]}_T\times_T Y^{[\![\beta_2,d_2]\!]}_T$
and the forgetful morphism $Y^{[\![\beta_i, d_i]\!]}_T\to \cM_{0,3}(d_i)$.
\end{proof}
\subsection{Virtual classes and comparison of normal cones}

As $Y_T^n\to Y_T^{(n)}$ is a finite quotient map by a finite group, it is flat.
Let $[Y^{[\alpha,\delta]}_T]\virt$ be the virtual class of $Y^{[\alpha,\delta]}_T$.
We define $[Y^{[\![\alpha,\delta]\!]}_T]\virt$ to be the flat pullback of $[Y^{[\alpha,\delta]}_T]\virt$.
Our goal is to inductively construct cycle representatives
of the virtual classes of $Y_T^\adbracket$ that are compatible via the comparison $\zeta_{\ad}^{\be}$.

We recall the construction of virtual cycles in \cite{BF, LT1}. 
Let $(\EE_{[\ad]})\dual\to \LL_{Y^{[\ad]}_T/T\times (\cM_{0,3})^l}$ 
be the standard perfect relative obstruction theory\footnote{Here $\EE_{[\ad]}$ is a derived object locally 
presented as a two-term complex of locally free sheaves placed at $[0,1]$.} of 
$Y^{[\ad]}_T\to T\times (\cM_{0,3})^l$; let 
$\mathbf C_{[\ad]}\sub \bF_{[\ad]}:= h^1/h^0(\EE_{[\ad]})$ be its intrinsic normal cone.
To use analytic Gysin map, we put it in a vector bundle. 
Following \cite{BF, LT1}, we can find a vector bundle (locally free sheaf) $E_{[\ad]}$ on $Y^{[\ad]}_T$
and a surjection of bundle-stack $E_{[\ad]}\to h^1/h^0(\EE_{[\ad]})$. Let 
$C_{[\ad]}\sub E_{[\ad]}$ be the flat pullback of $\mathbf C_{[\ad]}$. Then $[Y^{[\ad]}_T]\virt=0_{E_{[\ad]}}^! [C_{[\ad]}]$,
the image of the Gysin map of the zero-section of $E_{[\ad]}$. 
Let $\rho_{\ad}: Y^{[\![\ad]\!]}_T\lra Y^{[\ad]}_T$
be the tautological projection, $E_{[\![\ad]\!]}=\rho_{\ad}\sta E_{[\ad]}$, and 
$C_{[\![\ad]\!]}\sub E_{[\![\ad]\!]}$
be the flat pullback of $C_{[\ad]}$ via $E_{[\![\ad]\!]}\to E_{[\ad]}$.
The {\it virtual class} of $Y_T^{[\![\alpha, \delta]\!]}$ is equal  to
\begin{eqnarray}  \label{Def4.7-LiJ}
\big [Y_T^{[\![\alpha, \delta]\!]} \big ]^{\rm vir}
=(\rho_{\ad})\sta[Y^{[\ad]}_T]\virt
= 0_{E_{[\![\alpha,\delta]\!]}}^*[C_{[\![\alpha,\delta]\!]}]  \in H_*(|Y_T^{[\![\alpha, \delta]\!]}|; \mathbb Q),
\end{eqnarray}
where $0_{E_{[\![\alpha,\delta]\!]}}^*$ is the Gysin homomorphism of the zero section of 
$E_{[\![\alpha,\delta]\!]}$, and $|Y^{[\![\ad]\!]}_T|$ is the coarse moduli space of $Y^{[\![\ad]\!]}_T$.
Also, put $\EE_{[\![\ad]\!]}=\rho_{\ad}\sta \EE_{[\ad]}$, and let
$\bF_{[\![\ad]\!]} =h^1/h^0(\EE_{[\![\ad]\!]})=\rho_{\ad}\sta\bF_{[\ad]}$ be the flat pullback. 
Let $\bC_{[\![\ad]\!]}\sub \bF_{[\![\ad]\!]}$
be the flat pullback of $\bC_{[\ad]}$ via $\bF_{[\![\ad]\!]}\to \bF_{[\ad]}$.

We now compare the cycles $C_{[\![\ad]\!]}$ using $\zeta_{\ad}^{\be}$. 
The tricky part is that the vector bundles $E_{[\![\ad]\!]}$ are not comparable. Thus we will state the
comparison using cycles in $\bF_{[\![\ad]\!]}$, and later will use the obstruction
sheaf for accounting purpose.

\begin{lemma}    \label{lem0.5}
For pairs $(\ad)> (\be)$, we have canonical isomorphisms
$$\varphi_\ad^\be: (\zeta_\ad^\be)\sta (\bF_{[\![\be]\!]}|_{Y^{[\![\be]\!]}_{(\ad)}}) 
\mapright{\cong} \bF_{[\![\ad]\!]}|_{Y^{[\![\ad]\!]}_{(\be)}}
$$
that satisfy the cocycle condition: we have
$\varphi_\ad^\be\circ (\zeta_\ad^\be)\sta(\varphi_\be^{\gamma,\varepsilon}) 
=\varphi_\ad^{\gamma,\varepsilon}$ for any triple $(\ad)> (\be)>(\gamma,\varepsilon)$. 
Further, let $\bar\varphi_\ad^\be: \bF_{[\![\ad]\!]}|_{Y^{[\![\ad]\!]}_{(\be)}} \lra 
\bF_{[\![\be]\!]}|_{Y^{[\![\be]\!]}_{(\ad)}}$
be the projection induced by $\varphi_\ad^\be$, which is proper by Lemma \ref{lem0.3}. Then
$$(\bar\varphi_\ad^\be)_\ast[\bC_{[\![\ad]\!]}|_{Y^{[\![\ad]\!]}_{(\be)}}]
=[\bC_{[\![\be]\!]}|_{Y^{[\![\be]\!]}_{(\ad)}}].
$$
\end{lemma}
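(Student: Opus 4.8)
The plan is to reduce everything to a single splitting and then transport the obstruction-theoretic data across the Cartesian square of Lemma~\ref{0.4}. First I would use the product presentation $Y^{[\![\ad]\!]}_T=\prod_{i}Y^{[\![\alpha_i,\delta_i]\!]}_T$ from~\eqref{Y-alpha}, under which both $\bF_{[\![\ad]\!]}$ and $\bC_{[\![\ad]\!]}$ are formed factor by factor, to reduce the construction of $\varphi_\ad^\be$ and the cone identity to the case of one splitting, namely $(\ad)=(\AD)$ with $l=1$ and $(\be)=((\beta_1,\beta_2),(d_1,d_2))$, $r=2$. The general isomorphism and cone identity then follow by composing single-splitting steps, and the cocycle condition is obtained from the associativity of iterated standard decompositions. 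In this reduced case $\zeta_\AD^\be$ is exactly the proper surjective morphism of Lemma~\ref{lem0.3}, with image $Y^{[\![\be]\!]}_{(\AD)}$.

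To construct $\varphi$, I would recall that $\EE_{[\ad]}$ is the relative map-obstruction theory of $Y^{[\ad]}_T\to T\times(\cM_{0,3})^l$, so that $\bF_{[\ad]}$ is governed by $R\pi_\ast f^\ast T_{Y^{[n]}_T/T}$ for the universal stable map $f$. Over the open locus $Y^{[\![\AD]\!]}_{(\be)}$ the Hilbert-Chow data $\hc_1$ splits the support into two disjoint pieces carrying $\beta_1$ and $\beta_2$; hence $\Xn$ is \'etale-locally near $u(C)$ a product $X^{[|\beta_1|]}\times X^{[|\beta_2|]}$, and the standard decomposition~\eqref{ui} yields $f^\ast T_{Y^{[n]}_T/T}\cong u_{\beta_1}^\ast T_{Y^{[|\beta_1|]}_T/T}\oplus u_{\beta_2}^\ast T_{Y^{[|\beta_2|]}_T/T}$ on $C$. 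Applying $R\pi_\ast$ splits $\bF$ as a sum indexed by the two parts. The only discrepancy between the two sides is the domain curve: $Y^{[\![\AD]\!]}$ uses the single curve $C$, while $Y^{[\![\be]\!]}$ uses its two stabilizations $C_{\beta_i}$. But the stabilizing contractions collapse only constant-map rational components and so leave $R\pi_\ast f^\ast T$ unchanged; this invariance is precisely packaged by the Cartesian square of Lemma~\ref{0.4}, whose bottom arrow $(\epsilon_1,\epsilon_2)$ is proper and birational. Pulling the factorwise identification back along $\zeta_\AD^\be$ produces $\varphi_\AD^\be$, and its naturality in the splitting gives the cocycle condition.

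For the cycle identity I would note that $\bC_{[\![\ad]\!]}\subset\bF_{[\![\ad]\!]}$ is pulled back from the intrinsic normal cone of $Y^{[\ad]}_T$ over its base curve stack, a datum depending only on the map and not on which curve stack one works over. Under $\varphi_\AD^\be$ the ambient sheaves are identified, so comparing $\bC_{[\![\AD]\!]}$ and $\bC_{[\![\be]\!]}$ reduces to comparing intrinsic normal cones across the square of Lemma~\ref{0.4}. Since $(\epsilon_1,\epsilon_2)$, equivalently $\zeta_\AD^\be$ on the relevant loci, is proper and birational, it is an isomorphism over a dense open set, along which the two cones coincide. Properness of $\bar\varphi_\AD^\be$ (from Lemma~\ref{lem0.3}) then makes the pushforward of cycles well defined, and degree-one pushforward under a proper birational morphism yields $(\bar\varphi_\AD^\be)_\ast[\bC_{[\![\AD]\!]}|_{Y^{[\![\AD]\!]}_{(\be)}}]=[\bC_{[\![\be]\!]}|_{Y^{[\![\be]\!]}_{(\AD)}}]$.

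The hard part will be making the construction of $\varphi$ canonical: the splitting of $f^\ast T_{\Xn}$ rests on the \'etale-local product structure of the Hilbert scheme over disjoint supports, and I must check that it globalizes functorially over the splitting locus and is compatible with the domain stabilizations, so that the resulting $\varphi$ genuinely satisfies the cocycle condition. Closely tied to this, in the cone comparison one must rule out spurious lower-dimensional components of the normal cone created by the birational contraction $\zeta_\AD^\be$ and confirm that the pushforward has multiplicity exactly one; this is precisely where the properness and birationality supplied by Behrend's product formula through Lemma~\ref{0.4} are indispensable.
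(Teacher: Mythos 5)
Your overall strategy coincides with the paper's: reduce by induction to a single splitting $(\Lambda,d)>((\beta_1,\beta_2),(d_1,d_2))$, exploit the local product structure of the Hilbert scheme over the locus where the two supports are disjoint (the paper realizes your ``\'etale-locally a product'' remark in globalizable form via the formal completions $\hat V_1\times_T\hat V_2\hookrightarrow\hat V$ of fibers of the Hilbert--Chow morphism, which are injective and smooth into $Y^{[\Lambda,d]}_T$ and hence allow the intrinsic normal cones to be pulled back), and then feed everything into the Cartesian square supplied by Behrend's product formula as in Lemma~\ref{0.4}. Up to that point your proposal is sound, and your construction of $\varphi$ from the splitting of $f^*T$ over disjoint supports is a reasonable way to see the isomorphism of obstruction bundle stacks.

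The gap is in the final cycle identity. You argue that since the comparison map is proper and birational, the two intrinsic normal cones ``coincide over a dense open set,'' and that degree-one pushforward then gives $(\bar\varphi_{\alpha,\delta}^{\beta,\eta})_*[\mathbf C_{[\![\alpha,\delta]\!]}|_{Y^{[\![\alpha,\delta]\!]}_{(\beta,\eta)}}]=[\mathbf C_{[\![\beta,\eta]\!]}|_{Y^{[\![\beta,\eta]\!]}_{(\alpha,\delta)}}]$. This does not suffice: the relative intrinsic normal cone is a cycle with components supported over the boundary strata of $\overline{\mathfrak M}_{0,3}(\hat V_1\times_T\hat V_2,(d_1,d_2))$ --- precisely the loci where stabilization contracts components of the domain curve and the map $\phi$ fails to be an isomorphism --- and in general the cone is not the closure of its restriction to the dense open locus where $\phi$ is an isomorphism. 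One must show that these boundary components of $\mathbf C_1$ push forward onto the corresponding components of $\mathbf C_2$ with the right multiplicities, and properness plus birationality of the base map alone does not give this. You flag this yourself as ``the hard part,'' but Lemma~\ref{0.4} only furnishes the Cartesian square and the properness/birationality of $(\epsilon_1,\epsilon_2)$; it does not by itself yield $\tilde\phi_*[\mathbf C_1]=[\mathbf C_2]$. The paper closes exactly this gap by invoking Costello's pushforward theorem (Theorem~5.0.1 of \cite{Cos}), which is the statement that for a Cartesian square of this shape, with a proper pure-degree-one morphism of base Artin stacks and compatible obstruction theories, the cone cycles (equivalently the virtual classes) push forward. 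Without citing that result or reproving it, your argument for the displayed identity is incomplete.
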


\begin{proof}
By induction, we only need to prove the case where $\ell(\beta)=\ell(\alpha)+1$;
by definition this follows from the case $(\ad)=(\lam,d)$ and $\beta=(\beta_1,\beta_2)$ with $\eta=(d_1,d_2)$,
which we suppose in the remainder of this proof.

Let $y=(y_a)\in Y^\lam_T$ be a closed point so that $y_{\beta_1}= \rho_{\beta_1}((y_a)_{a\in\beta_1})
\in Y^{(\beta_1)}_T$ and $y_{\beta_2}\in Y^{(\beta_2)}_T$ (defined similarly) are disjoint.
We then form
$$V_i=Y_T^{[\beta_i]}\times_{Y_T^{(\beta_i)}}\{y_{\beta_i}\}\and
V=Y_T^{[\Lam]}\times_{Y_T^{(\Lam)}}\{\rho_\Lambda(y)\}.
$$
Note that $y_{\beta_1}\cap y_{\beta_2}=\emptyset$ implies that $V_1\times_T V_2 \sub Y^{[\beta]}_T$.
Also, there exists a canonical isomorphism $\zeta^\Lam_\beta: V_1\times_T V_2 \to V$.
Let $\hat V_i$ (respectively, $\hat V$) be 
the formal completion of $Y^{[\beta_i]}_T$ (respectively, $Y^{[\Lambda]}_T$)
along $V_i$ (respectively, $V$). 
The isomorphism $\zeta^\Lam_\beta$ induces 
$$\hat\zeta^\Lam_\beta: \hat V_1\times_T\hat V_2\lra \hat V,
$$
which is injective and smooth.

For notational simplicity, we denote $\barM(\hat V_i)=\barM_{0,3}(\hat V_i,d_i)$ with
$\iota_2$ in \eqref{lem0.5.1} being the tautological morphism induced by 
$\hat V_i\to Y^{[\beta_i]}_T$; we let 
$$
\barM(\hat V_1\times_T\hat V_2)=\barM_{0,3}(\hat V_1\times_T\hat V_2, (d_1,d_2))
$$
with $\iota_1$ in \eqref{lem0.5.1} being the tautological morphism induced by 
$\hat V_1\times_T\hat V_2\to Y^{[\Lam]}_T$.

We consider the following commutative diagram of arrows, where $\phi$ is defined 
by sending $[u,C,p_j]\in \barM(\hat V_1\times_T\hat V_2)$ to $(\xi_1,\xi_2)$ with
$\xi_i=[\pi_i\circ u, C, p_j]\st$ for $\pi_i: \hat V_1\times_T\hat V_2\to\hat V_i$ the projection;
$\phi'$ is induced by $\phi$.
\beq    \label{lem0.5.1}
\begin{CD}
Y^{[\![\AD]\!]}_{(\be)} @>{\zeta^{\be}_{\AD}}>> Y^{[\![\be]\!]}_{(\AD)} \\
@AA{\varphi_1}A @AA{\varphi_2}A\\
\barM(\hat V_1\times_T\hat V_2)\times_{Y^{(\beta)}_T}Y^\beta_T @>{\phi'}>>
\bigl(\barM(\hat V_1)\times_T\barM(\hat V_2)\bigr) \times_{Y^{(\beta)}_T}Y^\beta_T \\
@VV{\psi_1}V @VV{\psi_2}V\\
\barM(\hat V_1\times_T\hat V_2) @>\phi>> \barM(\hat V_1)\times_T\barM(\hat V_2)\\
@VV{\iota_1}V@VV{\iota_2}V\\
Y^{[\AD]}_T @. Y^{[\be]}_T
\end{CD}
\eeq

We let $\bC_{1}\sub \bF_{1}$ be the intrinsic normal cone in the bundle stack of the obstruction complex
of the prefect relative obstruction theory of $\barM(\hat V_1\times_T\hat V_2)\to T\times\cM_{0,3}$.
Because $\hat V_1\times_T\hat V_2\to Y^{[\AD]}_T$ is injective and smooth, we have
$\iota_1\sta(\bC_{[\AD]}\sub \bF_{[\AD]}) = (\bC_{1}\sub \bF_{1})$. 
Since $\bC_{[\![\AD]\!]}\sub \bF_{[\![\AD]\!]}$ is the 
pullback of $\bC_{[\AD]}\sub \bF_{[\AD]}$, we conclude
$\varphi_1\sta(\bC_{[\![\AD]\!]}\sub \bF_{[\![\AD]\!]})=\psi_1\sta(\bC_1\sub\bF_1)$.

Similarly, letting $\bC_{2}\sub \bF_{2}$ be the intrinsic normal cone in the bundle stack of 
the obstruction complex of the prefect relative obstruction theory of 
$\barM(\hat V_1)\times_T\barM(\hat V_2)\to T\times (\cM_{0,3})^2$,
we have $\varphi_2\sta(\bC_{[\![\be]\!]}\sub \bF_{[\![\be]\!]})=\psi_2\sta(\bC_2\sub\bF_2)$.
Since $\varphi_1$ and $\varphi_2$ are injective and smooth, since $\phi'$ is proper, since the top square 
is commutative, and since the image of $\varphi_1$ (respectively, of $\varphi_2$) covers 
$Y^{[\![\AD]\!]}_{(\be)}$ (respectively, $Y^{[\![\be]\!]}_{(\AD)}$) for $y$ varying through 
$Y^\Lambda_T$ satisfying $y_{\beta_1}\cap y_{\beta_2}=\emptyset$,
to prove that $\bF_{[\![\AD]\!]}=(\zeta_{\AD}^{\be})\sta\bF_{[\![\be]\!]}$ and $(\zeta_{\AD}^\be)_\ast[\bC_{[\![\AD]\!]}] = [\bC_{[\![\be]\!]}]$,
it suffices to show that we have the canonical isomorphism and identity
\beq\label{need}
\bF_1\cong \phi\sta\bF_2\and \ti\phi_\ast[\bC_1]=[\bC_2],
\eeq
where $\ti\phi: \bF_1\to \bF_2$ is the induced projection. 
But this follows from the Cartesian square
$$\begin{CD}
\barM(\hat V_1\times_T\hat V_2) @>\phi>> \barM(\hat V_1)\times_T\barM(\hat V_2)\\
@VVV@VVV\\
T\times\frak D(d_1,d_2) @>>> T\times \cM_{0,3}(d_1)\times\cM_{0,3}(d_2),
\end{CD}
$$
similar to the one stated in Lemma \ref{0.4} (originally constructed in Proposition~5 of \cite{Beh2}).
Since the lower horizontal line is birational, and $T$ is smooth and projective,
by Theorem~5.0.1 in \cite{Cos}, we have the isomorphism and identities in \eqref{need}. 
This proves the lemma.
\end{proof}

\subsection{Multi-sections and pseudo-cycle representatives}
\def\lalp{_\alpha}
\def\lbe{_\beta}

In this subsection, we use multi-sections to intersect the cycles $C_{[\![\alpha,\delta]\!]}$ to obtain 
pseudo-cycle representatives of $[Y_T^{[\![\ad]\!]}]\virt$.

In the remainder of this section, we will work with analytic topology and smooth ($C^\infty$) sections.
Let $V$ be a vector bundle over a DM stack $W$.
In case $W$ is singular, we stratify $W$ into 
a union of smooth locally closed DM stacks $W=\coprod W_\alpha$,
and use continuous sections that are smooth when restricted to each stratum $W\lalp$.
Without further commenting, all sections used in this section are
stratified sections; we denote the space of such sections by
$\cC(W,V)$. Also, we will use $|W|$ and $|V|$ to denote the coarse moduli of $W$ and $V$. 

We recall the notion of multi-sections, 
following \cite{FO, LT2}. 
We first consider the case where $W=U/G$ is a quotient stack and 
$V$ is a $G$-vector bundle on $U$. 
Let $S^n(V)\to U$ be the $n$-th symmetric product bundle of $V$.
A {\it liftable multi-section} $s$ of $V$ (of multiplicity $n$) is 
a $G$-equivariant section $s\in \cC(U, S^n(V))^G$  such that there are 
$n$ sections $s_1,\cdots, s_n\in \cC(U,V)$ so that $s$ is the image of 
$(s_1,\cdots, s_n)$. For a multi-section $s\in \cC(U, S^n(V))^G$ that is 
the image of $(s_1,\cdots, s_n)$, we define its integer multiple 
$ms\in \cC(U, S^{mn}(V))^G$ be the image of $(s_1,\cdots, s_1,\cdots, 
s_n,\cdots, s_n)$, where each $s_i$ is repeated $m$ times. 
Given two multi-sections $s$ and $s'$ of multiplicities $n$ and $n'$, 
we say that $s$ and $s'$ are equivalent, denoted by $s\approx s'$,
if $n's=ns'$ as multi-sections. 

In general, since $W$ is a DM-stack,
it can be covered by (analytic) open quotient stacks $U\lalp/G\lalp\sub W$, 
and the restriction $V|_{U\lalp/G\lalp}=V\lalp/G\lalp$
for $G\lalp$-vector bundles $V\lalp$ on $U\lalp$. 
A {\it multi-section} $s$ of $V$ consists of an analytic open 
covering $U\lalp/G\lalp$ of $W$ and a collection of liftable multi-sections $s\lalp$ of $V|_{U\lalp/G\lalp}$ so
that for any pair $(\alpha,\beta)$, the pullbacks of $s\lalp$ and $s\lbe$ to $U\lalp\times_W U\lbe$
are equivalent. We denote the space of multi-sections of $V$ by $\cC_{\mathrm{mu}}(W,V)$.
(Thus multi-sections in this paper are always locally liftable.)

The space of multi-sections of $V$ has the same extension property as the space of
sections of a vector bundle on a manifold. The usual extension property of vector
bundles on manifolds is proved by using the partition of unity and the addition structure of the vector
bundles. For multi-sections, over a chart $U\lalp/G\lalp$, we define the sum of two (liftable) multi-sections 
$s$ and $s'$ (with lifting $(s_i)_{i=1}^n$ and $(s'_j)_{j=1}^m$, respectively) be the multiplicity $nm$ multi-section that is the image of
$s+s'=(s_i+s_j')$. This local sum extends to sum of two 
multi-sections on $W$. Thus combined with the partition of unity of $|W|$, we conclude that
the mentioned extension property holds for $\cC_{\mathrm{mu}}(W, V)$.

We also have the following transversality property.  Given a closed integral substack $C\sub V$ and
a multi-section $s\in \cC_{\mathrm{mu}}(W, V)$, we say that $s$ intersects $C$ transversally if
there is a stratification of $C$ so that each strata $C_{\alpha}$ of $C$ lies over a strata of $W$, say $W_{\alpha'}$,
and the section $s|_{W_{\alpha'}}$ intersects $C_{\alpha}$ transversally,
meaning that the local liftings of $s|_{W_{\alpha'}}$ intersect $C\lalp$ transversally.
Given a cycle $[C]=\sum n_i[C_i]$ with $C_i$ closed integral algebraic substacks, 
we say $s$ intersects $[C]$ transversally if it intersects each $C_i$ transversally.

\begin{lemma}\label{rat-gen}
Let $p: W'\to W$ be a proper morphism of DM-stacks; let $V$ be a vector bundle on $W$ and
$\tilde p: p\sta V\to V$ be the induced projection.
Suppose $[C']\in Z\lsta( p\sta V)$ is an algebraic cycle and $[C]=\ti p\lsta [C']$. 
If $s\in \cC_{\mathrm{mu}}(W,V)$ intersects $[C]$ transversally, then
$p\sta s\in \cC_{\mathrm{mu}}(W',p\sta V)$ intersects $[C']$ transversally.
\end{lemma}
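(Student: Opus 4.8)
The plan is to prove this by a local analysis that reduces everything to the quotient-chart case, where transversality of multi-sections is checked via their local liftings. First I would observe that transversality is a local property on $W$ (and correspondingly on $W'$): a multi-section $s$ intersects $[C]$ transversally exactly when, over an analytic covering $U_\alpha/G_\alpha$ of $W$ compatible with the stratification, each local lifting of $s_\alpha$ meets each piece $C_{i,\alpha}$ transversally after restricting to the appropriate stratum. Since $p$ is proper and $W'$ is a DM-stack, I can refine the given covering of $W$ and pull it back along $p$ to obtain an analytic covering of $W'$ by quotient charts $U'_{\alpha'}/G'_{\alpha'}$ each lying over some $U_\alpha/G_\alpha$; the pullback multi-section $p^*s$ is then represented on these charts by the pullbacks of the local liftings of $s_\alpha$. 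Thus the whole statement reduces to the following local claim.

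The local claim is: given a morphism of smooth manifolds (or the relevant smooth strata) $q\colon M'\to M$, a submanifold (stratum piece) $C\subset V$ of a vector bundle $V\to M$, its pullback $C'=\tilde q^{-1}(C)\subset q^*V$, and a section $\sigma$ of $V$ that meets $C$ transversally along the relevant strata, the pulled-back section $q^*\sigma$ of $q^*V$ meets $C'$ transversally. Here the key structural input is the Cartesian (fiber-product) description of the pullback: $q^*V = M'\times_M V$ and $C' = M'\times_M C$, and $q^*\sigma$ is the section $m'\mapsto (m', \sigma(q(m')))$. I would then carry out the transversality computation directly on tangent spaces: at a point where $q^*\sigma(m')\in C'$, the image $\sigma(q(m'))$ lies in $C$, and the differential of $q^*\sigma$ together with the vertical (fiber) directions of $q^*V$ surjects onto the normal space of $C'$ because those fiber directions are pulled back isomorphically from the fiber directions of $V$ over $q(m')$, and transversality of $\sigma$ to $C$ guarantees that the fiber directions already span the normal space of $C$ modulo the image of $d\sigma$. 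The compatibility of $\tilde p$ with the projections makes the normal bundle of $C'$ in $q^*V$ the pullback of the normal bundle of $C$ in $V$, so surjectivity is inherited.

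I would treat the passage from sections to multi-sections by noting that the definition of transversality for a multi-section is stated in terms of its local liftings, so it suffices to verify the claim for each ordinary section $s_i$ appearing in a lifting $(s_1,\dots,s_n)$ of $s_\alpha$; the pullback $p^*s_\alpha$ is lifted by $(p^*s_1,\dots,p^*s_n)$, and each $p^*s_i$ is transverse to $C'$ by the section-level argument above. The equivalence relation $\approx$ and the multiplicity bookkeeping are preserved by pullback because $p^*$ commutes with the symmetric-product and multiple constructions, so $p^*s$ is a genuine multi-section in $\cC_{\mathrm{mu}}(W',p^*V)$ and its transversality to $[C']=\sum n_i[C_i']$ follows by applying the argument to each integral component $C_i'$.

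The main obstacle I anticipate is purely technical rather than conceptual: keeping the stratifications compatible. Transversality here is \emph{stratified} transversality, so I must ensure that when I pull back along $p$, each stratum of $C'$ lies over a stratum of $W'$ in a way refining the stratification witnessing transversality of $s$ to $[C]$, and that the strata of $C$ pull back to (unions of) strata of $C'$. This requires choosing the stratification of $W'$ to be a common refinement of $p^{-1}$ of the stratification of $W$ and the stratification induced by the components $C_i'$; since $p$ is proper (hence the fibers are reasonable) and everything is algebraic, such a refinement exists, but I would state this compatibility carefully so that the section-level transversality at each point genuinely lies in the smooth stratum where the local liftings are smooth. Once the stratifications are arranged, the tangent-space surjectivity is immediate from the Cartesian structure, and the proof concludes.
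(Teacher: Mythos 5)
There is a genuine gap, and it sits at the heart of your local claim: you have reversed the relationship between $C$ and $C'$. In the lemma the cycle $C'$ upstairs is the given datum and $C=\tilde p_*[C']$ is its proper \emph{pushforward}; in your local claim you instead declare $C'=\tilde q^{-1}(C)=M'\times_M C$, the full preimage, and your entire tangent-space argument (the normal bundle of $C'$ in $q^*V$ being the pullback of the normal bundle of $C$ in $V$, so that surjectivity is ``inherited'' through the Cartesian square) rests on that identification. In general $C'$ is only a subcycle of $\tilde p^{-1}(C)$ with the prescribed pushforward: for instance, with $W'=W\times\mathbb{P}^1$ and $p$ the projection, the graph of any map $C\to\mathbb{P}^1$ is a cycle $C'$ with $\tilde p_*[C']=[C]$ but is a proper subvariety of $\tilde p^{-1}(C)=C\times\mathbb{P}^1$. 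Transversality of $p^*s$ to the preimage does not imply transversality to a subvariety of the preimage (a section transverse to a surface need not be transverse to a curve inside it), so the section-level statement you prove does not yield the lemma. In the intended application the cycles really are of this ``graph-like'' type — they are pushed forward with degree one under $\bar\varphi_\ad^\be$, not pulled back — so the gap is not hypothetical.

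The paper closes exactly this gap by stratifying $C'$ rather than $C$: it chooses stratifications of $W$ and $W'$ with $p$ smooth on strata, and a stratification $C'=\coprod C'_\beta$ with each $C'_\beta$ lying over a stratum of $W'$ and $\tilde p|_{C'_\beta}\colon C'_\beta\to\tilde p(C'_\beta)$ smooth; this reduces the lemma to the case where both $p$ and $C'\to\tilde p(C')$ are submersions, and there the local coordinate check succeeds. Concretely, writing a fiber vector $\nu\in T_vV$ as $d\sigma(\zeta)+\delta$ with $\delta\in T_v\tilde p(C'_\beta)$ (transversality downstairs), one lifts $\zeta$ through $dp$ and $\delta$ through $d(\tilde p|_{C'_\beta})$, and the kernel of $dp$ is absorbed into the graph of $d(p^*\sigma)$; the surjectivity of $TC'_\beta\to T\tilde p(C'_\beta)$ is the indispensable input that your proposal never establishes. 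The rest of your reduction — localizing to quotient charts, arguing lifting-by-lifting for multi-sections, refining stratifications compatibly — is sound and agrees with the paper's strategy, but the core tangent-space step must be rerun with the pushforward (not pullback) relationship between $C'$ and $C$.
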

\begin{proof}
We pick stratifications $W=\coprod W\lalp$ and $W'=\coprod W\lalp'$ so that $p(W'\lalp)=W\lalp$
and $p\lalp=p|_{W'\lalp}: W\lalp'\to W\lalp$ are smooth. We then pick a stratification 
$C'=\coprod C\lbe'$ so that each $C\lbe'$ lies over a stratum of $W'$, 
and that $\ti p|_{C\lbe'}: C\lbe'\to \ti p(C\lbe')$ is smooth.
Therefore, by the definition of transversal to $C$, we are reduced to check when $p: W'\to W$
and $C' \to \ti p(C')$ are smooth. In this case, the statement of the lemma holds by direct local
coordinate checking. This proves the lemma.
\end{proof}

We now construct pseudo-cycle representatives of the topological Gysin map
\beq\label{Gysin}
0^!_V: Z\lsta V\lra H_\ast(|W|,\QQ),
\eeq
via intersecting with multi-sections \cite{FO, LT2, LT3, McD, Zin}.

We assume $W$ is proper. Let $\pi :V\to W$ and $\bar\pi: |V|\to |W|$ be the projections.
Given a closed integral algebraic substack $C\sub V$, 
we find a multi-section $s$ of $V$ so that it intersects
$C$ transversally. Let $k=2(\text{rank } V-\dim C)$.
By slightly perturbing $s$ if necessary, we can assume that there is 
a closed (stratifiable) subset $R\sub |V|$ of $\dim_\RR R\le k-2$ 
and an (analytic) open covering of $W$ by
quotient stacks $U\lalp/G\lalp$ so that,  letting $q\lalp: V\lalp\to |V|$ be the projections, 
\begin{enumerate}
\item
$s|_{U\lalp/G\lalp}$ are images of $s_{\alpha,1},\cdots,s_{\alpha,m\lalp}$ in $\cC(U\lalp,V\lalp)$;

\item
there are topological spaces $S_{\alpha, i}$ and proper embeddings 
$f_{\alpha,i}: S_{\alpha,i}\to V\lalp$ such that
\begin{enumerate}
\item 
there are dense open subsets $S_{\alpha, i}^\circ\sub S_{\alpha, i}$ so that $S_{\alpha, i}^\circ$ are 
smooth manifolds and $f_{\alpha,i}|_{S_{\alpha,i}^\circ}: S_{\alpha, i}^\circ\to V\lalp$ 
are smooth embeddings;
\item
$s_{\alpha,i}\cap(C\times_V V\lalp-q\lalp\upmo(R))= f_{\alpha, i}(S_{\alpha, i}^\circ)$;
\item 
$f_{\alpha,i}(S_{\alpha,i}-S_{\alpha,i}^\circ)\sub q\lalp\upmo(R)$.
\end{enumerate}
\end{enumerate}

Since $s\in \cC_{\mathrm{mu}}(W,V)$, by definition, 
$\sum_{i=1}^{m\lalp}f_{\alpha,i}(S_{\alpha,i}^\circ)$ is $G\lalp$-equivariant. Define
\beq\label{currents}
(C\cap s)|_{|V\lalp|}=\frac{1}{m\lalp}\bigl(\sum_{i=1}^{m\lalp}f_{\alpha,i}(S_{\alpha,i})\bigr)/ G\lalp, 
\eeq
viewed as a sum of piecewise smooth $k$-dimensional $\QQ$-currents away from a $(k-2)$-dimensional subset.
Since $(s_{\alpha,i})$ are local lifts of a global multi-section $s$, the $\QQ$-currents \eqref{currents}
patch to form a piecewise smooth $\QQ$-currents with vanishing boundary in $|V|-R$.
We denote this current by $C\cap s$. Since $|W|$ is compact, the current $C\cap s$ defines a homology class in
$H_k(|V|,R;\QQ)=H_k(|V|;\QQ)$. Applying the projection $\bar\pi: |V|\to |W|$, we obtain 
the image $\QQ$-current $\bar\pi(C\cap s)$ and its associated homology class
$[\bar \pi(C\cap s)]\in H_k(|W|;\QQ).$
Following the topological construction of Gysin map of intersecting with the zero-section of $V$,
$$0^!_V[C]=[\bar\pi(C\cap s)]\in H_*(|W|;\QQ)
$$ 
is the image of $[C]$ under the topological Gysin map $0_V^!$.
By the linearity of Gysin map, this defines the topological $0_V^!$ in \eqref{Gysin}.
The current $\bar\pi(C\cap s)$ is called a {\it pseudo-cycle representative} of the Gysin map.


We now assume in addition that $\sF$ is a quotient sheaf 
$\phi: \sO_W(V)\to\sF$, and the cycle $[C]=\sum n_i[C_i]\in Z\lsta W$ has the property
\begin{enumerate}
\item[{\rm (P)}] 
{\sl for each $C_i$, and any closed $z\in W$ and $a\in \sF|_z$, letting 
$\phi_z: V_z\to \sF|_z$ be $\phi$ restricting to $z$, we have either $\phi_z\upmo(a)\cap C_i=\emptyset$ or
$\phi_z\upmo(a)\cap C_i=\phi_z\upmo(a)$. \footnote{As argued in \cite{CL}, 
this means that $C$ is a pull back of a ``substack" of $\sF$.}
 }
\end{enumerate}


\begin{definition}
Two multi-sections $s$ and $s'$ of $V$ are $\sF$-equivalent, denoted by $s \sim_\sF s'$, 
if for any $x\in W$, as $\QQ$-zero-cycles, we have
$(\phi_x)\lsta(s(x))=(\phi_x)\lsta(s'(x))$. A multi-section of $\sF$ is 
an $\sim_\sF$ equivalence class of multi-sections of $V$. We say a multi-section $\bold s$ 
of $\sF$ intersects $C\sub V$ transversally if 
a representative $s$ of $\bold s$ intersects $C$ transversally.
\end{definition}

We comment that when $C$ satisfies property (P), the notion that a multi-section 
of $\sF$ intersects $C$ transversally is well-defined, 
after we pick the stratification of $W$ so that $\sF$ restricts to each stratum is
locally free, which we always assume in the remaining discussion.


We apply this discussion to $C_{[\![\alpha,\delta]\!]}\sub E_{[\![\alpha,\delta]\!]}$.
Let $\sF_{[\ad]}=H^1(\EE_{[\ad]})$, a coherent sheaf on $Y^{[\ad]}_T$,
and let $\sF_{[\![\ad]\!]}=\rho_\ad\sta \sF_{[\ad]}$, the pullback sheaf on $Y^{[\![\ad]\!]}_T$.
(Note that $\sF_{[\ad]}$ is the obstruction sheaf of the relative obstruction theory of $Y^{[\ad]}_T$.)
Then $\sF_{[\![\ad]\!]}$ is the quotient sheaf of $E_{[\![\ad]\!]}$ via
$$\phi_{[\![\ad]\!]}: E_{[\![\ad]\!]}\lra \bF_{[\![\ad]\!]}
=h^1/h^0(\rho_\ad\sta \EE_{[\ad]})\lra H^1(\rho_\ad\sta \EE_{[\![\ad]\!]})
=\sF_{[\![\ad]\!]}.
$$
Since $C_{[\![\ad]\!]}$ is the pullback of the cycle $\bC_{[\![\ad]\!]}$
in $\bF_{[\![\ad]\!]}$, the cycle $C_{[\![\ad]\!]}$ satisfies property (P) for the pair
$E_{[\![\ad]\!]}\to\sF_{[\![\ad]\!]}$. Thus we can speak of multi-sections $\bold s$ of 
$\sF_{[\![\ad]\!]}$ intersecting $C_{[\![\ad]\!]}\sub E_{[\![\ad]\!]}$ transversally. 

In the future, we will call a multi-section of $\sF_{[\![\ad]\!]}$ intersecting 
$C_{[\![\ad]\!]}$ transversally a {\it good multi-section}. 
Let $k_{[\ad]}$ be the virtual dimension of $Y^{[\ad]}_T$.
For a good multi-section $\bs_{[\![\ad]\!]}$ of $\sF_{[\![\ad]\!]}$, we denote 
$$ D(\bs_{[\![\ad]\!]}) =\bar\pi(C_{[\![\ad]\!]}\cap s_{[\![\ad]\!]}),
$$
where $s_{[\![\ad]\!]}$ is a representative of $\bs_{[\![\ad]\!]}$, 
and $D(\bs_{[\![\ad]\!]})$ is a piecewise smooth $k_{[\ad]}$-dimensional $\QQ$-current 
away from a subset of dimension at most $k_{[\ad]}-2$.
(Note that $D(\bs_{[\![\ad]\!]})$ is independent of the choice of $s_{[\![\ad]\!]}$.)
We denote
$$[D(\bs_{[\![\ad]\!]})] \in H_{k_{[\ad]}}(|Y^{[\![\ad]\!]}_T|;\QQ)
$$
the homology class it represents. 

Applying the pseudo-cycle representative of Gysin maps, we obtain

\begin{proposition}     \label{MulSecVir}
Given a good multi-section $\bs_{[\![\ad]\!]}$ of $\sF_{[\![\ad]\!]}$,
we have 
$$[D(\bs_\adbracket)]=[Y_T^\adbracket]\virt\in H_*(|Y_T^\adbracket|;\mathbb Q).
$$
\end{proposition}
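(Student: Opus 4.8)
The plan is to show that the proposition is essentially an application of the topological Gysin construction of the preceding subsection to the particular cycle and bundle at hand, once the definition of the virtual class is unwound. By \eqref{Def4.7-LiJ} we have $[Y_T^\adbracket]\virt = 0^*_{E_{[\![\ad]\!]}}[C_{[\![\ad]\!]}]$, the image of the cone cycle $[C_{[\![\ad]\!]}]$ under the Gysin homomorphism of the zero section of $E_{[\![\ad]\!]}$. The content of the proposition is that this algebraic cycle class is represented, in rational homology, by the piecewise-smooth pseudo-cycle $D(\bs_\adbracket)$ attached to an arbitrary good multi-section $\bs_\adbracket$. First I would recall that the topological realization of the zero-section Gysin map was built earlier precisely so that $0^!_V[C]=[\bar\pi(C\cap s)]$ for \emph{any} multi-section $s$ of $V$ that intersects the cycle $[C]$ transversally, and that this topological Gysin map agrees with the algebraic $0^*$ after passing to $H_*(\,\cdot\,;\QQ)$ (the standard comparison for virtual cycles of the Li-Tian/Fukaya-Ono type; cf.\ \cite{LT2, FO}). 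Granting this, it suffices to exhibit a transverse multi-section $s_{[\![\ad]\!]}$ of $E_{[\![\ad]\!]}$ with $\bar\pi(C_{[\![\ad]\!]}\cap s_{[\![\ad]\!]})=D(\bs_\adbracket)$.

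The single point that requires care is that $\bs_\adbracket$ is a multi-section of the quotient sheaf $\sF_{[\![\ad]\!]}$ rather than of the honest bundle $E_{[\![\ad]\!]}$. Here I would invoke property~(P): since $C_{[\![\ad]\!]}$ is the flat pullback of $\bC_{[\![\ad]\!]}\subset\bF_{[\![\ad]\!]}$, the cone $C_{[\![\ad]\!]}$ satisfies (P) for the quotient $\phi_{[\![\ad]\!]}: E_{[\![\ad]\!]}\to\sF_{[\![\ad]\!]}$. Consequently the intersection locus $C_{[\![\ad]\!]}\cap s$ depends only on $\phi_{[\![\ad]\!]}\circ s$, that is, only on the $\sim_\sF$ class of $s$. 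Hence any representative $s_{[\![\ad]\!]}$ of the good multi-section $\bs_\adbracket$ is a multi-section of $E_{[\![\ad]\!]}$ intersecting $C_{[\![\ad]\!]}$ transversally, the resulting intersection current is independent of the chosen representative, and $D(\bs_\adbracket)=\bar\pi(C_{[\![\ad]\!]}\cap s_{[\![\ad]\!]})$ holds by the very definition of $D$. This is exactly the bookkeeping that legitimizes passing from $\sF_{[\![\ad]\!]}$-multi-sections to genuine $E_{[\![\ad]\!]}$-multi-sections in the pseudo-cycle machine.

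Chaining these together gives
\[
[D(\bs_\adbracket)]
=[\bar\pi(C_{[\![\ad]\!]}\cap s_{[\![\ad]\!]})]
=0^!_{E_{[\![\ad]\!]}}[C_{[\![\ad]\!]}]
=0^*_{E_{[\![\ad]\!]}}[C_{[\![\ad]\!]}]
=[Y_T^\adbracket]\virt ,
\]
which is the asserted identity. The only genuine obstacle is the middle comparison, namely that the current obtained by transverse intersection with a multi-section truly computes the algebraic Gysin image of the cone; but this is precisely the general principle already established in the construction of $0^!_V$ above, so here it is applied rather than reproved, and the remaining work is purely the property~(P) verification described in the previous paragraph.
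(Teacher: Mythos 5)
Your argument is correct and is essentially the paper's own: the paper proves this proposition simply by ``applying the pseudo-cycle representative of Gysin maps,'' i.e.\ by combining the definition $[Y_T^\adbracket]\virt = 0^*_{E_{[\![\ad]\!]}}[C_{[\![\ad]\!]}]$ with the topological identity $0^!_V[C]=[\bar\pi(C\cap s)]$ and the property~(P) discussion that makes intersection with an $\sF_{[\![\ad]\!]}$-multi-section well defined. Your write-up just makes these steps explicit, so there is nothing to add.
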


\subsection{Comparison of virtual cycles}
\label{subsect_CompVirCyc}

Our goal in this subsection is to compare the virtual cycles in terms of pseudo-cycle representatives. 
We will prove the analogue of Lemma~5.6 in \cite{LiJ}. 

To begin with, we recall $\alpha$-diagonals, their tubular neighborhoods, and the associated 
partitions from \cite{LiJ}. For $\alpha \in \mathcal P_\Lambda$, we form the strict $\alpha$-diagonal:
\beq\label{a-diag}
\Delta_\alpha = \Delta_\alpha^Y 
= \{x \in Y_T^\Lambda|\, a \sim_\alpha b \Rightarrow x_a = x_b \};
\eeq
it is closed in $Y_T^\Lambda$ and isomorphic to $Y_T^l$ when 
$\alpha = (\alpha_1, \ldots, \alpha_l)$. Fix a sufficiently small number $c > 0$ and
a large real $N$, and pick a function $\epsilon: \mathcal P_\Lambda \to (0, c)$ whose values on 
any ordered pair $\alpha > \beta$ satisfy $\epsilon(\alpha) > N \cdot \epsilon(\beta)$.
After fixing a Riemannian metric on $Y$, 
we define the $\epsilon$-neighborhood of $\Delta_\alpha \subset Y_T^\Lambda$ to be
\beq\label{a-neigh}
\Delta_{\alpha, \epsilon} = \Delta_{\alpha, \epsilon}^Y =
\{x \in Y_T^\Lambda \mid  {\rm dist}(x, \Delta_\alpha) < \epsilon(\alpha) \}.
\eeq

For a pair $\alpha \ge \beta$, we define $\Delta^{\alpha}_{\beta, \epsilon} 
= \cup_{\alpha \ge \gamma \ge \beta} \Delta_{\gamma, \epsilon}$ and 
$Q^{\alpha}_{\beta, \epsilon} 
= \Delta_{\beta, \epsilon} - \cup_{\alpha \ge \gamma > \beta} \Delta^{\alpha}_{\gamma, \epsilon}
= \Delta_{\beta, \epsilon} - \cup_{\alpha \ge \gamma > \beta} \Delta_{\gamma, \epsilon}.$
Then, $Q^{\alpha}_{\beta, \epsilon}$ is a closed subset of $\Delta_{\beta, \epsilon}$.
%
By Lemma~5.5 of \cite{LiJ}, if $\Delta_{\beta_1, \epsilon} \cap Q^{\alpha}_{\beta_2, \epsilon}
\ne \emptyset$ for some $\beta_1, \beta_2 \le \alpha$, then 
\beq   \label{beta21}
\beta_1 \le \beta_2.
\eeq
It follows that $\Delta^{\alpha}_{\beta, \epsilon}
= \coprod _{\alpha \ge \gamma \ge \beta} Q^{\alpha}_{\gamma, \epsilon}.$
In particular, for any $\alpha$, by taking $\beta = 1^\Lambda$, we get 
$Y_T^\Lambda = \coprod _{\gamma \le \alpha} Q^{\alpha}_{\gamma, \epsilon}$.
Further, letting $\mathcal Q^{[\![\alpha, \delta]\!]}_{\beta, \epsilon}
= Y_T^\adbracket\times_{Y^\Lambda_T} Q^{\alpha}_{\beta, \epsilon}$, we obtain
$Y_T^{[\![\alpha, \delta]\!]}
= \coprod_{\beta \le \alpha} \mathcal Q^{[\![\alpha, \delta]\!]}_{\beta, \epsilon}.$
Note that for fixed $\beta$ with $\beta \le \alpha$, we have
$\mathcal Q^{[\![\alpha, \delta]\!]}_{\beta, \epsilon} \subset \coprod_{(\beta, \eta) \le 
(\alpha, \delta)} Y^{[\![\alpha, \delta]\!]}_{(\beta, \eta)}$. Define
$
\mathcal Q^{[\![\alpha, \delta]\!]}_{(\beta, \eta), \epsilon} 
= \mathcal Q^{[\![\alpha, \delta]\!]}_{\beta, \epsilon} \cap
Y^{[\![\alpha, \delta]\!]}_{(\beta, \eta)}
$
for $(\beta, \eta) \le (\alpha, \delta)$. Then, we obtain a partition:
\begin{eqnarray}  \label{PtnMYAlpha}
Y_T^{[\![\alpha, \delta]\!]} = \coprod_{(\beta, \eta) \le (\alpha, \delta)} 
\mathcal Q^{[\![\alpha, \delta]\!]}_{(\beta, \eta), \epsilon}.
\end{eqnarray}

\begin{lemma}   \label{lma5.6-LiJ}
For sufficiently small $\eps$, we can find a collection of good multi-sections $\bs_{[\![\ad]\!]}$
of $\sF_{[\![\ad]\!]}$ that satisfy the properties
\begin{enumerate}
\item[{\rm (i)}] each $\bs_{[\![\ad]\!]}$ intersects transversally with the cycle $C_{[\![\ad]\!]}\sub E_{[\![\ad]\!]}$;

\item[{\rm (ii)}] for $(\beta, \eta) < (\alpha, \delta)$, the pseudo-cycles (as $\QQ$-currents)
$$
(\zeta_\ad^\be)_*\bigl(D(\bs_{[\![\ad]\!]})\cap \mathcal Q^{[\![\ad]\!]}_{(\be), \epsilon}\bigr) =
D(\bs_{[\![\be]\!]}) \cap \zeta_\ad^\be \bigl(\mathcal Q^{[\![\ad]\!]}_{(\be), \epsilon}\bigr).
$$
\end{enumerate}
\end{lemma}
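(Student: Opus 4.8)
The plan is to construct the whole collection $\{\bs_{[\![\ad]\!]}\}$ simultaneously by an induction along the poset $\cP_{\Lam,d}$, processing the weighted partitions in order of increasing coarseness (equivalently, decreasing $\ell(\alpha)$), so that the sections on the finer, more split weighted partitions are fixed first and then used to prescribe the coarser ones on their degenerate strata. For a minimal (finest) $(\ad)$ there is no $(\be)<(\ad)$, and one simply picks any good multi-section of $\sF_{[\![\ad]\!]}$, which exists by the transversality and extension properties of $\cC_{\mathrm{mu}}(Y_T^{[\![\ad]\!]},\sF_{[\![\ad]\!]})$ established earlier; this gives (i) vacuously with (ii). By the cocycle condition of Lemma~\ref{lem0.5} it then suffices to arrange compatibility for pairs $(\ad)>(\be)$ obtained by a single splitting, $\ell(\beta)=\ell(\alpha)+1$, as in the setup of Lemma~\ref{0.4}; the general case follows by composing the maps $\zeta_\ad^\be$ along chains and invoking the identity $\varphi_\ad^\be\circ(\zeta_\ad^\be)\sta(\varphi_\be^{\gamma,\varepsilon})=\varphi_\ad^{\gamma,\varepsilon}$.

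For the inductive step, suppose good multi-sections $\bs_{[\![\be]\!]}$ of $\sF_{[\![\be]\!]}$ have been chosen for every $(\be)<(\ad)$, satisfying (i) and the mutual compatibilities (ii) among themselves. First I would define $\bs_{[\![\ad]\!]}$ on the deep strata. On each locus $Y^{[\![\ad]\!]}_{(\be)}$ the isomorphism $\varphi_\ad^\be$ of Lemma~\ref{lem0.5} identifies $\bF_{[\![\ad]\!]}|_{Y^{[\![\ad]\!]}_{(\be)}}$ with $(\zeta_\ad^\be)\sta\bF_{[\![\be]\!]}$, and hence, passing to $h^1$, identifies the obstruction sheaves $\sF_{[\![\ad]\!]}$ and $(\zeta_\ad^\be)\sta\sF_{[\![\be]\!]}$ over that locus; I would set $\bs_{[\![\ad]\!]}$ there to be the transport of $(\zeta_\ad^\be)\sta\bs_{[\![\be]\!]}$ under this identification. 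Because the chosen function $\eps$ obeys the separation $\eps(\alpha)>N\cdot\eps(\beta)$, the partition \eqref{PtnMYAlpha} together with the nesting \eqref{beta21} guarantees, for $\eps$ small enough, that the strata $\mathcal Q^{[\![\ad]\!]}_{(\be),\eps}$ are disjoint and that each lies inside the locus $Y^{[\![\ad]\!]}_{(\be)}$ on which the corresponding $\varphi_\ad^\be$ is defined; the cocycle condition then forces the locally transported sections to agree on the overlaps of the $Y^{[\![\ad]\!]}_{(\be)}$, so that they glue to one multi-section defined on a neighborhood of $\coprod_{(\be)<(\ad)}\mathcal Q^{[\![\ad]\!]}_{(\be),\eps}$.

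Next I would extend this partial section over the open stratum $\mathcal Q^{[\![\ad]\!]}_{(\ad),\eps}$ using the extension property of multi-sections (partition of unity on the coarse space $|Y_T^{[\![\ad]\!]}|$ together with the additive structure recalled before Lemma~\ref{rat-gen}), and then perturb the extension generically, keeping it fixed on the deep strata, to make it transversal to $C_{[\![\ad]\!]}$. On the deep strata transversality is automatic: since $\bs_{[\![\be]\!]}$ is transversal to $C_{[\![\be]\!]}$, since $\zeta_\ad^\be$ is proper by Lemma~\ref{lem0.3}, and since $(\bar\varphi_\ad^\be)_\ast[\bC_{[\![\ad]\!]}|_{Y^{[\![\ad]\!]}_{(\be)}}]=[\bC_{[\![\be]\!]}|_{Y^{[\![\be]\!]}_{(\ad)}}]$ by Lemma~\ref{lem0.5}, Lemma~\ref{rat-gen} applied to the proper map $p=\zeta_\ad^\be$ with the cone $\bC_{[\![\be]\!]}$ downstairs pulling back to $\bC_{[\![\ad]\!]}$ shows that the transported section remains transversal to $C_{[\![\ad]\!]}$ there. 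Thus $\bs_{[\![\ad]\!]}$ is a good multi-section, giving (i), and by Proposition~\ref{MulSecVir} its pseudo-cycle $D(\bs_{[\![\ad]\!]})$ represents $[Y_T^{[\![\ad]\!]}]\virt$.

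Finally, for (ii), on $\mathcal Q^{[\![\ad]\!]}_{(\be),\eps}$ the section $\bs_{[\![\ad]\!]}$ is by construction the $\varphi_\ad^\be$-transport of $(\zeta_\ad^\be)\sta\bs_{[\![\be]\!]}$; combining this with the cone identity of Lemma~\ref{lem0.5} and the properness of $\zeta_\ad^\be$, the functoriality of the pseudo-cycle construction of the Gysin map under proper pushforward yields $(\zeta_\ad^\be)_\ast\bigl(D(\bs_{[\![\ad]\!]})\cap\mathcal Q^{[\![\ad]\!]}_{(\be),\eps}\bigr)=D(\bs_{[\![\be]\!]})\cap\zeta_\ad^\be(\mathcal Q^{[\![\ad]\!]}_{(\be),\eps})$, which is exactly the asserted identity of $\QQ$-currents. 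The main obstacle I expect is the bookkeeping required to glue the locally transported sections into a single globally defined good multi-section: the vector bundles $E_{[\![\ad]\!]}$ are not comparable across different $(\ad)$ (only the bundle-stacks $\bF_{[\![\ad]\!]}$ and the obstruction sheaves $\sF_{[\![\ad]\!]}$ are, via $\varphi_\ad^\be$), so the gluing, the transversality, and the perturbation must all be carried out at the level of multi-sections of $\sF_{[\![\ad]\!]}$, using property (P) to make sense of transversality, while checking simultaneous compatibility on all overlaps via the cocycle condition and preserving genericity of the extension over the open stratum.
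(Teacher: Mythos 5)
Your proposal is correct and follows essentially the same route as the paper, which constructs the sections by induction over the poset $\cP_{\Lam,d}$ (deferring to the line-by-line argument of Lemma~5.6 of \cite{LiJ}), prescribes $\bs_{[\![\ad]\!]}$ on each stratum $\mathcal Q^{[\![\ad]\!]}_{(\be),\epsilon}$ as the pullback $(\zeta_\ad^\be)\sta\bs_{[\![\be]\!]}$ via the identification $\sF_{[\![\ad]\!]}|_{Y^{[\![\ad]\!]}_{(\be)}}=(\zeta_\ad^\be)\sta\sF_{[\![\be]\!]}$, extends over the open stratum by the extension property of multi-sections, and obtains transversality on the deep strata from Lemma~\ref{lem0.5} combined with Lemma~\ref{rat-gen}. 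The only difference is that you have unpacked the details that the paper delegates to the citation of \cite{LiJ}.
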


\begin{proof}
We follow the proof of \cite[Lemma 5.6]{LiJ} line by line, with $\mathcal Q^\alpha_{\beta,\epsilon}$
(respectively, $s_\alpha$) in \cite[p. 2156]{LiJ} replaced by $\mathcal Q^{[\![\ad]\!]}_{(\be),\epsilon}$ 
(respectively, $\bs_{[\![\ad]\!]}$).
 
To carry the argument in \cite[p. 2156]{LiJ} through in the current situation, two modifications are
necessary. The first is using multi-sections of $\sF_{[\![\ad]\!]}$, etc. The two properties of
sections we used in the proof of \cite[Lemma 5.6]{LiJ} are the existence of extensions and
general position results. For multiple-sections, similar results hold as we have mentioned before.

The other is to choose multi-section $\bs_{[\![\ad]\!]}|_{[\![\be]\!]}$ of 
$\sF_{[\![\ad]\!]}|_{\mathcal Q^{[\![\ad]\!]}_{(\be),\epsilon}}$ to be the pullback
$$\bs_{[\![\ad]\!]}|_{[\![\be]\!]} = (\zeta_\ad^\be)\sta
\bigl(\bs_{[\![\be]\!]}|_{\zeta_\ad^\be(\mathcal Q^{[\![\ad]\!]}_{(\be), \epsilon})}\bigr).
$$
(Compare the construction of $s_\alpha|_\beta=s_\beta|_{\mathcal Q^\alpha_{(\beta,\alpha)}}$ 
in \cite[p. 2156]{LiJ}.) Since $\sF_{[\![\ad]\!]}|_{Y^{[\![\ad]\!]}_{(\be)}}
=(\zeta_\ad^\be)\sta\sF_{[\![\be]\!]}$, such pullback is well-defined.

What we need to make sure is that the section $\bs_{[\![\ad]\!]}|_{[\![\be]\!]}$ 
(of $\sF_{[\![\ad]\!]}|_{\mathcal Q^{[\![\ad]\!]}_{(\be),\epsilon}}$)
intersects transversally with the cycle $C_{[\![\ad]\!]}$; this is true, following Lemma \ref{lem0.5} 
and Lemma \ref{rat-gen}. This completes the proof  of the lemma.
\end{proof}

\subsection{Approximating virtual cycles}
\label{subsect_Approx(continued)}
 {In this subsection, we define the pseudo-cycle $\Theta^{[\![\alpha, \delta]\!]} $ and study its properties. The formula  (\ref{Identity-VirTheta}) below can be roughly thought of as a 
decomposition of the virtual cycle $\ev_*\big [Y_T^{[\![\alpha, \delta]\!]} \big ]^{\rm vir}$ as a sum of cycles $\Theta^{[\![\beta, \eta]\!]} $ supported near $\alpha$-diagonals. The ideal situation is that we have a similar 
decomposition for $\ev_*\big [Y_T^{[n,d ]} \big ]^{\rm vir}$ in $(Y_T^{[n]})^3$. Unfortunately, such a decomposition doesn't exist. However, the decomposition (\ref{Identity-VirTheta}) works equally well as if we had a decomposition for $\ev_*\big [Y_T^{[n,d ]} \big ]^{\rm vir}$. This is carried out in subsections \ref{subsect_IntWith} and \ref{subsect_Structure}. In fact, the main reason for introducing  Hilbert schemes of $\alpha$-points $Y_T^{[\![\alpha]\!}$,  non-separated spaces $Y_T^{[\![\le\alpha]\!}$, and moduli spaces $Y_T^{[\![\alpha, \delta]\!}$ of $\alpha$-stable maps to Hilbert schemes  is to provide appropriate spaces where we can define $\Theta^{[\![\alpha, \delta]\!]} $.}

Let $(\beta, \eta) \le (\alpha, \delta) \in \mathcal P_{\Lambda, d}$. Define
$$
\phi_{\beta, \alpha}: \, Y_T^{[\![\beta]\!]} \to Y_T^{[\![\le \alpha]\!]}, \quad
\w \phi_{\beta, \alpha}: \, Y_T^{[\![\le \beta]\!]} \to Y_T^{[\![\le \alpha]\!]}
$$
to be the open immersions induced from the construction (\ref{YLeAlpha}).
The evaluation map ${\rm ev}_i: Y_T^{[\Lambda, d]} \to Y_T^{[\Lambda]}$ induces 
an evaluation map $Y_T^{[\![\alpha, \delta]\!]} \to Y_T^{[\![\alpha]\!]}$ 
which will be denoted again by ${\rm ev}_i$. Let 
$\ev = {\rm ev}_1 \times {\rm ev}_2 \times {\rm ev}_3: 
Y_T^{[\![\alpha, \delta]\!]} \to (Y_T^{[\![\alpha]\!]})^3$.
Since $\ev_i: Y_T^{[\![\alpha, \delta]\!]} = Y_T^{[\alpha, \delta]} \times_{Y_T^{(\alpha)}} 
Y_T^\Lambda \to Y_T^{[\![\alpha]\!]} = Y_T^{[\alpha]} \times_{Y_T^{(\alpha)}} Y_T^\Lambda$ 
does not affect the factor $Y_T^\Lambda$, we have $\ev(Y_T^{[\![\alpha, \delta]\!]}) 
\subset (Y_T^{[\![\alpha]\!]})^3 \times_{(Y_T^\Lambda)^3} \iota_\Lambda(Y_T^\Lambda)$ where 
$$
\iota_\Lambda: Y_T^\Lambda \to (Y_T^\Lambda)^3
$$ 
is the diagonal embedding. Define the indexing morphism to be
\begin{eqnarray}  \label{IndexMap}
\ind: \quad \bigcup_{(\beta, \eta) \le (\alpha, \delta)} (\phi^3_{\beta, \alpha}) \ev 
(Y_T^{[\![\beta, \eta]\!]}) \lra \iota_\Lambda(Y_T^\Lambda) \cong Y_T^\Lambda.
\end{eqnarray}

\begin{definition}   \label{ThetaAlpha}
Define the pseudo-cycle $\Theta^{[\![\alpha, \delta]\!]} \sub (Y_T^{[\![\le \alpha]\!]})^3$ inductively by
\begin{eqnarray}  \label{ThetaAlpha.1}
\Theta^{[\![\alpha, \delta]\!]} 
= (\phi^3_{\alpha, \alpha})_*\ev_*D(\bs_{[\![\alpha, \delta]\!]}) - \sum_{(\beta, \eta) 
  < (\alpha, \delta)} (\W \phi^3_{\beta, \alpha})_*\Theta^{[\![\beta, \eta]\!]}.
\end{eqnarray}
\end{definition}

By Proposition~\ref{MulSecVir}, we obtain
\begin{eqnarray}  \label{Identity-VirTheta}
   (\phi^3_{\alpha, \alpha})_*\ev_*\big [Y_T^{[\![\alpha, \delta]\!]} \big ]^{\rm vir}
= \sum_{(\beta, \eta) \le (\alpha, \delta)} (\W \phi^3_{\beta, \alpha})_*[\Theta^{[\![\beta, \eta]\!]}].
\end{eqnarray}
Further properties of the pseudo-cycles $\Theta^{[\![\alpha, \delta]\!]}$ are contained 
in the next two lemmas which are the analogues of Lemmas 5.7 and 5.9 in \cite{LiJ}.

\begin{lemma}   \label{lma5.7-LiJ(PropA)}
$\Theta^{[\![\alpha, \delta]\!]} \subset (Y_T^{[\![\le \alpha]\!]})^3 
\times_{(Y_T^\Lambda)^3} \iota_\Lambda(\Delta_{\alpha, \epsilon})$ for sufficiently small $c > 0$.
\end{lemma}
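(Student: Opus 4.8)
The plan is to prove, by induction on $(\alpha,\delta)$ with respect to the partial order on $\mathcal P_{\Lambda,d}$, the sharper statement that as $\QQ$-currents $\Theta^{[\![\alpha,\delta]\!]}$ coincides with the part of $(\phi^3_{\alpha,\alpha})_*\ev_*D(\bs_{[\![\alpha,\delta]\!]})$ that is indexed, via the morphism $\ind$ of (\ref{IndexMap}), over the top cell $Q^\alpha_{\alpha,\epsilon}$. Since there is no $\gamma$ with $\alpha\ge\gamma>\alpha$, we have $Q^\alpha_{\alpha,\epsilon}=\Delta_{\alpha,\epsilon}$, and this sharper statement immediately forces $\ind$ to carry the support of $\Theta^{[\![\alpha,\delta]\!]}$ into $\Delta_{\alpha,\epsilon}$, which is the asserted inclusion. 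The base case is that of the minimal pairs, where $\alpha=1^\Lambda$: then there are no strictly smaller pairs to subtract in (\ref{ThetaAlpha.1}) and $\Delta_{1^\Lambda}=Y_T^\Lambda$, so the claim holds trivially.

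For the inductive step I would first invoke the partition (\ref{PtnMYAlpha}) to write $D(\bs_{[\![\alpha,\delta]\!]})=\sum_{(\beta,\eta)\le(\alpha,\delta)}D(\bs_{[\![\alpha,\delta]\!]})\cap\mathcal Q^{[\![\alpha,\delta]\!]}_{(\beta,\eta),\epsilon}$ and push this forward by $(\phi^3_{\alpha,\alpha})_*\ev_*$. The summand with $(\beta,\eta)=(\alpha,\delta)$ is indexed over $Q^\alpha_{\alpha,\epsilon}=\Delta_{\alpha,\epsilon}$ and is exactly the term to be retained, so the whole content of the step is to show that the subtracted sum $\sum_{(\beta,\eta)<(\alpha,\delta)}(\W\phi^3_{\beta,\alpha})_*\Theta^{[\![\beta,\eta]\!]}$ accounts for the remaining summands. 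Using the inductive hypothesis together with the factorization $\W\phi_{\beta,\alpha}\circ\phi_{\beta,\beta}=\phi_{\beta,\alpha}$ of the open immersions from (\ref{YLeAlpha}), each subtracted term rewrites as the $\beta$-world top-cell pushforward $(\phi^3_{\beta,\alpha})_*\ev_*\bigl(D(\bs_{[\![\beta,\eta]\!]})\cap\mathcal Q^{[\![\beta,\eta]\!]}_{(\beta,\eta),\epsilon}\bigr)$.

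Next I would match these $\beta$-world top cells with the $\alpha$-world lower cells through the comparison morphism $\zeta_\ad^\be$. The key input is Lemma~\ref{lma5.6-LiJ}~(ii), which supplies the current-level identity $(\zeta_\ad^\be)_*\bigl(D(\bs_{[\![\ad]\!]})\cap\mathcal Q^{[\![\ad]\!]}_{(\be),\epsilon}\bigr)=D(\bs_{[\![\be]\!]})\cap\zeta_\ad^\be\bigl(\mathcal Q^{[\![\ad]\!]}_{(\be),\epsilon}\bigr)$; since $\ev$ does not affect the $Y_T^\Lambda$-factor, the morphism $\zeta_\ad^\be$ preserves the indexing to $Y_T^\Lambda$ and carries the $\alpha$-cell $\mathcal Q^{[\![\alpha,\delta]\!]}_{(\beta,\eta),\epsilon}$ onto the matching portion of the $\beta$-cell. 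Transversality is transported along the relevant pullbacks by Lemma~\ref{rat-gen}, so these identities hold as pseudo-cycles rather than only on the level of supports.

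The remaining work is the cell-by-cell bookkeeping over $Y_T^\Lambda=\coprod_{\gamma\le\alpha}Q^\alpha_{\gamma,\epsilon}$. For a fixed $\gamma<\alpha$, the combinatorial Lemma~5.5 of \cite{LiJ}, recorded as (\ref{beta21}), guarantees that $\Delta_{\beta,\epsilon}\cap Q^\alpha_{\gamma,\epsilon}\ne\emptyset$ forces $\beta\le\gamma$; combined with the scale separation $\epsilon(\alpha)>N\cdot\epsilon(\beta)$, this determines exactly which subtracted terms meet the cell $Q^\alpha_{\gamma,\epsilon}$ and how their $\beta$-world cells $Q^\beta_{\beta,\epsilon}=\Delta_{\beta,\epsilon}$ reassemble. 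I expect the main obstacle to be precisely this reassembly: because $Q^\alpha_{\beta,\epsilon}$ is only a proper part of $Q^\beta_{\beta,\epsilon}=\Delta_{\beta,\epsilon}$, the cancellation is not term-by-term but a M\"obius-type inclusion-exclusion over $\mathcal P_{\Lambda,d}$, and it must be checked as an equality of $\QQ$-currents. I would carry it out by following \cite[Lemma 5.7]{LiJ} line by line, with the $\mathcal Q^\alpha_{\beta,\epsilon}$ and $s_\alpha$ there replaced by $\mathcal Q^{[\![\ad]\!]}_{(\be),\epsilon}$ and $\bs_{[\![\ad]\!]}$, taking $c>0$ small enough that the tubular-neighborhood and disjointness hypotheses underlying (\ref{beta21}) are in force.
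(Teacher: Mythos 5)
Your overall architecture---induction over $\mathcal P_{\Lambda,d}$, the cell decomposition \eqref{PtnMYAlpha}, the comparison morphisms $\zeta_{\alpha,\delta}^{\beta,\eta}$ with Lemma~\ref{lma5.6-LiJ}~(ii), and the bookkeeping via \eqref{beta21}---is exactly the paper's, but the inductive claim you propose to run it on is false, and the term-by-term cancellation you derive from it does not hold. Your ``sharper statement'' $\Theta^{[\![\alpha,\delta]\!]}=(\phi^3_{\alpha,\alpha})_*\ev_*\bigl(D(\bs_{[\![\alpha,\delta]\!]})\cap\mathcal Q^{[\![\alpha,\delta]\!]}_{(\alpha,\delta),\epsilon}\bigr)$ would force the subtracted sum $\sum_{(\beta,\eta)<(\alpha,\delta)}(\W\phi^3_{\beta,\alpha})_*\Theta^{[\![\beta,\eta]\!]}$ to have no part over the top cell $Q^\alpha_{\alpha,\epsilon}=\Delta_{\alpha,\epsilon}$. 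But for $\beta<\alpha$ one has $\Delta_\alpha\subset\Delta_\beta$, hence $\Delta_{\beta,\epsilon}\cap\Delta_{\alpha,\epsilon}\supset\Delta_\alpha\neq\emptyset$, and \eqref{beta21} gives nothing here (it only forces $\beta\le\alpha$, which is automatic). Concretely, for $\Lambda=[3]$, $d=1$, $\beta=(\{1,2\},\{3\})$, $\eta=(1,0)$, the current $(\W\phi^3_{\beta,[3]})_*\bigl(\Theta^{[\![\{1,2\},1]\!]}\times\Theta^{[\![\{3\},0]\!]}\bigr)$ is generically nonzero over $\{x_1=x_2\}$, and its restriction to the locus where $x_3$ is also within $\epsilon([3])$ of $x_1=x_2$ does not vanish; it survives, with a minus sign, inside $\Theta^{[\![3,1]\!]}$. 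The multi-sections of Lemma~\ref{lma5.6-LiJ} are only matched over $\mathcal Q^{[\![\alpha,\delta]\!]}_{(\beta,\eta),\epsilon}$, i.e.\ away from $\Delta_{\alpha,\epsilon}$, so there is no mechanism that cancels this contribution over the top cell.

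The same error propagates into your reduction of the subtracted terms: $(\W\phi^3_{\beta,\alpha})_*\Theta^{[\![\beta,\eta]\!]}$ is \emph{not} equal to $(\phi^3_{\beta,\alpha})_*\ev_*\bigl(D(\bs_{[\![\beta,\eta]\!]})\cap\mathcal Q^{[\![\beta,\eta]\!]}_{(\beta,\eta),\epsilon}\bigr)$. The correct inductive input is only the support statement $\Theta^{[\![\beta,\eta]\!]}\subset(Y_T^{[\![\le\beta]\!]})^3\times_{(Y_T^\Lambda)^3}\iota_\Lambda(\Delta_{\beta,\epsilon})$ together with the defining recursion read backwards, $(\phi^3_{\beta,\beta})_*\ev_*D(\bs_{[\![\beta,\eta]\!]})=\sum_{(\gamma,\rho)\le(\beta,\eta)}(\W\phi^3_{\gamma,\beta})_*\Theta^{[\![\gamma,\rho]\!]}$. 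The repair is to prove only what the lemma asserts: fix $\beta<\alpha$ and show that $\Theta^{[\![\alpha,\delta]\!]}$ vanishes over the \emph{lower} cell $Q^{\alpha}_{\beta,\epsilon}$, by checking that over that cell the \emph{total} subtracted sum---reassembled via \eqref{beta21} and the recursion above into $\sum_{\eta}(\phi^3_{\beta,\alpha})_*\ev_*\bigl(D(\bs_{[\![\beta,\eta]\!]})\cap\zeta_{\alpha,\delta}^{\beta,\eta}(\mathcal Q^{[\![\alpha,\delta]\!]}_{(\beta,\eta),\epsilon})\bigr)$---equals the corresponding restriction of $(\phi^3_{\alpha,\alpha})_*\ev_*D(\bs_{[\![\alpha,\delta]\!]})$, which is exactly where Lemma~\ref{lma5.6-LiJ}~(ii) enters. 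No identity over the top cell is needed, and none is true.
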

\begin{proof}
We use induction on the order of $(\alpha, \delta) \in \mathcal P_{\Lambda, d}$.
Assume that $(\alpha, \delta)$ is minimal.
Then for each $i$, we have either $(\alpha_i, \delta_i) = (1, 0)$, 
or $\alpha_i = 2$ and $\delta_i > 0$. So 
$
Y_T^{[\![\alpha, \delta]\!]} 
= Y_T^{[\![\alpha, \delta]\!]} \times_{Y^\Lambda_T} \Delta_{\alpha}
= Y_T^{[\![\alpha, \delta]\!]} \times_{Y^\Lambda_T} \Delta_{\alpha, \epsilon}.
$
Thus, $\Theta^{[\![\alpha, \delta]\!]} 
= (\phi^3_{\alpha, \alpha})_*\ev_*D(\bs_{[\![\alpha, \delta]\!]}) \subset 
(Y_T^{[\![\le \alpha]\!]})^3 \times_{(Y_T^\Lambda)^3} \iota_\Lambda(\Delta_{\alpha, \epsilon})$.

Next, we assume that our lemma is true for every $(\gamma, \rho)$ with 
$(\gamma, \rho) < (\alpha, \delta)$. Recall that 
${Y^\Lambda_T} = \coprod_{\beta \le \alpha} Q^{\alpha}_{\beta, \epsilon}$ and $Q^{\alpha}_{\alpha, \epsilon} = \Delta_{\alpha, \epsilon}$.
So to prove the lemma, it suffices to verify $\Theta^{[\![\alpha, \delta]\!]} \cap 
\big ((Y_T^{[\![\le \alpha]\!]})^3 \times_{(Y_T^\Lambda)^3} 
\iota_\Lambda(Q^{\alpha}_{\beta, \epsilon}) \big ) = \emptyset$ for every $\beta < \alpha$.
By (\ref{ThetaAlpha.1}), this is equivalent to proving that the intersection
\begin{eqnarray}  \label{lma5.7-LiJ(PropA).1}
& &(\phi^3_{\alpha, \alpha})_*\ev_*D(\bs_{[\![\alpha, \delta]\!]}) 
   \cap \big ((Y_T^{[\![\le \alpha]\!]})^3 \times_{(Y_T^\Lambda)^3} 
   \iota_\Lambda(Q^{\alpha}_{\beta, \epsilon}) \big )  \nonumber    \\
&=&\sum_{(\gamma, \rho) < (\alpha, \delta)} (\W \phi^3_{\gamma, \alpha})_*
   \Theta^{[\![\gamma, \rho]\!]} \cap \big ((Y_T^{[\![\le \alpha]\!]})^3 
   \times_{(Y_T^\Lambda)^3} \iota_\Lambda(Q^{\alpha}_{\beta, \epsilon}) \big ).
\end{eqnarray}
On one hand, if $(\gamma, \rho) < (\alpha, \delta)$, then $\Theta^{[\![\gamma, \rho]\!]} 
\subset (Y_T^{[\![\le \gamma]\!]})^3 \times_{(Y_T^\Lambda)^3} 
\iota_\Lambda(\Delta_{\gamma, \epsilon})$ by induction. 
Thus, a nonempty $(\W \phi^3_{\gamma, \alpha})_*\Theta^{[\![\gamma, \rho]\!]} \cap 
\big ((Y_T^{[\![\le \alpha]\!]})^3 \times_{(Y_T^\Lambda)^3} 
\iota_\Lambda(Q^{\alpha}_{\beta, \epsilon}) \big )$ forces $\Delta_{\gamma, \epsilon} \cap 
Q^{\alpha}_{\beta, \epsilon} \ne \emptyset$ which in turn implies $\gamma \le \beta$ by
\eqref{beta21}. Therefore, the right-hand-side of (\ref{lma5.7-LiJ(PropA).1}) equals
\begin{eqnarray}     \label{lma5.7-LiJ(PropA).100}
& &\sum_{(\gamma, \rho) < (\alpha, \delta), \gamma \le \beta} (\W \phi^3_{\gamma, \alpha})_*
   \Theta^{[\![\gamma, \rho]\!]} \cap \big ((Y_T^{[\![\le \alpha]\!]})^3 
   \times_{(Y_T^\Lambda)^3} \iota_\Lambda(Q^{\alpha}_{\beta, \epsilon}) \big )   \nonumber \\
&=&\sum_{(\beta, \eta) \le (\alpha, \delta)} \sum_{(\gamma, \rho) \le (\beta, \eta)} 
   (\W \phi^3_{\beta, \alpha})_*(\W \phi^3_{\gamma, \beta})_*
   \Theta^{[\![\gamma, \rho]\!]} \cap \big ((Y_T^{[\![\le \alpha]\!]})^3 
   \times_{(Y_T^\Lambda)^3} \iota_\Lambda(Q^{\alpha}_{\beta, \epsilon}) \big ).
\end{eqnarray}
Since $(\phi^3_{\beta, \beta})_*\ev_*D(\bs_{[\![\beta, \eta]\!]}) = \sum_{(\gamma, \rho) 
\le (\beta, \eta)} (\W \phi^3_{\gamma, \beta})_*\Theta^{[\![\gamma, \rho]\!]}$, 
\eqref{lma5.7-LiJ(PropA).100} is equal to
\begin{eqnarray}   \label{lma5.7-LiJ(PropA).101}
& &\sum_{(\beta, \eta) \le (\alpha, \delta)} (\W \phi^3_{\beta, \alpha})_*
   (\phi^3_{\beta, \beta})_*\ev_*D(\bs_{[\![\beta, \eta]\!]}) 
   \cap \big ((Y_T^{[\![\le \alpha]\!]})^3 \times_{(Y_T^\Lambda)^3} 
   \iota_\Lambda(Q^{\alpha}_{\beta, \epsilon}) \big )    \nonumber     \\
&=&\sum_{(\beta, \eta) \le (\alpha, \delta)} (\phi^3_{\beta, \alpha})_*
   \ev_*D(\bs_{[\![\beta, \eta]\!]}) \cap \big ((Y_T^{[\![\le \alpha]\!]})^3 
   \times_{(Y_T^\Lambda)^3} \iota_\Lambda(Q^{\alpha}_{\beta, \epsilon}) \big ).
\end{eqnarray}
Since $Q^{\alpha}_{\beta, \epsilon} = \Delta_{\beta, \epsilon} - 
\cup_{\alpha \ge \gamma > \beta} \Delta^{\alpha}_{\gamma, \epsilon}$,
we see that $(\phi^3_{\beta, \alpha})_*\ev_*D(\bs_{[\![\beta, \eta]\!]}) \cap 
\big ((Y_T^{[\![\le \alpha]\!]})^3 \times_{(Y_T^\Lambda)^3} 
\iota_\Lambda(Q^{\alpha}_{\beta, \epsilon}) \big )$ is contained in 
$\phi^3_{\beta, \alpha} \ev \big ( \zeta_\ad^\be  
(\mathcal Q^{[\![\alpha, \delta]\!]}_{(\beta, \eta), \epsilon})\big )$.  
So \eqref{lma5.7-LiJ(PropA).101} (hence the right-hand-side of (\ref{lma5.7-LiJ(PropA).1})) equals
\begin{eqnarray}  \label{lma5.7-LiJ(PropA).2}
& &\sum_{(\beta, \eta) \le (\alpha, \delta)} (\phi^3_{\beta, \alpha})_*
   \ev_*D(\bs_{[\![\beta, \eta]\!]}) \cap \phi^3_{\beta, \alpha} \ev 
   \big ( \zeta_\ad^\be (\mathcal Q^{[\![\alpha, \delta]\!]}_{(\beta, \eta), \epsilon})
   \big )    \nonumber   \\
&=&\sum_{(\beta, \eta) \le (\alpha, \delta)} (\phi^3_{\beta, \alpha})_*\ev_* 
   \big (D(\bs_{[\![\beta, \eta]\!]}) \cap \zeta_\ad^\be  
   (\mathcal Q^{[\![\alpha, \delta]\!]}_{(\beta, \eta), \epsilon}) \big ) .
\end{eqnarray}
On the other hand, $(\phi^3_{\alpha, \alpha})_*\ev_*D(\bs_{[\![\alpha, \delta]\!]})$ is 
supported on $\cup_{(\gamma, \rho) \le (\alpha, \delta)} \phi^3_{\alpha, \alpha} \ev
(\mathcal Q^{[\![\alpha, \delta]\!]}_{(\gamma, \rho), \epsilon})$ by (\ref{PtnMYAlpha}). Moreover, 
$\phi^3_{\alpha, \alpha} \ev(\mathcal Q^{[\![\alpha, \delta]\!]}_{(\gamma, \rho), \epsilon})$
is contained in $(Y_T^{[\![\le \alpha]\!]})^3 \times_{(Y_T^\Lambda)^3} 
\iota_\Lambda(Q^{\alpha}_{\gamma, \epsilon})$, and the subsets 
$\iota_\Lambda(Q^{\alpha}_{\gamma, \epsilon}), \gamma \le \alpha$ are disjoint. 
So the left-hand-side of (\ref{lma5.7-LiJ(PropA).1}) is equal to
\begin{eqnarray*} 
& &\sum_{(\beta, \eta) \le (\alpha, \delta)} (\phi^3_{\alpha, \alpha})_*\ev_*
   D(\bs_{[\![\alpha, \delta]\!]}) \cap \phi^3_{\alpha, \alpha} \ev
   (\mathcal Q^{[\![\alpha, \delta]\!]}_{(\beta, \eta), \epsilon})    \\
&=&\sum_{(\beta, \eta) \le (\alpha, \delta)} (\phi^3_{\alpha, \alpha})_*\ev_*
   (D(\bs_{[\![\alpha, \delta]\!]}) \cap   
   \mathcal Q^{[\![\alpha, \delta]\!]}_{(\beta, \eta), \epsilon})  \\
&=&\sum_{(\beta, \eta) \le (\alpha, \delta)} (\phi^3_{\beta, \alpha})_*\ev_*
   \big ( (\zeta_\ad^\be)_*\bigl(D(\bs_{[\![\ad]\!]}) \cap 
   \mathcal Q^{[\![\ad]\!]}_{(\be), \epsilon}\bigr) \big )   \\
&=&\sum_{(\beta, \eta) \le (\alpha, \delta)} (\phi^3_{\beta, \alpha})_*\ev_*
   \big (D(\bs_{[\![\beta, \eta]\!]}) \cap \zeta_\ad^\be  
   (\mathcal Q^{[\![\alpha, \delta]\!]}_{(\beta, \eta), \epsilon}) \big )
\end{eqnarray*}
where we have used Lemma~\ref{lma5.6-LiJ}~(ii) in the last step. 
Combining with (\ref{lma5.7-LiJ(PropA).2}), we get (\ref{lma5.7-LiJ(PropA).1}).
\end{proof}

\begin{lemma}   \label{ProdTheta}
Let $(\alpha, \delta) \in \mathcal P_{\Lambda, d}$ with 
$\alpha = (\alpha_1, \ldots, \alpha_l)$. Then,
$\Theta^{[\![\alpha, \delta]\!]} = \prod_{i=1}^l \Theta^{[\![\alpha_i, \delta_i]\!]}$
via the natural identification $(Y_T^{[\![\le \alpha]\!]})^3 = \prod_{i=1}^l (Y_T^{[\![\le \alpha_i]\!]})^3$.
\end{lemma}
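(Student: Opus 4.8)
The plan is to induct on the order of $(\alpha,\delta)$ in $\mathcal P_{\Lambda,d}$, exploiting the fact that every space and every morphism entering Definition~\ref{ThetaAlpha} factors through the blocks of $\alpha$. The cleanest route is to prove the product identity not for $\Theta^{[\![\alpha,\delta]\!]}$ directly, but for the ``summed'' class on the left of \eqref{Identity-VirTheta}, and then to peel off the top term. Write $\alpha=(\alpha_1,\dots,\alpha_l)$. Since $\beta\le\alpha$ means $\beta$ refines $\alpha$, restricting $\beta$ to each block $\alpha_i$ yields a partition $\beta^{(i)}$ of $\alpha_i$, and $\beta\mapsto(\beta^{(1)},\dots,\beta^{(l)})$ is an order isomorphism $\{\beta\le\alpha\}\cong\prod_i\{\beta^{(i)}\le\alpha_i\}$; the weighted version gives $\{(\beta,\eta)\le(\alpha,\delta)\}\cong\prod_i\{(\beta^{(i)},\eta^{(i)})\le(\alpha_i,\delta_i)\}$. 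The first thing I would check is that the gluing \eqref{YLeAlpha} is compatible with this block decomposition, so that $Y_T^{[\![\le\alpha]\!]}=\prod_i Y_T^{[\![\le\alpha_i]\!]}$ relative to $T$, with the equivalences $\zeta$, and hence the open immersions $\phi_{\beta,\alpha}=\prod_i\phi_{\beta^{(i)},\alpha_i}$ and $\W\phi_{\beta,\alpha}=\prod_i\W\phi_{\beta^{(i)},\alpha_i}$, all respecting the product, and $\ev\colon Y_T^{[\![\alpha,\delta]\!]}\to(Y_T^{[\![\alpha]\!]})^3$ being the external product of the evaluation maps on the factors $Y_T^{[\![\alpha_i,\delta_i]\!]}$ from \eqref{Y-alpha}. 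In particular $\W\phi_{\alpha,\alpha}=\mathrm{id}$.

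Next I would factor the summed virtual class. By \eqref{Y-alpha} the space $Y_T^{[\alpha,\delta]}=\prod_i Y_T^{[\alpha_i,\delta_i]}$ is a fibre product over $T$ of moduli of stable maps, so its perfect obstruction theory is the external product of those of the factors and $[Y_T^{[\alpha,\delta]}]^{\rm vir}=\prod_i[Y_T^{[\alpha_i,\delta_i]}]^{\rm vir}$ (the product formula of Behrend \cite{Beh2}, exactly as invoked in Lemma~\ref{0.4} and Lemma~\ref{lem0.5}); flat pullback then gives $[Y_T^{[\![\alpha,\delta]\!]}]^{\rm vir}=\prod_i[Y_T^{[\![\alpha_i,\delta_i]\!]}]^{\rm vir}$. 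Combined with the previous step and the K\"unneth compatibility of proper pushforward under external products of maps, the class on the left of \eqref{Identity-VirTheta} factors:
\begin{align*}
S:=(\phi^3_{\alpha,\alpha})_*\ev_*[Y_T^{[\![\alpha,\delta]\!]}]^{\rm vir}
&=\prod_{i=1}^l (\phi^3_{\alpha_i,\alpha_i})_*\ev_*[Y_T^{[\![\alpha_i,\delta_i]\!]}]^{\rm vir}\\
&=:\prod_{i=1}^l S_i .
\end{align*}

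Then comes the induction itself. Inducting on the order of $(\alpha,\delta)$, I would assume $\Theta^{[\![\gamma,\rho]\!]}=\prod_i\Theta^{[\![\gamma^{(i)},\rho^{(i)}]\!]}$ for all $(\gamma,\rho)<(\alpha,\delta)$. Expanding each factor $S_i$ by \eqref{Identity-VirTheta}, multiplying, and using the poset isomorphism above together with $\W\phi_{\beta,\alpha}=\prod_i\W\phi_{\beta^{(i)},\alpha_i}$, one obtains
\[
S=\prod_i S_i=\sum_{(\beta,\eta)\le(\alpha,\delta)}(\W\phi^3_{\beta,\alpha})_*\Bigl(\prod_{i=1}^l\Theta^{[\![\beta^{(i)},\eta^{(i)}]\!]}\Bigr).
\]
On the other hand, \eqref{Identity-VirTheta} itself reads $S=\sum_{(\beta,\eta)\le(\alpha,\delta)}(\W\phi^3_{\beta,\alpha})_*[\Theta^{[\![\beta,\eta]\!]}]$. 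By the inductive hypothesis the two sums agree term by term for every $(\beta,\eta)<(\alpha,\delta)$, so the top terms, where $(\beta,\eta)=(\alpha,\delta)$, must coincide; since $\W\phi_{\alpha,\alpha}=\mathrm{id}$ this yields $\Theta^{[\![\alpha,\delta]\!]}=\prod_i\Theta^{[\![\alpha_i,\delta_i]\!]}$, completing the induction. The base case, $(\alpha,\delta)$ minimal, is immediate from \eqref{ThetaAlpha.1}, which then has no lower terms, together with the factorizations above.

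The step I expect to be the main obstacle is the first one: carefully verifying that the non-separated glued spaces $Y_T^{[\![\le\alpha]\!]}$ of \eqref{YLeAlpha}, and all of $\phi_{\beta,\alpha}$, $\W\phi_{\beta,\alpha}$, $\ev$, genuinely respect the block decomposition, since the gluing is indexed by the entire poset $\{\beta\le\alpha\}$ and one must check that the equivalences $\zeta^{\gamma}_{\beta}$ factor as external products. Once this product structure and the poset isomorphism are in hand, the virtual-class factorization is the standard product formula and the remainder is formal bookkeeping. If the identity is wanted at the level of currents rather than homology classes, one would in addition choose the good multi-sections of Lemma~\ref{lma5.6-LiJ} as external products $\boxtimes_i\bs_{[\![\alpha_i,\delta_i]\!]}$ --- consistent with both the $\zeta$-compatibility and the transversality required there, as these conditions are preserved under external products --- so that $D(\bs_{[\![\alpha,\delta]\!]})=\prod_i D(\bs_{[\![\alpha_i,\delta_i]\!]})$.
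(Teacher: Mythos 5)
Your proposal is correct and follows essentially the same route as the paper: induct on $(\alpha,\delta)$, factor the total pushforward over the blocks of $\alpha$, expand each factor by the defining recursion \eqref{ThetaAlpha.1}, match the lower terms via the poset isomorphism $\{(\beta,\eta)\le(\alpha,\delta)\}\cong\prod_i\{(\beta^{(i)},\eta^{(i)})\le(\alpha_i,\delta_i)\}$, and peel off the top term. The one point worth stressing is that your closing remark is actually the paper's main engine rather than an optional refinement: since $\Theta^{[\![\alpha,\delta]\!]}$ is a pseudo-cycle and the lemma is later used for support arguments, the paper runs the entire induction at the level of the currents $D(\bs_{[\![\alpha,\delta]\!]})=\prod_i D(\bs_{[\![\alpha_i,\delta_i]\!]})$ (its equation \eqref{ProdTheta.1}) instead of passing through the homology identity \eqref{Identity-VirTheta} and the virtual-class product formula.
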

\begin{proof}
First of all, since $Y_T^{[\![\alpha, \delta]\!]} = \prod_{i=1}^l Y_T^{[\![\alpha_i, \delta_i]\!]}$, we have
\begin{eqnarray}      \label{ProdTheta.1}
D(\bs_{[\![\alpha, \delta]\!]}) = \prod_{i=1}^l D(\bs_{[\![\alpha_i, \delta_i]\!]}).
\end{eqnarray}
Next, to prove the lemma, we use induction on the size $|\Lambda|$ and 
on the order of 
$(\alpha, \delta) \in \mathcal P_{\Lambda, d}$. Assume that $(\alpha, \delta)$ is minimal in 
$\mathcal P_{\Lambda, d}$. Then $(\alpha_i, \delta_i)$ is minimal in 
$\mathcal P_{\alpha_i, \delta_i}$. By (\ref{ThetaAlpha.1}) and \eqref{ProdTheta.1}, 
\begin{eqnarray*}  
\Theta^{[\![\alpha, \delta]\!]} 
= (\phi^3_{\alpha, \alpha})_*\ev_*D(\bs_{[\![\alpha, \delta]\!]})
= \prod_{i=1}^l (\phi^3_{\alpha_i, \alpha_i})_*\ev_*D(\bs_{[\![\alpha_i, \delta_i]\!]})
= \prod_{i=1}^l  \Theta^{[\![\alpha_i, \delta_i]\!]}.
\end{eqnarray*}
In particular, the lemma holds for $|\Lambda| = 1$ (necessarily, $(\alpha, \delta) = (1, 0)$). 
Next, assume that $\Theta^{[\![\beta, \eta]\!]} = \prod_{i} \Theta^{[\![\beta_i, \eta_i]\!]}$ for 
every $(\beta, \eta) < (\alpha, \delta)$. By \eqref{ProdTheta.1} and (\ref{ThetaAlpha.1}),
\begin{eqnarray*}  
   (\phi^3_{\alpha, \alpha})_*\ev_*D(\bs_{[\![\alpha, \delta]\!]})
&=&\prod_{i=1}^l (\phi^3_{\alpha_i, \alpha_i})_*\ev_*D(\bs_{[\![\alpha_i, \delta_i]\!]})  \\
&=&\prod_{i=1}^l \sum_{(\beta^{(i)}, \eta^{(i)}) \le (\alpha_i, \delta_i)} 
   (\W \phi^3_{\beta^{(i)}, \alpha_i})_*\Theta^{[\![\beta^{(i)}, \eta^{(i)}]\!]}  \\
&=&\prod_{i=1}^l \Theta^{[\![\alpha_i, \delta_i]\!]} 
   + \sum_{(\beta, \eta) < (\alpha, \delta)}
   (\W \phi^3_{\beta, \alpha})_*\Theta^{[\![\beta, \eta]\!]}
\end{eqnarray*}
noting that induction has been used in the last step to handle those $\beta^{(i)}$ 
which have length greater than $1$. Applying (\ref{ThetaAlpha.1}) again, we obtain the lemma.
\end{proof}
\subsection{Co-section localizations}
\label{subsect_CosectLoc}

We now apply the co-section localization techniques from \cite{KL1, KL2, LL} to 
the constructions in the previous subsections.
Let $\theta$ be a meromorphic section of $\mathcal O_X(K_X)$, and let $D_0$ and $D_\infty$ be the 
vanishing and pole divisors of $\theta$ respectively. For simplicity, we assume that
$D_0$ and $D_\infty$ are smooth irreducible curves intersecting transversally. 
Let $X^{[n, d]}_{\theta}$ be the subset of 
$X^{[n, d]}$ consisting of those $\varphi$ whose standard decomposition
$(\varphi_1, \ldots, \varphi_l)$ have the property that for each $i$, 
either $\varphi_i$ is constant or the support of $\varphi_i$ lies in $D_0 \cup D_\infty$.
The meromorphic section $\theta$ induces a meromorphic section $\theta^{[n]}$ 
of $\Omega^2_{\Xn}$. By \cite{KL2, LL}, we obtain the localized virtual 
fundamental cycle $\big [X^{[n, d]} \big ]^{\rm vir}_{\rm loc}
\in A_*(X^{[n, d]}_{\theta})$ of $X^{[n, d]}$ 
such that $\iota_*\big [X^{[n, d]} \big ]^{\rm vir}_{\rm loc}
= \big [X^{[n, d]} \big ]^{\rm vir}$
where $\iota_*$ is the map induced by the inclusion map $\iota: X^{[n, d]}_{\theta} 
\hookrightarrow X^{[n, d]}$. For simplicity of notations, we write
$\big [X^{[n, d]} \big ]^{\rm vir}_{\rm loc} = \big [X^{[n, d]} \big ]^{\rm vir}$.

The constructions in \cite{KL2, LL} and Subsections~5.1-\ref{subsect_Approx(continued)} 
are canonical. Applying the constructions in \cite{KL2, LL} 
to Subsections~5.1-\ref{subsect_Approx(continued)}, we obtain localized cycles 
$\big [X^{[\![\alpha,\delta]\!]} \big ]^{\rm vir}_{\rm loc}
\in H_*(X^{[\![\alpha,\delta]\!]}_{\theta}; \mathbb Q)$, { $D(\bs_{[\![\alpha, \delta]\!]})_{{\rm loc}}$}, and 
$\Theta^{[\![\alpha, \delta]\!]}_{{\rm loc}} \sub \cup_{(\beta, \eta) \le 
(\alpha,\delta)} \phi^3_{\beta, \alpha}\ev(X^{[\![\beta, \eta]\!]}_{\theta})$ with
$\big [X^{[\![\alpha,\delta]\!]} \big ]^{\rm vir}_{\rm loc}
= \big [X^{[\![\alpha,\delta]\!]} \big ]^{\rm vir}$ and 
$[\Theta^{[\![\alpha,\delta]\!]}_{{\rm loc}}] = [\Theta^{[\![\alpha, \delta]\!]}]$
in $H_*(X^{[\![\alpha,\delta]\!]}; \mathbb Q)$ and 
$H_*\big ( (X^{[\![\le \alpha]\!]})^3; \mathbb Q \big )$ respectively.
Here the subset $X^{[\![\alpha,\delta]\!]}_\theta \subset X^{[\![\alpha,\delta]\!]}$ is
defined similarly as $X^{[n, d]}_{\theta} \subset X^{[n, d]}$.
\subsection{Extensions of Heisenberg monomial classes}
\label{subsect_Extension}

Let $(\beta, \eta) \in \mathcal P_{[n], d}$. To study the pairings with
$(\W \phi^3_{\beta, [n]})_*[\Theta^{[\![\beta, \eta]\!]}]$, we need to extend 
the classes $(f^{[\![n]\!]})^*w \in H^*(X^{[\![n]\!]})$ from 
$X^{[\![n]\!]}$ to $X^{[\![\le n]\!]}$, where $f^{[\![n]\!]}: X^{[\![n]\!]} \to X^{[n]}$ 
is the tautological map. Let $f^{[\![\beta]\!]} = \prod_i f^{[\![|\beta_i|]\!]}$.

\begin{lemma}   \label{ExtHeis}
Let $\alpha_i \in H^*(X)$ be homogeneous with $|\alpha_i| > 0$, and $\alpha_{i,j} = 1_X$. Let
\begin{eqnarray}      \label{ExtHeis.01}
w = \displaystyle{\left (
\prod_{i=1}^t \prod_{j=1}^{s_i} \frak a_{-i}(\alpha_{i,j}) \right )
\left ( \prod_{i=1}^k \frak a_{-n_i}(\alpha_i) \right ) \vac} \quad \in H^*(\Xn).
\end{eqnarray}
Then there exists a class $w^{[\![\le n]\!]} \in H^*(X^{[\![\le n]\!]})$ such that 
$(\phi_{[n], [n]})^*w^{[\![\le n]\!]} = (f^{[\![n]\!]})^*w$,
and that if $\beta = (\beta_1, \ldots, \beta_l) \le [n]$, then via the identification 
$X^{[\![\le \beta]\!]} = \prod_{i=1}^l X^{[\![\le \beta_i]\!]}$,
\begin{eqnarray}      \label{ExtHeis.03}
(\W \phi_{\beta, [n]})^*w^{[\![\le n]\!]} = \sum_{w_1 \circ \cdots \circ w_l = w} 
\otimes_{i=1}^l w_i^{[\![\le |\beta_i|]\!]}
\end{eqnarray}
where each $w_i \in H^*(X^{[|\beta_k|]})$ is a Heisenberg monomial class.
\end{lemma}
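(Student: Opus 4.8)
The plan is to construct $w^{[\![\le n]\!]}$ by induction on $n=|\Lambda|$. The base case $n=1$ is immediate, since $X^{[\![\le 1]\!]}=X^{[\![1]\!]}=X$ and one sets $w^{[\![\le 1]\!]}=w$. For the inductive step, recall that $X^{[\![\le n]\!]}$ is glued from the charts $X^{[\![\beta]\!]}$, $\beta\le [n]$, by the construction \eqref{YLeAlpha}. On the top chart $\beta=[n]$ the first required identity forces the value $(f^{[\![n]\!]})^*w$, while on a proper open subset $X^{[\![\le\beta]\!]}=\prod_{i=1}^l X^{[\![\le\beta_i]\!]}$ the formula \eqref{ExtHeis.03} forces the external sum $\sum_{w_1\circ\cdots\circ w_l=w}\otimes_{i=1}^l w_i^{[\![\le|\beta_i|]\!]}$, whose factors $w_i^{[\![\le|\beta_i|]\!]}$ are supplied by the inductive hypothesis because $|\beta_i|<n$. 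Thus a candidate restriction is prescribed on every chart, and the substance of the proof is to show that these prescriptions come from one global cohomology class.

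To make the construction manifestly global I would realize $w$ through tautological classes. By Theorem~2.9 of \cite{LQW4} (cf. Lemma~\ref{PolyGkAlphaRmk}) the monomial $w$ in \eqref{ExtHeis.01} is a universal polynomial $P$ in the Chern character classes $G_k(\gamma,n)$, with $\gamma$ running over a basis of $H^*(X)$. Each $G_k(\gamma,n)$ is built from $\ch(\mathcal O_{\mathcal Z_n})$, and on a chart $X^{[\![\beta]\!]}=\prod_i X^{[\![\beta_i]\!]}$ the associated subscheme decomposes into the $l$ factor subschemes, so that $\ch$ is additive there. I would therefore define $w^{[\![\le n]\!]}$ as the same polynomial $P$ in the global tautological classes $G_k(\gamma)^{[\![\le n]\!]}$; by construction this restricts on the top chart to $(f^{[\![n]\!]})^*w$.

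The combinatorial heart is the overlap compatibility together with the restriction formula \eqref{ExtHeis.03}. Over an overlap $X^{[\![\beta]\!]}_\gamma\cong X^{[\![\gamma]\!]}_\beta$ glued by $\zeta_\beta^\gamma$ the support is already separated into clusters indexed by the common refinement $\beta\wedge\gamma$, so both prescriptions reduce to the additive (coproduct) behaviour of the tautological classes under splitting of disjoint supports. By Proposition~\ref{prop_geom} a Heisenberg monomial is represented by configurations supported at distinct points, and restricting to the locus where the support breaks into disjoint clusters redistributes the creation operators among the clusters; since the $\mathfrak a_{-m}(\gamma)$ super-commute, this redistribution produces exactly the sum over factorizations $w_1\circ\cdots\circ w_l=w$ with weights matched to the $|\beta_i|$. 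Feeding the additive decomposition of the $G_k(\gamma)^{[\![\le n]\!]}$ into $P$ and comparing with the construction one level down then yields \eqref{ExtHeis.03}; keeping the weight bookkeeping and the signs straight is where the care lies.

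The main obstacle I anticipate is the globalization on the non-separated space: producing a genuine class in $H^*(X^{[\![\le n]\!]})$ from the chart-wise data, rather than merely a compatible family of local classes, given that $H^*$ is not a sheaf and that $X^{[\![\le n]\!]}$ is non-Hausdorff along the diagonals. The tautological route above is attractive precisely because the classes $G_k(\gamma)^{[\![\le n]\!]}$ are defined by a single global recipe; the delicate point is to verify that this recipe really does yield well-defined global cohomology classes across the non-Hausdorff doublings over the $\alpha$-diagonals, and to compute their coproduct on the boundary charts so as to match \eqref{ExtHeis.03} exactly. Alternatively one may glue by Mayer--Vietoris, adjoining the charts $X^{[\![\beta]\!]}$ in increasing order of the partial order on $\mathcal P_\Lambda$ and using the overlap compatibility to lift successively; controlling these lifts along the non-separated locus is the same essential difficulty.
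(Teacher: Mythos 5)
Your skeleton --- induction on $n$, values forced chart by chart, Mayer--Vietoris gluing, and Proposition~\ref{prop_geom} to control restrictions to the disjoint-support loci --- is exactly the paper's proof in outline. But your \emph{preferred} route through tautological classes both introduces a gap and buys you nothing. The gap: $X^{[\![\le n]\!]}$ is not only non-separated but singular (each chart $X^{[\![\beta]\!]}=X^{[\beta]}\times_{X^{(\beta)}}X^\Lambda$ is a fiber product over the symmetric product), so defining $\ch(\mathcal O_{\mathcal Z})$ globally there is a genuinely delicate matter that the paper deliberately sidesteps --- its stated policy is to perform only topological operations (pullbacks of cohomology classes, cap products) on $X^{[\![\le n]\!]}$, never to build new geometric classes on it. The redundancy: even if the global classes $G_k(\gamma)^{[\![\le n]\!]}$ existed, writing $w=P(G_\bullet)$ via Lemma~\ref{PolyGkAlphaRmk} and expanding $P$ of the additive decomposition does not by itself give the sum over factorizations $w_1\circ\cdots\circ w_l=w$; matching the two expansions is precisely the geometric coproduct computation you would still have to do with Proposition~\ref{prop_geom}. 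So the detour replaces one hard step with the same hard step plus a foundational one.

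The real content of the lemma, which your proposal only gestures at as ``where the care lies,'' is the pair of overlap identities. The paper checks two kinds of overlaps separately: (a) between two maximal proper charts $\W\phi_{\beta,[n]}(X^{[\![\le\beta]\!]})$ and $\W\phi_{\gamma,[n]}(X^{[\![\le\gamma]\!]})$, where the intersection lands in $X^{[\![\le\beta\wedge\gamma]\!]}$ and agreement follows from a double application of the inductive hypothesis (the associativity-type computation \eqref{ExtHeis.1}); and (b) between the top chart and a proper chart, where one must verify \eqref{ExtHeis.2}. Step (b) is where Proposition~\ref{prop_geom} enters: the cycle $W$ representing $w/\prod_i s_i!$ restricts on $X^{[\![n]\!]}_\beta$ to a cycle representing $\sum\big(\prod_{i,j}1/s_{i,j}!\big)\,w_1\otimes\cdots\otimes w_l$, and the multinomial count $\prod_i s_i!/\prod_{i,j}s_{i,j}!$ of ways to distribute the $\frak a_{-i}(1_X)$ factors among the clusters is exactly what converts this into the unweighted sum over factorizations. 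Your proposal names the right ingredients but defers this bookkeeping, and it does not address overlap type (a) at all. Note also that Mayer--Vietoris gives existence but not uniqueness of $w^{[\![\le n]\!]}$; the paper accepts this and proves separately (Lemma~\ref{IndepExt}) that the ambiguity is harmless for the pairings, which is another reason a canonical tautological construction is not needed.
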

\begin{proof}
We use induction on $n$. The lemma is trivially true when $n=1$. In the following,
assume that the lemma holds for all $X^{[m]}$ with $m < n$.

Let $S$ be the set consisting of all $\beta < [n]$ such that there does not exist
$\gamma < [n]$ with $\beta < \gamma$. Then $X^{[\![\le n]\!]}$ is covered by the open subsets
$\phi_{[n], [n]}(X^{[\![n]\!]})$ and $\W \phi_{\beta, [n]}(X^{[\![\le \beta]\!]}), 
\beta \in S$. Define
$$
w^{[\![\le \beta ]\!]} = \sum_{w_1 \circ \cdots \circ w_l = w} 
\otimes_{i=1}^l w_i^{[\![\le |\beta_i|]\!]} \in H^*(X^{[\![\le \beta]\!]})
= H^*(\W \phi_{\beta, [n]}(X^{[\![\le \beta]\!]}))
$$
for each $\beta \in S$.
Applying the Mayer-Vietoris sequence successively, we see that to prove the existence
of $w^{[\![\le n]\!]} \in H^*(X^{[\![\le n]\!]})$, it suffices to show that $(f^{[\![n]\!]})^*w$ 
and $w^{[\![\le \beta ]\!]}, \beta \in S$ are equal on the overlaps of the open subsets 
$\phi_{[n], [n]}(X^{[\![n]\!]})$ and $\W \phi_{\beta, [n]}(X^{[\![\le \beta]\!]}), 
\beta \in S$.

First of all, let $\beta, \gamma = (\gamma_1, \ldots, \gamma_r) \in S$ and 
$\beta \ne \gamma$. Then, 
$$
\W \phi_{\beta, [n]}(X^{[\![\le \beta]\!]}) \cap \W \phi_{\gamma, [n]}(X^{[\![\le \gamma]\!]}) 
\sub \W \phi_{\beta \wedge \gamma, [n]}(X^{[\![\le \beta \wedge \gamma]\!]}).
$$
Let $\beta_i \wedge \gamma = (\beta_i \cap \gamma_1, \ldots, \beta_i \cap 
\gamma_r) \in \mathcal P_{\beta_i}$. 
Then $(\W \phi_{\beta \wedge \gamma, \beta})^*w^{[\![\le \beta]\!]}$ is equal to
$$
(\W \phi_{\beta \wedge \gamma, \beta})^*\sum_{w_1 \circ \cdots \circ w_l = w} 
      \otimes_{i=1}^l w_i^{[\![\le |\beta_i|]\!]}   
\,\, = \,\, \sum_{w_1 \circ \cdots \circ w_l = w} \otimes_{i=1}^l 
      (\W \phi_{\beta_i \wedge \gamma, \beta_i})^*w_i^{[\![\le |\beta_i|]\!]}.
$$
Applying induction to the classes $w_i^{[\![\le |\beta_i|]\!]}$, we see that
\begin{eqnarray}  \label{ExtHeis.1}
   (\W \phi_{\beta \wedge \gamma, \beta})^*w^{[\![\le \beta]\!]} 
&=&\sum_{w_1 \circ \cdots \circ w_l = w} \otimes_{i=1}^l \left (
   \sum_{w_{i,1} \circ \cdots \circ w_{i, r} = w_i} \otimes_{j=1}^r
   w_{i, j}^{[\![\le |\beta_i \cap \gamma_j|]\!]} \right )   \nonumber   \\
&=&\sum_{w_{1,1} \circ \cdots \circ w_{l, r} = w} \otimes_{i=1}^l \otimes_{j=1}^r
   w_{i, j}^{[\![\le |\beta_i \cap \gamma_j|]\!]}.
\end{eqnarray}
It follows immediately that $(\W \phi_{\beta \wedge \gamma, \beta})^*w^{[\![\le \beta]\!]}
= (\W \phi_{\beta \wedge \gamma, \gamma})^*w^{[\![\le \gamma]\!]}$.

Next, we claim that the restrictions of $(f^{[\![n]\!]})^*w$ and
$w^{[\![\le \gamma]\!]}, \gamma \in S$ to 
$\phi_{[n], [n]}(X^{[\![n]\!]}) \cap \W \phi_{\gamma, [n]}(X^{[\![\le \gamma]\!]})$ are equal. 
Note that $X^{[\![\le \gamma]\!]}$ is covered by the open subsets 
$\phi_{\beta, \gamma}(X^{[\![\beta]\!]}), \beta \le \gamma$, and  
$\phi_{[n], [n]}(X^{[\![n]\!]}) \cap \phi_{\beta, [n]}(X^{[\![\beta]\!]})$
is identified with the images of $X^{[\![n]\!]}_{\beta} \cong 
X^{[\![\beta]\!]}_{[n]}$. So it suffices to prove that
\begin{eqnarray}  \label{ExtHeis.2}
(f^{[\![n]\!]})^*w|_{X^{[\![n]\!]}_{\beta}} = (\zeta_{[n]}^\beta)^*
\big ((\phi_{\beta, \gamma})^*w^{[\![\le \gamma]\!]}|_{X^{[\![\beta]\!]}_{[n]}} \big ).
\end{eqnarray}
To see this, represent each $\alpha_i \in H^*(X)$ by a cycle $X_i$ such that 
$X_1, \ldots, X_k$ are in general position. By Proposition~\ref{prop_geom}, 
the class $w_{[n]} := w/\prod_{i=1}^t s_i!$ is represented by the closure $W$ of 
the subset consisting of elements of the form \eqref{prop_geom.1}.
Then, $(f^{[\![n]\!]})^*w_{[n]}$
is represented by $(f^{[\![n]\!]})^{-1}(W)$. By Proposition~\ref{prop_geom} again, 
the closure of $f^{[\![\beta]\!]} \Big (\zeta_{[n]}^\beta \big ((f^{[\![n]\!]})^{-1}(W) 
\cap X^{[\![n]\!]}_{\beta} \big ) \Big )$ in $X^{[\beta]}$ represents the class
$$
w_\beta := \sum_{w_1 \circ \cdots \circ w_l = w} \left (\prod_{i=1}^t 
\prod_{j=1}^l {1 \over s_{i, j}!} \right ) \cdot w_1 \otimes \cdots \otimes w_l
\in H^*(X^{[\beta]}) \cong \bigotimes_{i=1}^l H^*(X^{[\beta_i]})
$$
where each $w_j \in H^*(X^{[\beta_j]})$ contains exactly $s_{i, j}$ copies of 
$\frak a_{-i}(1_X)$. Note that $\sum_{j=1}^l s_{i, j} = s_i$.
Also, the class $(f^{[\![\beta]\!]})^*w_\beta \in H^*(X^{[\![\beta]\!]})$ is represented by 
the closure of $\zeta_{[n]}^\beta \big ((f^{[\![n]\!]})^{-1}(W) \cap 
X^{[\![n]\!]}_{\beta} \big )$ in $X^{[\![\beta]\!]}$. 
So $(f^{[\![n]\!]})^*w_{[n]}|_{X^{[\![n]\!]}_{\beta}} = (\zeta_{[n]}^\beta)^*
\big ((f^{[\![\beta]\!]})^*w_\beta|_{X^{[\![\beta]\!]}_{[n]}} \big ).$
Note that for fixed integers $s_{i, j}$, the number of choices of $w_1, \ldots, w_l$
satisfying $w_1 \circ \cdots \circ w_l = w$ is precisely equal to
$\prod_{i=1}^t s_i!/\prod_{i=1}^t \prod_{k=1}^l s_{i, j}!$. Therefore,
$(f^{[\![n]\!]})^*w|_{X^{[\![n]\!]}_{\beta}}$ is equal to
\begin{eqnarray}  \label{ExtHeis.3}
(\zeta_{[n]}^\beta)^*\big ((f^{[\![\beta]\!]})^*(\prod_{i=1}^t s_i! \cdot 
   w_\beta)|_{X^{[\![\beta]\!]}_{[n]}} \big )  
= (\zeta_{[n]}^\beta)^*\big ((f^{[\![\beta]\!]})^*\sum_{w_1 \circ \cdots \circ w_l = w} 
   w_1 \otimes \cdots \otimes w_l|_{X^{[\![\beta]\!]}_{[n]}} \big ).
\end{eqnarray}
On the other hand, since $\phi_{\beta, \gamma} = \W \phi_{\beta, \gamma} \circ 
\phi_{\beta, \beta}$, we obtain from \eqref{ExtHeis.1} that
\begin{eqnarray*}
   (\phi_{\beta, \gamma})^*w^{[\![\le \gamma]\!]}   
&=&(\phi_{\beta, \beta})^*\sum_{w_1 \circ \cdots \circ w_l = w} 
   \otimes_{i=1}^l w_i^{[\![\le |\beta_i|]\!]}    
   = \sum_{w_1 \circ \cdots \circ w_l = w} 
   \otimes_{i=1}^l (\phi_{\beta_i, \beta_i})^*w_i^{[\![\le |\beta_i|]\!]}   \\
&=&\sum_{w_1 \circ \cdots \circ w_l = w} 
   \otimes_{i=1}^l (f^{[\![|\beta_i|]\!]})^*w_i    
 = (f^{[\![\beta]\!]})^*\sum_{w_1 \circ \cdots \circ w_l = w} \otimes_{i=1}^l w_i
\end{eqnarray*}
where we have used induction in the third equality. Combining with \eqref{ExtHeis.3} 
verifies \eqref{ExtHeis.2}. 
\end{proof}

Our next lemma says that even though the extension $w^{[\![\le n]\!]} \in 
H^*(X^{[\![\le n]\!]})$ may not be unique, it does not affect the pairings with
$[\Theta^{[\![n, d]\!]}]$. Recall that the tautological map $\rho_{\alpha, \delta}: 
X^{[\![\alpha, \delta]\!]} \to X^{[\alpha, \delta]}$ is a finite map of degree $n!$.

\begin{lemma}   \label{IndepExt}
Let $A_1, A_2, A_3 \in H^*(\Xn)$ be Heisenberg monomial classes. Then, the pairing
$$
\left \langle [\Theta^{[\![n, d]\!]}], \,\, A_1^{[\![\le n]\!]} \otimes A_2^{[\![\le n]\!]} 
\otimes A_3^{[\![\le n]\!]} \right \rangle
$$
is independent of the choices of $A_1^{[\![\le n]\!]}, A_2^{[\![\le n]\!]}, A_3^{[\![\le n]\!]}$.
\end{lemma}
\begin{proof}
Since $[X^{[\![n, d]\!]}]^{\rm vir} = \rho_{[n], d}^*[X^{[n, d]}]^{\rm vir}$, $\left \langle 
[X^{[n,d]}]^{\rm vir}, \,\,\ev^*(A_1 \otimes A_2 \otimes A_3) \right \rangle$ is equal to
$$
{1 \over n!} \left \langle [X^{[\![n, d]\!]}]^{\rm vir}, \,\,
   \rho_{[n], d}^*\ev^*(A_1 \otimes A_2 \otimes A_3) \right \rangle    
= {1 \over n!} \left \langle [X^{[\![n, d]\!]}]^{\rm vir},  \,\,
   \ev^*\bigotimes_{i=1}^3 (f^{[\![n]\!]})^*A_i \right \rangle.
$$
By Lemma~\ref{ExtHeis} and (\ref{Identity-VirTheta}), $\left \langle [X^{[n,d]}]^{\rm vir},  \,\,
\ev^*(A_1 \otimes A_2 \otimes A_3) \right \rangle$ is equal to
\begin{eqnarray}    \label{IndepExt.1}
& &{1 \over n!} \left \langle [X^{[\![n, d]\!]}]^{\rm vir}, \,\,\ev^*\bigotimes_{i=1}^3
   (\phi_{[n], [n]})^*A_i^{[\![\le n]\!]} \right \rangle    \nonumber   \\     
&=&{1 \over n!} \left \langle (\phi^3_{[n], [n]})_*\ev_*[X^{[\![n, d]\!]}]^{\rm vir}, \,\,
   \bigotimes_{i=1}^3 A_i^{[\![\le n]\!]} \right \rangle       \nonumber   \\
&=&{1 \over n!} \sum_{(\alpha, \delta) \le ([n], d)} \left \langle
   (\W \phi^3_{\alpha, [n]})_*[\Theta^{[\![\alpha, \delta]\!]}], \,\, 
   \bigotimes_{i=1}^3 A_i^{[\![\le n]\!]} \right \rangle.    \qquad
\end{eqnarray}

Next, to prove the lemma, we use induction on $n$. When $n = 1$, 
the lemma is trivially true since $A_i^{[\![\le n]\!]} = A_i$. 
Assume that the lemma holds for all $X^{[m]}$ with $m < n$.
Let $(\alpha, \delta) < ([n], d)$. By Lemma~\ref{ProdTheta} and \eqref{ExtHeis.03}, 
$\left \langle (\W \phi^3_{\alpha, [n]})_*[\Theta^{[\![\alpha, \delta]\!]}], \,\,
A_1^{[\![\le n]\!]} \otimes A_2^{[\![\le n]\!]} \otimes A_3^{[\![\le n]\!]} \right \rangle$ 
is equal to
\begin{eqnarray}      \label{IndepExt.2}
& &\left \langle [\Theta^{[\![\alpha, \delta]\!]}], \,\, 
   (\W \phi_{\alpha, [n]})^*A_1^{[\![\le n]\!]}
   \otimes (\W \phi_{\alpha, [n]})^*A_2^{[\![\le n]\!]} \otimes 
   (\W \phi_{\alpha, [n]})^*A_3^{[\![\le n]\!]} \right \rangle   \nonumber   \\
&=&\sum_{A_{1,1} \circ \cdots \circ A_{1,l}=A_1 
   \atop{A_{2,1} \circ \cdots \circ A_{2,l}=A_2 
   \atop A_{3,1} \circ \cdots \circ A_{3,l}=A_3}} \prod_{i=1}^l \left \langle
   [\Theta^{[\![\alpha_i, \delta_i]\!]}], \,\, 
   A_{1,i}^{[\![\le |\alpha_i|]\!]} \otimes A_{2,i}^{[\![\le |\alpha_i|]\!]} 
   \otimes A_{3,i}^{[\![\le |\alpha_i|]\!]} \right \rangle.
\end{eqnarray}
Now our lemma follows from \eqref{IndepExt.1} and induction.
\end{proof}

\begin{remark}   \label{IndepExt-Rmk}
Note that for $A\in H_k(W)$ and $B\in H^k(W)$ on a topological space $W$, the pairing $\langle A, B\rangle$ is the degree of the $0$-cycle $A\cap B\in H_0(W)$. 
As $0$-cycles, $(\phi^3_{[n], [n]})_*\Big ( \ev_*[D(\bs_{[\![n,d]\!]})] \cap
((f^{[\![n]\!]})^*A_1 \otimes (f^{[\![n]\!]})^*A_2 \otimes 
(f^{[\![n]\!]})^*A_3) \Big )$ is equal to
$$
\sum_{(\alpha, \delta) \le ([n], d)} (\W \phi^3_{\alpha, [n]})_*
   \sum_{A_{1,1} \circ \cdots \circ A_{1,l}=A_1 
   \atop{A_{2,1} \circ \cdots \circ A_{2,l}=A_2 
   \atop A_{3,1} \circ \cdots \circ A_{3,l}=A_3}} \prod_{i=1}^l \left (
   [\Theta^{[\![\alpha_i, \delta_i]\!]}] \cap
   \big (A_{1,i}^{[\![\le |\alpha_i|]\!]} \otimes A_{2,i}^{[\![\le |\alpha_i|]\!]} 
   \otimes A_{3,i}^{[\![\le |\alpha_i|]\!]} \big ) \right ). 
$$
\end{remark}

Next, we extend the notation of Heisenberg monomial classes to a smooth family 
$Y \to T$ of quasi-projective surfaces. 

\begin{definition}   \label{HeisForY}
Fix positive integers $s_1, \ldots, s_t$ with $\sum_i i s_i = n$. Define 
\begin{eqnarray}      \label{HeisForY.0}
w^Y = \prod_{i=1}^t \frak a^Y_{-i}(1_X)^{s_i} \vac \quad \in H^*(Y_T^{[n]})
\end{eqnarray}
to be the cohomology class represented by the cycle $\prod_{i=1}^t s_i! \cdot [W] \in 
A_*(Y_T^{[n]})$ where $W \sub Y_T^{[n]}$ is the closure of the subset consisting of 
elements of the form
\begin{eqnarray*}   
\sum_{i=1}^t (\xi_{i, 1} + \ldots + \xi_{i, s_i}) \, \in (Y_u)^{[n]},
\quad u \in T
\end{eqnarray*}
where $\xi_{i, m} \in M_i(x_{i, m})$ for some $x_{i, m} \in Y_u$, 
and all the points $x_{i, m}$ are distinct. 
\end{definition} 

The following lemma is similar to Lemma~\ref{ExtHeis}, and its proof is omitted.

\begin{lemma}   \label{ExtHeisForY}
Let $w = w^Y$ be as in \eqref{HeisForY.0}.
Then there exists $w^{[\![\le n]\!]} \in H^*(Y_T^{[\![\le n]\!]})$ such that 
$(\phi_{[n], [n]})^*w^{[\![\le n]\!]} = (f^{[\![n]\!]})^*w$,
and that if $\beta = (\beta_1, \ldots, \beta_l) \le [n]$, then 
$(\W \phi_{\beta, [n]})^*w^{[\![\le n]\!]} = \sum_{w_1 \circ \cdots \circ w_l = w} 
\otimes_{i=1}^l w_i^{[\![\le |\beta_i|]\!]}$ via the identification 
$Y_T^{[\![\le \beta]\!]} = \prod_{i=1}^l Y_T^{[\![\le \beta_i]\!]}$.
\end{lemma}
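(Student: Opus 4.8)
The plan is to imitate the proof of Lemma~\ref{ExtHeis} verbatim, since the only difference is that the relevant Heisenberg monomial now involves exclusively the classes $\mathfrak a_{-i}(1_X)$ (i.e., all the ``decorating'' cohomology classes are the fundamental class $1_X$), which is a strictly simpler situation than the mixed case treated in Lemma~\ref{ExtHeis}. First I would set up the induction on $n$, with the base case $n=1$ being trivial as before, since then $w^{[\![\le 1]\!]}=(f^{[\![1]\!]})^*w$ by definition. For the inductive step I would cover $X^{[\![\le n]\!]}$ (here $Y_T^{[\![\le n]\!]}$) by the open sets $\phi_{[n],[n]}(Y_T^{[\![n]\!]})$ and $\W\phi_{\beta,[n]}(Y_T^{[\![\le\beta]\!]})$ for $\beta$ ranging over the maximal proper refinements $S$ of $[n]$, define the candidate local classes $w^{[\![\le\beta]\!]}=\sum_{w_1\circ\cdots\circ w_l=w}\otimes_{i=1}^l w_i^{[\![\le|\beta_i|]\!]}$, and glue them via the Mayer--Vietoris sequence once I verify compatibility on overlaps.

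The overlap verification splits into two checks, exactly parallel to the proof of Lemma~\ref{ExtHeis}. The first is that $(\W\phi_{\beta\wedge\gamma,\beta})^*w^{[\![\le\beta]\!]}=(\W\phi_{\beta\wedge\gamma,\gamma})^*w^{[\![\le\gamma]\!]}$ for distinct $\beta,\gamma\in S$; this follows by expanding via the induction hypothesis and reindexing the double sum over $w_{i,j}$ with $w_{1,1}\circ\cdots\circ w_{l,r}=w$, using the associativity and commutativity of the operation $\circ$ on Heisenberg monomials built from the single class $1_X$. The second, and genuinely geometric, check is that $(f^{[\![n]\!]})^*w$ agrees with $w^{[\![\le\gamma]\!]}$ on $\phi_{[n],[n]}(Y_T^{[\![n]\!]})\cap\W\phi_{\gamma,[n]}(Y_T^{[\![\le\gamma]\!]})$, which reduces as before to the identity on the locus $Y_T^{[\![n]\!]}_\beta\cong Y_T^{[\![\beta]\!]}_{[n]}$. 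Here I would invoke the relative analogue of Proposition~\ref{prop_geom}: the class $w^Y$ is represented by the closure of the family of cycles $\sum_i(\xi_{i,1}+\cdots+\xi_{i,s_i})$ with distinct supports, and restricting this representative to the partial diagonal and pushing forward under $\zeta_{[n]}^\beta$ produces precisely the combinatorial splitting, with the multinomial coefficients $\prod_i s_i!/\prod_i\prod_j s_{i,j}!$ counting the ways of distributing the $s_i$ copies of $\mathfrak a_{-i}(1_X)$ among the $l$ factors.

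The main obstacle—though a mild one—is making the geometric representative argument work relatively over the base $T$ rather than over a single surface $X$. Proposition~\ref{prop_geom} as stated is for a fixed surface, so I would need the evident relative version for the family $Y\to T$, where the ``general position'' of the supporting points is taken fiberwise; since in this lemma all decorating classes are $1_X$ (no transversality of auxiliary cycles $X_j$ is required), the general position condition is vacuous apart from the distinctness of the supporting points, and the closure construction in Definition~\ref{HeisForY} is tailored to produce exactly the cycle representative needed. Consequently the multinomial bookkeeping is identical to the one in the proof of Lemma~\ref{ExtHeis}, and the multiplicities $s_i!$ in Definition~\ref{HeisForY} are chosen precisely to absorb the combinatorial factors, so the counting identity closes the induction. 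For these reasons the proof is a line-by-line transcription of the proof of Lemma~\ref{ExtHeis} with the mixed Heisenberg monomial replaced by $w^Y=\prod_i\mathfrak a^Y_{-i}(1_X)^{s_i}\vac$, and it is therefore safely omitted.
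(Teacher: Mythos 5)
Your proposal is correct and is exactly what the paper intends: the authors omit the proof precisely because it is a line-by-line transcription of the proof of Lemma~\ref{ExtHeis}, with the geometric representative now supplied directly by Definition~\ref{HeisForY} in place of Proposition~\ref{prop_geom}. Your observation that the only adjustment is the fiberwise (relative over $T$) version of the cycle representative, where general position reduces to distinctness of supports since all decorations are $1_X$, is the right and only point requiring comment.
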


\subsection{Normal slices and universal families}
\label{subsect_NormalSlices}
{This subsection mainly provides a necessary set-up for the proof of the universality result Lemma \ref{LiJ-PropB} in the next subsection.}

By Lemma~\ref{lma5.7-LiJ(PropA)}, we have
$
\Theta^{[\![\alpha, \delta]\!]} \subset (Y_T^{[\![\le \alpha]\!]})^3 
\times_{(Y_T^\Lambda)^3} \iota_\Lambda(\Delta_{\alpha, \epsilon}).
$
In this subsection, with $Y = X$ and $\alpha = [n]$, we will describe an analytic space,
independent of $\epsilon$, which contains $(X^{[\![\le n]\!]})^3 \times_{(X^n)^3} 
\iota_n(\Delta_{[n], \epsilon})$ whenever $\epsilon$ is sufficiently small.

To begin with, let $Y \to T$ be the total space of a rank-$2$ vector bundle, 
viewed as a smooth family of affine schemes. Define the fiber-wise averaging morphism 
$$
\mathfrak{av}: Y^{(n)}_T \to Y;\quad \sum m_i[x_i] \in Y_t^{(n)} \mapsto 
{1 \over n} \sum m_ix_i \in Y_t,\ \ t\in T.
$$
Here $\sum m_i x_i$ is the sum using the fiber-wise linear structure of $Y/T$.
Using $Y^n_T\to Y^{(n)}_T$ and $Y^{[n]}_T\to Y^{(n)}_T$, we obtain the averaging
maps $\mathfrak{av}: Y^n_T$ and $Y^{[n]}_T\to Y$. We define the relative Hilbert scheme of 
{\it centered $\alpha$-points} to be 
\beq\label{ave}
Y_{T, 0}^{[\![\alpha]\!]} = Y^{[\![\alpha]\!]}_T \times_{\mathfrak{av}, Y} 0_Y,
\eeq
where $0_Y\sub Y$ is the zero-section of $Y\to T$.

Next, like in \cite{LiJ}, we need to express an open neighborhood of the diagonal
$\Delta_{[2]} = \Delta_{[2]}^X \subset X \times X$ a vector bundle structure, 
using the first projection. As this is impossible in general, we will content to 
have a $C^\infty$-vector bundle structure. For this reason, we will again work 
with the analytic category. We will use differentiable map to mean a $C^\infty$-map; 
and an open subset will be open in analytic topology; we will use regular function 
and Zariski open subset to stand for their original meanings in algebraic geometry.

Consider the total space of the tangent bundle $T_X$, and its
zero-section $0_X \subset T_X$. For an open $\mathcal U\sub X\times X$, we view it as a 
space over $X$ via (that induced by the first projection)
$\pr_1|_{\mathcal U}: \mathcal U\to X$.
By Lemma~2.4 in \cite{LiJ}, there exists a diffeomorphism
\begin{eqnarray}  \label{Lma2.4-LiJ}
\varphi: \mathcal U \lra \mathcal V
\end{eqnarray}
of a tubular neighborhood $\mathcal U$ of $X_{[2]} \subset X \times X$ and 
a tubular neighborhood $\mathcal V$ of $0_X \subset T_X$, both considered as 
fiber bundles over $X$, such that 
\begin{enumerate}
\item[{\rm (A-i)}] restricting to each fiber $\mathcal U_x= \big ( \pr_1|_{\mathcal U} 
\big )^{-1}(x)$, the map $\varphi_x = \varphi|_{\mathcal U_x}: \mathcal U_x \to \mathcal V_x$ 
is a biholomorphism,

\item[{\rm (A-ii)}] $\varphi_x(x) = 0\in T_{X,x}$, and $d\varphi_x: T_{\mathcal U_x, x} 
\to T_{\mathcal V_x, 0}$ is the identity map.
\end{enumerate}

Since $\cV\sub T_X$ (over $X$), we define 
$$\cV^{[\![\alpha]\!]}_X=\{(\xi_1,\cdots,\xi_l)\in (T_X)^{[\![\alpha]\!]}_X\mid \Supp(\xi_i)\in \cV\}.
$$
For $\cU$ over $X$, we define $\cU^{[\![\alpha]\!]}_X=\coprod_{x\in X} (\cU_x)^{[\![\alpha]\!]}
$
endowed with the obvious smooth structure. By Lemma~2.5 in \cite{LiJ}, 
$\varphi$ induces a differentiable isomorphism
\begin{eqnarray}  \label{Lma2.5-LiJ}
\varphi^{[\![\alpha]\!]}: \mathcal U^{[\![\alpha]\!]}_X \lra \mathcal V^{[\![\alpha]\!]}_X
\end{eqnarray}
as stratified spaces. 
Both $\cV^{[\![\alpha]\!]}_X$ and $\cU^{[\![\alpha]\!]}_X$ are bundles over $X$:
\beq\label{bundle}
\cV^{[\![\alpha]\!]}_X \lra X \and \cU^{[\![\alpha]\!]}_X \lra X.
\eeq
The first is induced by the bundle $\cV\sub T_X\to X$, and the second is via 
$(\cU_x)^{[\![\alpha]\!]} \mapsto \{x\}$. As $T_X\to X$ is a vector bundle, 
we obtain $(T_X)_{X, 0}^{[\![\alpha]\!]} \sub (T_X)_{X}^{[\![\alpha]\!]}$ as in \eqref{ave}. 
Let $\mathcal V^{[\![\alpha]\!]}_{X, 0} 
= \mathcal V^{[\![\alpha]\!]}_X \cap (T_X)_{X, 0}^{[\![\alpha]\!]},$
and let ${U}^{[\![\alpha]\!]} \subset X^{[\![\alpha]\!]}$ be the image of 
$\mathcal V^{[\![\alpha]\!]}_{X,0}$ under the composition
$$
\varrho_{\alpha}: \quad \mathcal V^{[\![\alpha]\!]}_{X,0} \mapright{\sub} \mathcal V^{[\![\alpha]\!]}_X 
\cong {\cU}^{[\![\alpha]\!]}_X \lra X^{[\![\alpha]\!]} \times X \mapright{\pr_1} X^{[\![\alpha]\!]},
$$
where the first factor of ${\cU}^{[\![\alpha]\!]}_X \to X^{[\![\alpha]\!]} \times X$ is
induced by the inclusion $(\cU_x)^{[\![\alpha]\!]}\sub X^{[\![\alpha]\!]}$, and the second 
is \eqref{bundle}. By the Lemma~2.6 and Lemma~2.7 in \cite{LiJ}, 
after shrinking $\mathcal V$ if necessary, ${U}^{[\![\alpha]\!]}$ is an open neighborhood 
of $X^{[\![\alpha]\!]} \times_{X^n} \Delta_{[n]} \sub X^{[\![\alpha]\!]}$, and 
\begin{eqnarray}  \label{VarrhoIso}
\varrho_{\alpha}: \mathcal V^{[\![\alpha]\!]}_{X, 0} \lra U^{[\![\alpha]\!]}
\end{eqnarray}
is a smooth isomorphism of stratified spaces fibered over $\Delta_{[n]}$, via the map
$$
U^{[\![\alpha]\!]} \sub {\cU}^{[\![\alpha]\!]}_X \lra X^{[\![\alpha]\!]} \times X \mapright{\pr_2} X,
$$
and preserves the partial equivalences of 
$\mathcal V^{[\![\alpha]\!]}_{X, 0}$ and ${U}^{[\![\alpha]\!]}$. Note that 
\begin{eqnarray}  \label{TopDiagDisJoin}
X^{[\![\le n]\!]} \times_{X^n} \Delta_{[n]} = 
\coprod_{\alpha \le [n]} \phi_{[\alpha], [n]}(X^{[\![\alpha]\!]} \times_{X^n} \Delta_{[n]}).
\end{eqnarray}
So $U^{[\![\le n]\!]} := \cup_{\alpha \le [n]} \phi_{[\alpha], [n]}(U^{[\![\alpha]\!]})$ 
is an open neighborhood of $X^{[\![\le n]\!]} \times_{X^n} \Delta_{[n]}$ in 
$X^{[\![\le n]\!]}$. Since $\epsilon$ is sufficiently small, 
$X^{[\![\le n]\!]} \times_{X^n} \Delta_{[n], \epsilon} \sub U^{[\![\le n]\!]}$. Thus,
\begin{eqnarray}  \label{EpsilonCont1}
(X^{[\![\le n]\!]})^3 \times_{(X^n)^3} \iota_n(\Delta_{[n], \epsilon}) 
\sub (U^{[\![\le n]\!]})^3
\end{eqnarray}
noting that by our convention, $(U^{[\![\le n]\!]})^3$ is a fibered product over $\Delta_{[n]}$.
Since $\mathcal V^{[\![\alpha]\!]}_{X, 0} \sub (T_X)_{X, 0}^{[\![\alpha]\!]}$,
we put $\mathcal V^{[\![\le n]\!]}_{X, 0} = \cup_{\alpha \le [n]} 
\phi_{[\alpha], [n]}(\mathcal V_{X, 0} ^{[\![\alpha]\!]}) \sub (T_X)_{X, 0}^{[\![\le n]\!]}$.
Then, the smooth isomorphisms $\varrho_{\alpha}$ from \eqref{VarrhoIso} induces
a smooth isomorphism 
\begin{eqnarray}  \label{EpsilonCont2}
\varrho_{[\![\le n]\!]}: \mathcal V^{[\![\le n]\!]}_{X, 0} \to U^{[\![\le n]\!]}
\end{eqnarray}
of stratified spaces fibered over $X \cong \Delta_{[n]}$. Combining with \eqref{EpsilonCont1}, 
we have 
\begin{eqnarray}  \label{EpsilonCont3}
((T_X)_{X, 0}^{[\![\le n]\!]})^3 \,\, \supset \,\, (\mathcal V^{[\![\le n]\!]}_{X, 0})^3 \,\, 
\overset{\varrho_{[\![\le n]\!]}^3} \lra \,\, (U^{[\![\le n]\!]})^3 \,\, \supset \,\, 
(X^{[\![\le n]\!]})^3 \times_{(X^n)^3} \iota_n(\Delta_{[n], \epsilon}).
\end{eqnarray}

To prove universality results later on, we pick a differentiable map
\begin{eqnarray}  \label{SmoothG}
g: X \lra Gr = Gr(2, \C^N)
\end{eqnarray}
with $N \gg 0$ so that $T_X \cong g^*F$ as smooth vector bundles, where $F \to Gr$ is 
the total space of the universal quotient rank-$2$ bundle over $Gr$. 
Let $F^{[\![\alpha_i]\!]}_{Gr, 0} \to Gr$ be the associated relative Hilbert scheme 
of centered $\alpha_i$-points. By Lemma~2.8 in \cite{LiJ}, $g$ induces isomorphisms 
(as stratified spaces) of fiber bundles over $X$:
$$
g^{\alpha_i}: (T_X)^{\alpha_i}_{X,0} \to  g^*F^{\alpha_i}_{Gr, 0} \and
g^{[\![\le n]\!]}: (T_X)^{[\![\le n]\!]}_{X,0} \to g^*F^{[\![\le n]\!]}_{Gr, 0}.
$$
\subsection{Pairings with $[\Theta^{[\![n, d]\!]}]$ when $d > 0$}
\label{subsect_IntWith}
 {Here we study more properties of  $ [\Theta^{[\![n, d]\!]}]$ which are the
crucial ingredients for the universality results of extremal Gromov-Witten invariants of $X^{n]}$.}

\begin{convention}    \label{Conven-Int}
Fix $d>0$ and Heisenberg monomial classes 
\begin{eqnarray}  \label{AssumpOnAi}
A_i = \mathfrak a_{-\la^{(i)}}(1_X) \mathfrak a_{-n_{i,1}}(\alpha_{i,1}) \cdots 
\mathfrak a_{-n_{i,u_i}}(\alpha_{i,u_i}) \vac \quad \in H^*(\Xn)
\end{eqnarray}
where $1 \le i \le 3$, $u_i \ge 0$, and $|\alpha_{i,j}| > 0$. When $|\alpha_{i,j}| = 4$, we let 
$\alpha_{i,j} = x$ (the cohomology class of a point). Moreover, if $|\alpha_{i,j}| = 2$, 
then $\alpha_{i,j}$ can be represented by a Riemann surface intersecting transversally 
with $D_0 \cup D_\infty$. For simplicity, put $A^{[\![\le n]\!]} 
= A_1^{[\![\le n]\!]} \otimes A_2^{[\![\le n]\!]} \otimes A_3^{[\![\le n]\!]}$.
\end{convention}

Our goal is to understand the pairing $\left \langle [\Theta^{[\![n, d]\!]}], \,\,
A^{[\![\le n]\!]} \right \rangle$ when $d > 0$. 

\begin{lemma}   \label{ThetaVanish}
Fix $d > 0$. Then, $\left \langle [\Theta^{[\![n, d]\!]}], \,\,
A^{[\![\le n]\!]} \right \rangle = 0$ if one of the following holds:
\begin{enumerate}
\item[{\rm (i)}] $|\alpha_{i,j}| = 4$ for some $(i, j)$;

\item[{\rm (ii)}] $|\alpha_{i,j}| = 2$ for two different pairs $(i, j)$. 
\end{enumerate}
\end{lemma}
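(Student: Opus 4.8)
The plan is to exploit the co-section localization of Subsection~\ref{subsect_CosectLoc} to pin down the support of the pseudo-cycle, and then run a constraint count using the geometric representatives of the Heisenberg monomial classes. First I would replace $[\Theta^{[\![n, d]\!]}]$ by its localized representative $[\Theta^{[\![n, d]\!]}_{\rm loc}]$, which is legitimate since the two classes agree in $H_*\big((X^{[\![\le n]\!]})^3; \mathbb Q\big)$. By construction $\Theta^{[\![n, d]\!]}_{\rm loc}$ is supported on $\cup_{(\beta, \eta) \le ([n], d)} \phi^3_{\beta, [n]} \ev(X^{[\![\beta, \eta]\!]}_\theta)$, so every point of its support comes from a stable map whose non-constant components have support in $D_0 \cup D_\infty$. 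Since $\alpha = [n]$ is a single block and $d > 0$, the associated map is genuinely non-constant with $\rho_n(\im(\varphi)) = n y$ for a single point $y$; hence $y \in D_0 \cup D_\infty$. Combining this with Lemma~\ref{lma5.7-LiJ(PropA)}, which confines $\Theta^{[\![n, d]\!]}$ to the $\epsilon$-neighborhood of the diagonal $\Delta_{[n]}$, I conclude that on the support all $n$ points cluster around a single $y \in D_0 \cup D_\infty$; this is the decisive constraint.

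Next I would make the pairing geometric. By Lemma~\ref{IndepExt} the pairing is independent of the chosen extensions, so by Remark~\ref{IndepExt-Rmk} it equals the degree of the $0$-cycle obtained by capping $\ev_*[D(\bs_{[\![n,d]\!]})_{\rm loc}]$ against the pullbacks of geometric representatives of $A_1, A_2, A_3$. Using Proposition~\ref{prop_geom}, I represent each $A_i$ by the closure $W_i$ of the configurations in \eqref{prop_geom.1}: the factors $\mathfrak a_{-n_{i,j}}(\alpha_{i,j})$ force one constituent sub-cluster to sit on a cycle $X_{i,j}$ representing $\alpha_{i,j}$, while the $1_X$-factors impose no positional constraint on the base point. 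Here I am free to choose the representatives $X_{i,j}$ generically: when $|\alpha_{i,j}| = 4$ I take $X_{i,j}$ to be a point $x \notin D_0 \cup D_\infty$, and when $|\alpha_{i,j}| = 2$ I take $X_{i,j}$ to be a Riemann surface meeting $D_0 \cup D_\infty$ transversally and in general position, as permitted by Convention~\ref{Conven-Int}.

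The vanishing then follows from an elementary count of constraints on $y$. Because all $n$ points collapse to $y$ as $\epsilon \to 0$, a nonempty contribution to the capped $0$-cycle forces a support point of $W_i$ to lie within $\epsilon$ of $y$ on $X_{i,j}$, hence $y \in X_{i,j}$ in the closure. In case (i) a factor with $|\alpha_{i,j}| = 4$ forces $y = x$, which is incompatible with $y \in D_0 \cup D_\infty$ for the generic point $x$, so for $\epsilon$ small the intersection is empty. In case (ii) two distinct curve factors force $y$ onto two generic surfaces $C, C'$ in addition to $y \in D_0 \cup D_\infty$; these amount to three independent conditions of real codimension $2$ on $y \in X$, whose total real codimension $6$ exceeds $\dim_{\mathbb R} X = 4$, so $C \cap C' \cap (D_0 \cup D_\infty) = \emptyset$ for generic choices and again the intersection is empty for small $\epsilon$. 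In both cases the capped $0$-cycle is empty, whence $\langle [\Theta^{[\![n, d]\!]}], A^{[\![\le n]\!]}\rangle = 0$.

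The main obstacle I expect is making the step ``all points collapse to $y$, hence $y \in X_{i,j}$'' rigorous: one must use the normal-slice and Grassmannian model of Subsection~\ref{subsect_NormalSlices} to realize $\Theta^{[\![n, d]\!]}$ inside $(\mathcal V^{[\![\le n]\!]}_{X, 0})^3$ fibered over $X \cong \Delta_{[n]}$, track the localized support over $D_0 \cup D_\infty$, and verify that intersecting with the transverse generic representatives $X_{i,j}$ really produces the empty set for all sufficiently small $\epsilon$, uniformly in the fiber directions. Care is also needed to handle the non-separated space $X^{[\![\le n]\!]}$, on which only the algebraic-topological operations (pullback of cohomology classes and cap product) are available.
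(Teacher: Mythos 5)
Your proposal is correct and follows essentially the same route as the paper: pass to the co-section--localized pseudo-cycle supported over $D_0 \cup D_\infty$, invoke Lemma~\ref{lma5.7-LiJ(PropA)} to confine $\Theta^{[\![n,d]\!]}_{\rm loc}$ to the $\epsilon$-neighborhood of the diagonal, represent the $A_i$ geometrically via Proposition~\ref{prop_geom} with representatives in general position relative to $D_0 \cup D_\infty$, and conclude the capped $0$-cycle is empty by the same constraint count (a generic point off $D_0 \cup D_\infty$ in case (i); two generic curves plus $D_0 \cup D_\infty$ in case (ii)). The only cosmetic difference is that the paper organizes the ``a marked point of the support must lie on the representing cycle'' step as an induction through Remark~\ref{IndepExt-Rmk} starting from $d \ge 0$, which is the rigorous form of the collapse argument you flag as the main obstacle.
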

\begin{proof}
(i) We begin with $d \ge 0$. Consider the $0$-cycle  $
 [\Theta^{[\![n, d]\!]}_{\rm loc}] \cap A^{[\![\le n]\!]}$  in 
$(X^{[\![\le n]\!]})^3$. Choose the point representation $x \in X$ of $\alpha_{i,j}$ 
such that $x \not \in D_0 \cup D_\infty$. By Proposition~\ref{prop_geom},
$A_i$ can be represented by a cycle $W_i \subset \Xn$ such that $x \in \Supp(\xi_1)$
for every $\xi_1 \in W_i$. Thus for every $\xi_2$ contained in the 
$0$-cycle $(\phi^3_{[n], [n]})_*\Big ( \ev_*[D(\bs_{[\![n,d]\!]})_{{\rm loc}}] \cap 
((f^{[\![n]\!]})^*A_1 \otimes (f^{[\![n]\!]})^*A_2 \otimes (f^{[\![n]\!]})^*A_3) \Big )$,
the point $x$ is a component of $\ind(\xi_2)$ where $\ind$ is from \eqref{IndexMap}. 
By the localized version of Remark~\ref{IndepExt-Rmk} and induction, 
we conclude that $x$ is a component of $\ind(\xi)$ if $\xi$ is contained in
$[\Theta^{[\![n, d]\!]}_{\rm loc}] \cap A^{[\![\le n]\!]}$.

Now let $d > 0$. By the localized version of Lemma~\ref{lma5.7-LiJ(PropA)}, we have
$$
\Theta^{[\![n, d]\!]}_{\rm loc} \subset \Big ( (X^{[\![\le n]\!]})^3 
\times_{(X^\Lambda)^3} \iota_\Lambda(\Delta_{[n], \epsilon}) \Big ) \cap 
\bigcup_{(\beta, \eta) \le ([n], d)} \phi^3_{\beta, [n]}\ev(X^{[\![\beta, \eta]\!]}_{\theta}).
$$
Thus, since $d > 0$, if $\xi \in \Theta^{[\![n, d]\!]}_{\rm loc}$, 
then $\ind(\xi) \in \Delta_{[n], \epsilon}$ and $y \in D_0 \cup D_\infty$ for
some component $y$ of $\ind(\xi)$. Since $\epsilon$ is sufficiently small,
we see from the previous paragraph that $[\Theta^{[\![n, d]\!]}_{\rm loc}] \cap
A^{[\![\le n]\!]}$ is empty. Hence as pairings, $\langle [\Theta^{[\![n, d]\!]}] , \,\,
A^{[\![\le n]\!]} \rangle= \langle [\Theta^{[\![n, d]\!]}_{\rm loc}], \,\,A^{[\![\le n]\!]}\rangle  = 0$.

(ii) Let $|\alpha_{i_1,j_1}| = |\alpha_{i_2,j_2}| = 2$ where $(i_1,j_1) \ne (i_2,j_2)$.
Represent $\alpha_{i_1,j_1}$ and $\alpha_{i_2,j_2}$ by Riemann surfaces
$C_{i_1,j_1}$ and $C_{i_2,j_2}$ respectively such that 
$C_{i_1,j_1}$, $C_{i_2,j_2}$ and $D_0 \cup D_\infty$ are in general position. 
As in the proof of (i), we see that if 
$\xi \in [\Theta^{[\![n, d]\!]}_{\rm loc}] \cap A^{[\![\le n]\!]}$, 
then $\ind(\xi) \in \Delta_{[n], \epsilon}$ and the components of $\ind(\xi)$ contain 
three points $x_1 \in C_{i_1,j_1}$, $x_2 \in C_{i_2,j_2}$ and 
$x_3 \in D_0 \cup D_\infty$. This is impossible since $\epsilon$ is sufficiently small
and $C_{i_1,j_1}$, $C_{i_2,j_2}$, $D_0 \cup D_\infty$ are in general position.
So the $0$-cycle $[\Theta^{[\![n, d]\!]}_{\rm loc}] \cap A^{[\![\le n]\!]}$ is empty.
\end{proof}

\begin{lemma}   \label{1CurveClass}
Let $u_1 = 1$, $u_2 = u_3 = 0$, and $|\alpha_{1,1}|=2$ in \eqref{AssumpOnAi}. Then,
$\langle [\Theta^{[\![n, d]\!]}] , \,\,A^{[\![\le n]\!]}\rangle = p \cdot \langle K_X, \alpha_{1, 1} \rangle$
where $p$ is a constant depending only on $n_{1,1}$ and the partitions $\la^{(i)}$.
\end{lemma}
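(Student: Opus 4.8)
The plan is to show that the pairing, regarded as a $\C$-linear functional of $\alpha_{1,1}\in H^2(X)$, is forced by the geometry to be a universal multiple of $\langle K_X,\alpha_{1,1}\rangle$. First I would record the shape of the data: under Convention~\ref{Conven-Int} with $u_1=1$, $u_2=u_3=0$ and $|\alpha_{1,1}|=2$, the only factor carrying a nontrivial cohomology class is $\mathfrak a_{-n_{1,1}}(\alpha_{1,1})$, every other factor of $A_1,A_2,A_3$ being of the form $\mathfrak a_{-k}(1_X)$. This is exactly the configuration left open by Lemma~\ref{ThetaVanish}. I would then represent $\alpha_{1,1}$ by a Riemann surface $C_{1,1}$ in general position with $D_0\cup D_\infty$; by Lemma~\ref{ExtHeis} the extension $A_1^{[\![\le n]\!]}$ depends $\C$-linearly on $\alpha_{1,1}$, so $\alpha_{1,1}\mapsto\langle[\Theta^{[\![n,d]\!]}],A^{[\![\le n]\!]}\rangle$ is linear.

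Next I would pin down the support. By the localized form of Lemma~\ref{lma5.7-LiJ(PropA)}, the cycle $\Theta^{[\![n,d]\!]}_{\rm loc}$ lies over the small-diagonal neighbourhood $\iota_n(\Delta_{[n],\epsilon})$, so the index $\ind(\xi)$ of any contributing $\xi$ collapses to (a neighbourhood of) a single point $x\in X$. Because $d>0$, the co-section localization of Subsection~\ref{subsect_CosectLoc} forces $x\in D_0\cup D_\infty$, and the presence of $\alpha_{1,1}$ in $A_1$ forces $x\in C_{1,1}$ by the same reasoning as in Lemma~\ref{ThetaVanish}. Hence, exactly as there, the $0$-cycle $[\Theta^{[\![n,d]\!]}_{\rm loc}]\cap A^{[\![\le n]\!]}$ is supported over the finite set $C_{1,1}\cap(D_0\cup D_\infty)$, and the pairing is the sum of its local degrees.

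The heart of the argument is universality, which I would extract from Subsection~\ref{subsect_NormalSlices}. Over $\Delta_{[n]}\cong X$ the triple product $(X^{[\![\le n]\!]})^3\times_{(X^n)^3}\iota_n(\Delta_{[n],\epsilon})$, together with $\Theta^{[\![n,d]\!]}_{\rm loc}$, the extended classes $A_i^{[\![\le n]\!]}$, the good multi-sections $\bs_{[\![n,d]\!]}$, and the co-section, is carried by the stratified isomorphism $\varrho_{[\![\le n]\!]}^3$ of \eqref{EpsilonCont3} into the model $((T_X)^{[\![\le n]\!]}_{X,0})^3$, which is in turn the $g$-pullback of the corresponding object over $Gr$ built from the universal quotient bundle $F$ (this is where $T_X\cong g^*F$ from \eqref{SmoothG} enters). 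Thus the whole computation is pulled back under $g$ from a construction depending only on $d$, $n_{1,1}$ and the $\la^{(i)}$. Integrating out the fibre directions of the $T_X$-slice (the $1_X$-factors restrict to $1$ on $\Delta_{[n]}$, while the $\alpha_{1,1}$-factor restricts to the base class Poincar\'e dual to $C_{1,1}$) reduces the pairing to $\int_X \Xi\cup\alpha_{1,1}$, where $\Xi\in H^2(X)$ is a universal characteristic class of $T_X$, i.e. $\Xi\in\image\big(g^*:H^2(Gr)\to H^2(X)\big)$. Since $H^2(Gr)$ is one-dimensional, spanned by $c_1(F)$, and $g^*c_1(F)=c_1(T_X)=-K_X$, the class $\Xi$ is a universal scalar multiple of $K_X$; writing $\Xi=-p\,K_X$ yields $\langle[\Theta^{[\![n,d]\!]}],A^{[\![\le n]\!]}\rangle=p\,\langle K_X,\alpha_{1,1}\rangle$ with $p$ depending only on $n_{1,1}$ and the $\la^{(i)}$ (for the fixed $d$). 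In the localized picture this matches the statement that the local degrees at a zero and at a pole of $\theta$ differ by the sign of $\mathrm{div}(\theta)=D_0-D_\infty\sim K_X$, so the signed count over $C_{1,1}\cap(D_0\cup D_\infty)$ returns $\langle K_X,\alpha_{1,1}\rangle$ times a common universal factor.

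The main obstacle will be the universality step: one must check that $\varrho_{[\![\le n]\!]}^3$ and $g$ transport not merely the ambient stratified spaces but also the cycle $\Theta^{[\![n,d]\!]}_{\rm loc}$, the multi-sections defining $D(\bs_{[\![n,d]\!]})$, and the co-section data, so that the local degrees are literally $g$-pullbacks and hence independent of $X$, of $\alpha_{1,1}$, and of the chosen point of $C_{1,1}\cap(D_0\cup D_\infty)$. Granting this transport, the identification of the surviving degree-$2$ universal invariant with a multiple of $c_1(T_X)=-K_X$ is forced by $\dim H^2(Gr)=1$, and the lemma follows.
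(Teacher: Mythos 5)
Your route is genuinely different from the paper's, and it has a real gap. The paper never goes to the Grassmannian for this lemma. It localizes by the co-section, observes that the $0$-cycle $[\Theta^{[\![n, d]\!]}_{\rm loc}] \cap A^{[\![\le n]\!]}$ breaks into local contributions $e_i$ supported in disjoint neighbourhoods of the finitely many points of $C_{1,1}\cap(D_0\cup D_\infty)$, argues as in Lemma~4.3 of \cite{LL} that these local degrees are universal (depending only on $n_{1,1}$ and the $\la^{(i)}$) and equal up to the sign of the local intersection, so that the total is $e\cdot\langle D_0,\alpha_{1,1}\rangle+e'\cdot\langle D_\infty,\alpha_{1,1}\rangle=p\langle K_X,\alpha_{1,1}\rangle+p'\langle D_\infty,\alpha_{1,1}\rangle$ using $D_0=K_X+D_\infty$; it then kills $p'$ by replacing $\theta$ with a section $\theta_m$ whose pole divisor is $mD_\infty$: since the pairing is independent of the choice of co-section, $p'\langle mD_\infty,\alpha_{1,1}\rangle$ cannot depend on $m$, forcing $p'=0$. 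Your sketch reproduces the support analysis correctly, but your closing remark that ``the local degrees at a zero and at a pole differ by the sign of $\mathrm{div}(\theta)$'' is the conclusion of this argument, not an input one may assume.

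The gap in your Grassmannian alternative is the transport step, and it is not a formality. The machinery the paper builds for moving classes to the model over $Gr$ --- Definition~\ref{HeisForY}, Lemma~\ref{ExtHeisForY}, and the identification \eqref{LiJ-PropB.4} --- is constructed only for Heisenberg monomials in $1_X$, and Lemma~\ref{LiJ-PropB} is correspondingly stated only for $A_i=\mathfrak a_{-\la^{(i)}}(1_X)\vac$. Your $A_1$ contains the factor $\mathfrak a_{-n_{1,1}}(\alpha_{1,1})$, and the assertion that $A_1^{[\![\le n]\!]}$, restricted to the diagonal neighbourhood and carried through $\varrho_{[\![\le n]\!]}$ and $g^{[\![\le n]\!]}$, factors as a class pulled back from $F^{[\![\le n]\!]}_{Gr,0}$ times the pullback of $\alpha_{1,1}$ from the base $X\cong\Delta_{[n]}$ is exactly the point requiring proof: the cycle of Proposition~\ref{prop_geom} representing $A_1$ is defined by configurations with \emph{distinct} supports, and controlling its closure along the punctual locus where all supports collide is nontrivial. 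Moreover, the co-section data $(\theta,D_0,D_\infty)$ is specific to $X$ and is certainly not pulled back from $Gr$, so it cannot appear among the ``transported'' data as you list it; you would have to run the universality argument on the non-localized cycle, as Lemma~\ref{LiJ-PropB} does. If the factorization of $A_1^{[\![\le n]\!]}$ were established, your endgame would work and would be attractive --- since $H^2(Gr)$ is spanned by $c_1(F)$ and $g^*c_1(F)=-K_X$, no analogue of the $\theta_m$ trick would be needed --- but as written that factorization is asserted rather than proved, and it is precisely the technical work the paper sidesteps by computing local degrees instead.
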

\begin{proof}
Represent $\alpha_{1,1}$ by a Riemann surface $C_{1, 1}$ intersecting transversally with 
$D_0 \cup D_\infty$. Let
\begin{eqnarray*}
C_{1,1} \cap D_0 
&=& \{x_1, \ldots, x_{s_+}, x_{s_++1}, \ldots, x_{s_+ + s_-} \},     \\
C_{1,1} \cap D_\infty 
&=& \{x_{s_+ + s_-+1}, \ldots, x_{s_+ + s_- + t_+}, 
    x_{s_+ + s_- + t_+ +1}, \ldots, x_{s_+ + s_- + t_+ + t_-}\}
\end{eqnarray*}
so that the points $x_1, \ldots, x_{s_+ + s_- + t_+ + t_-}$ are distinct, 
the intersection of $C_{1,1}$ and $D_0$ at $x_i$ for $1 \le i \le s_+$ 
(respectively, for $s_+ + 1 \le i \le s_+ + s_-$) is equal to $1$ (respectively, $-1$),
and the intersection of $C_{1,1}$ and $D_\infty$ at $x_i$ for $s_+ + s_- + 1 \le i \le 
s_+ + s_-+ t_+$ (respectively, for $s_+ + s_- + t_+ + 1 \le i \le s_+ + s_- + t_+ + t_-$) 
is equal to $1$ (respectively, $-1$). So $s_+ - s_- = \langle D_0, \alpha_{1, 1} \rangle$ 
and $t_+ - t_- = \langle D_\infty, \alpha_{1, 1} \rangle$.
Let $x_i \in X_i$ be a small analytic open neighborhood of $x_i$ such that 
$X_1, \ldots, X_{s_+ + s_- + t_+ + t_-}$ are mutually disjoint. 
As in the proof of Lemma~\ref{ThetaVanish}~(i), 
we see that the $0$-cycle $[\Theta^{[\![n, d]\!]}_{\rm loc}] \cap A^{[\![\le n]\!]}$
is a disjoint union of $W_1, \ldots, W_{s_+ + s_- + t_+ + t_-}$ such that $\ind(W_i) 
\sub (X_i)^n$ for every $i$. Let $e_i$ be the contribution of each $W_i$ to the pairing 
$\langle [\Theta^{[\![n, d]\!]}_{\rm loc}], \,\, A^{[\![\le n]\!]}\rangle$. Then, 
$$\langle [\Theta^{[\![n, d]\!]}] , \,\,A^{[\![\le n]\!]}\rangle = \langle [\Theta^{[\![n, d]\!]}_{\rm loc}],  \,\,A^{[\![\le n]\!]}\rangle
 = e_1 + \ldots + e_{s_+ + s_- + t_+ + t_-}.$$
As in the proof of Lemma~4.3 in \cite{LL}, we conclude that each $e_i$ can be computed
from $X_i$ so that $e_1 = \ldots = e_{s_+} = -e_{s_+ + 1} = \ldots = -e_{s_+ + s_-}$ 
and $e_{s_+ + s_- +1} = \ldots = e_{s_+ + s_- + t_+}
= -e_{s_+ + s_- + t_+ + 1} = \ldots = -e_{s_+ + s_- + t_+ + t_-}$ depend only on 
$n_{1,1}$ and the partitions $\la^{(i)}$. Since $D_0 = K_X + D_\infty$,
\begin{eqnarray}     \label{1CurveClass.1}
& &\langle [\Theta^{[\![n, d]\!]}],  \,\,A^{[\![\le n]\!]} \rangle
  = (s_+ - s_-) e_1 + (t_+ - t_-) e_{s_+ + s_- +1}   \nonumber   \\
&=&e_1  \cdot \langle D_0, \alpha_{1, 1} \rangle 
    + e_{s_+ + s_- +1} \cdot \langle D_\infty, \alpha_{1, 1} \rangle    
= p \cdot \langle K_X, \alpha_{1, 1} \rangle 
    + p' \cdot \langle D_\infty, \alpha_{1, 1} \rangle
\end{eqnarray}
where $p = e_1$ and $p' = e_1 + e_{s_+ + s_- +1}$. Note that for $m \gg 0$, there exists 
a meromorphic section $\theta_m$ of $\mathcal O_X(K_X)$ such that $mD_\infty$ 
is the pole divisor of $\theta_m$. By (\ref{1CurveClass.1}), 
\begin{eqnarray*}    
  \langle [\Theta^{[\![n, d]\!]}] ,\,\,A^{[\![\le n]\!]} \rangle
= p \cdot \langle K_X, \alpha_{1, 1} \rangle 
    + p' \cdot \langle m D_\infty, \alpha_{1, 1} \rangle
\end{eqnarray*}
for all $m \gg 0$. It follows that $p' = 0$ and $\langle [\Theta^{[\![n, d]\!]}],\,\,
A^{[\![\le n]\!]} \rangle= p \cdot \langle K_X, \alpha_{1, 1} \rangle$.
\end{proof}

\begin{lemma}   \label{LiJ-PropB}
Let $d > 0$ and $A_i = \mathfrak a_{-\la^{(i)}}(1_X) \vac$ for $i \in \{1,2,3\}$. Then,
$
\langle [\Theta^{[\![n, d]\!]}] , \,\,A^{[\![\le n]\!]}\rangle = p \cdot \langle K_X, K_X \rangle
$
where the coefficient $p$ is a constant depending only on the partitions $\la^{(i)}$.
\end{lemma}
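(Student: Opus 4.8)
The plan is to combine the co-section localization of Subsection~\ref{subsect_CosectLoc} with the normal-slice universality of Subsection~\ref{subsect_NormalSlices}, and to close with a variation argument on the meromorphic section $\theta$, exactly in the spirit of the proof of Lemma~\ref{1CurveClass}. Since $[\Theta^{[\![n,d]\!]}_{\rm loc}] = [\Theta^{[\![n,d]\!]}]$ in $H_*\big((X^{[\![\le n]\!]})^3;\QQ\big)$, the number
$$
N := \left\langle [\Theta^{[\![n, d]\!]}], \, A^{[\![\le n]\!]} \right\rangle
$$
is independent of the choice of $\theta$, and may be computed from the localized cycle $\Theta^{[\![n, d]\!]}_{\rm loc}$. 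This cycle is supported over $D_0 \cup D_\infty$ and, by the localized form of Lemma~\ref{lma5.7-LiJ(PropA)}, lies near the diagonal $\Delta_{[n], \epsilon}$.

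First I would show that $N$ is a quadratic form in the classes $D_0$ and $D_\infty$ with coefficients independent of $X$. Because each insertion $A_i = \mathfrak a_{-\la^{(i)}}(1_X)\vac$ involves only $1_X$, its extension $A_i^{[\![\le n]\!]}$ is the universal class furnished by Lemma~\ref{ExtHeisForY}. Using the identification (\ref{VarrhoIso})--(\ref{EpsilonCont3}) of a neighborhood of the diagonal with $(\mathcal V^{[\![\le n]\!]}_{X,0})^3 \subset ((T_X)^{[\![\le n]\!]}_{X,0})^3$, together with the classifying map $g\colon X \to Gr$ of (\ref{SmoothG}) satisfying $T_X \cong g^*F$, the whole integrand becomes a $g$-pullback from the universal model over $Gr$. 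The locality of the co-section contributions, as in the proof of Lemma~4.3 in \cite{LL} and Lemma~5.9 in \cite{LiJ}, then lets me split $N$ into contributions localized along the smooth locus of $D_0$, the smooth locus of $D_\infty$, and the finite set $D_0 \cap D_\infty$. Each is a universal constant, depending only on $d$, $n$ and the $\la^{(i)}$, times a characteristic number of the pertinent curve in $X$; by adjunction $e(D) = -\langle D, D + K_X\rangle$ and $K_X = D_0 - D_\infty$, every such number lies in the span of $\langle D_0, D_0\rangle$, $\langle D_0, D_\infty\rangle$, $\langle D_\infty, D_\infty\rangle$. Hence
$$
N = A\,\langle D_0, D_0\rangle + B\,\langle D_0, D_\infty\rangle + C\,\langle D_\infty, D_\infty\rangle
$$
with $A, B, C$ independent of $X$. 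Crucially, no term proportional to $e_X$ appears: the localization confines the entire computation to the curves $D_0$ and $D_\infty$, so only their (adjunction-expressible) curve invariants enter.

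To determine the coefficients I would run the variation argument of Lemma~\ref{1CurveClass}. For $m \gg 0$ choose a meromorphic section $\theta_m$ of $\mathcal O_X(K_X)$ with pole divisor $m D_\infty$ and zero divisor $K_X + m D_\infty$. As $N$ is independent of the section, substituting $D_\infty \mapsto m D_\infty$ and $D_0 \mapsto K_X + m D_\infty$ yields
$$
N = A\,\langle K_X, K_X\rangle + (2A + B)\, m\, \langle K_X, D_\infty\rangle + (A + B + C)\, m^2\, \langle D_\infty, D_\infty\rangle,
$$
which is independent of $m$. Testing on a surface with $\langle K_X, D_\infty\rangle \ne 0$ and $\langle D_\infty, D_\infty\rangle \ne 0$ forces the universal relations $2A + B = 0$ and $A + B + C = 0$, so $B = -2A$ and $C = A$. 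Therefore, for every $X$,
$$
N = A\,\langle D_0 - D_\infty, D_0 - D_\infty\rangle = A\,\langle K_X, K_X\rangle,
$$
and setting $p = A$ proves the lemma.

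The hard part will be the second paragraph: making rigorous the claim that the localized contributions are genuinely local along the strata of $D_0 \cup D_\infty$ and are governed by an $X$-independent universal density. This is precisely where the normal-slice identification (\ref{VarrhoIso})--(\ref{EpsilonCont3}) and the classifying map $g$ are needed, and it is the analogue, for the present non-separated spaces $X^{[\![\le n]\!]}$, of the universality statement Lemma~5.9 in \cite{LiJ}. Once it is established, the $m$-variation is a routine linear-algebra computation.
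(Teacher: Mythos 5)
Your strategy is genuinely different from the paper's, and the endgame is attractive: you localize by the co-section from the outset, argue that the answer must be a universal quadratic form $A\langle D_0,D_0\rangle+B\langle D_0,D_\infty\rangle+C\langle D_\infty,D_\infty\rangle$, and then use the freedom in choosing $\theta$ to force $B=-2A$, $C=A$, hence $N=A\langle K_X,K_X\rangle$. The paper instead does \emph{not} localize at this stage: it transports $[\Theta^{[\![n,d]\!]}]\cap A^{[\![\le n]\!]}$ into the Grassmannian model via \eqref{VarrhoIso}--\eqref{EpsilonCont3}, identifies the pairing with a polynomial in $c_i(F)$ evaluated on $[g(X)]$, and so lands on $p\,\langle K_X,K_X\rangle+q\,\deg(e_X)$ as in \eqref{LiJ-PropB.5}; the co-section localization is invoked only at the very end, and only for surfaces carrying a \emph{smooth connected} canonical curve $D=D_0$ (with $D_\infty=\emptyset$), where the classifying map can simply be replaced by $g|_D\colon D\to Gr$; a two-surface linear-independence argument then kills $q$. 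Your $m$-variation is a legitimate alternative to that last step, though as written it substitutes the non-reduced divisor $mD_\infty$ into a formula established for smooth transversal reduced pairs; this is repairable (take smooth members of $|mH|$ and $|K_X+mH|$ for $H$ very ample), but it needs to be said.

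The genuine gap is the second paragraph, and it is not a technicality you can defer. The claim that $N$ splits into contributions over the smooth loci of $D_0$, of $D_\infty$, and over $D_0\cap D_\infty$, each equal to a universal constant times a characteristic number of the corresponding stratum, is not supplied by any lemma in the paper. The paper's universality machinery (Lemma~\ref{lem0.5}, Lemma~\ref{lma5.6-LiJ}, and the construction of $\Theta_F^{[\![n,d]\!]}$ over $Gr$) compares cycles with respect to the \emph{diagonal} stratification of $X^n$ and classifies only the tangent bundle $T_X$ via \eqref{SmoothG}; your decomposition is with respect to the stratification of the degeneracy locus $D_0\cup D_\infty$ of $\theta^{[n]}$, and the relevant local model at a point of $D_0$ is not just $T_{X,x}$ but the pair $(T_{X,x},\,d\theta)$, i.e.\ the curve germ together with the order of vanishing of the co-section. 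Establishing that the resulting density is universal would require redoing Subsections~\ref{subsect_NormalSlices}--\ref{subsect_IntWith} over a different classifying space (a flag-type bundle recording $T_{D_0}\subset T_X$ and the normal direction), plus a separate analysis at the finite set $D_0\cap D_\infty$ and a boundary-matching argument for the open strata. The paper's route is arranged precisely to avoid this: it only ever needs the localized universality over a single smooth curve, and only to determine one coefficient. So while your architecture could in principle be carried out, as it stands the central claim --- that $N$ is a universal quadratic form in $D_0$ and $D_\infty$ with no $e_X$ term --- is asserted rather than proved, and everything downstream (including the vanishing of the $e_X$ contribution, which is the whole point of the lemma) depends on it.
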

\begin{proof}
By Lemma~\ref{lma5.7-LiJ(PropA)}, $\Theta^{[\![n, d]\!]} \subset (X^{[\![\le n]\!]})^3 
\times_{(X^n)^3} \iota_n(\Delta_{[n], \epsilon})$.
Using \eqref{EpsilonCont3} and the smooth isomorphism \eqref{EpsilonCont2},
we transport the $0$-cycle $[\Theta^{[\![n, d]\!]}] \cap A^{[\![\le n]\!]}$
in $(X^{[\![\le n]\!]})^3 \times_{(X^n)^3} \iota_n(\Delta_{[n], \epsilon})$
to the following $0$-cycle in $(\mathcal V^{[\![\le n]\!]}_{X, 0})^3 \sub
((T_X)_{X, 0}^{[\![\le n]\!]})^3$:
$$
(\varrho_{[\![\le n]\!]}^3)^*[\Theta^{[\![n, d]\!]}] \cap
(\varrho_{[\![\le n]\!]}^3)^*\Big (A^{[\![\le n]\!]}|_{(X^{[\![\le n]\!]})^3 
\times_{(X^n)^3} \, \iota_n(\Delta_{[n], \epsilon})} \Big ).
$$
Note that these two $0$-cycles have the same degree. So as pairings,
\begin{eqnarray}  \label{LiJ-PropB.1}
\langle [\Theta^{[\![n, d]\!]}], \,\,A^{[\![\le n]\!]} \rangle
= \left\langle (\varrho_{[\![\le n]\!]}^3)^*[\Theta^{[\![n, d]\!]}], \,\,
(\varrho_{[\![\le n]\!]}^3)^*\Big (A^{[\![\le n]\!]}|_{(X^{[\![\le n]\!]})^3 
\times_{(X^n)^3} \, \iota_n(\Delta_{[n], \epsilon})} \Big )\right\rangle.
\end{eqnarray}

Let $g$ from \eqref{SmoothG} be generic, and let $F \to Gr$ be the total space of 
the universal quotient rank-$2$ bundle over $Gr = Gr(2, \C^N)$.
Let ${\overline T}_X \to X$ and ${\overline F} \to Gr$ be the projectifications of
$T_X \to X$ and $F \to Gr$ respectively. Then the differentiable isomorphism $T_X \cong g^*F$ 
induces a differentiable isomorphism ${\overline T}_X \cong g^*{\overline F}$.
Note that the top diagonal $\Delta_{[n]}^{F, 0} := \Delta_{[n]}^{F} \cap F^n_{Gr, 0}$
in $F^n_{Gr, 0}$ is the $0$-section of $F^n_{Gr, 0} \to Gr$. 
Put $\Delta_{[n], \epsilon}^{F, 0} = \Delta_{[n], \epsilon}^{F} \cap F^n_{Gr, 0}$.
Applying the previous constructions to the families ${\overline F} \to Gr$ and
${\overline T}_X \to X$ and adopting the proof of Lemma~6.1 in \cite{LiJ},
we conclude that there exists a cycle $\Theta^{[\![n, d]\!]}_F \sub 
(F_{Gr, 0}^{[\![\le n]\!]})^3 \times_{(F_{Gr, 0}^n)^3} \iota_n(\Delta_{[n], \epsilon}^{F, 0})$
such that
\begin{eqnarray}  \label{LiJ-PropB.100}
[\Theta^{[\![n, d]\!]}_F] \in H_*\Big ((F_{Gr, 0}^{[\![\le n]\!]})^3 
\times_{(F_{Gr, 0}^n)^3} \iota_n(\Delta_{[n]}^{F, 0}) \Big ),
\end{eqnarray}
the intersection $\Theta^{[\![n, d]\!]}_F \cap \Big (\big ( (F_{Gr, 0}^{[\![\le n]\!]})^3 
\times_{(F_{Gr, 0}^n)^3} \iota_n(\Delta_{[n], \epsilon}^{F, 0}) \big ) \times_{Gr} X \Big )$
is  transversal, and 
\begin{eqnarray}  \label{LiJ-PropB.2}
(\varrho_{[\![\le n]\!]}^3)^{-1}(\Theta^{[\![n, d]\!]})
= \Theta^{[\![n, d]\!]}_F \cap \Big (\big ( (F_{Gr, 0}^{[\![\le n]\!]})^3 
\times_{(F_{Gr, 0}^n)^3} \iota_n(\Delta_{[n], \epsilon}^{F, 0}) \big ) \times_{Gr} X \Big )
\end{eqnarray}
via $((T_X)_{X, 0}^{[\![\le n]\!]})^3 \times_{((T_X)_{X, 0}^n)^3} 
\iota_n(\Delta_{[n], \epsilon}^{T_X, 0}) \cong \big ( (F_{Gr, 0}^{[\![\le n]\!]})^3 
\times_{(F_{Gr, 0}^n)^3} \iota_n(\Delta_{[n], \epsilon}^{F, 0}) \big ) \times_{Gr} X$. Thus,
\begin{eqnarray}  \label{LiJ-PropB.200}
   (\varrho_{[\![\le n]\!]}^3)^*[\Theta^{[\![n, d]\!]}]
= [(\varrho_{[\![\le n]\!]}^3)^{-1}(\Theta^{[\![n, d]\!]})] 
\end{eqnarray}
is a homology class supported on $((T_X)_{X, 0}^{[\![\le n]\!]})^3 
\times_{((T_X)_{X, 0}^n)^3} \iota_n(\Delta_{[n]}^{T_X, 0})$.

Let $A_i^{T_X} = \mathfrak a_{-\la^{(i)}}^{T_X}(1_X) \vac \in H^*((T_X)_X^{[n]})$
be defined in Definition~\ref{HeisForY}, and put 
$$
(A^{T_X, 0})^{[\![\le n]\!]} = (A_1^{T_X, 0})^{[\![\le n]\!]} 
\otimes (A_2^{T_X, 0})^{[\![\le n]\!]} \otimes (A_3^{T_X, 0})^{[\![\le n]\!]}
$$
where $(A_i^{T_X, 0})^{[\![\le n]\!]} \in H^*((T_X)_{X, 0}^{[\![\le n]\!]})$
is the pull-back of $(A_i^{T_X})^{[\![\le n]\!]} \in H^*((T_X)_{X}^{[\![\le n]\!]})$
via the inclusion $(T_X)_{X, 0}^{[\![\le n]\!]} \sub (T_X)_{X}^{[\![\le n]\!]}$.
Let $S$ denote the intersection
$$
\Big (((T_X)_{X, 0}^{[\![\le n]\!]})^3 \times_{((T_X)_{X, 0}^n)^3} 
\iota_n(\Delta_{[n]}^{T_X, 0}) \Big ) \cap (\varrho_{[\![\le n]\!]}^3)^{-1}
((X^{[\![\le n]\!]})^3 \times_{(X^n)^3} \, \iota_n(\Delta_{[n], \epsilon})).
$$
Then, $S = (\varrho_{[\![\le n]\!]}^3)^{-1}
((X^{[\![\le n]\!]})^3 \times_{(X^n)^3} \, \iota_n(\Delta_{[n]}))$. We claim that 
\begin{eqnarray}  \label{LiJ-PropB.3}
(\varrho_{[\![\le n]\!]}^3)^*\Big (A^{[\![\le n]\!]}|_{(X^{[\![\le n]\!]})^3 
\times_{(X^n)^3} \, \iota_n(\Delta_{[n], \epsilon})} \Big )|_S 
\, = \, (A^{T_X, 0})^{[\![\le n]\!]}|_S,
\end{eqnarray}
i.e., $(\varrho_{[\![\le n]\!]}^3)^*\Big (A^{[\![\le n]\!]}|_{(X^{[\![\le n]\!]})^3 
\times_{(X^n)^3} \, \iota_n(\Delta_{[n]})} \Big ) \, = \, (A^{T_X, 0})^{[\![\le n]\!]}|_S$.
It suffices to prove that 
\begin{eqnarray}  \label{LiJ-PropB.4}
\varrho_{[\![\le n]\!]}^*\Big (A_i^{[\![\le n]\!]}|_{X^{[\![\le n]\!]} 
\times_{X^n} \, \Delta_{[n]}} \Big ) 
\, = \, (A_i^{T_X, 0})^{[\![\le n]\!]}|_{\varrho_{[\![\le n]\!]}^{-1} 
(X^{[\![\le n]\!]} \times_{X^n} \, \Delta_{[n]})}.
\end{eqnarray}
Indeed, for every $\alpha \le [n]$, we conclude from Lemma~\ref{ExtHeis} and 
Lemma~\ref{ExtHeisForY} that the same subvariety in 
$\varrho_{[\![\le n]\!]}^{-1}\big ({\phi_{[\alpha], [n]}
(X^{[\![\alpha]\!]} \times_{X^n} \Delta_{[n]})} \big )$ represents the cohomology classes
$$
\varrho_{[\![\le n]\!]}^*\Big (A_i^{[\![\le n]\!]}|_{\phi_{[\alpha], [n]}
(X^{[\![\alpha]\!]} \times_{X^n} \Delta_{[n]})} \Big ) \and
(A_i^{T_X, 0})^{[\![\le n]\!]}|_{\varrho_{[\![\le n]\!]}^{-1}\big ({\phi_{[\alpha], [n]}
(X^{[\![\alpha]\!]} \times_{X^n} \Delta_{[n]})} \big )}.
$$
Since $X^{[\![\le n]\!]} \times_{X^n} \, \Delta_{[n]} = \coprod_{\alpha \le [n]} 
\phi_{[\alpha], [n]}(X^{[\![\alpha]\!]} \times_{X^n} \Delta_{[n]})$,
we obtain \eqref{LiJ-PropB.4}.

By \eqref{LiJ-PropB.1}, \eqref{LiJ-PropB.3} and \eqref{LiJ-PropB.200}, as pairings, we have
\begin{eqnarray*} 
\langle [\Theta^{[\![n, d]\!]}] , \,\,A^{[\![\le n]\!]}\rangle
= \langle (\varrho_{[\![\le n]\!]}^3)^*[\Theta^{[\![n, d]\!]}], \,\,
   (A^{T_X, 0})^{[\![\le n]\!]}|_S  \rangle            
= \langle [(\varrho_{[\![\le n]\!]}^3)^{-1}(\Theta^{[\![n, d]\!]})], \,\,
   (A^{T_X, 0})^{[\![\le n]\!]}\rangle.
\end{eqnarray*} 
Let $(g^{[\![\le n]\!]})^3: ((T_X)_{X, 0}^{[\![\le n]\!]})^3 \to 
(F_{Gr, 0}^{[\![\le n]\!]})^3 \times_{Gr} X$ be the isomorphism induced by $g$.
By Lemma~\ref{ExtHeisForY}, $(A^{T_X, 0})^{[\![\le n]\!]}$ can be taken to be
$(g^{[\![\le n]\!]})^{3*}((A^{F, 0})^{[\![\le n]\!]}|_{(F_{Gr, 0}^{[\![\le n]\!]})^3 
\times_{Gr} X})$. So
$$
\langle [\Theta^{[\![n, d]\!]}], \,\,A^{[\![\le n]\!]} \rangle 
= \left\langle [(\varrho_{[\![\le n]\!]}^3)^{-1}(\Theta^{[\![n, d]\!]})],\,\,
(g^{[\![\le n]\!]})^{3*}\Big ((A^{F, 0})^{[\![\le n]\!]}|_{
  (F_{Gr, 0}^{[\![\le n]\!]})^3 \times_{Gr} X} \Big )\right\rangle.
$$
Combining with \eqref{LiJ-PropB.2} and putting $W_\epsilon = (F_{Gr, 0}^{[\![\le n]\!]})^3 
\times_{(F_{Gr, 0}^n)^3} \iota_n(\Delta_{[n], \epsilon}^{F, 0})$, we get
{\begin{eqnarray*} 
  \langle [\Theta^{[\![n, d]\!]}] ,  \,\,A^{[\![\le n]\!]} \rangle
&=&\left \langle [(\varrho_{[\![\le n]\!]}^3)^{-1}(\Theta^{[\![n, d]\!]})],\,\,
 (g_\epsilon^{[\![\le n]\!]})^{3*}
  \Big ((A^{F, 0})^{[\![\le n]\!]}|_{W_\epsilon} \Big ) \right\rangle  \\
&=&\left \langle((g_\epsilon^{[\![\le n]\!]})^{3})_*
  [(\varrho_{[\![\le n]\!]}^3)^{-1}(\Theta^{[\![n, d]\!]})],\,\,
  (A^{F, 0})^{[\![\le n]\!]}|_{W_\epsilon}   \right\rangle
\end{eqnarray*} 
where $(g_\epsilon^{[\![\le n]\!]})^{3}: ((T_X)_{X, 0}^{[\![\le n]\!]})^3 
\times_{((T_X)_{X, 0}^n)^3} \iota_n(\Delta_{[n], \epsilon}^{T_X, 0}) \to W_\epsilon$ 
is the morphism induced by $g$.
By \eqref{LiJ-PropB.100}, $[\Theta_F^{[\![n, d]\!]}]$
is supported on $W := (F_{Gr, 0}^{[\![\le n]\!]})^3 \times_{(F_{Gr, 0}^n)^3} 
\iota_n(\Delta_{[n]}^{F, 0})$.  
Therefore, by (\ref{LiJ-PropB.2}), we obtain
\begin{eqnarray*} 
& &\langle[\Theta^{[\![n, d]\!]}],  \,\,A^{[\![\le n]\!]}  \rangle
     =\langle[\Theta_F^{[\![n, d]\!]}]\cap \pi^*( \text {PD}^{-1}[g(X)]  ) ,\,\,
     (A^{F, 0})^{[\![\le n]\!]}|_{W} \rangle  \\
&=&\langle[\Theta_F^{[\![n, d]\!]}] \cap (A^{F, 0})^{[\![\le n]\!]}|_{W} , \,\,
      \pi^*( \text {PD}^{-1}[g(X)]  ) \rangle= \left \langle\pi_*\Big ([\Theta_F^{[\![n, d]\!]}] \cap
     (A^{F, 0})^{[\![\le n]\!]}|_{W} \Big ), \,\,\text {PD}^{-1}[g(X)]  \right \rangle
\end{eqnarray*}
where $\pi: W \to Gr$ is the tautological projection.
Here a bit of topological argument is needed for the first equality. Observe  that $W$ is in fact a disjoint union 
of Hausdorff spaces. Then we use the Zariski local triviality of the bundle $F$ over $Gr$, the K\"unneth decomposition for Borel-Moore homology \cite{Iv}, and the properties of cap products \cite{GH, Iv, Sp}. }

 The Poincar\' e dual of 
$\pi_*\Big ([\Theta_F^{[\![n, d]\!]}] \cap (A^{F, 0})^{[\![\le n]\!]}|_{W} \Big )$
is a polynomial $P$ in the Chern classes $c_i(F)$. Hence, 
\begin{eqnarray}     \label{LiJ-PropB.5}
   \langle [\Theta^{[\![n, d]\!]}] , \,\,A^{[\![\le n]\!]} \rangle
= p \cdot \langle K_X, K_X \rangle + q \cdot \deg (e_X)
\end{eqnarray}
where $p$ and $q$ are constants depending only on the partitions $\la^{(i)}$.

Finally, it remains to prove that $q=0$ in \eqref{LiJ-PropB.5}.
To see this, choose the surface $X$ such that $|K_X|$ contains a smooth divisor $D$.
Let $\theta$ be a holomorphic section of $\mathcal O_X(K_X)$ such that 
the vanishing divisor of $\theta$ is $D=D_0$. By \eqref{LiJ-PropB.200},
$(\varrho_{[\![\le n]\!]}^3)^*[\Theta^{[\![n, d]\!]}_{\rm loc}]$
is a homology class supported on 
$\big ((T_X|_D)_{D, 0}^{[\![\le n]\!]} \big )^3 
\times_{((T_X|_D)_{D, 0}^n)^3} \iota_n(\Delta_{[n]}^{T_X|_D, 0})$.
Repeating the above argument and replacing $g: X \to Gr$ (respectively, $T_X \to X$)
by $g|_{D}: D \to Gr$ (respectively, $T_X|_D \to D$), we get
$$
\langle [\Theta^{[\![n, d]\!]}] , \,\,A^{[\![\le n]\!]} \rangle= p' \cdot \langle K_X, K_X \rangle 
= p \cdot \langle K_X, K_X \rangle + q \cdot \deg (e_X)
$$ 
where $p'$ depends only on the partitions $\la^{(i)}$. Since there exist two surfaces $X$ with smooth $D \in |K_X|$ such that the pairs 
$(\langle K_X, K_X \rangle, \deg (e_X))$ are linearly independent, $p = p'$ and $q=0$. 
\end{proof}

\subsection{Proofs of Theorem~\ref{A1ToAk} and Theorem~\ref{KX2-universal}}
\label{subsect_Structure}
 {In this subsection, we introduce a new class ${\mathfrak Z}_{n, d}\in H_*((\Xn)^3)$ in terms of 
cycles $\Theta^{[\![\alpha, \delta]\!]}$ studied intensively in previous subsections. Note that now ${\mathfrak Z}_{n, d}$  is on the Hilbert scheme $(X^{[n]})^3$, not on the non-separated spaces
$(X^{[\![\le n]\!]})^3$.}

Let $\mathcal B = \{ \beta_1, \ldots, \beta_b \}$ be a basis of $H^2(X)$. 
Then, $\{1_X, x, \beta_1, \ldots, \beta_b \}$ is a basis of $H^*(X)$, 
and $H^*(\Xn)$ has a basis $\mathcal B^{[n]}$ consisting of the elements
$
\mathfrak a_{-\la}(1_X) \mathfrak a_{-\mu}(x) \mathfrak a_{-\nu^{(1)}}(\beta_1) 
\cdots \mathfrak a_{-\nu^{(b)}}(\beta_b) \vac
$
where $|\la| + |\mu| + \sum_i |\nu^{(i)}| =n$. Via the K\" unneth decomposition, 
a basis of $H^*((\Xn)^3)$ consists of the elements 
$A_1 \otimes A_2 \otimes A_3 = \prod_{i=1}^3 \pi_{n, i}^*A_i$, 
where $A_1, A_2, A_3 \in \mathcal B^{[n]}$ and $\pi_{n, i}$ denotes 
the $i$-th projection $(X^{[n]})^3 \to \Xn$.

\begin{definition}   \label{Theta-CalZ}
\begin{enumerate}
\item[{\rm (i)}] Let $d \ge 1$, and let $\mathcal P_{[n], d}^+$ be the subset of $\mathcal P_{[n], d}$
consisting of all the weighted partitions $(\alpha, \delta)$ such that 
$\delta_i > 0$ for every $i$. 

\item[{\rm (ii)}] For $d \ge 1$, define the class ${\mathfrak Z}_{n, d} = 
{\mathfrak Z}_{n, d}^{\mathcal B} \in H_*((\Xn)^3)$ by putting
\begin{eqnarray}  \label{Theta-CalZ.0}
\left \langle{\mathfrak Z}_{n, d}, \,\,\prod_{i=1}^3 \pi_{n, i}^*A_i  \right \rangle
=  {1 \over n!} \cdot \sum_{(\alpha, \delta) \in \mathcal P_{[n], d}^+}
     \left\langle (\W \phi^3_{\alpha, [n]})_*[\Theta^{[\![\alpha, \delta]\!]}], \,\,
     A_1^{[\![\le n]\!]} \otimes A_2^{[\![\le n]\!]} \otimes A_3^{[\![\le n]\!]} \right\rangle
\end{eqnarray}
for the basis elements $A_1, A_2, A_3 \in \mathcal B^{[n]}$.
\end{enumerate}
\end{definition}

Next, we prove Theorem~\ref{A1ToAk} and Theorem~\ref{KX2-universal}
which determine the structure of the $3$-pointed genus-$0$ extremal Gromov-Witten 
invariants of $\Xn$. Note from Theorem~\ref{KX2-universal} that
the class ${\mathfrak Z}_{n, d} = {\mathfrak Z}_{n, d}^{\mathcal B}$ 
is independent of the choice of the basis $\mathcal B$ of $H^2(X)$. 
So from now on, the basis $\mathcal B$ of $H^2(X)$ will be implicit in our presentation.

\medskip\noindent 
{\it Proof of Theorem~\ref{A1ToAk}}.
By \eqref{IndepExt.1}, the invariant $\langle A_1, A_2, A_3 \rangle_{0, d \beta_n}$ is equal to
\begin{eqnarray}     \label{A1ToAk.1}
 {1 \over n!} \sum_{(\alpha, \delta) \le ([n], d)}\left \langle
   (\W \phi^3_{\alpha, [n]})_*[\Theta^{[\![\alpha, \delta]\!]}], \,\,
   A_1^{[\![\le n]\!]} \otimes A_2^{[\![\le n]\!]} \otimes A_3^{[\![\le n]\!]} \right\rangle.
\end{eqnarray}
Define $\alpha^0 = \{ (\alpha_i)_i|\, \delta_i = 0\}$, and let $(\alpha^0, 0)$ be 
the weighted partition such that all the weights are equal to $0$.
Let $(\alpha', \delta')$ be the weighted partition obtained from $(\alpha, \delta)$ by deleting 
all the $\alpha_i$ and $\delta_i$ with $\delta_i = 0$. Let $|\alpha'| = m$, $\Lambda_{\alpha^0} 
= \coprod_i (\alpha^0)_i$, and $\Lambda_{\alpha'} = \coprod_i (\alpha')_i$. Then, 
$\alpha = (\alpha^0, 0) \coprod (\alpha', \delta')$,  $|\alpha^0| = n-m$, and 
$[n] = \Lambda_{\alpha^0} \coprod \Lambda_{\alpha'}$. By \eqref{IndepExt.2}, 
$\langle(\W \phi^3_{\alpha, [n]})_*[\Theta^{[\![\alpha, \delta]\!]}] ,\,\, A_1^{[\![\le n]\!]} 
\otimes A_2^{[\![\le n]\!]} \otimes A_3^{[\![\le n]\!]} \rangle $ equals
\begin{eqnarray*}   
& &\sum_{A_{1,1} \circ \cdots \circ A_{1,l}=A_1 
   \atop{A_{2,1} \circ \cdots \circ A_{2,l}=A_2 
   \atop A_{3,1} \circ \cdots \circ A_{3,l}=A_3}} \prod_{i=1}^l \left \langle
   [\Theta^{[\![\alpha_i, \delta_i]\!]}],  \,\,A_{1,i}^{[\![\le |\alpha_i|]\!]} \otimes 
   A_{2,i}^{[\![\le |\alpha_i|]\!]} \otimes A_{3,i}^{[\![\le |\alpha_i|]\!]}  \right\rangle   \\
&=&\sum_{A_{1,1} \circ A_{1,2} = A_1 \atop {A_{2,1} \circ A_{2,2} = A_2 \atop 
   A_{3,1} \circ A_{3,2} = A_3}} 
\left  \langle (\W \phi^3_{\alpha^0, \Lambda_{\alpha^0}})_*[\Theta^{[\![\alpha^0, 0]\!]}],
   A_{1,1}^{[\![\le (n-m)]\!]} \otimes A_{2, 1}^{[\![\le (n-m)]\!]} \otimes 
   A_{3,1}^{[\![\le (n-m)]\!]} \right\rangle           \\
& &\qquad \qquad \cdot \left\langle(\W \phi^3_{\alpha', \Lambda_{\alpha'}})_*[\Theta^{[\![\alpha', \delta']\!]}] ,
   A_{1,2}^{[\![\le m]\!]} \otimes A_{2, 2}^{[\![\le m]\!]} \otimes 
   A_{3,2}^{[\![\le m]\!]} \right\rangle.
\end{eqnarray*}
Put $\Lambda = \Lambda_{\alpha'}$. By \eqref{A1ToAk.1}, 
$\langle A_1, A_2, A_3 \rangle_{0, d \beta_n}$ is equal to
\begin{eqnarray}     \label{A1ToAk.2}
& &{1 \over n!} \cdot \sum_{m \le n}
   \sum_{A_{1,1} \circ A_{1,2} = A_1 \atop {A_{2,1} \circ A_{2,2} = A_2 \atop 
   A_{3,1} \circ A_{3,2} = A_3}}
   \sum_{\Lambda \subset [n] \atop |\Lambda| = m}
   \sum_{(\alpha', \delta') \in \mathcal P_{\Lambda, d}^+}   \nonumber   \\
& &\sum_{\alpha^0 \in \mathcal P_{[n]- \Lambda}} 
   \left\langle (\W \phi^3_{\alpha^0, [n]- \Lambda})_*[\Theta^{[\![\alpha^0, 0]\!]}],
   A_{1,1}^{[\![\le (n-m)]\!]} \otimes A_{2, 1}^{[\![\le (n-m)]\!]} \otimes 
   A_{3,1}^{[\![\le (n-m)]\!]} \right\rangle  \nonumber   \\
& &\qquad \qquad \cdot \left\langle (\W \phi^3_{\alpha', \Lambda})_*[\Theta^{[\![\alpha', \delta']\!]}] ,
     A_{1,2}^{[\![\le m]\!]} \otimes A_{2, 2}^{[\![\le m]\!]} \otimes 
   A_{3,2}^{[\![\le m]\!]}\right\rangle .
\end{eqnarray}

In particular, setting $d = 0$ in (\ref{A1ToAk.2}), we see that 
$\langle  A_1, A_2, A_3 \rangle$ is equal to
\begin{eqnarray*}  
{1 \over n!} \cdot \sum_{\alpha \in \mathcal P_{[n]}} \left\langle (\W \phi^3_{\alpha, [n]})_*
[\Theta^{[\![\alpha, 0]\!]}] , A_1^{[\![\le n]\!]} \otimes A_2^{[\![\le n]\!]} 
\otimes A_3^{[\![\le n]\!]} \right\rangle.
\end{eqnarray*}
Therefore, by \eqref{A1ToAk.2}, 
$\langle A_1, A_2, A_3 \rangle_{0, d \beta_n}$ is equal to
\begin{eqnarray*}  
& &{1 \over n!} \cdot \sum_{m \le n}
   \sum_{A_{1,1} \circ A_{1,2} = A_1 \atop {A_{2,1} \circ A_{2,2} = A_2 \atop 
   A_{3,1} \circ A_{3,2} = A_3}}
  \sum_{\Lambda \subset [n] \atop |\Lambda| = m}
  \sum_{(\alpha', \delta') \in \mathcal P_{\Lambda, d}^+}  
  (n-m)! \cdot \langle A_{1,1}, A_{2,1}, A_{3,1} \rangle \\
& &\qquad \qquad \cdot \langle (\W \phi^3_{\alpha', \Lambda})_*[\Theta^{[\![\alpha', \delta']\!]}]
   , A_{1,2}^{[\![\le m]\!]} \otimes A_{2, 2}^{[\![\le m]\!]} \otimes 
   A_{3,2}^{[\![\le m]\!]} \rangle  \\
&=&{1 \over n!} \cdot \sum_{m \le n}
  \sum_{A_{1,1} \circ A_{1,2} = A_1 \atop {A_{2,1} \circ A_{2,2} = A_2 \atop 
   A_{3,1} \circ A_{3,2} = A_3}}
  \sum_{(\alpha, \delta) \in \mathcal P_{[m], d}^+} {n \choose m} 
 (n-m)! \cdot \langle A_{1,1}, A_{2,1}, A_{3,1} \rangle   \\
& &\qquad \qquad \cdot \langle (\W \phi^3_{\alpha, [m]})_*[\Theta^{[\![\alpha, \delta]\!]}] 
  , A_{1,2}^{[\![\le m]\!]} \otimes A_{2, 2}^{[\![\le m]\!]} \otimes 
   A_{3,2}^{[\![\le m]\!]} \rangle.
\end{eqnarray*}
Using the definition of ${\mathfrak Z}_{m, d}$, we complete the proof of our theorem.
\qed

\medskip\noindent
{\it Proof of Theorem~\ref{KX2-universal}}. 
Let $A_i = \mathfrak a_{-\la^{(i)}}(1_X)\mathfrak a_{-\mu^{(i)}}(x)
\mathfrak a_{-n_{i,1}}(\alpha_{i,1}) \cdots \mathfrak a_{-n_{i,u_i}}(\alpha_{i,u_i}) \vac$
with $|\alpha_{i,j}|=2$. By linearity, we may assume  $\alpha_{i,j} \in \mathcal B$ for 
every $i$ and $j$. By \eqref{Theta-CalZ.0} and \eqref{IndepExt.2}, 
\begin{eqnarray}   \label{KX2-universal.1}
\left\langle {\mathfrak Z}_{n, d}, \,\,\prod_{i=1}^3 \pi_{n, i}^*A_i \right\rangle= {1 \over n!} \cdot \sum_{(\alpha, \delta) \in \mathcal P_{[n], d}^+}
   \sum_{A_{1,1} \circ \cdots \circ A_{1,l}=A_1 
   \atop{A_{2,1} \circ \cdots \circ A_{2,l}=A_2 
   \atop A_{3,1} \circ \cdots \circ A_{3,l}=A_3}} \prod_{i=1}^l 
 \left \langle  [\Theta^{[\![\alpha_i, \delta_i]\!]}] ,\,\,
    \otimes_{j=1}^3 A_{j,i}^{[\![\le |\alpha_i|]\!]}\right \rangle .
\end{eqnarray}
So our theorem, except the degree of $p$ in (ii), follows from Lemma~\ref{ThetaVanish}, 
Lemma~\ref{1CurveClass} and Lemma~\ref{LiJ-PropB}. To see the degree of $p$ in (ii),
consider a nonzero term in \eqref{KX2-universal.1}:
\begin{eqnarray}   \label{KX2-universal.2}
\prod_{i=1}^l \left \langle [\Theta^{[\![\alpha_i, \delta_i]\!]}], \,\,
   A_{1,i}^{[\![\le |\alpha_i|]\!]} \otimes 
   A_{2,i}^{[\![\le |\alpha_i|]\!]} \otimes A_{3,i}^{[\![\le |\alpha_i|]\!]} \right\rangle.
\end{eqnarray}
By Lemma~\ref{ThetaVanish}~(ii),  for each $i$ in \eqref{KX2-universal.2},
the classes $A_{1,i}, A_{2, i}, A_{3,i}$ together contains at most one Heisenberg factor
of the form $\mathfrak a_{-n_{j, k}}(\alpha_{j,k})$. By Lemma~\ref{1CurveClass} and 
Lemma~\ref{LiJ-PropB}, the degree of \eqref{KX2-universal.2} as a monomial of 
$\langle K_X, K_X \rangle$ is equal to $|I|$ where $I$ is the set consisting of the index 
$i \in \{1, \ldots, l\}$ such that the classes $A_{1,i}, A_{2, i}, A_{3,i}$ together do not 
contain any Heisenberg factor of the form $\mathfrak a_{-n_{j, k}}(\alpha_{j,k})$. 
Now for each $i \in I$, $|\alpha_i| \ge 2$ since $\delta_i \ge 1$. So we conclude that
$$
|I| \le {1 \over 2} \sum_{i \in I} |\alpha_i| = {1 \over 2} (n - \sum_{i \not \in I} |\alpha_i|)
\le {1 \over 2} (n - \sum_{j,k} n_{j,k}).
$$
Hence the degree of $p$ as a polynomial of $\langle K_X, K_X \rangle$ is at most
$(n - \sum_{i, j} n_{i, j})/2$.
\qed

\begin{corollary}  \label{PtClass}
Let $d \ge 1$, and let $A_1, A_2, A_3 \in H^*(\Xn)$ be Heisenberg monomial classes.
\begin{enumerate}
\item[{\rm (i)}] If $A_1 = \mathfrak a_{-1}(1_X)^{n-1}\mathfrak a_{-1}(\alpha) \vac$, 
then $\langle{\mathfrak Z}_{n, d}, \,\, \prod_{i=1}^3 \pi_{n, i}^*A_i \rangle= 0$.

\item[{\rm (ii)}] If $A_1 = \mathfrak a_{-1}(1_X)^{n-1-|\la|} \mathfrak a_{-1}(\alpha)
\mathfrak a_{-\la}(x)\vac$ for some $\la$, then 
$\langle A_1, A_2, A_3 \rangle_{0, d \beta_n} = 0.$
\end{enumerate}
\end{corollary}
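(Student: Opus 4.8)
The plan is to prove part (i) directly from the expansion \eqref{KX2-universal.1} of $\langle \mathfrak Z_{n,d}, \prod_i \pi_{n,i}^* A_i\rangle$ as a sum over $(\alpha,\delta)\in\mathcal P_{[n],d}^+$ and over splittings $A_{j,1}\circ\cdots\circ A_{j,l}=A_j$, with summand $\prod_{i=1}^l\langle[\Theta^{[\![\alpha_i,\delta_i]\!]}], \otimes_{j=1}^3 A_{j,i}^{[\![\le|\alpha_i|]\!]}\rangle$, and then to deduce part (ii) from part (i) via Theorem~\ref{A1ToAk}. First I would reduce (i) to the case where none of $A_1,A_2,A_3$ carries a factor $\mathfrak a_{-j}(x)$: if one does, the pairing already vanishes by Theorem~\ref{KX2-universal}(i). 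In particular $\alpha\neq x$, so every factor of $A_1=\mathfrak a_{-1}(1_X)^{n-1}\mathfrak a_{-1}(\alpha)\vac$ has subscript $-1$, and by linearity I may take $\alpha=1_X$ or $|\alpha|=2$. The crux is a per-cluster analysis. Since $\delta_i>0$ and $X^{[1]}=X$ has no $\rho$-contracted curves (so $X^{[1,\delta_i]}=\emptyset$ for $\delta_i>0$), every cluster has $|\alpha_i|\ge2$. Because $\rho_m$ is crepant, $K_{X^{[m]}}\cdot\beta_m=0$ and $\overline{\mathfrak M}_{0,3}(X^{[m]},\delta\beta_m)$ has virtual dimension $2m$; hence $[\Theta^{[\![\alpha_i,\delta_i]\!]}]\in H_{4|\alpha_i|}$, so the $i$-th factor vanishes unless $\sum_{j=1}^3\deg A_{j,i}=4|\alpha_i|$. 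For an $x$-free monomial $w$ on $X^{[m]}$ one has $\deg w=2m-2\ell(w)$, where $\ell(w)$ is the number of $\mathfrak a_{-\bullet}(1_X)$-factors, so this degree condition reads $\sum_j\ell(A_{j,i})=|\alpha_i|$.

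In every cluster the sub-monomial $A_{1,i}$ of $A_1$ is again all-subscript-$1$, hence either pure $1_X$ (so $\ell(A_{1,i})=|\alpha_i|$) or carrying the single non-$1_X$ factor $\mathfrak a_{-1}(\alpha)$ of $A_1$ (so $\ell(A_{1,i})=|\alpha_i|-1$ and $A_{1,i}$ has one degree-$2$ factor). In the first case the degree condition forces $\ell(A_{2,i})=\ell(A_{3,i})=0$; then $A_{2,i}$ and $A_{3,i}$, each creating $|\alpha_i|\ge2$ points with no $1_X$-factor, each carry a degree-$2$ factor, and two degree-$2$ factors in one cluster make $\langle[\Theta^{[\![\alpha_i,\delta_i]\!]}],\,\cdots\rangle=0$ by Lemma~\ref{ThetaVanish}(ii). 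In the second case Lemma~\ref{ThetaVanish}(ii) forces $A_{2,i}$ and $A_{3,i}$ to be pure $1_X$, whence $\ell(A_{2,i}),\ell(A_{3,i})\ge1$, so $\ell(A_{2,i})+\ell(A_{3,i})\ge2$, contradicting the degree condition $\ell(A_{2,i})+\ell(A_{3,i})=1$. Either way the $i$-th factor, hence the whole summand, vanishes, which proves (i).

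For (ii) I would feed part (i) into Theorem~\ref{A1ToAk}, which writes $\langle A_1,A_2,A_3\rangle_{0,d\beta_n}$ as a sum over $m\le n$ and over splittings $A_j=A_{j,1}\circ A_{j,2}$ of $\langle A_{1,1},A_{2,1},A_{3,1}\rangle\cdot\langle\mathfrak Z_{m,d},\prod_i\pi_{m,i}^*A_{i,2}\rangle$. If the moving part $A_{1,2}$ of $A_1=\mathfrak a_{-1}(1_X)^{n-1-|\la|}\mathfrak a_{-1}(\alpha)\mathfrak a_{-\la}(x)\vac$ contains any factor $\mathfrak a_{-j}(x)$, the pairing with $\mathfrak Z_{m,d}$ vanishes by Theorem~\ref{KX2-universal}(i); so only splittings sending all $x$-factors of $A_1$ into the constant part $A_{1,1}$ survive. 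For these the moving part $A_{1,2}$ is built solely from $\mathfrak a_{-1}(1_X)$-factors and at most one factor $\mathfrak a_{-1}(\alpha)$, i.e. it is exactly of the form treated in part (i), so $\langle\mathfrak Z_{m,d},\prod_i\pi_{m,i}^*A_{i,2}\rangle=0$ by (i). Hence every surviving term vanishes and $\langle A_1,A_2,A_3\rangle_{0,d\beta_n}=0$.

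The step I expect to be the main obstacle is the per-cluster bookkeeping in (i): one must apply the virtual-dimension count ($2m$ per cluster, via crepancy of $\rho_m$) and the length identity $\deg w=2m-2\ell(w)$ to the correct factor spaces $X^{[\![\le\alpha_i]\!]}$ rather than to $X^{[\![\le n]\!]}$, and must combine the two logically independent inputs — the collision of two degree-$2$ factors handled by Lemma~\ref{ThetaVanish}(ii) and the degree constraint from the dimension of $[\Theta^{[\![\alpha_i,\delta_i]\!]}]$ — without gaps across both of the cases above. Once these are secured, all remaining steps are formal.
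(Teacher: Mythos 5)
Your proof is correct, but part (i) takes a genuinely different route from the paper's. The paper argues from the outside: for $\alpha=1_X$ it notes that $A_1/n!$ is the fundamental class of $\Xn$, so $\langle A_1,A_2,A_3\rangle_{0,d\beta_n}=0$ by the Fundamental Class Axiom; for $|\alpha|=2$ it uses the Divisor Axiom together with $\langle A_1,\beta_n\rangle=0$; it then inverts Theorem~\ref{A1ToAk} by induction on $n$ to isolate the top term $\langle\mathfrak Z_{n,d},\prod_i\pi_{n,i}^*A_i\rangle$ from the vanishing of the full invariant. You instead work from the inside, killing every cluster factor $\langle[\Theta^{[\![\alpha_i,\delta_i]\!]}],\otimes_{j}A_{j,i}^{[\![\le|\alpha_i|]\!]}\rangle$ in \eqref{KX2-universal.1} directly, by combining the purity of the homological degree of $[\Theta^{[\![\alpha_i,\delta_i]\!]}]$ (real dimension $4|\alpha_i|$, from crepancy of $\rho_{|\alpha_i|}$) with Lemma~\ref{ThetaVanish}(ii). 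Your bookkeeping checks out: $|\alpha_i|\ge 2$ since $\delta_i>0$, the identity $\deg w=2m-2\ell(w)$ for $x$-free monomials on $X^{[m]}$ is right, and the two cases for $A_{1,i}$ are exhaustive; in each, either the constraint $\sum_j\ell(A_{j,i})=|\alpha_i|$ fails or two degree-$2$ insertions collide in one cluster. What the paper's route buys is brevity and reliance only on standard Gromov--Witten axioms; what yours buys is independence from those axioms and a stronger, local conclusion (every cluster factor vanishes, not merely the sum), at the cost of having to justify that $[\Theta^{[\![\alpha_i,\delta_i]\!]}]$ has pure degree $4|\alpha_i|$ --- which does hold, since every pseudo-cycle $D(\bs_{[\![\beta,\eta]\!]})$ entering the inductive definition \eqref{ThetaAlpha.1} has that same real dimension. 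Part (ii) is argued exactly as in the paper.
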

\begin{proof}
(i) First of all, if $\alpha = x$, then $\langle {\mathfrak Z}_{n, d},  \,\,\prod_{i=1}^3 \pi_{n, i}^*A_i\rangle = 0$
by Theorem~\ref{KX2-universal}~(i).

Next, let $\alpha = 1_X$. Use induction on $n$. Since $d \ge 1$, the conclusion is trivially true when 
$n = 1$. Let $n > 1$. Recall that $1/n! \cdot A_1$ is the fundamental class $1_{\Xn}$ of $\Xn$.
By Theorem~\ref{A1ToAk} and the Fundamental Class Axiom of Gromov-Witten theory, 
\begin{eqnarray*}  
\sum_{m = 2}^{n} \sum_{A_{1,1} \circ A_{1,2} = A_1 \atop {A_{2,1} \circ A_{2,2} = A_2 \atop
A_{3,1} \circ A_{3,2} = A_3}}  \langle A_{1,1}, A_{2,1}, A_{3,1}  \rangle \cdot 
\left \langle {\mathfrak Z}_{m, d},  \,\,\prod_{i=1}^3 \pi_{m, i}^*A_{i,2} \right \rangle = 0.
\end{eqnarray*}
Since $A_1 = \mathfrak a_{-1}(1_X)^n \vac$, we have $A_{1,2} = \mathfrak a_{-1}(1_X)^m \vac$. 
By induction, $\langle{\mathfrak Z}_{m, d} , \,\,\prod_{i=1}^3 \pi_{m, i}^*A_{i,2}\rangle = 0$
if $2 \le m \le n-1$. It follows that $\langle {\mathfrak Z}_{n, d} , \,\,\prod_{i=1}^3 \pi_{n, i}^*A_i \rangle= 0$.

Now let $|\alpha| = 2$. By the Divisor Axiom of Gromov-Witten theory and 
$\langle A_1, \beta_n \rangle = 0$, we have $\langle A_1, A_2, A_3 \rangle_{0, d \beta_n} = 0$.
Using an argument similar to the one in the previous paragraph, we conclude that
$\langle{\mathfrak Z}_{n, d} , \,\,\prod_{i=1}^3 \pi_{n, i}^*A_i \rangle= 0$.

(ii) We compute $\langle A_1, A_2, A_3 \rangle_{0, d \beta_n}$ by using \eqref{A1ToAk.0}.
Note that the class $A_{1,2}$ in \eqref{A1ToAk.0} is equal to $\mathfrak a_{-1}(1_X)^m\vac$, 
or is equal to $\mathfrak a_{-1}(1_X)^{m-1} \mathfrak a_{-1}(\alpha)\vac$, 
or contains a factor $\mathfrak a_{-i}(x)$ for some $i > 0$. By (i) and 
Theorem~\ref{KX2-universal}~(i), we get $\langle A_1, A_2, A_3 \rangle_{0, d \beta_n} = 0$.
\end{proof}

\section{\bf Proofs of (\ref{Axiom22}) and Theorem~\ref{Intro-Thm1}}
\label{sect_ProofOfAxiom22}

Let $X$ be a simply connected smooth projective surface. Our goal in this section is 
to prove (\ref{Axiom22}) and Theorem~\ref{Intro-Thm1} for $A^{[n]} = 
H^*_{\rho_n}(\Xn)$. The proof of (\ref{Axiom22}) is divided into three cases 
depending on the cohomology degree of the class $\alpha$ in (\ref{Axiom22}) 
and leading to Proposition~\ref{Axiom22A=x}, Proposition~\ref{Axiom22A=2}
and Proposition~\ref{Axiom22A=1X}. Assuming these three propositions, 
we now prove Theorem~\ref{Intro-Thm1}.

\medskip\noindent
{\it Proof of Theorem~\ref{Intro-Thm1}.}
Note that the shift number (or the age) of the class $\mathfrak p_{-n_1}(\alpha_1)\cdots
\mathfrak p_{-n_s}(\alpha_s)\vac$ is equal to $n_1 + \ldots + n_s - s$.
Define a linear isomorphism
\begin{eqnarray}    \label{ProofOfRuanConj.2}
\Psi: \Fock \rightarrow \fock
\end{eqnarray}
by sending $\sqrt{-1}^{n_1 + \ldots + n_s -s} \mathfrak p_{-n_1}(\alpha_1) \cdots 
\mathfrak p_{-n_s}(\alpha_s)\vac$ to $\mathfrak a_{-n_1}(\alpha_1)\cdots 
\mathfrak a_{-n_s}(\alpha_s)\vac$. This induces a linear isomorphism $\Psi_n:
H^*_{\rm CR}(X^{(n)}) \rightarrow H^*(\Xn)$ for each $n$. 
Moreover, $\Psi_1$ is simply the identity map on the cohomology group of the surface $X$. 

By (\ref{OprtWG1X}), Proposition~\ref{Axiom22A=x}, Proposition~\ref{Axiom22A=2}
and Proposition~\ref{Axiom22A=1X}, the two formulas (\ref{Axiom21}) and (\ref{Axiom22}) 
hold for $A^{[n]} = H^*_{\rho_n}(\Xn)$. By the proof of 
Theorem~\ref{char_th} (i.e., Theorem~4.7 in \cite{LQW3}),
\begin{eqnarray}    \label{ProofOfRuanConj.1}
\W {\mathfrak G}_k(\alpha) 
= -\sum_{\ell(\lambda) = k+2, |\lambda|=0}
   {1 \over \lambda^!} \mathfrak a_{\lambda}(\tau_{*}\alpha)
   + \sum_{\ell(\lambda) = k, |\lambda|=0}
   {\lambsq - 2 \over 24\lambda^!}
   \mathfrak a_{\lambda}(\tau_{*}(e_X\alpha)).
\end{eqnarray}
Combining this with formula (\ref{Oprt-OkAlphaN}), we check directly that 
\begin{eqnarray*}    
& &\Psi_n \Big ( \sqrt{-1}^k O_k(\alpha, n) \bullet \sqrt{-1}^{n_1 + \ldots + n_s -s} 
   \mathfrak p_{-n_1}(\alpha_1) \cdots \mathfrak p_{-n_s}(\alpha_s)\vac \Big )  \\
&=&\Psi_n \Big ( \sqrt{-1}^{k+n_1 + \ldots + n_s -s} \mathfrak O_k(\alpha)
   \mathfrak p_{-n_1}(\alpha_1) \cdots \mathfrak p_{-n_s}(\alpha_s)\vac \Big )  \\
&=&\W {\mathfrak G}_k(\alpha) \mathfrak a_{-n_1}(\alpha_1) \cdots 
   \mathfrak a_{-n_s}(\alpha_s)\vac \\
&=&\W G_k(\alpha, n) \cdot \mathfrak a_{-n_1}(\alpha_1) \cdots \mathfrak a_{-n_s}(\alpha_s)\vac
\end{eqnarray*}
where $n_1 + \ldots + n_s = n$. In particular, letting $s=n$, $n_1 = \ldots = n_s = 1$
and $\alpha_1 = \ldots = \alpha_s = 1_X$, we obtain 
$\Psi_n \big ( \sqrt{-1}^k O_k(\alpha, n) \big ) = \W G_k(\alpha, n)$. Thus,
\begin{eqnarray*}    
& &\Psi_n \Big ( \sqrt{-1}^k O_k(\alpha, n) \bullet \sqrt{-1}^{n_1 + \ldots + n_s -s} 
   \mathfrak p_{-n_1}(\alpha_1) \cdots \mathfrak p_{-n_s}(\alpha_s)\vac \Big )  \\
&=&\Psi_n \big ( \sqrt{-1}^k O_k(\alpha, n) \big ) \cdot 
   \Psi_n \big ( \sqrt{-1}^{n_1 + \ldots + n_s -s} \mathfrak p_{-n_1}(\alpha_1) 
   \cdots \mathfrak p_{-n_s}(\alpha_s)\vac \big ).
\end{eqnarray*}
Since the classes $O_k(\alpha, n)$ with $k \ge 0, \alpha \in H^*(X)$ generate the ring
$H^*_{\rm CR}(X^{(n)})$, we conclude that $\Psi_n:
H^*_{\rm CR}(X^{(n)}) \rightarrow H^*(\Xn)$ is a ring isomorphism.
\qed

\begin{remark}  \label{RmkPairing}
Using Heisenberg monomial classes, one checks that the ring isomorphism $\Psi_n$ preserves 
the pairings on $H^*_{\rm CR}(X^{(n)})$ and $H^*(\Xn)$.
\end{remark}

In the next three subsections, we will verify (\ref{Axiom22}) by proving 
Propositions~\ref{Axiom22A=x}, \ref{Axiom22A=2} and \ref{Axiom22A=1X} 
used in the proof of Theorem~\ref{Intro-Thm1}. For simplicity, put
$
\langle w_1, w_2, w_3 \rangle_d = \langle w_1, w_2, w_3 \rangle_{0, d \beta_n}.
$
In addition, $w_1, w_2$ and $w_3$ will stand for Heisenberg monomial classes.
\subsection{The case $\alpha = x$}
\label{subsect_Alpha=x}

We begin with a setup for the proof of (\ref{Axiom22}) for arbitrary $\alpha, \beta \in H^*(X)$.
To prove (\ref{Axiom22}), it is equivalent to verify that
\begin{eqnarray}  \label{Axiom22Alt}
\big \langle [ \W {\mathfrak G}_k(\alpha), \mathfrak a_{-1}(\beta)]w_1, w_2 \big \rangle
= {1 \over k!} \, \big \langle \mathfrak a^{\{ k \}}_{-1}(\alpha \beta) w_1, w_2 \big \rangle.
\end{eqnarray}
for $w_1 \in H^*_{\rho_n}(X^{[n-1]}) = H^*(X^{[n-1]})$ and 
$w_2 \in H^*_{\rho_n}(\Xn) = H^*(X^{[n]})$. Put
\begin{eqnarray}  \label{DABW1W2}
D^\alpha_\beta(w_1, w_2; q)
:= \langle [ \W {\mathfrak G}_k(\alpha; q), \mathfrak a_{-1}(\beta)]w_1, w_2 \rangle
- {1 \over k!} \, \big \langle \mathfrak a^{\{ k \}}_{-1}(\alpha \beta) w_1, w_2 \big \rangle
\end{eqnarray}
where we have omitted $k$ in $D^\alpha_\beta(w_1, w_2; q)$ since it will be clear from the context.

\begin{lemma} \label{difference}
The difference $D^\alpha_\beta(w_1, w_2; q)$ is equal to 
\begin{eqnarray}  \label{Axiom22AltLeft}
& &\sum_{0 \le j \le k} \sum_{\lambda \vdash (j+1) \atop \ell(\lambda)=k-j+1}
   {(-1)^{|\lambda|-1} \over \lambda^! \cdot |\lambda|!}
   \sum_{d \ge 1} \Big ( \big \langle {\bf 1}_{-(n-j-1)} \mathfrak a_{-\lambda}(\tau_*\alpha)\vac, 
   \mathfrak a_{-1}(\beta)w_1, w_2 \big \rangle_d       \nonumber \\
& &\qquad - \, \big \langle {\bf 1}_{-(n-j-2)} \mathfrak a_{-\lambda}(\tau_*\alpha)\vac, w_1, 
   \mathfrak a_{-1}(\beta)^\dagger w_2 \big \rangle_d \Big ) q^d  \nonumber \\
&+&\sum_{{\epsilon} \in \{K_X, K_X^2\}}  \,\,\,
   \sum_{\ell(\lambda) = k+1-|{\epsilon}|/2 \atop |\lambda|=-1}
   {\w f_{|{\epsilon}|}(\lambda)} \cdot \big \langle 
   \mathfrak a_{\lambda}(\tau_{*}(\epsilon \alpha \beta)) w_1, w_2 \big \rangle  \nonumber \\
&-&\sum_{\epsilon \in \{K_X, K_X^2\} \atop 0 \le j \le k}
   \sum_{\lambda \vdash (j+1) \atop \ell(\lambda)=k-j+1-|{\epsilon}|/2}
   \w g_{|{\epsilon}|}(\lambda) \cdot \Big ( \big \langle {\bf 1}_{-(n-j-1)}
   \mathfrak a_{-\lambda}(\tau_*(\epsilon\alpha))\vac, \mathfrak a_{-1}(\beta)w_1, 
   w_2 \big \rangle  \nonumber \\
& &\qquad - \, \big \langle {\bf 1}_{-(n-j-2)} \mathfrak a_{-\lambda}(\tau_*(\epsilon\alpha)) \vac, 
   w_1, \mathfrak a_{-1}(\beta)^\dagger w_2 \big \rangle \Big )
\end{eqnarray}
where $\mathfrak a_{-1}(\beta)^\dagger = -\mathfrak a_1(\beta)$ is the adjoint operator of 
$\mathfrak a_{-1}(\beta)$, and the functions $\w f_{|{\epsilon}|}(\lambda)$ and 
$\w g_{|{\epsilon}|}(\lambda)$ depend only on $k, |{\epsilon}|$ and $\lambda$.
\end{lemma}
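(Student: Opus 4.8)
The plan is to separate $D^\alpha_\beta(w_1,w_2;q)$ into its $q^0$ (classical) coefficient and its $q^{\ge 1}$ part, and to match these against the three blocks of \eqref{Axiom22AltLeft}. First I would expand the commutator and use the adjoint relation $\mathfrak a_{-1}(\beta)^\dagger=-\mathfrak a_1(\beta)$ together with \eqref{OprtWGq} to write $\langle[\W{\mathfrak G}_k(\alpha;q),\mathfrak a_{-1}(\beta)]w_1,w_2\rangle$ as $\sum_{d\ge 0}\bigl(\langle\W G_k(\alpha,n),\mathfrak a_{-1}(\beta)w_1,w_2\rangle_d-\langle\W G_k(\alpha,n-1),w_1,\mathfrak a_{-1}(\beta)^\dagger w_2\rangle_d\bigr)q^d$, the second summand arising because $\W{\mathfrak G}_k(\alpha;q)$ acts on $H^*(X^{[n-1]})$ through $\W G_k(\alpha,n-1)$. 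Since $\langle\,\cdot,\cdot,\cdot\,\rangle_0$ is the ordinary triple intersection and the term $\frac{1}{k!}\langle\mathfrak a^{\{k\}}_{-1}(\alpha\beta)w_1,w_2\rangle$ carries no $q$, the $q^{\ge 1}$ part of $D^\alpha_\beta$ comes only from the invariants with $d\ge 1$, while the $q^0$ coefficient equals $\langle[\W{\mathfrak G}_k(\alpha),\mathfrak a_{-1}(\beta)]w_1,w_2\rangle-\frac{1}{k!}\langle\mathfrak a^{\{k\}}_{-1}(\alpha\beta)w_1,w_2\rangle$ with $\W{\mathfrak G}_k(\alpha)$ the ordinary cup-product operator.

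For the $q^0$ coefficient I would use Remark~\ref{WGkAlphaNRmk} and Theorem~\ref{g_{1_X, e}} to write $G_k(\alpha,n)=\W G_k(\alpha,n)+G^K_k(\alpha,n)$, where $G^K_k(\alpha,n)$ is the $K_X$-block (the $\epsilon\in\{K_X,K_X^2\}$ sum of Theorem~\ref{g_{1_X, e}}); linearity of cup product then gives $\W{\mathfrak G}_k(\alpha)=\mathfrak G_k(\alpha)-\mathfrak G^K_k(\alpha)$ as operators. By Theorem~\ref{commutator}(iii), $[\mathfrak G_k(\alpha),\mathfrak a_{-1}(\beta)]=\frac{1}{k!}\mathfrak a^{(k)}_{-1}(\alpha\beta)$, so the $q^0$ coefficient equals $\frac{1}{k!}\langle(\mathfrak a^{(k)}_{-1}(\alpha\beta)-\mathfrak a^{\{k\}}_{-1}(\alpha\beta))w_1,w_2\rangle-\langle[\mathfrak G^K_k(\alpha),\mathfrak a_{-1}(\beta)]w_1,w_2\rangle$. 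Comparing Theorem~\ref{deriv_th} at $n=-1$ (where $(-n)^k=1$) with Lemma~\ref{derivLLlma}, the difference $\mathfrak a^{(k)}_{-1}(\alpha\beta)-\mathfrak a^{\{k\}}_{-1}(\alpha\beta)$ is precisely the $\{K_X,K_X^2\}$-block of Theorem~\ref{deriv_th}; this yields the second block of \eqref{Axiom22AltLeft} with $\w f_{|\epsilon|}(\lambda)=f_{|\epsilon|}(\lambda)/(k!\,\lambda^!)$. Finally, expanding $[\mathfrak G^K_k(\alpha),\mathfrak a_{-1}(\beta)]$ by adjointness into $\langle G^K_k(\alpha,n)\cdot\mathfrak a_{-1}(\beta)w_1,w_2\rangle-\langle G^K_k(\alpha,n-1)\cdot w_1,\mathfrak a_{-1}(\beta)^\dagger w_2\rangle$ and inserting the explicit expression for $G^K_k$ from Theorem~\ref{g_{1_X, e}} produces the third block, with $\w g_{|\epsilon|}(\lambda)=(-1)^{|\lambda|}g_{|\epsilon|}(\lambda+(1^{j+1}))/(\lambda^!\,|\lambda|!)$; both $\w f$ and $\w g$ depend only on $k$, $|\epsilon|$ and $\lambda$, as required.

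For the $q^{\ge 1}$ part I substitute $\W G_k(\alpha,n)$ from \eqref{WGkAlphaN} as a sum of a ``Type-1'' block (involving $\tau_*\alpha$) and a ``Type-2'' block (involving $\tau_*(e_X\alpha)$). The Type-1 monomials ${\bf 1}_{-(n-j-1)}\mathfrak a_{-\lambda}(\tau_*\alpha)\vac$, together with their $X^{[n-1]}$ analogues (for which $(n-1)-j-1=n-j-2$), reproduce the first block of \eqref{Axiom22AltLeft} verbatim. The crux of the argument is to show that the Type-2 block contributes nothing when $d\ge 1$. Since $e_X=\chi(X)\,x$ is a multiple of the point class, $e_X\alpha$ vanishes unless $\alpha\in H^0(X)$, in which case $\tau_*(e_X\alpha)$ is proportional to $x^{\otimes\ell}$ and the Type-2 monomial carries factors $\mathfrak a_{-\lambda_i}(x)$. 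I would then apply Theorem~\ref{A1ToAk} to rewrite each degree-$d$ invariant as $\sum\langle A_{1,1},A_{2,1},A_{3,1}\rangle\cdot\langle\mathfrak Z_{m,d},\prod_i\pi_{m,i}^*A_{i,2}\rangle$ and argue splitting by splitting: if some $\mathfrak a_{-\lambda_i}(x)$ lands in $A_{1,2}$, the $\mathfrak Z$-pairing vanishes by Theorem~\ref{KX2-universal}(i); if instead all point-class factors remain in $A_{1,1}$, then $A_{1,2}=\mathfrak a_{-1}(1_X)^m\vac$ and the $\mathfrak Z$-pairing vanishes by Corollary~\ref{PtClass}(i). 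Hence the Type-2 block drops out for every $d\ge 1$, so the $q^{\ge 1}$ part is exactly the first block. I expect this point-class case analysis, resting on the full structural results of Section~\ref{sect_Extremal}, to be the main obstacle; the coefficient matching in the other two blocks is then routine bookkeeping.
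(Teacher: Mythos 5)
Your proposal is correct and follows essentially the same route as the paper: expand the commutator via the pairing definition of $\W{\mathfrak G}_k(\alpha;q)$, handle the $d\ge 1$ terms by observing that only the $\tau_*\alpha$-block of $\W G_k(\alpha,n)$ survives, and extract the two $K_X$-blocks at $q^0$ by comparing $G_k$ with $\W G_k$ (Theorem~\ref{g_{1_X, e}}) and $\mathfrak a^{(k)}_{-1}$ with $\mathfrak a^{\{k\}}_{-1}$ (Theorem~\ref{deriv_th} versus Lemma~\ref{derivLLlma}). The only cosmetic difference is that you re-derive the vanishing of the $e_X$-block for $d\ge 1$ from Theorem~\ref{A1ToAk}, Theorem~\ref{KX2-universal}~(i) and Corollary~\ref{PtClass}~(i), which is word-for-word the paper's proof of Corollary~\ref{PtClass}~(ii), the statement the paper cites directly at that step.
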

\begin{proof}
By (\ref{OprtWGq}), $\big \langle [ \W {\mathfrak G}_k(\alpha; q), \mathfrak a_{-1}(\beta)]w_1, 
w_2 \big \rangle$ is equal to
\begin{eqnarray}   \label{UsedForn=1}
& &\big \langle \W {\mathfrak G}_k(\alpha; q) \big (\mathfrak a_{-1}(\beta)w_1 \big ), 
         w_2 \big \rangle
   - \big \langle \mathfrak a_{-1}(\beta) \W {\mathfrak G}_k(\alpha; q)(w_1), w_2 
         \big \rangle \nonumber \\
&=&\big \langle \W {\mathfrak G}_k(\alpha; q) \big (\mathfrak a_{-1}(\beta)w_1 \big ), 
         w_2 \big \rangle
   - \big \langle \W {\mathfrak G}_k(\alpha; q)(w_1), \mathfrak a_{-1}(\beta)^\dagger w_2 
         \big \rangle \nonumber \\
&=&\sum_{d \ge 0} \,\,\Big ( \big \langle \W G_k(\alpha,n), \mathfrak a_{-1}(\beta)w_1, 
         w_2 \big \rangle_d      
      -  \big \langle \W G_k(\alpha,n-1), w_1, 
         \mathfrak a_{-1}(\beta)^\dagger w_2 \big \rangle_d \Big ) q^d.  \qquad
\end{eqnarray}
If $d \ge 1$, then we see from (\ref{WGkAlphaN}) and Corollary~\ref{PtClass}~(ii) that 
\begin{eqnarray}   \label{difference.1}
& &\big \langle \W G_k(\alpha,n), \mathfrak a_{-1}(\beta)w_1, w_2 \big \rangle_d   \nonumber  \\
&=&\sum_{0 \le j \le k} \sum_{\lambda \vdash (j+1) \atop \ell(\lambda)=k-j+1}
   {(-1)^{|\lambda|-1} \over \lambda^! \cdot |\lambda|!}
   \big \langle {\bf 1}_{-(n-j-1)} \mathfrak a_{-\lambda}(\tau_*\alpha)\vac, 
   \mathfrak a_{-1}(\beta)w_1, w_2 \big \rangle_d.
\end{eqnarray}
Similarly, if $d \ge 1$, then $\big \langle \W G_k(\alpha,n-1), w_1, 
\mathfrak a_{-1}(\beta)^\dagger w_2 \big \rangle_d$ is equal to
\begin{eqnarray}   \label{difference.2}
\sum_{0 \le j \le k} \sum_{\lambda \vdash (j+1) \atop \ell(\lambda)=k-j+1}
   {(-1)^{|\lambda|-1} \over \lambda^! \cdot |\lambda|!}
   \big \langle {\bf 1}_{-(n-j-2)} \mathfrak a_{-\lambda}(\tau_*\alpha)\vac, w_1, 
   \mathfrak a_{-1}(\beta)^\dagger w_2 \big \rangle_d.
\end{eqnarray}

Next, we study the two terms with $d= 0$ in (\ref{UsedForn=1}). 
By (\ref{WGkAlphaN}) and Theorem~\ref{g_{1_X, e}}, 
$$
\W G_k(\alpha, n) = G_k(\alpha, n) - \sum_{\epsilon \in \{K_X, K_X^2\} \atop 0 \le j \le k}
   \sum_{\lambda \vdash (j+1) \atop \ell(\lambda)=k-j+1-|{\epsilon}|/2}
   \w g_{|{\epsilon}|}(\lambda) \cdot {\bf 1}_{-(n-j-1)}
   \mathfrak a_{-\lambda}(\tau_*(\epsilon\alpha))\vac
$$
where $\w g_{|{\epsilon}|}(\lambda)$ depends only on $k, |{\epsilon}|$ and $\lambda$.
By Theorem~\ref{commutator}~(iii), Theorem~\ref{deriv_th} and Lemma~\ref{derivLLlma}, 
$\langle G_k(\alpha,n), \mathfrak a_{-1}(\beta)w_1, w_2 \rangle 
- \langle G_k(\alpha,n-1), w_1, \mathfrak a_{-1}(\beta)^\dagger w_2 \rangle$ equals
\begin{eqnarray*}    
& &\langle G_k(\alpha,n) \cdot \mathfrak a_{-1}(\beta)w_1, w_2 \rangle 
    - \langle G_k(\alpha,n-1) \cdot w_1, \mathfrak a_{-1}(\beta)^\dagger w_2 \rangle    \\
&=&\langle \mathfrak G_k(\alpha) \mathfrak a_{-1}(\beta)w_1, w_2 \rangle 
    - \langle \mathfrak a_{-1}(\beta)\mathfrak  G_k(\alpha) w_1, w_2 \rangle    \\
&=&\big \langle [\mathfrak G_k(\alpha), \mathfrak a_{-1}(\beta)]w_1, w_2 \rangle   
     = {1 \over k!} \, \langle \mathfrak a^{(k)}_{-1}(\alpha \beta) w_1, w_2 \rangle   \\
&=&{1 \over k!} \, \langle \mathfrak a^{\{ k \}}_{-1}(\alpha \beta) w_1, w_2 \rangle 
   + \sum_{{\epsilon} \in \{K_X, K_X^2\}}  \,\,\,
   \sum_{\ell(\lambda) = k+1-|{\epsilon}|/2 \atop |\lambda|=-1}
   {\w f_{|{\epsilon}|}(\lambda)} \cdot \big \langle 
   \mathfrak a_{\lambda}(\tau_{*}(\epsilon \alpha \beta)) w_1, w_2 \big \rangle.
\end{eqnarray*}
Thus, $\big \langle \W G_k(\alpha,n), \mathfrak a_{-1}(\beta)w_1, w_2 \big \rangle 
- \big \langle \W G_k(\alpha,n-1), w_1, \mathfrak a_{-1}(\beta)^\dagger w_2 \big \rangle$ is equal to
\begin{eqnarray}    \label{UsedForA=2B=1X}
& &{1 \over k!} \, \langle \mathfrak a^{\{ k \}}_{-1}(\alpha \beta) w_1, w_2 \rangle 
   + \sum_{{\epsilon} \in \{K_X, K_X^2\}}  \,\,\,
   \sum_{\ell(\lambda) = k+1-|{\epsilon}|/2 \atop |\lambda|=-1}
   {\w f_{|{\epsilon}|}(\lambda)} \cdot \big \langle 
   \mathfrak a_{\lambda}(\tau_{*}(\epsilon \alpha \beta)) w_1, w_2 \big \rangle   \nonumber \\
&-&\sum_{\epsilon \in \{K_X, K_X^2\} \atop 0 \le j \le k}
   \sum_{\lambda \vdash (j+1) \atop \ell(\lambda)=k-j+1-|{\epsilon}|/2}
   \w g_{|{\epsilon}|}(\lambda) \cdot \Big ( \big \langle {\bf 1}_{-(n-j-1)}
   \mathfrak a_{-\lambda}(\tau_*(\epsilon\alpha))\vac, \mathfrak a_{-1}(\beta)w_1, 
   w_2 \big \rangle  \nonumber \\
& &- \big \langle {\bf 1}_{-(n-j-2)} \mathfrak a_{-\lambda}(\tau_*(\epsilon\alpha))\vac, 
   w_1, \mathfrak a_{-1}(\beta)^\dagger w_2 \big \rangle \Big ).
\end{eqnarray}

Finally, our lemma follows from (\ref{UsedForn=1}), (\ref{difference.1}), (\ref{difference.2}) 
and (\ref{UsedForA=2B=1X}).
\end{proof}

Now we deal with the simplest case when $\alpha = x$ and $\beta$ is arbitrary.

\begin{proposition}  \label{Axiom22A=x}
If $\alpha = x$ is the cohomology class of a point, then (\ref{Axiom22}) is true.
\end{proposition}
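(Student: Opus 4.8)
The plan is to reduce (\ref{Axiom22}) to the single scalar vanishing $D^x_\beta(w_1, w_2; -1) = 0$, using the reformulation (\ref{Axiom22Alt}) together with the definition (\ref{DABW1W2}), and then to read off $D^x_\beta$ from the explicit formula (\ref{Axiom22AltLeft}) of Lemma~\ref{difference} specialized to $\alpha = x$. The decisive simplification comes from dimension: since $X$ is a surface and $x \in H^4(X)$, we have $K_X \cdot x \in H^6(X) = 0$ and $K_X^2 \cdot x \in H^8(X) = 0$, so $\epsilon x = 0$ for every $\epsilon \in \{K_X, K_X^2\}$. Hence $\tau_*(\epsilon x) = 0$ and $\tau_*(\epsilon x \beta) = 0$, and the entire third and fourth groups of terms in (\ref{Axiom22AltLeft}) (those carrying the universal coefficients $\w f_{|\epsilon|}(\lambda)$ and $\w g_{|\epsilon|}(\lambda)$) vanish identically. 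This is what makes $\alpha = x$ the simplest case.

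After this cancellation, the only surviving contribution to $D^x_\beta(w_1, w_2; -1)$ is the first group: a sum over $d \ge 1$ of the two extremal Gromov--Witten invariants $\langle {\bf 1}_{-(n-j-1)} \mathfrak a_{-\lambda}(\tau_*x)\vac, \mathfrak a_{-1}(\beta)w_1, w_2 \rangle_d$ and $\langle {\bf 1}_{-(n-j-2)} \mathfrak a_{-\lambda}(\tau_*x)\vac, w_1, \mathfrak a_{-1}(\beta)^\dagger w_2 \rangle_d$, indexed by $\lambda \vdash (j+1)$. Since $\tau_{\ell(\lambda)*}x = x^{\otimes \ell(\lambda)}$, the first insertion of each invariant is a Heisenberg monomial of the form ${\bf 1}_{-\ast}\,\mathfrak a_{-\lambda_1}(x) \cdots \mathfrak a_{-\lambda_{\ell(\lambda)}}(x)\vac$, which contains at least one point-class factor $\mathfrak a_{-\lambda_i}(x)$ because $|\lambda| = j+1 \ge 1$. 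The key step is therefore to prove that any $3$-pointed extremal invariant $\langle A_1, A_2, A_3 \rangle_d$ with $d \ge 1$ vanishes whenever $A_1$ contains a factor $\mathfrak a_{-i}(x)$; termwise vanishing of both invariants then yields $D^x_\beta(w_1, w_2; -1) = 0$, hence (\ref{Axiom22}). (By linearity of Gromov--Witten invariants we may assume all three insertions are Heisenberg monomials, so that Theorem~\ref{A1ToAk} applies; this covers the class $\mathfrak a_{-1}(\beta)^\dagger w_2 = -\mathfrak a_1(\beta)w_2$, which is a combination of such monomials.)

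To establish this vanishing I would expand $\langle A_1, A_2, A_3 \rangle_d$ by Theorem~\ref{A1ToAk} as a sum over splittings $A_{i,1} \circ A_{i,2} = A_i$ of products $\langle A_{1,1}, A_{2,1}, A_{3,1} \rangle \cdot \langle \mathfrak Z_{m,d}, \prod_{i} \pi_{m,i}^* A_{i,2} \rangle$. In each splitting the point-class factor $\mathfrak a_{-i}(x)$ of $A_1$ is routed either into $A_{1,2}$ or into $A_{1,1}$. If it lands in $A_{1,2}$, then $A_{1,2}$ contains a factor $\mathfrak a_{-i}(x)$ and the pairing with $\mathfrak Z_{m,d}$ vanishes by Theorem~\ref{KX2-universal}~(i). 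If instead every point-class factor of $A_1$ lands in $A_{1,1}$, then $A_{1,2}$ is a pure product $\mathfrak a_{-1}(1_X)^m \vac$ of free points, and the pairing $\langle \mathfrak Z_{m,d}, \prod_i \pi_{m,i}^* A_{i,2} \rangle$ vanishes by Corollary~\ref{PtClass}~(i) applied with $\alpha = 1_X$ (the case $m = 0$ being trivial, since $\mathcal P_{[0], d}^+ = \emptyset$ and hence $\mathfrak Z_{0,d} = 0$ for $d \ge 1$). Thus every summand vanishes, so $\langle A_1, A_2, A_3 \rangle_d = 0$.

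The main obstacle is the bookkeeping of the splittings in Theorem~\ref{A1ToAk}: one must verify that no splitting escapes both vanishing mechanisms, i.e., that the point-class factor cannot be hidden inside the constant part $A_{1,1}$ while leaving a nonzero $\mathfrak Z_{m,d}$-pairing. This is exactly what Corollary~\ref{PtClass}~(i) forbids, so once it is invoked the argument closes. With the $K_X$- and $K_X^2$-terms of Lemma~\ref{difference} gone for dimension reasons and all the $d \ge 1$ terms gone for this structural reason, we conclude $D^x_\beta(w_1, w_2; -1) = 0$ for all Heisenberg monomials $w_1, w_2$, which is precisely (\ref{Axiom22}) for $\alpha = x$.
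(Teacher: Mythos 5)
Your proof is correct and is essentially the paper's argument: the paper disposes of this case in one line by citing Corollary~\ref{PtClass}~(ii) (the $K_X$- and $K_X^2$-terms of (\ref{Axiom22AltLeft}) having already vanished since $\epsilon x=0$), and your routing of the point-class factors through Theorem~\ref{A1ToAk}, Theorem~\ref{KX2-universal}~(i) and Corollary~\ref{PtClass}~(i) is exactly how the paper proves that corollary. One caution: your ``key step'' as stated (vanishing of $\langle A_1,A_2,A_3\rangle_d$ for \emph{any} $A_1$ containing a factor $\mathfrak a_{-i}(x)$) is broader than what your case analysis establishes --- if the point factor is routed into $A_{1,1}$, the leftover $A_{1,2}$ is a pure power of $\mathfrak a_{-1}(1_X)$ only because the relevant first insertions ${\bf 1}_{-(n-j-1)}\mathfrak a_{-\lambda}(\tau_*x)\vac$ happen to contain nothing but $\mathfrak a_{-1}(1_X)$ and point factors --- but this is all the application requires, so the proof stands.
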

\begin{proof}
By Corollary~\ref{PtClass}~(ii), every term in (\ref{Axiom22AltLeft}) is equal to zero. 
So $D^x_\beta(w_1, w_2; q) = 0$.
Setting $q = -1$, we conclude immediately that (\ref{Axiom22}) is true.
\end{proof}

\subsection{The case $|\alpha| = 2$}
\label{subsect_Alpha=2}

We begin with two lemmas about the structures of the intersections in $H^*(\Xn)$.

\begin{lemma}   \label{IntLaMuNu}
Let $\la$ be a partition with $|\la| \le n$. For $i = 1$ and $2$, let
\begin{eqnarray}   \label{IntLaMuNu.00}
w_i = \mathfrak a_{-\lambda^{(i)}}(x)\mathfrak a_{-\mu^{(i)}}(1_X)
\mathfrak a_{-n_{i,1}}(\alpha_{i,1}) \cdots \mathfrak a_{-n_{i,u_i}}(\alpha_{i,u_i})\vac
\end{eqnarray}
where $|\alpha_{i,j}| = 2$ for all $i$ and $j$. Then, 
$\big \langle \mathfrak a_{-1}(1_X)^{n - |\la|}\mathfrak a_{-\la}(x)\vac, w_1, w_2 \big \rangle$ 
is equal to
\begin{eqnarray}   \label{IntLaMuNu.0}
\delta_{u_1, u_2} \cdot \sum_{\sigma \in {\rm Perm}\{1, \ldots, u_1\}} \prod_{i=1}^{u_1}
   \langle \alpha_{1, i}, \alpha_{2, \sigma(i)} \rangle  \cdot p(\sigma)
\end{eqnarray}
where $p(\sigma)$ depends only on $\sigma, n, \la$ and all the $\lambda^{(i)}, \mu^{(i)}, n_{i, j}$.
\end{lemma}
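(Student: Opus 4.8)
The plan is to compute the triple product $I := \langle \mathfrak a_{-1}(1_X)^{n-|\la|}\mathfrak a_{-\la}(x)\vac, w_1, w_2\rangle = \int_{\Xn} D\cdot w_1\cdot w_2$, where $D := \mathfrak a_{-1}(1_X)^{n-|\la|}\mathfrak a_{-\la}(x)\vac$, as an honest cup-product intersection number on $\Xn$, and to read off its dependence on the degree-two classes $\alpha_{i,j}$ from explicit cycle representatives. The starting observation is that $I$ is $\mathbb C$-multilinear in each $\alpha_{i,j}\in H^2(X)$, since each such class occurs exactly once in the Heisenberg monomials $w_1,w_2$. By the universality (transfer) principle underlying Theorem~2.9 in \cite{LQW4}, $I$ is a universal polynomial in the intersection data of $X$; multilinearity then forces each $\alpha_{i,j}$ to be contracted either with another $\alpha_{a,b}$ (through $\langle\alpha_{a,b},\alpha_{i,j}\rangle$) or with $K_X$ (through $\langle K_X,\alpha_{i,j}\rangle$), these being the only nonzero pairings available on $H^2(X)$ (note $\langle 1_X,\alpha_{i,j}\rangle=\langle x,\alpha_{i,j}\rangle=0$ by degree). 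The entire content of the lemma is then to show that only the cross-contractions $\langle\alpha_{1,i},\alpha_{2,\sigma(i)}\rangle$ survive --- in particular no $\langle K_X,\alpha_{i,j}\rangle$ and no same-group pairings --- and that the remaining coefficients are independent of $X$.

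To see this I would pass to explicit geometric representatives via Proposition~\ref{prop_geom}: represent each $\alpha_{i,j}$ by a curve $C_{i,j}\subset X$, the point classes by general points, and the classes $1_X$ by the fundamental cycle, everything chosen in general position. This produces cycles $W_D, W_1, W_2$ representing $D, w_1, w_2$ and reduces $I$ to the multiplicity-weighted count of points of $W_D\cap W_1\cap W_2$. Each of these cycles consists of configurations whose distinct punctual pieces have \emph{mutually distinct} supports, with the $\alpha_{i,j}$-piece of $w_i$ supported on $C_{i,j}$. In a zero-dimensional intersection every support point must be pinned down, and general position of the general points and of the curves forces the only allowed coincidence to be a bijective matching of the curve-constrained supports of $w_1$ with those of $w_2$: the matched pair indexed by $(i,\sigma(i))$ sits at a point of $C_{1,i}\cap C_{2,\sigma(i)}$ and contributes the factor $\langle\alpha_{1,i},\alpha_{2,\sigma(i)}\rangle$. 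Two curves from the same group cannot share a support point (distinct supports within a single configuration), which rules out same-group pairings; and a curve-constraint with no partner leaves its support unconstrained, so $I=0$ unless $u_1=u_2$, yielding the factor $\delta_{u_1,u_2}$ and the permutation sum over $\sigma\in{\rm Perm}\{1,\dots,u_1\}$.

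It then remains to identify the coefficient $p(\sigma)$ of $\prod_i\langle\alpha_{1,i},\alpha_{2,\sigma(i)}\rangle$ and to confirm it is universal. At each matched intersection point I would localize to a formal normal-slice punctual model; because a smooth surface looks locally the same at every point, the resulting local intersection multiplicity is a constant determined only by the lengths $n_{1,i}, n_{2,\sigma(i)}$ of the two punctual pieces being forced together. Multiplying these local factors by the combinatorial count of how the fixed-point pieces $\mathfrak a_{-\la}(x)$, $\mathfrak a_{-\la^{(i)}}(x)$, the mobile $1_X$-pieces $\mathfrak a_{-\mu^{(i)}}(1_X)$, and the free points are distributed and matched gives a constant $p(\sigma)$ depending only on $\sigma, n, \la$ and the partitions $\la^{(i)},\mu^{(i)},n_{i,j}$. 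Crucially, the $\alpha_{i,j}$ enter \emph{only} through the transverse curve counts $\langle\alpha_{1,i},\alpha_{2,\sigma(i)}\rangle$, and the local models are independent of $X$, so neither $\langle K_X,\alpha_{i,j}\rangle$ nor $\deg e_X$ can appear.

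The hard part will be making the geometric intersection rigorous: one must show the three cycles meet properly, control the closures of the Proposition~\ref{prop_geom} representatives, and exclude excess contributions coming from degenerate configurations where supports collide (the big diagonal of $\Xn$). Such degenerations are precisely the source of $K_X$- and $e_X$-type corrections in general Hilbert-scheme intersection numbers, so the decisive technical point is to verify that general position of the $C_{i,j}$ and of the general points keeps all relevant intersections transverse and confines every collision locus away from the curve-constraints, thereby guaranteeing the clean matching formula. As a cross-check and an alternative to the delicate excess-intersection analysis, one can instead combine multilinearity with the universality of $I$ and pin down the vanishing of the $K_X$-contractions and same-group contractions by evaluating on auxiliary surfaces; but the direct geometric computation via Proposition~\ref{prop_geom} is the most transparent route.
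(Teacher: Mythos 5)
The shape of the answer you predict is correct, but your route leaves the decisive step unproved, and that step is not a removable technicality. Your geometric argument relies on choosing the curves $C_{i,j}$ and the auxiliary points in general position and concluding that the three Nakajima cycles meet properly, with every coincidence of supports forced onto a transverse point of some $C_{1,i}\cap C_{2,\sigma(i)}$. But the cycles of Proposition~\ref{prop_geom} are \emph{closures} of loci of configurations with distinct supports; their boundary strata, where punctual pieces collide, cannot be moved off one another by varying the $C_{i,j}$, and intersections along these strata are exactly the source of the $e_X$- and $K_X$-corrections visible in Theorems~\ref{char_th} and~\ref{g_{1_X, e}}. In particular your exclusion of same-group pairings ("two curves from the same group cannot share a support point") only inspects the open stratum of $W_i$ and says nothing about its closure, and "confining every collision locus away from the curve-constraints" is precisely the excess-intersection analysis you have not supplied. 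Your fallback via multilinearity and universality is closer to a proof, but to know a priori that the invariant is a universal polynomial in the pairings $\langle\alpha_{a,b},\alpha_{c,d}\rangle$, $\langle K_X,\alpha_{i,j}\rangle$, $\langle K_X,K_X\rangle$, $\deg e_X$ you already need the structure theorems of the paper, and evaluating on auxiliary surfaces to kill the unwanted contractions is both delicate and unnecessary.

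The paper's proof avoids all geometry. By Lemma~\ref{PolyGkAlphaRmk}~(i), the class $\mathfrak a_{-1}(1_X)^{n-|\la|}\mathfrak a_{-\la}(x)\vac$ is a universal polynomial in the classes $G_k(x,n)$, so the triple product is $\langle \mathfrak G_{k_1}(x)\cdots\mathfrak G_{k_l}(x)w_1, w_2\rangle$. The key observation you miss is that in Theorem~\ref{char_th} every correction term of $\mathfrak G_k(\alpha)$ carries a factor $e_X\alpha$, $K_X\alpha$ or $K_X^2\alpha$; with $\alpha=x$ these all vanish, so $\mathfrak G_k(x)$ is a pure polynomial in the operators $\mathfrak a_m(x)$. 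These commute with every $\mathfrak a_{-n}(\alpha_{i,j})$ because $\langle x,\alpha_{i,j}\rangle=0$, so the degree-two classes of $w_1$ can only be annihilated by contracting against those of $w_2$ via Theorem~\ref{commutator}~(i); this forces $u_1=u_2$, produces the bijective matching $\sigma$, and yields \eqref{IntLaMuNu.0} with manifestly universal coefficients, with no transversality issues to address. I would recommend replacing the geometric computation by this operator computation, or at minimum supplying the excess-intersection analysis you defer.
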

\begin{proof}
By Lemma~\ref{PolyGkAlphaRmk}~(i), $\mathfrak a_{-1}(1_X)^{n - |\la|}\mathfrak a_{-\la}(x)\vac$ 
is a polynomial of the classes $G_k(x, n), k \ge 0$ whose coefficients are independent of $X$. 
In addition, the integers $k$ involved depend only on $\la$. Note that
\begin{eqnarray*}   
& &\big \langle G_{k_1}(x, n) \cdots G_{k_l}(x, n), w_1, w_2 \big \rangle  
    = \big \langle \mathfrak G_{k_1}(x) \cdots \mathfrak G_{k_l}(x)w_1, w_2 \big \rangle  \\
&=&\big \langle \mathfrak a_{-\lambda^{(1)}}(x)\mathfrak a_{-n_{1,1}}(\alpha_{1,1}) 
   \cdots \mathfrak a_{-n_{1,u_1}}(\alpha_{1,u_1})\mathfrak G_{k_1}(x) \cdots 
   \mathfrak G_{k_l}(x)\mathfrak a_{-\mu^{(1)}}(1_X)\vac, w_2 \big \rangle.
\end{eqnarray*}
So by Theorem~\ref{char_th} and Theorem~\ref{commutator}~(i),
$\big \langle G_{k_1}(x, n) \cdots G_{k_l}(x, n), w_1, w_2 \big \rangle$ equals
\begin{eqnarray}   \label{IntLaMuNu.1}
\delta_{u_1, u_2} \cdot \sum_{\sigma \in {\rm Perm}\{1, \ldots, u_1\}} \prod_{i=1}^{u_1}
   \langle \alpha_{1, i}, \alpha_{2, \sigma(i)} \rangle  \cdot \w p(\sigma)
\end{eqnarray}
where $\w p(\sigma)$ depends only on $\sigma, n, k_1, \ldots, k_l$ and 
all the $\lambda^{(i)}, \mu^{(i)}, n_{i, j}$.
\end{proof}

\begin{lemma}  \label{New1xAlBe}
Let $n_0 \ge 1$, $|\alpha| = 2$, and $\la$ be a partition. Let $w_1$ and $w_2$ be given by
(\ref{IntLaMuNu.00}). Then, $\big \langle {\bf 1}_{-(n-|\la|-n_0)} 
\mathfrak a_{-\la}(x) \mathfrak a_{-n_0}(\alpha)\vac, w_1, w_2 \big \rangle$ is equal to
\begin{eqnarray}   \label{New1xAlBe.0}
& &\langle K_X, \alpha \rangle \cdot \delta_{u_1, u_2} \cdot 
   \sum_{\sigma_1 \in {\rm Perm} \{1, \ldots, u_1\}} \prod_{i=1}^{u_1}
   \langle \alpha_{1, i}, \alpha_{2, \sigma_1(i)} \rangle  \cdot p_1(\sigma_1)   \nonumber   \\
&+&\sum_{j=1}^{u_1} \langle \alpha, \alpha_{1, j} \rangle \cdot \delta_{u_1-1, u_2} \cdot 
   \sum_{\sigma_2} \prod_{i \ne j}
   \langle \alpha_{1, i}, \alpha_{2, \sigma_2(i)} \rangle  \cdot p_2(\sigma_2)    \nonumber   \\
&+&\sum_{j=1}^{u_2} \langle \alpha, \alpha_{2, j} \rangle \cdot \delta_{u_1, u_2-1} \cdot 
   \sum_{\sigma_3} \prod_{i =1}^{u_1}
   \langle \alpha_{1, i}, \alpha_{2, \sigma_3(i)} \rangle  \cdot p_3(\sigma_3)
\end{eqnarray}
where $\sigma_2$ runs over all bijections $\{1, \ldots, u_1\}-\{j\} \to \{1, \ldots, u_2\}$,
$\sigma_3$ runs over all bijections $\{1, \ldots, u_1\} \to \{1, \ldots, u_2\} -\{j\}$,
and $p_1(\sigma_1)$ (respectively, $p_2(\sigma_2)$, $p_3(\sigma_3)$) depend only on 
$\sigma_1$ (respectively, $\sigma_2$, $\sigma_3$), $n, n_0, \la$ and 
all the $\lambda^{(i)}, \mu^{(i)}, n_{i, j}$.
\end{lemma}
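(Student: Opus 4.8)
The plan is to reduce the three-fold intersection to the point-class intersections already controlled by Lemma~\ref{IntLaMuNu}, isolating the single degree-$2$ factor carried by $\alpha$ and tracking the one pairing it can produce. Since $X$ is simply connected its odd Betti numbers vanish, so Lemma~\ref{PolyGkAlphaRmk}~(ii) applies and gives
\[
{\bf 1}_{-(n-|\la|-n_0)} \mathfrak a_{-\la}(x) \mathfrak a_{-n_0}(\alpha)\vac
= \langle K_X, \alpha \rangle \cdot F_1(n) + \sum_i G_{k_i}(\alpha, n) \cdot F_{2, i}(n),
\]
where $F_1(n)$ and $F_{2,i}(n)$ are polynomials in the classes $G_k(x,n)$ whose coefficients and exponents depend only on $\la$ and $n_0$. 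Pairing with $w_1$ and $w_2$ splits the left-hand side of \eqref{New1xAlBe.0} into a $\langle K_X,\alpha\rangle$-multiple of $\langle F_1(n), w_1, w_2\rangle$ and a sum of terms $\langle G_{k_i}(\alpha,n)\cdot F_{2,i}(n), w_1, w_2\rangle$. For the first piece, $F_1(n)$ is a polynomial in the $G_k(x,n)$, so \eqref{IntLaMuNu.1} in the proof of Lemma~\ref{IntLaMuNu} applies verbatim and produces exactly the first line of \eqref{New1xAlBe.0}, with a universal coefficient $p_1(\sigma_1)$.

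For the remaining terms I would write $\langle G_{k_i}(\alpha,n)\cdot F_{2,i}(n), w_1, w_2\rangle = \langle \mathfrak G_{k_i}(\alpha)\, \mathfrak F_{2,i}\, w_1, w_2\rangle$, where $\mathfrak F_{2,i}$ is the cup-product operator attached to $F_{2,i}(n)$, and analyze $\mathfrak G_{k_i}(\alpha)$ via Theorem~\ref{char_th}. Because $|\alpha| = 2$ and $X$ is a surface, the terms $\tau_*(e_X\alpha)$ and $\tau_*(K_X^2\alpha)$ vanish by degree, while $\tau_*(K_X\alpha) = \langle K_X,\alpha\rangle\,\tau_*x$; hence $\mathfrak G_{k_i}(\alpha)$ equals a main term $-\sum_{\ell(\lambda)=k_i+2,\,|\lambda|=0}\frac{1}{\lambda^!}\mathfrak a_{\lambda}(\tau_*\alpha)$ plus a $\langle K_X,\alpha\rangle$-multiple of an operator built from $\tau_*x$. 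The $\langle K_X,\alpha\rangle$-part (together with the $\mathfrak F_{2,i}$ factors, which by Theorem~\ref{char_th} again involve only $x$ and $1_X$) contributes only point-class Heisenberg monomials, so by \eqref{IntLaMuNu.1} it folds into the first line of \eqref{New1xAlBe.0}.

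The decisive step is the main term. Here I would use that, as $X$ has no odd cohomology, the K\"unneth components of $\tau_*\alpha \in H^*(X^{k_i+2})$ all consist of $(k_i+1)$ point-class factors $x$ and a single degree-$2$ factor $\gamma$ whose contraction against any class in $H^2(X)$ returns $\langle\alpha,\gamma\rangle$. Commuting the operators of $\mathfrak a_{\lambda}(\tau_*\alpha)$ and $\mathfrak F_{2,i}$ past those of $w_1$ and pairing with $w_2$ via Theorem~\ref{commutator}~(i) and Lemma~\ref{tau_k_tau_{k-1}}, the point-class factors contract exactly as in Lemma~\ref{IntLaMuNu}, while the lone degree-$2$ factor must pair with a degree-$2$ class. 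The only available degree-$2$ classes are the $\alpha_{1,j}$ (when that factor is an annihilation operator meeting $w_1$) and the $\alpha_{2,j}$ (when it is a creation operator surviving to the pairing with $w_2$); no $K_X$ can occur, since neither the main term nor any $\mathfrak G_k(x)$ carries $K_X$. These two alternatives yield respectively the $\langle\alpha,\alpha_{1,j}\rangle$-terms (forcing $u_1-1 = u_2$) and the $\langle\alpha,\alpha_{2,j}\rangle$-terms (forcing $u_1 = u_2-1$), i.e. the second and third lines of \eqref{New1xAlBe.0}. Universality of $p_1,p_2,p_3$ is automatic, because every structure constant arising---from Lemma~\ref{PolyGkAlphaRmk}, Theorem~\ref{char_th}, the commutation relations, and the coefficient $\widetilde p(\sigma)$ of \eqref{IntLaMuNu.1}---is independent of $X$ and of the degree-$2$ classes, which enter only through the displayed pairings. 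The main obstacle is precisely the bookkeeping of this single degree-$2$ contraction: one must verify that it meets \emph{at most one} of the $\alpha_{i,j}$, that the accompanying point-class factors genuinely collapse to the Lemma~\ref{IntLaMuNu} pattern with a universal coefficient, and that the $\delta$-constraints on $u_1,u_2$ emerge as stated.
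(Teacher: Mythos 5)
Your proposal matches the paper's own proof: both begin with Lemma~\ref{PolyGkAlphaRmk}~(ii) to split the class into $\langle K_X,\alpha\rangle\cdot F_1(n)$ plus terms $G_{k_i}(\alpha,n)\cdot F_{2,i}(n)$, handle the first piece exactly as in the proof of Lemma~\ref{IntLaMuNu}, and expand $\mathfrak G_{k_i}(\alpha)$ via Theorem~\ref{char_th} together with Theorem~\ref{commutator}~(i) and Lemma~\ref{tau_k_tau_{k-1}} so that the lone degree-two factor of $\tau_*\alpha$ contracts against $K_X$, a single $\alpha_{1,j}$, or a single $\alpha_{2,j}$, producing the three lines of \eqref{New1xAlBe.0}. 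Your explicit degree bookkeeping for that single contraction is precisely what the paper leaves implicit in its citation of Theorem~\ref{char_th} and Lemma~\ref{tau_k_tau_{k-1}}, and it is correct.
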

\begin{proof}
By Lemma~\ref{PolyGkAlphaRmk}~(ii), ${\bf 1}_{-(n-|\la|-n_0)} \mathfrak a_{-\la}(x) 
\mathfrak a_{-n_0}(\alpha)\vac$ can be written as 
$$
\langle K_X, \alpha \rangle \cdot F_1(n) + \sum_i G_{k_i}(\alpha, n) \cdot F_{2, i}(n)
$$ 
where $F_1(n)$ and $F_{2, i}(n)$ are polynomials of $G_k(x, n), k \ge 0$
whose coefficients are independent of $n$ and $\alpha$. Moreover,
the integers $k$ and $k_i$ depend only on $\la$ and $n_0$. Thus,
\begin{eqnarray}   \label{New1xAlBe.1}
& &\big \langle {\bf 1}_{-(n-|\la|-n_0)}\mathfrak a_{-\la}(x) 
     \mathfrak a_{-n_0}(\alpha)\vac, w_1, w_2 \big \rangle  \nonumber   \\
&=&\langle K_X, \alpha \rangle \cdot \big \langle F_1(n), w_1, w_2 \big \rangle
+ \sum_i \big \langle G_{k_i}(\alpha, n) \cdot F_{2, i}(n), w_1, w_2 \big \rangle.
\end{eqnarray}
As in the proof of Lemma~\ref{IntLaMuNu}, $\big \langle F_1(n), w_1, w_2 \big \rangle$ is 
of the form
\begin{eqnarray}   \label{New1xAlBe.2}
\delta_{u_1, u_2} \cdot \sum_{\sigma_1 \in {\rm Perm}\{1, \ldots, u_1\}} \prod_{i=1}^{u_1}
   \langle \alpha_{1, i}, \alpha_{2, \sigma_1(i)} \rangle  \cdot \w p_{1,1}(\sigma_1)
\end{eqnarray}
where $\w p_{1,1}(\sigma_1)$ depends only on $\sigma_1, n, n_0, \la$ and 
all the $\lambda^{(i)}, \mu^{(i)}, n_{i, j}$. Also, 
\begin{eqnarray*}   
\big \langle G_{k_i}(\alpha, n)G_{s_1}(x, n) \cdots G_{s_l}(x, n), w_1, w_2 \big \rangle  
= \big \langle \mathfrak G_{s_1}(x) \cdots \mathfrak G_{s_l}(x)\mathfrak G_{k_i}(\alpha)w_1, 
      w_2 \big \rangle.
\end{eqnarray*}
By Theorem~\ref{char_th} and Lemma~\ref{tau_k_tau_{k-1}}, $\big \langle G_{k_i}(\alpha, n)
G_{s_1}(x, n) \cdots G_{s_l}(x, n), w_1, w_2 \big \rangle$ equals
\begin{eqnarray}      \label{New1xAlBe.3}
& &\langle K_X, \alpha \rangle \cdot \delta_{u_1, u_2} \cdot 
   \sum_{\sigma_1 \in {\rm Perm} \{1, \ldots, u_1\}} \prod_{i=1}^{u_1}
   \langle \alpha_{1, i}, \alpha_{2, \sigma_1(i)} \rangle  \cdot \w p_{1,2}(\sigma_1)   \nonumber \\
&+&\sum_{j=1}^{u_1} \langle \alpha, \alpha_{1, j} \rangle \cdot \delta_{u_1-1, u_2} \cdot 
   \sum_{\sigma_2} \prod_{i \ne j}
   \langle \alpha_{1, i}, \alpha_{2, \sigma_2(i)} \rangle  \cdot \w p_2(\sigma_2)  \nonumber   \\
&+&\sum_{j=1}^{u_2} \langle \alpha, \alpha_{2, j} \rangle \cdot \delta_{u_1, u_2-1} \cdot 
   \sum_{\sigma_3} \prod_{i =1}^{u_1}
   \langle \alpha_{1, i}, \alpha_{2, \sigma_3(i)} \rangle  \cdot \w p_3(\sigma_3)
\end{eqnarray}
where $\sigma_2$ runs over all the bijections $\{1, \ldots, u_1\}-\{j\} \to \{1, \ldots, u_2\}$,
and $\sigma_3$ runs over all the bijections $\{1, \ldots, u_1\} \to \{1, \ldots, u_2\} -\{j\}$.
Hence $\sum_i \big \langle G_{k_i}(\alpha, n) \cdot F_{2, i}(n), w_1, w_2 \big \rangle$ is of 
the form \eqref{New1xAlBe.3} as well.
Combining with (\ref{New1xAlBe.1}) and (\ref{New1xAlBe.2}), 
we obtain (\ref{New1xAlBe.0}).
\end{proof}

Next, we introduce the notion of universal polynomials 
$P(K_X, S_1, S_2)$ in $\langle K_X, K_X \rangle$ 
of degree at most $m$ and of type $(u_1, u_2)$, and prove a vanishing lemma.

\begin{definition}   \label{UniPoly}
Fix three integers $m, u_1, u_2 \ge 0$. Then {\it a universal polynomial 
$P(K_X, S_1, S_2)$ in $\langle K_X, K_X \rangle$ 
of degree at most $m$ and of type $(u_1, u_2)$} is of the form
\begin{eqnarray}    \label{UniPoly.1}
& &\sum_{1 \le j_1 < \ldots < j_s \le u_1 \atop 1 \le l_1 < \ldots < l_s \le u_2}
   \prod_{i \not \in \{j_1, \ldots, j_s\}} \langle K_X, \alpha_{1, i} \rangle \cdot
   \prod_{i \not \in \{l_1, \ldots, l_s\}} \langle K_X, \alpha_{2, i} \rangle  \nonumber     \\
& &\cdot \sum_{\sigma \in {\rm Perm}\{l_1, \ldots, l_s\}} \prod_{i=1}^s
   \langle \alpha_{1, j_i}, \alpha_{2, \sigma(l_i)} \rangle  \cdot 
   p(j_1,\ldots, j_s; l_1, \ldots, l_s; \sigma)
\end{eqnarray}
where $S_i = \{ \alpha_{i, 1}, \ldots, \alpha_{i, u_i}\} \subset H^2(X)$,
and $p(j_1,\ldots, j_s; l_1, \ldots, l_s; \sigma)$ is 
a polynomial in $\langle K_X, K_X \rangle$ whose degree is at most $m$ and
whose coefficients are independent of $X$ and the classes $\alpha_{i, j}$.
\end{definition}

\begin{lemma} \label{VanishingUniPoly}
Fix $m, u_1, u_2 \ge 0$. Let $P(K_X, S_1, S_2)$ be a universal polynomial 
in $\langle K_X, K_X \rangle$ of degree at most $m$ and of type $(u_1, u_2)$.
Assume that $P(K_X, S_1, S_2) = 0$ whenever $X$ is a smooth projective toric surface.
Then $P(K_X, S_1, S_2) = 0$ for every smooth projective surface $X$.
\end{lemma}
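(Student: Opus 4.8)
The plan is to show that every universal coefficient $p(j_1,\ldots,j_s;l_1,\ldots,l_s;\sigma)$ appearing in \eqref{UniPoly.1} vanishes identically as a polynomial in $\langle K_X,K_X\rangle$; this immediately yields $P(K_X,S_1,S_2)=0$ for all $X$. The key observation is that $P$ is, by its very definition, a multilinear expression in the classes $\alpha_{i,j}$ whose value depends only on the pairing data $\langle K_X,K_X\rangle$, $\langle K_X,\alpha_{i,j}\rangle$ and $\langle \alpha_{1,j},\alpha_{2,l}\rangle$. Each monomial type in \eqref{UniPoly.1} is indexed by a triple $(A,B,M)$, where $A=\{1,\ldots,u_1\}\setminus\{j_1,\ldots,j_s\}$ and $B=\{1,\ldots,u_2\}\setminus\{l_1,\ldots,l_s\}$ are the index sets paired against $K_X$, and $M$ is the matching $\sigma$ of the complementary indices. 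I would isolate one type at a time by choosing the classes $\alpha_{i,j}$ on a suitable toric surface so that exactly one monomial is nonzero, and then vary $\langle K_X,K_X\rangle$ over infinitely many values to kill the (degree $\le m$) coefficient polynomial.

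For the supply of toric surfaces I would use the iterated blow-ups $X_N$ of $\Pee^2$ at $N$ torus-fixed points, which are toric by construction. In the basis $H,E_1,\ldots,E_N$ of $H^2(X_N)$ given by the hyperplane class and the total transforms of the exceptional divisors, the intersection form is diagonal with $\langle H,H\rangle=1$, $\langle E_i,E_i\rangle=-1$ and all other basis pairings zero, while $K_{X_N}=-3H+\sum_i E_i$ and $\langle K_{X_N},K_{X_N}\rangle=9-N$. This gives, as $N$ grows, a lattice of arbitrarily large rank with an explicit orthogonal basis and infinitely many distinct values of $\langle K_X,K_X\rangle$. Fixing a type $(A_0,B_0,M_0)$ with $|M_0|=s$, I would realize a block configuration: reserved distinct exceptional classes $E_i$ (which satisfy $\langle K_X,E_i\rangle=-1$ and are mutually orthogonal) for the $K_X$-paired slots in $A_0$ and $B_0$, and, for each matched pair in $M_0$, the two classes $E_p-E_q$ and $E_p-E_r$ built from a fresh triple of exceptional classes. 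The latter lie in $K_X^\perp$, since $\langle K_X,E_i\rangle$ is constant, are orthogonal to all other chosen classes, and pair with their designated partner by $\langle E_p-E_q,E_p-E_r\rangle=-1$.

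With this configuration every monomial in \eqref{UniPoly.1} other than the one of type $(A_0,B_0,M_0)$ vanishes: a $K_X$-paired slot can never be matched, because the reserved $E_i$ are orthogonal to every opposite-side class, and a matched-pair class placed in a $K_X$-slot contributes $\langle K_X,E_p-E_q\rangle=0$. Hence $P$ reduces to $(-1)^{|A_0|+|B_0|+s}\,p(\ldots)(9-N)$, a nonzero scalar times the coefficient. Since $X_N$ is toric, the hypothesis forces $p(\ldots)(9-N)=0$; taking $N$ large enough that $X_N$ carries the required orthogonal classes, this holds for infinitely many values $9-N$, and a polynomial of degree at most $m$ with infinitely many roots is identically zero. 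As $(A_0,B_0,M_0)$ was arbitrary, all coefficient polynomials vanish, and therefore $P(K_X,S_1,S_2)=0$ on every smooth projective surface.

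The main obstacle is the realizability step: one must check both that the stated block configuration fits inside $X_N$ for $N$ large, which is straightforward because only finitely many orthogonal classes are needed while $\rho(X_N)=N+1\to\infty$, and that it makes precisely one type survive, which is the orthogonality bookkeeping above. I would also record that toric surfaces are rational, hence simply connected with $b_1=0$, so that $H^*(X)$ is concentrated in even degrees and the formula \eqref{UniPoly.1} genuinely sees only the intersection form on $H^2(X)$, matching the setting in which $P$ was defined.
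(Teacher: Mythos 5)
Your proposal is correct and follows essentially the same route as the paper's proof: blow up $\Pee^2$ at many torus-fixed points to get toric surfaces $X_r$ with $\langle K_{X_r},K_{X_r}\rangle=9-r$, choose exceptional-divisor classes so that exactly one monomial type of \eqref{UniPoly.1} survives with a nonzero universal scalar, and let $r$ vary to force the degree-$\le m$ coefficient polynomial to vanish identically. The only differences are cosmetic bookkeeping choices (the paper uses $-e_i$ in the $K_X$-slots and the single class $e_{2i}-e_{2i-1}$ in both slots of a matched pair, giving the factor $(-2)^s$, where you use $E_i$ and a fresh triple $E_p-E_q$, $E_p-E_r$ per matched pair).
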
  
\begin{proof}
Let $r \gg m+u_1+u_2$, and let $X_r$ be a smooth toric surface obtained from $\Pee^2$ as 
an $r$-fold blown-up. Let $L_0$ be a line in $\Pee^2$, and let $e_1, \ldots, e_r$ be 
the exceptional divisors. Then, $K_{X_r} = -3L_0 + e_1 + \ldots + e_r$. For fixed 
$j_1,\ldots, j_s, l_1, \ldots, l_s$ and $\sigma$, let
\begin{eqnarray*}
\big \{ \alpha_{1, i}|\, i \in \{1, \ldots, u_1\} - \{j_1, \ldots, j_s\} \}
   &=& \{-e_1, \ldots, -e_{u_1-s} \big \},  \\
\big \{ \alpha_{2, i}|\, i \in \{1, \ldots, u_2\} - \{l_1, \ldots, l_s\} \}
   &=& \{-e_{u_1-s+1}, \ldots, -e_{u_1-s+u_2-s} \big \},
\end{eqnarray*}
and $\alpha_{1, j_i} = \alpha_{2, \sigma(l_i)} = e_{u_1-s+u_2-s+2i} - e_{u_1-s+u_2-s+2i-1}$ for
$i = 1, \ldots, s$. Then,
\begin{eqnarray}    \label{VanishingUniPoly.1}
0 = P(K_{X_r}, S_1, S_2) = (-2)^s \cdot p(j_1,\ldots, j_s; l_1, \ldots, l_s; \sigma)
\end{eqnarray}
by (\ref{UniPoly.1}). It follows that $p(j_1,\ldots, j_s; l_1, \ldots, l_s; \sigma) = 0$
for all the surfaces $X_r$ with $r \gg m+u_1+u_2$. 
Since $p(j_1,\ldots, j_s; l_1, \ldots, l_s; \sigma)$ is 
a polynomial in $\langle K_{X_r}, K_{X_r} \rangle$ whose degree is at most $m$, 
we conclude that as polynomials, $p(j_1,\ldots, j_s; l_1, \ldots, l_s; \sigma) = 0$.
Therefore, $P(K_X, S_1, S_2) = 0$ for every smooth projective surface $X$.
\end{proof}

Our next lemma is about the structure of certain $3$-pointed extremal Gromov-Witten invariants,
and provides the motivation for Definition~\ref{UniPoly}.

\begin{lemma} \label{Alpha=2Lma1}
Let $d, n_0 \ge 1$ and $|\alpha| = 2$. Let $w_1$ and $w_2$ be given by (\ref{IntLaMuNu.00}). Then, 
\begin{eqnarray}    \label{Alpha=2Lma1.0}
\big \langle {\bf 1}_{-(n-|\la|-n_0)} 
\mathfrak a_{-\lambda}(x) \mathfrak a_{-n_0}(\alpha)\vac, w_1, w_2 \big \rangle_d
= \langle K_X, \alpha \rangle \cdot P(K_X, S_1, S_2)
\end{eqnarray}
where $S_1 = \{\alpha_{1,1}, \ldots, \alpha_{1,u_1} \}$, 
$S_2 = \{\alpha_{2,1}, \ldots, \alpha_{2,u_2} \}$, and $P(K_X, S_1, S_2)$ is
a universal polynomial in $\langle K_X, K_X \rangle$ of degree at most $(n-n_0)/2$ and 
of type $(u_1, u_2)$.
\end{lemma}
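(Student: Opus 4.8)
The plan is to evaluate the invariant through Theorem~\ref{A1ToAk}, applied with $A_1 = {\bf 1}_{-(n-|\la|-n_0)}\mathfrak a_{-\la}(x)\mathfrak a_{-n_0}(\alpha)\vac$, $A_2 = w_1$, $A_3 = w_2$. This writes $\langle A_1, w_1, w_2\rangle_d$ as a sum over $m \le n$ and over decompositions $A_{i,1}\circ A_{i,2} = A_i$ of products $\langle A_{1,1}, A_{2,1}, A_{3,1}\rangle \cdot \langle\mathfrak Z_{m,d}, \prod_i\pi_{m,i}^*A_{i,2}\rangle$. Since $d \ge 1$, I would first invoke Theorem~\ref{KX2-universal}~(i) to see that every point-class factor $\mathfrak a_{-j}(x)$ must lie in the classical factors $A_{i,1}$; in particular all of $\mathfrak a_{-\la}(x)$, $\mathfrak a_{-\lambda^{(1)}}(x)$, $\mathfrak a_{-\lambda^{(2)}}(x)$ are forced classical, and no $A_{i,2}$ carries a point class.

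The heart of the argument is to show that the unique degree-$2$ factor $\mathfrak a_{-n_0}(\alpha)$ of $A_1$ must be assigned to the quantum factor $A_{1,2}$ in every nonvanishing term. I would argue by a dimension count. Suppose instead $\mathfrak a_{-n_0}(\alpha)$ were classical; then, the remaining factors of $A_1$ being only copies of $\mathfrak a_{-1}(1_X)$, we would have $A_{1,2} = \mathfrak a_{-1}(1_X)^m\vac$. Writing each $A_{i,2}=\mathfrak a_{-\la^{(i)}}(1_X)\prod_j\mathfrak a_{-n'_{i,j}}(\alpha'_{i,j})\vac$ with $|\alpha'_{i,j}|=2$, one computes $\deg A_{i,2} = 2m - 2\ell_i$, where $\ell_i = \ell(\la^{(i)})$ counts the $1_X$-factors; the condition that the $\mathfrak Z_{m,d}$-pairing lives in the expected dimension, $\sum_i\deg A_{i,2} = 4m$, then reads $\ell_1+\ell_2+\ell_3 = m$. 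As $A_{1,2}=\mathfrak a_{-1}(1_X)^m\vac$ has $\ell_1=m$, this forces $\ell_2=\ell_3=0$, so $A_{2,2}$ and $A_{3,2}$ are products of degree-$2$ operators of total index $m$ each, whence the total index of degree-$2$ factors is $\sum_{i,j}n'_{i,j}=2m$. But then the degree bound in Theorem~\ref{KX2-universal}~(ii) gives a polynomial $p$ of degree at most $(m-2m)/2 < 0$, i.e. $p=0$, and the term vanishes. Hence $\mathfrak a_{-n_0}(\alpha)$ is indeed forced into $A_{1,2}$.

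With $\mathfrak a_{-n_0}(\alpha)\in A_{1,2}$, the class $A_{1,2}=\mathfrak a_{-\la^{(1)}}(1_X)\mathfrak a_{-n_0}(\alpha)\vac$ has the shape required by Theorem~\ref{KX2-universal}~(ii), which produces a factor $\langle K_X, \alpha\rangle$ in each surviving quantum pairing; I would pull this out front. The remaining bookkeeping is organized by which degree-$2$ classes $\alpha_{1,j},\alpha_{2,j}$ go quantum (each contributing $\langle K_X,\alpha_{i,j}\rangle$ by Theorem~\ref{KX2-universal}~(ii)) and which stay classical (pairing off among themselves as $\langle\alpha_{1,i},\alpha_{2,\sigma(i)}\rangle$ by Lemma~\ref{IntLaMuNu}, applicable since $A_{1,1}$ no longer carries a degree-$2$ factor). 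Summing over all such assignments yields exactly a universal polynomial $P(K_X,S_1,S_2)$ of type $(u_1,u_2)$ as in Definition~\ref{UniPoly}, the $K_X$-paired classes being the quantum ones and the matched classes the classical ones. Universality of the coefficients follows because every input (Theorems~\ref{A1ToAk}, \ref{KX2-universal} and Lemma~\ref{IntLaMuNu}) has coefficients independent of $X$ and of the classes; and the $\langle K_X,K_X\rangle$-degree of $P$ is the degree of the $p$ from Theorem~\ref{KX2-universal}~(ii), bounded by $(m-\sum_{i,j}n'_{i,j})/2 \le (m-n_0)/2 \le (n-n_0)/2$, since $\alpha$ contributes $n_0$ to $\sum n'_{i,j}$ and $m\le n$.

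The step I expect to be the main obstacle is exactly the forced-quantum claim in the second paragraph. A priori, letting $\mathfrak a_{-n_0}(\alpha)$ stay classical would produce terms $\langle\alpha,\alpha_{1,j}\rangle$ and $\langle\alpha,\alpha_{2,j}\rangle$ (compare Lemma~\ref{New1xAlBe}), which are genuinely incompatible with the asserted form $\langle K_X,\alpha\rangle\cdot P$; it is the dimension-count combined with the negative-degree vanishing in Theorem~\ref{KX2-universal}~(ii) that eliminates them. Getting this count precisely right, and then packaging the surviving contributions into the exact combinatorial shape of Definition~\ref{UniPoly}, is where care is required.
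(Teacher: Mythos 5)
Your proposal is correct and follows the paper's own proof almost step for step: apply Theorem~\ref{A1ToAk}, use Theorem~\ref{KX2-universal}~(i) to force every point-class factor into the classical part, then evaluate the classical factor by Lemma~\ref{IntLaMuNu} and the quantum factor by Theorem~\ref{KX2-universal}~(ii), and assemble the surviving terms into the shape of Definition~\ref{UniPoly} with the degree bound $(m-\sum n'_{i,j})/2\le (n-n_0)/2$. The only place you diverge is the forced-quantum claim for $\mathfrak a_{-n_0}(\alpha)$: the paper disposes of the case $A_{1,2}=\mathfrak a_{-1}(1_X)^m\vac$ by citing Corollary~\ref{PtClass}~(i) (whose $\alpha=1_X$ case rests on the Fundamental Class Axiom and induction), whereas you rule it out by homogeneity of $\mathfrak Z_{m,d}$ in real degree $4m$ combined with the negative-degree vanishing in Theorem~\ref{KX2-universal}~(ii). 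Both are valid; your dimension count tacitly uses that every cycle $\Theta^{[\![\alpha,\delta]\!]}$ entering $\mathfrak Z_{m,d}$ has the same real dimension $4m$, which holds because the virtual dimension of each $X^{[\![\alpha,\delta]\!]}$ equals $2|\Lambda|$ independently of the weighted partition.
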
  
\begin{proof}
For simplicity, let $w_0 = {\bf 1}_{-(n-|\la|-n_0)} \mathfrak a_{-\lambda}(x) 
\mathfrak a_{-n_0}(\alpha)\vac$. Also, for $i = 1$ and $2$, let
$\W w_i = \mathfrak a_{-\mu^{(i)}}(1_X)
\mathfrak a_{-n_{i,1}}(\alpha_{i,1}) \cdots \mathfrak a_{-n_{i,u_i}}(\alpha_{i,u_i})\vac$.
We compute $\big \langle w_0, w_1, w_2 \big \rangle_d$ by using (\ref{A1ToAk.0}). 
Consider the following term from (\ref{A1ToAk.0}):
\begin{eqnarray}  \label{Alpha=2Lma1.1}
\langle B_0, B_1, B_2 \rangle \cdot \left \langle{\mathfrak Z}_{m, d}  
   ,  \,\,\pi_{m, 1}^* \left ({w_0 \over B_0} \right ) \cdot 
\pi_{m, 2}^* \left ({w_1 \over B_1} \right ) \cdot 
\pi_{m, 3}^* \left ({w_2 \over B_2} \right )\right\rangle
\end{eqnarray}
where $m \le n$, $B_0, B_1, B_2 \in H^*(X^{[n-m]})$, $B_0 \subset w_0$, $B_1 \subset w_1$, and 
$B_2 \subset w_2$. By Theorem~\ref{KX2-universal}~(i) and Corollary~\ref{PtClass}~(i), 
such a term is nonzero only if 
$B_0 = \mathfrak a_{-1}(1_X)^j\mathfrak a_{-\lambda}(x)\vac$ with $j \le (n-|\la|-n_0)$, 
$B_1 = \mathfrak a_{-\lambda^{(1)}}(x) \W B_1$ with $\W B_1 \subset \W w_1$, and 
$B_2 = \mathfrak a_{-\lambda^{(2)}}(x) \W B_2$ with $\W B_2 \subset \W w_2$.
In this situation, (\ref{Alpha=2Lma1.1}) can be rewritten as
\begin{eqnarray}  
& &\big \langle \mathfrak a_{-1}(1_X)^j\mathfrak a_{-\lambda}(x)\vac, 
   \mathfrak a_{-\lambda^{(1)}}(x) \W B_1, 
   \mathfrak a_{-\lambda^{(2)}}(x) \W B_2 \big \rangle      \label{Alpha=2Lma1.2} \\
& &\cdot \left\langle {\mathfrak Z}_{m, d}  
   ,\,\pi_{m, 1}^* \left ({{\bf 1}_{-(n-|\la|-n_0)} \mathfrak a_{-n_0}(\alpha) \vac 
   \over \mathfrak a_{-1}(1_X)^j\vac} \right ) \cdot 
   \pi_{m, 2}^* \left ({\W w_1 \over \W B_1} \right ) \cdot 
   \pi_{m, 3}^* \left ({\W w_2 \over \W B_2} \right )\right\rangle.       \label{Alpha=2Lma1.3}
\end{eqnarray}
Note that $\W B_1 = \mathfrak a_{-\nu^{(1)}}(1_X) \mathfrak a_{-n_{1, j_1}}(\alpha_{1, j_1}) 
\cdots \mathfrak a_{-n_{1, j_s}}(\alpha_{1, j_s})\vac$ for some $1 \le j_1 < \ldots < j_s \le u_1$
and some sub-partition $\nu^{(1)}$ of $\mu^{(1)}$ (i.e., every part of $\nu^{(1)}$ is 
a part of $\mu^{(1)}$). Similarly, 
$\W B_2 = \mathfrak a_{-\nu^{(2)}}(1_X) \mathfrak a_{-n_{2, l_1}}(\alpha_{2, l_1}) 
\cdots \mathfrak a_{-n_{2, l_t}}(\alpha_{2, l_t})\vac$ 
for some $1 \le l_1 < \ldots < l_t \le u_2$ and some sub-partition $\nu^{(2)}$ 
of $\mu^{(2)}$. By Lemma~\ref{IntLaMuNu}, (\ref{Alpha=2Lma1.2}) equals
\begin{eqnarray}   \label{Alpha=2Lma1.4}
\delta_{s, t} \cdot \sum_{\sigma \in {\rm Perm}\{l_1, \ldots, l_s\}} \prod_{i=1}^s
   \langle \alpha_{1, j_i}, \alpha_{2, \sigma(l_i)} \rangle  \cdot 
   p_1(j_1,\ldots, j_s; l_1, \ldots, l_s; \sigma)
\end{eqnarray}
where $p_1$ is a number independent of the surface $X$ and the classes $\alpha_{i,j}$.
By Theorem~\ref{KX2-universal}, we see that the factor (\ref{Alpha=2Lma1.3}) is equal to
\begin{eqnarray*}    
\langle K_X, \alpha \rangle \cdot
   \prod_{i \not \in \{j_1, \ldots, j_s\}} \langle K_X, \alpha_{1, i} \rangle \cdot
   \prod_{i \not \in \{l_1, \ldots, l_s\}} \langle K_X, \alpha_{2, i} \rangle  
\cdot p_2(j_1,\ldots, j_s; l_1, \ldots, l_s; \sigma)
\end{eqnarray*}
where $p_2$ is a polynomial in $\langle K_X, K_X \rangle$ whose degree is at most 
$(m-n_0)/2 \le (n-n_0)/2$, and whose coefficients are independent of the surface $X$ and 
the classes $\alpha_{i,j}$. Combining this with (\ref{Alpha=2Lma1.1}), 
(\ref{Alpha=2Lma1.2}), (\ref{Alpha=2Lma1.3}) and (\ref{Alpha=2Lma1.4}),
we obtain (\ref{Alpha=2Lma1.0}).
\end{proof}

\begin{proposition}  \label{Axiom22A=2}
If $|\alpha| = 2$, then (\ref{Axiom22}) is true.
\end{proposition}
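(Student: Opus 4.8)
The plan is to show that the difference $D^\alpha_\beta(w_1,w_2;q)$ of \eqref{DABW1W2} vanishes at $q=-1$ for all Heisenberg monomial classes $w_1,w_2$; by \eqref{Axiom22Alt} this is precisely \eqref{Axiom22}. Lemma~\ref{difference} already records $D^\alpha_\beta(w_1,w_2;q)$ in the explicit form \eqref{Axiom22AltLeft}, so the task is to prove that this expression vanishes when $q=-1$. First I would use bilinearity in $w_1,w_2,\alpha,\beta$ to reduce to the case in which $\alpha$, $\beta$, and all cohomology classes appearing in $w_1,w_2$ are drawn from a fixed basis of $H^*(X)$; in particular I may take $w_1,w_2$ of the standard shape \eqref{IntLaMuNu.00}, with $x$-factors, $1_X$-factors, and degree-$2$ factors $\alpha_{i,j}$.

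With $|\alpha|=2$ fixed, I would then pass through \eqref{Axiom22AltLeft} block by block and extract a common factor $\langle K_X,\alpha\rangle$. The $\epsilon=K_X^2$ contributions drop out for degree reasons, since $K_X^2\alpha\in H^6(X)=0$, and the surviving $\epsilon=K_X$ contributions in the two classical blocks carry a factor $\langle K_X,\alpha\rangle$ coming from $K_X\alpha=\langle K_X,\alpha\rangle\,x$; these reduce, via Lemma~\ref{IntLaMuNu} and Lemma~\ref{New1xAlBe}, to $\langle K_X,\alpha\rangle$ times universal intersection polynomials. For the genus-$0$ block with $d\ge 1$ I would expand $\mathfrak a_{-\lambda}(\tau_*\alpha)$ through the K\"unneth decomposition: a degree count shows that, after summing over the basis, every term of $\tau_{\ell(\lambda)*}\alpha$ places $\alpha$ in a single tensor slot while the remaining $\ell(\lambda)-1$ slots carry the point class $x$. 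Hence each resulting extremal invariant has exactly the shape $\langle {\bf 1}_{-(n-j-1)}\mathfrak a_{-\lambda'}(x)\mathfrak a_{-n_0}(\alpha)\vac,\ \mathfrak a_{-1}(\beta)w_1,\ w_2\rangle_d$ treated in Lemma~\ref{Alpha=2Lma1}, and likewise for the adjoint term. Lemma~\ref{Alpha=2Lma1} expresses each such invariant as $\langle K_X,\alpha\rangle$ times a universal polynomial $P(K_X,S_1,S_2)$ in $\langle K_X,K_X\rangle$ of type $(u_1,u_2)$ and of bounded degree. Collecting the three blocks, I would conclude that $D^\alpha_\beta(w_1,w_2;-1)=\langle K_X,\alpha\rangle\cdot P$, where $P$ is a universal polynomial in the sense of Definition~\ref{UniPoly} and is itself independent of $\alpha$.

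Finally I would invoke the toric case. By Lemma~\ref{Axiom22P2}, \eqref{Axiom22} holds when $X$ is a smooth projective toric surface, so $D^\alpha_\beta(w_1,w_2;-1)=0$ there. Since any smooth projective toric surface has $K_X\neq 0$ and the intersection pairing is nondegenerate, I may choose $\alpha\in H^2(X)$ with $\langle K_X,\alpha\rangle\neq 0$, whence $P(K_X,S_1,S_2)=0$ for every toric surface and every choice of $S_1,S_2$. The degree bound on $P$ then lets me apply the vanishing Lemma~\ref{VanishingUniPoly}, yielding $P=0$ for every smooth projective surface $X$. Therefore $D^\alpha_\beta(w_1,w_2;-1)=\langle K_X,\alpha\rangle\cdot P=0$ for all $X$, which is \eqref{Axiom22} in the case $|\alpha|=2$.

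The main obstacle I expect is the second step: confirming that, after the K\"unneth expansion of $\mathfrak a_{-\lambda}(\tau_*\alpha)$ and the insertions of $\beta$, every surviving term of \eqref{Axiom22AltLeft} factors through the same $\langle K_X,\alpha\rangle$ and assembles into a single universal polynomial of a uniform type $(u_1,u_2)$ with a degree bound. This is a matching-of-indices computation: one must track how the combinatorial data $(j,\lambda)$, the splittings produced by Theorem~\ref{A1ToAk} and Theorem~\ref{KX2-universal}, and the permutation/pairing structure supplied by Lemma~\ref{IntLaMuNu}, Lemma~\ref{New1xAlBe} and Lemma~\ref{Alpha=2Lma1} fit together, and verify that both the $q^0$ (classical) terms and the $q^{d}$, $d\ge1$ (quantum) terms land in the form permitted by Definition~\ref{UniPoly}. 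Once this structural identity is in hand, the toric reduction together with the vanishing lemma completes the argument without further difficulty.
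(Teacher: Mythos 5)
Your proposal is correct and follows essentially the same route as the paper's proof: reduce via Lemma~\ref{difference} to showing $D^\alpha_\beta(w_1,w_2;-1)=\langle K_X,\alpha\rangle\cdot P(K_X,S_1,S_2)$ with $P$ a universal polynomial of bounded degree and type, handling the quantum terms through the K\"unneth expansion of $\tau_*\alpha$ and Lemma~\ref{Alpha=2Lma1} and the classical terms through Lemma~\ref{IntLaMuNu}, and then killing $P$ by combining the toric case (Lemma~\ref{Axiom22P2}) with Lemma~\ref{VanishingUniPoly}. The only cosmetic deviation is your invocation of Lemma~\ref{New1xAlBe} for the classical blocks, which the paper does not need here since with $|\alpha|=2$ the relevant insertions $K_X\alpha$ are multiples of the point class and are already covered by Lemma~\ref{IntLaMuNu}.
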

\noindent
{\it Proof.}
Recall that (\ref{Axiom22}) is equivalent to (\ref{Axiom22Alt}), and the difference 
$D^\alpha_\beta(w_1, w_2; q)$ from (\ref{DABW1W2}) is computed by Lemma~\ref{difference}.
Let $w_1$ and $w_2$ be given by (\ref{IntLaMuNu.00}).
Let $u_1' = \delta_{2, |\beta|} + u_1$ and $S_2 = \{\alpha_{2,1}, \ldots, \alpha_{2,u_2} \}$.
Let $S_1 = \{\alpha_{1,1}, \ldots, \alpha_{1,u_1} \}$ if $|\beta| \ne 2$,
and $S_1 = \{\beta, \alpha_{1,1}, \ldots, \alpha_{1,u_1} \}$ if $|\beta| = 2$.

By Lemma~\ref{Axiom22P2} and Lemma~\ref{VanishingUniPoly}, it suffices to prove that
\begin{eqnarray}   \label{Axiom22A=2.1}
D^\alpha_\beta(w_1, w_2; -1) = \langle K_X, \alpha \rangle \cdot P(K_X, S_1, S_2)
\end{eqnarray}
where $P(K_X, S_1, S_2)$ is a universal polynomial in $\langle K_X, K_X \rangle$ 
of degree at most $(n-1)/2$ and of type $(u_1', u_2)$. This follows if we can prove that 
\begin{eqnarray}   \label{Axiom22A=2.2}
D^\alpha_\beta(w_1, w_2; q) = \langle K_X, \alpha \rangle \cdot \sum_{d \ge 0} 
P(K_X, S_1, S_2; d) \, q^d
\end{eqnarray}
where every $P(K_X, S_1, S_2; d)$ is a universal polynomial in $\langle K_X, K_X \rangle$ 
of degree at most $(n-1)/2$ and of type $(u_1', u_2)$. We remark that $d$ has been inserted into 
the notation $P(K_X, S_1, S_2; d)$ to emphasis its dependence on $d$.

In the following, we will show that the contribution of every term in (\ref{Axiom22AltLeft})
is of the form $P(K_X, S_1, S_2; d)$ for a suitable $d \ge 0$. 
Note that in $H^*(X^i)$, 
$$
\tau_{i*}(\alpha) = \alpha \otimes x \otimes \cdots \otimes x + 
x \otimes \alpha \otimes x \otimes \cdots \otimes x + \ldots +
x \otimes \cdots \otimes x \otimes \alpha.
$$
Thus, by Lemma~\ref{Alpha=2Lma1}, $\big  \langle {\bf 1}_{-(n-j-1)} 
\mathfrak a_{-\lambda}(\tau_*\alpha)\vac, \mathfrak a_{-1}(\beta)w_1, w_2 \big \rangle_d$ 
is equal to 
$$
\langle K_X, \alpha \rangle \cdot P_1(K_X, S_1, S_2; d)
$$
where $P_1(K_X, S_1, S_2; d)$ is a universal polynomial in $\langle K_X, K_X \rangle$ 
of degree at most $(n-1)/2$ and of type $(u_1', u_2)$. Similarly, 
since $\mathfrak a_{-1}(\beta)^\dagger w_2 = -\mathfrak a_{1}(\beta) w_2$,
we see from Theorem~\ref{commutator}~(i) and Lemma~\ref{Alpha=2Lma1} that
$$
\big \langle {\bf 1}_{-(n-j-2)} \mathfrak a_{-\lambda}(\tau_*\alpha)\vac, w_1, 
   \mathfrak a_{-1}(\beta)^\dagger w_2 \big \rangle_d 
= \langle K_X, \alpha \rangle \cdot P_2(K_X, S_1, S_2; d).
$$

Next, we move to the term $\big \langle \mathfrak a_{\lambda}(\tau_{*}(\epsilon \alpha \beta)) w_1, 
w_2 \big \rangle$ in (\ref{Axiom22AltLeft}), where ${\epsilon} \in \{K_X, K_X^2\}$.
Such a term is zero unless ${\epsilon}= K_X$ and $|\beta| = 0$. In this case, we may assume that 
$\beta = 1_X$. So let ${\epsilon}= K_X$ and $\beta = 1_X$. Then, 
$$
\big \langle \mathfrak a_{\lambda}(\tau_{*}(\epsilon \alpha \beta)) w_1, w_2 \big \rangle 
= \langle K_X, \alpha \rangle \cdot \big \langle \mathfrak a_{\lambda}(x) w_1, w_2 \big \rangle
= \langle K_X, \alpha \rangle \cdot P_3(K_X, S_1, S_2; 0).
$$
by Theorem~\ref{commutator}~(i), where $P_3(K_X, S_1, S_2; 0)$ is a universal polynomial in 
$\langle K_X, K_X \rangle$ of degree $0$ (i.e., $\langle K_X, K_X \rangle$ does not appear) 
and of type $(u_1', u_2)$. 

Finally, $\tau_*(\epsilon\alpha)$ is zero unless ${\epsilon}= K_X$. Let ${\epsilon}= K_X$. 
By Lemma~\ref{IntLaMuNu},
\begin{eqnarray*}  
   \big \langle {\bf 1}_{-(n-j-1)} \mathfrak a_{-\lambda}(\tau_*(\epsilon\alpha))\vac, 
   \mathfrak a_{-1}(\beta)w_1, w_2 \big \rangle         
&=&\langle K_X, \alpha \rangle \cdot \big \langle {\bf 1}_{-(n-j-1)} \mathfrak a_{-\lambda}(x)\vac, 
   \mathfrak a_{-1}(\beta)w_1, w_2 \big \rangle    \\
&=&\langle K_X, \alpha \rangle \cdot P_4(K_X, S_1, S_2; 0)
\end{eqnarray*}
where $P_4(K_X, S_1, S_2; 0)$ is a universal polynomial in $\langle K_X, K_X \rangle$ of degree $0$ 
and of type $(u_1', u_2)$. Similarly, since $\mathfrak a_{-1}(\beta)^\dagger w_2 
= -\mathfrak a_{1}(\beta) w_2$, we obtain
\begin{equation}
\big \langle {\bf 1}_{-(n-j-2)} \mathfrak a_{-\lambda}(\tau_*(\epsilon\alpha)) \vac, 
w_1, \mathfrak a_{-1}(\beta)^\dagger w_2 \big \rangle 
= \langle K_X, \alpha \rangle \cdot P_5(K_X, S_1, S_2; 0).                   \tag*{$\qed$}
\end{equation}

\subsection{The case $\alpha = 1_X$}
\label{subsect_Alpha=1X}

\begin{lemma} \label{Alpha=1XLma1}
Let $d, n_0 \ge 1$. Let $w_1$ and $w_2$ be given by (\ref{IntLaMuNu.00}). Then, 
\begin{eqnarray}    \label{Alpha=1XLma1.0}
\big \langle {\bf 1}_{-(n-|\la|-n_0)} 
\mathfrak a_{-\lambda}(x) \mathfrak a_{-n_0}(1_X)\vac, w_1, w_2 \big \rangle_d
= P(K_X, S_1, S_2)
\end{eqnarray}
where $S_1 = \{\alpha_{1,1}, \ldots, \alpha_{1,u_1} \}$, 
$S_2 = \{\alpha_{2,1}, \ldots, \alpha_{2,u_2} \}$, and $P(K_X, S_1, S_2)$ is
a universal polynomial in $\langle K_X, K_X \rangle$ of degree at most $n/2$ and 
of type $(u_1, u_2)$.
\end{lemma}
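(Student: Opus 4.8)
The plan is to follow the proof of Lemma~\ref{Alpha=2Lma1} almost verbatim, with $\alpha$ replaced by $1_X$, and to track how this single change propagates through the computation. Writing $w_0 = {\bf 1}_{-(n-|\lambda|-n_0)}\mathfrak a_{-\lambda}(x)\mathfrak a_{-n_0}(1_X)\vac$, I would expand $\langle w_0, w_1, w_2\rangle_d$ by the splitting formula \eqref{A1ToAk.0} of Theorem~\ref{A1ToAk}. Each resulting term is a product of a constant triple pairing $\langle B_0, B_1, B_2\rangle$ over a Hilbert scheme $X^{[n-m]}$ with a $\mathfrak Z_{m,d}$-pairing $\langle \mathfrak Z_{m,d}, \pi_{m,1}^*(w_0/B_0)\,\pi_{m,2}^*(w_1/B_1)\,\pi_{m,3}^*(w_2/B_2)\rangle$, where $B_0\subset w_0$, $B_1\subset w_1$, $B_2\subset w_2$.

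Next I would pin down which splittings can contribute. By Theorem~\ref{KX2-universal}~(i), the $\mathfrak Z_{m,d}$-factor vanishes as soon as any of the quotients $w_0/B_0$, $w_1/B_1$, $w_2/B_2$ retains a point factor $\mathfrak a_{-i}(x)$; hence all of $\mathfrak a_{-\lambda}(x)$, $\mathfrak a_{-\lambda^{(1)}}(x)$, $\mathfrak a_{-\lambda^{(2)}}(x)$ must be placed into $B_0, B_1, B_2$ respectively. The point that replaces the use of $|\alpha|=2$ in Lemma~\ref{Alpha=2Lma1} is the location of the factor $\mathfrak a_{-n_0}(1_X)$: if it were placed in $B_0$, then $w_0/B_0$ would be a pure class $\mathfrak a_{-1}(1_X)^m\vac$, and Corollary~\ref{PtClass}~(i) (with $\beta=1_X$) would force the $\mathfrak Z_{m,d}$-pairing to vanish. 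Therefore only the splittings with $B_0 = \mathfrak a_{-1}(1_X)^{n-m-|\lambda|}\mathfrak a_{-\lambda}(x)\vac$ and $w_0/B_0 = \mathfrak a_{-1}(1_X)^{m-n_0}\mathfrak a_{-n_0}(1_X)\vac$ survive, so the first slot of the $\mathfrak Z_{m,d}$-pairing is a pure $1_X$-class carrying no degree-$2$ factor, and $B_0$ is of the exact form demanded by Lemma~\ref{IntLaMuNu}.

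With the surviving splittings fixed, I would evaluate the two factors separately. The constant pairing $\langle B_0, B_1, B_2\rangle$, with $B_1 = \mathfrak a_{-\lambda^{(1)}}(x)\widetilde B_1$ and $B_2 = \mathfrak a_{-\lambda^{(2)}}(x)\widetilde B_2$, produces via Lemma~\ref{IntLaMuNu} a factor $\delta_{s,t}\sum_\sigma\prod_i\langle\alpha_{1,j_i},\alpha_{2,\sigma(l_i)}\rangle$ times a coefficient independent of $X$, where $\{j_1,\ldots,j_s\}$ and $\{l_1,\ldots,l_t\}$ index the degree-$2$ factors retained in $\widetilde B_1, \widetilde B_2$. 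The $\mathfrak Z_{m,d}$-factor, whose three slots carry $\mathfrak a_{-1}(1_X)^{m-n_0}\mathfrak a_{-n_0}(1_X)\vac$ and the leftover degree-$2$ factors $\alpha_{1,i}$ ($i\notin\{j_1,\ldots,j_s\}$), $\alpha_{2,i}$ ($i\notin\{l_1,\ldots,l_t\}$), evaluates by Theorem~\ref{KX2-universal}~(ii) to $\prod_{i\notin\{j_1,\ldots,j_s\}}\langle K_X,\alpha_{1,i}\rangle\,\prod_{i\notin\{l_1,\ldots,l_t\}}\langle K_X,\alpha_{2,i}\rangle$ times a polynomial in $\langle K_X,K_X\rangle$ of degree at most $m/2\le n/2$. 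Summing over all splittings and reindexing reproduces exactly the shape of Definition~\ref{UniPoly}, yielding a universal polynomial $P(K_X,S_1,S_2)$ in $\langle K_X,K_X\rangle$ of type $(u_1,u_2)$ and degree at most $n/2$.

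The computation is essentially bookkeeping once these tools are assembled, so I do not anticipate a deep conceptual obstacle; the two delicate features are both consequences of taking $\alpha=1_X$. First, in contrast to Lemma~\ref{Alpha=2Lma1}, there is no overall factor $\langle K_X,\alpha\rangle$, because the first slot of $\mathfrak Z_{m,d}$ now has no degree-$2$ factor and so Theorem~\ref{KX2-universal}~(ii) contributes no $\langle K_X,\cdot\rangle$ from it. Second, the degree bound improves from $(n-n_0)/2$ to $n/2$ precisely because $n_0$ is absorbed into the partition $\lambda^{(1)}=(1^{m-n_0},n_0)$ of that first slot rather than appearing among the degree-$2$ indices $n_{i,j}$ that lower the degree estimate in Theorem~\ref{KX2-universal}~(ii). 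Checking that these two features hold uniformly across all surviving splittings, and that the final sum genuinely assembles into the type-$(u_1,u_2)$ form of Definition~\ref{UniPoly}, is the only place requiring real care.
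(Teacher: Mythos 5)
Your proposal is correct and follows exactly the route the paper takes: the paper's own proof is the one-line remark that the argument of Lemma~\ref{Alpha=2Lma1} goes through with $\alpha$ replaced by $1_X$, and you have simply written out that substitution, correctly identifying the two places where it matters (the prefactor $\langle K_X,\alpha\rangle$ disappears because the first slot of $\mathfrak Z_{m,d}$ now carries no degree-$2$ class, and the degree bound relaxes to $n/2$ because $\mathfrak a_{-n_0}(1_X)$ is absorbed into $\la^{(1)}$ rather than counted among the $n_{i,j}$ in Theorem~\ref{KX2-universal}~(ii)). The only blemish is notational: you invoke Corollary~\ref{PtClass}~(i) ``with $\beta=1_X$'' where the relevant class in that corollary is called $\alpha$.
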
  
\begin{proof}
This follows from the proof of Lemma~\ref{Alpha=2Lma1} by replacing $\alpha$ by $1_X$
(and then by noticing that the factor $\langle K_X, \alpha \rangle$ there 
will not appear here).
\end{proof}

\begin{lemma} \label{Alpha=1XTau}
Let $d \ge 1$ and $|\la| \le n$. Let $w_1$ and $w_2$ be given by (\ref{IntLaMuNu.00}). Then, 
\begin{eqnarray}    \label{Alpha=1XTau.0}
\big \langle {\bf 1}_{-(n-|\la|)} \mathfrak a_{-\lambda}(\tau_*1_X)\vac, w_1, w_2 \big \rangle_d
= P(K_X, S_1, S_2)
\end{eqnarray}
where $S_1 = \{\alpha_{1,1}, \ldots, \alpha_{1,u_1} \}$, 
$S_2 = \{\alpha_{2,1}, \ldots, \alpha_{2,u_2} \}$, and $P(K_X, S_1, S_2)$ is
a universal polynomial in $\langle K_X, K_X \rangle$ of degree at most $n/2$ and 
of type $(u_1, u_2)$.
\end{lemma}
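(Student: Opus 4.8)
The plan is to reduce this statement to the already-established Lemma~\ref{Alpha=1XLma1} together with the machinery behind Lemma~\ref{Alpha=2Lma1}, after first resolving the diagonal class $\tau_{\ell(\lambda)*}1_X$ into tensor monomials. First I would fix $\lambda$ with $\ell = \ell(\lambda)$ parts and use the K\"unneth decomposition $\tau_{\ell*}1_X = \sum_s \gamma_{s,1}\otimes\cdots\otimes\gamma_{s,\ell}$, where each $\gamma_{s,r}$ is a basis class in $\{1_X, x, \beta_1,\ldots,\beta_b\}$ (recall $X$ is simply connected, so $H^{\mathrm{odd}}(X)=0$). By Definition~\ref{partition} this writes ${\bf 1}_{-(n-|\la|)}\mathfrak a_{-\lambda}(\tau_*1_X)\vac$ as a finite $\mathbb C$-linear combination of Heisenberg monomial classes ${\bf 1}_{-(n-|\la|)}\mathfrak a_{-\lambda_1}(\gamma_{s,1})\cdots\mathfrak a_{-\lambda_\ell}(\gamma_{s,\ell})\vac$, and it suffices to treat each such class separately.

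The first useful observation is a degree constraint on the surviving monomials. Since the diagonal $X\hookrightarrow X^\ell$ has codimension $2(\ell-1)$, each monomial satisfies $\sum_r |\gamma_{s,r}| = 4(\ell-1)$; writing $a,f,e$ for the number of factors of degree $0$, $2$, $4$, the relations $a+f+e=\ell$ and $2f+4e=4(\ell-1)$ force $f = 2-2a$, so $a\in\{0,1\}$. Hence each monomial has one of exactly two shapes: (i) one factor $\mathfrak a_{-\lambda_{r_0}}(1_X)$ with all others $\mathfrak a_{-\lambda_r}(x)$; or (ii) two degree-$2$ factors contracted against one another through the inverse intersection form $g^{ij}$ (so $\sum_{ij}g^{ij}\beta_i\otimes\beta_j$ appears), with all others $\mathfrak a_{-\lambda_r}(x)$. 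For shape (i) I would simply invoke Lemma~\ref{Alpha=1XLma1}: such a class is precisely of the form ${\bf 1}_{-(n-|\la'|-n_0)}\mathfrak a_{-\la'}(x)\mathfrak a_{-n_0}(1_X)\vac$ with $n_0=\lambda_{r_0}$, so its triple invariant with $w_1,w_2$ is a universal polynomial in $\langle K_X,K_X\rangle$ of degree $\le n/2$ and type $(u_1,u_2)$.

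For shape (ii) I would run the proof of Lemma~\ref{Alpha=2Lma1}, now with two degree-$2$ Heisenberg factors instead of one: apply \eqref{A1ToAk.0} to split each of the three inputs into a constant part (evaluated via Lemma~\ref{IntLaMuNu}) and a ${\mathfrak Z}_{m,d}$ part, use Theorem~\ref{KX2-universal}~(i) and Corollary~\ref{PtClass}~(i) to discard any split where an $x$-factor or an offending $1_X$-class lands in the ${\mathfrak Z}_{m,d}$ part, and evaluate the survivors by Theorem~\ref{KX2-universal}~(ii). The decisive bookkeeping is the fate of the two contracted factors. If both land in the ${\mathfrak Z}$ part, Theorem~\ref{KX2-universal}~(ii) produces $\langle K_X,\beta_i\rangle\langle K_X,\beta_j\rangle$, and the contraction $\sum_{ij}g^{ij}\langle K_X,\beta_i\rangle\langle K_X,\beta_j\rangle=\langle K_X,K_X\rangle$ is exactly the source of the single extra power of $\langle K_X,K_X\rangle$; if one or both land in the constant part, the identities $\sum_{ij}g^{ij}\langle K_X,\beta_j\rangle\langle\beta_i,\gamma\rangle=\langle K_X,\gamma\rangle$ and $\sum_{ij}g^{ij}\langle\beta_i,\gamma\rangle\langle\beta_j,\gamma'\rangle=\langle\gamma,\gamma'\rangle$ reduce everything to the factors $\langle K_X,\alpha_{\cdot,\cdot}\rangle$ and $\langle\alpha_{1,i},\alpha_{2,j}\rangle$ permitted by Definition~\ref{UniPoly}. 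Because the contracted $\beta$'s are internal, they never enlarge $S_1,S_2$, so the outcome remains type $(u_1,u_2)$.

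The hard part will be the degree estimate for shape (ii), which I would treat as the key step. When both contracted factors (of weights $m_1=\lambda_{r_1},m_2=\lambda_{r_2}\ge 1$) enter the ${\mathfrak Z}_{m,d}$ part, Theorem~\ref{KX2-universal}~(ii) bounds the residual polynomial by $\deg p\le (m-m_1-m_2)/2\le (n-2)/2$, so adding the extra $\langle K_X,K_X\rangle$ gives total degree $\le n/2$, using $m\le n$ and $m_1+m_2\ge 2$. The cases where a contracted factor sits in the constant part create no new power of $\langle K_X,K_X\rangle$ and only lower the residual degree, while the purely $1_X$-valued ${\mathfrak Z}$ part is controlled by Lemma~\ref{LiJ-PropB}; both stay within $n/2$. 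Finally, summing the shape-(i) and shape-(ii) contributions over all K\"unneth monomials, and using that a sum of universal polynomials of type $(u_1,u_2)$ and degree $\le n/2$ is again such a polynomial, yields \eqref{Alpha=1XTau.0}.
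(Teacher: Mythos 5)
Your proposal is correct and follows essentially the same route as the paper: Künneth-decompose $\tau_{\ell(\la)*}1_X$, observe by degree count that only the two shapes (one $1_X$ factor, or two contracted degree-$2$ factors) survive, dispose of the first by Lemma~\ref{Alpha=1XLma1}, and handle the second by rerunning the Lemma~\ref{Alpha=2Lma1} argument via Theorem~\ref{A1ToAk}, Theorem~\ref{KX2-universal}, Corollary~\ref{PtClass} and the contraction identity $\sum_j \langle \beta_1,\gamma_{j,1}\rangle\langle\beta_2,\gamma_{j,2}\rangle = \langle\beta_1,\beta_2\rangle$. One small imprecision: in the mixed case where one contracted factor lands in the constant part, Lemma~\ref{New1xAlBe} contributes a term proportional to $\langle K_X,\gamma_{j,1}\rangle$ which, contracted against the $\langle K_X,\gamma_{j,2}\rangle$ from the ${\mathfrak Z}$ part, \emph{does} produce an extra power of $\langle K_X,K_X\rangle$ — contrary to your claim of ``no new power'' — but since the residual degree there is at most $(m-n_2)/2 \le (n-n_1-n_2)/2$ and $n_1+n_2\ge 2$, the total still stays within $n/2$, exactly as in the paper's estimate $(m-n_2)/2+1\le n/2$.
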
  
\begin{proof}
 For $i = 1$ and $2$, let $\W w_i = \mathfrak a_{-\mu^{(i)}}(1_X)
\mathfrak a_{-n_{i,1}}(\alpha_{i,1}) \cdots \mathfrak a_{-n_{i,u_i}}(\alpha_{i,u_i})\vac$.
Note that if the K\" unneth decomposition of $\tau_{2*}1_X \in H^*(X^2)$ is given by
$$
\displaystyle{\tau_{2*}1_X 
= x \otimes 1_X + 1_X \otimes x + \sum_j \gamma_{j, 1} \otimes \gamma_{j, 2}}
$$ 
where $|\gamma_{j, 1}| = |\gamma_{j, 2}| = 2$, then up to permutations of factors, 
a typical term in the K\" unneth decomposition of $\tau_{i*}1_X \in H^*(X^i)$ with $i \ge 3$ 
is either $x \otimes \cdots \otimes x \otimes 1_X$ 
or $x \otimes \cdots \otimes x \otimes \gamma_{j, 1} \otimes \gamma_{j, 2}$. 
In view of Lemma~\ref{Alpha=1XLma1}, it suffices to verify that
\begin{eqnarray}    \label{Alpha=1XTau.1}
\sum_j \big \langle {\bf 1}_{-\w n} \mathfrak a_{-\w \lambda}(x)\mathfrak a_{-n_1}(\gamma_{j, 1})
\mathfrak a_{-n_2}(\gamma_{j, 2})\vac, w_1, w_2 \big \rangle_d
= P_1(K_X, S_1, S_2)
\end{eqnarray}
where $\w n = n - |\w \lambda| - n_1 - n_2$, and $P_1(K_X, S_1, S_2)$ is a universal polynomial 
in $\langle K_X, K_X \rangle$ of degree at most $n/2$ and of type $(u_1, u_2)$. For simplicity, let 
$$
w_0 = {\bf 1}_{-\w n} \mathfrak a_{-\w \lambda}(x)
\mathfrak a_{-n_1}(\gamma_{j, 1})\mathfrak a_{-n_2}(\gamma_{j, 2})\vac. 
$$
We see from (\ref{A1ToAk.0}) that to prove (\ref{Alpha=1XTau.1}), it suffices to show that
\begin{eqnarray}    \label{Alpha=1XTau.2}
\sum_j \langle B_0, B_1, B_2 \rangle \cdot \left\langle {\mathfrak Z}_{m, d} 
   , \,\,\pi_{m, 1}^* \left ({w_0 \over B_0} \right ) \cdot 
\pi_{m, 2}^* \left ({w_1 \over B_1} \right ) \cdot 
\pi_{m, 3}^* \left ({w_2 \over B_2} \right )\right\rangle
\end{eqnarray}
is equal to $P_2(K_X, S_1, S_2)$,
where $m \le n$, $B_0, B_1, B_2 \in H^*(X^{[n-m]})$, $B_0 \subset w_0$, $B_1 \subset w_1$, and 
$B_2 \subset w_2$. By Theorem~\ref{KX2-universal}~(i) and Corollary~\ref{PtClass}~(i), 
such a term is nonzero only if 
$B_1 = \mathfrak a_{-\lambda^{(1)}}(x) \W B_1$ with $\W B_1 \subset \W w_1$,  
$B_2 = \mathfrak a_{-\lambda^{(2)}}(x) \W B_2$ with $\W B_2 \subset \W w_2$, and
$B_0 = \mathfrak a_{-1}(1_X)^s\mathfrak a_{-\w \lambda}(x)\vac$ or
$\mathfrak a_{-1}(1_X)^s\mathfrak a_{-\w \lambda}(x)\mathfrak a_{-n_1}(\gamma_{j, 1})\vac$ or 
$\mathfrak a_{-1}(1_X)^s\mathfrak a_{-\w \lambda}(x)\mathfrak a_{-n_2}(\gamma_{j, 2})\vac$
where $s \le \w n$. In the following, we assume that (\ref{Alpha=1XTau.2}) is nonzero.
By symmetry, we need only to consider two cases for $B_0$: 
$$
B_0 = \mathfrak a_{-1}(1_X)^s\mathfrak a_{-\w \lambda}(x)\vac, \quad {\rm or } \,\, 
B_0 = \mathfrak a_{-1}(1_X)^s\mathfrak a_{-\w \lambda}(x)\mathfrak a_{-n_1}(\gamma_{j, 1})\vac.
$$

We begin with the case $B_0 = \mathfrak a_{-1}(1_X)^s\mathfrak a_{-\w \lambda}(x)\vac$. 
Then (\ref{Alpha=1XTau.2}) becomes
\begin{eqnarray*}  
& &\sum_j \big \langle \mathfrak a_{-1}(1_X)^s\mathfrak a_{-\w \lambda}(x)\vac, 
   \mathfrak a_{-\lambda^{(1)}}(x) \W B_1, 
   \mathfrak a_{-\lambda^{(2)}}(x) \W B_2 \big \rangle       \\
& &\cdot \left\langle {\mathfrak Z}_{m, d}
   ,\,\,\pi_{m, 1}^* \left ({ {\bf 1}_{-\w n} \mathfrak a_{-n_1}(\gamma_{j, 1})
   \mathfrak a_{-n_2}(\gamma_{j, 2})\vac 
   \over \mathfrak a_{-1}(1_X)^s \vac} \right ) \cdot 
   \pi_{m, 2}^* \left ({\W w_1 \over \W B_1} \right ) \cdot 
   \pi_{m, 3}^* \left ({\W w_2 \over \W B_2} \right )\right\rangle.       
\end{eqnarray*}
Applying the same arguments as in the computations of (\ref{Alpha=2Lma1.2}) and 
(\ref{Alpha=2Lma1.3}), we conclude that the term (\ref{Alpha=1XTau.2}) is equal to
\begin{eqnarray*}    
\sum_j \langle K_X, \gamma_{j, 1} \rangle \cdot \langle K_X, \gamma_{j, 2} \rangle 
\cdot P_3(K_X, S_1, S_2)
\end{eqnarray*}
where $P_3(K_X, S_1, S_2)$ is a universal polynomial in $\langle K_X, K_X \rangle$ of 
degree at most $(m-n_1-n_2)/2$ and of type $(u_1, u_2)$. Note that for $\beta_1, \beta_2 
\in H^2(X)$, we have
\begin{eqnarray}    \label{Alpha=1XTau.3}
\sum_j \langle \beta_1, \gamma_{j, 1} \rangle \cdot \langle \beta_2, \gamma_{j, 2} \rangle 
= \langle \beta_1, \beta_2 \rangle.
\end{eqnarray}
Therefore, (\ref{Alpha=1XTau.2}) is equal to $\langle K_X, K_X \rangle \cdot 
P_3(K_X, S_1, S_2)$ which is a universal polynomial in 
$\langle K_X, K_X \rangle$ of degree at most $m/2 \le n/2$ and of type $(u_1, u_2)$.

Next, let $B_0 = \mathfrak a_{-1}(1_X)^s\mathfrak a_{-\w \lambda}(x)
\mathfrak a_{-n_1}(\gamma_{j, 1})\vac$. This time, (\ref{Alpha=1XTau.2}) becomes
\begin{eqnarray*}  
& &\sum_j \big \langle \mathfrak a_{-1}(1_X)^s\mathfrak a_{-\w \lambda}(x)
   \mathfrak a_{-n_1}(\gamma_{j, 1})\vac, \mathfrak a_{-\lambda^{(1)}}(x) \W B_1, 
   \mathfrak a_{-\lambda^{(2)}}(x) \W B_2 \big \rangle       \\
& &\cdot \left\langle {\mathfrak Z}_{m, d}
   ,\,\, \pi_{m, 1}^* \left ({ {\bf 1}_{-\w n} \mathfrak a_{-n_2}(\gamma_{j, 2})\vac 
   \over \mathfrak a_{-1}(1_X)^s \vac} \right ) \cdot 
   \pi_{m, 2}^* \left ({\W w_1 \over \W B_1} \right ) \cdot 
   \pi_{m, 3}^* \left ({\W w_2 \over \W B_2} \right )\right\rangle.       
\end{eqnarray*}
Using Lemma~\ref{New1xAlBe}, Theorem~\ref{KX2-universal} and (\ref{Alpha=1XTau.3}), 
we conclude that (\ref{Alpha=1XTau.2}) is equal to $P_4(K_X, S_1, S_2)$ which is 
a universal polynomial in $\langle K_X, K_X \rangle$ of degree at most 
$$
(m - n_2)/2 + 1 \le ((n-n_1) - n_2)/2 + 1 \le n/2
$$ 
and of type $(u_1, u_2)$. This completes the proof of (\ref{Alpha=1XTau.0}).
\end{proof}

\begin{proposition}  \label{Axiom22A=1X}
If $\alpha = 1_X$, then (\ref{Axiom22}) is true.
\end{proposition}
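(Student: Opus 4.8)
The plan is to follow the template of the proof of Proposition~\ref{Axiom22A=2} almost verbatim, exploiting the fact that the only genuinely new ingredient, the diagonal class $\tau_*1_X$, has already been analyzed in Lemma~\ref{Alpha=1XTau}. As in the case $|\alpha|=2$, verifying (\ref{Axiom22}) is equivalent to verifying (\ref{Axiom22Alt}) on all pairs of Heisenberg monomials $w_1, w_2$ of the form (\ref{IntLaMuNu.00}), and the difference $D^{1_X}_\beta(w_1, w_2; q)$ from (\ref{DABW1W2}) is expanded by Lemma~\ref{difference}. By Lemma~\ref{Axiom22P2} the identity already holds for smooth projective toric surfaces, so by Lemma~\ref{VanishingUniPoly} it suffices to show that, for the choice $u_1' = \delta_{2, |\beta|} + u_1$ and $S_1, S_2$ with $\beta$ adjoined to $S_1$ exactly when $|\beta| = 2$, one has
$$
D^{1_X}_\beta(w_1, w_2; q) = \sum_{d \ge 0} P(K_X, S_1, S_2; d)\, q^d,
$$
where each $P(K_X, S_1, S_2; d)$ is a universal polynomial in $\langle K_X, K_X\rangle$ of degree at most $n/2$ and of type $(u_1', u_2)$ in the sense of Definition~\ref{UniPoly}. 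Since the $3$-pointed extremal invariants $\langle \cdot \rangle_d$ vanish for all but finitely many $d$, the value at $q=-1$ is again a universal polynomial of this type, and Lemma~\ref{Axiom22P2} together with Lemma~\ref{VanishingUniPoly} then forces $D^{1_X}_\beta(w_1, w_2; -1) = 0$.

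I would then verify that each term appearing in (\ref{Axiom22AltLeft}) with $\alpha = 1_X$ is of this universal form. For the quantum part (the sum over $d \ge 1$) the relevant class is $\mathfrak a_{-\lambda}(\tau_* 1_X)$; absorbing the extra factor $\mathfrak a_{-1}(\beta)$ into $w_1$ and rewriting $\mathfrak a_{-1}(\beta)^\dagger = -\mathfrak a_1(\beta)$ as an operator on $w_2$ via Theorem~\ref{commutator}~(i), each of the two pairings is precisely of the shape treated in Lemma~\ref{Alpha=1XTau}, hence is a universal polynomial of degree at most $n/2$ (when $\beta = x$ the extra point factor is simply incorporated into the $\lambda^{(1)}$-part of $w_1$, and any configuration with two point insertions is killed by Theorem~\ref{KX2-universal}~(i) and Corollary~\ref{PtClass}). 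For the classical terms ($d = 0$) I would use $\alpha\beta = \beta$ and $\epsilon\alpha = \epsilon$: the contributions $\langle \mathfrak a_\lambda(\tau_*(\epsilon\beta)) w_1, w_2\rangle$ survive only for finitely many pairs $(\epsilon, |\beta|)$ and are evaluated by Theorem~\ref{commutator}~(i) together with Lemma~\ref{IntLaMuNu} and Lemma~\ref{New1xAlBe}, while the terms involving $\mathfrak a_{-\lambda}(\tau_*\epsilon)$ are handled the same way after K\"unneth-expanding $\tau_*\epsilon$; each produces a universal polynomial of type $(u_1', u_2)$.

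The main obstacle, and the place where the case $\alpha = 1_X$ genuinely differs from $\alpha = x$ and $|\alpha| = 2$, is that the difference \emph{cannot} be factored through a scalar $\langle K_X, \alpha\rangle$: indeed $\langle K_X, 1_X\rangle = 0$, so the universal polynomial must be shown to vanish directly via the toric reduction rather than trivially. The source of this difficulty is the decomposition $\tau_* 1_X = x\otimes 1_X + 1_X\otimes x + \sum_j \gamma_{j,1}\otimes\gamma_{j,2}$ with $|\gamma_{j,1}|=|\gamma_{j,2}|=2$: the degree-two cross terms do not reduce to point classes, and it is only after applying the contraction identity (\ref{Alpha=1XTau.3}), $\sum_j\langle\beta_1, \gamma_{j,1}\rangle\langle\beta_2,\gamma_{j,2}\rangle = \langle\beta_1, \beta_2\rangle$, that they reassemble into honest $\langle K_X, K_X\rangle$ factors of the universal polynomial. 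Since this contraction and the accompanying degree bookkeeping are exactly what Lemma~\ref{Alpha=1XTau} and its companion Lemma~\ref{Alpha=1XLma1} establish, the remaining work is the routine verification that the degrees never exceed $n/2$ and that the types match $(u_1', u_2)$, after which the toric vanishing completes the argument.
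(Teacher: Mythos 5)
Your proposal follows essentially the same route as the paper's proof: reduce via Lemma~\ref{difference} to showing that $D^{1_X}_\beta(w_1,w_2;q)$ is a $q$-series of universal polynomials in $\langle K_X,K_X\rangle$ of bounded degree and type $(u_1',u_2)$ (handling the quantum terms by Lemma~\ref{Alpha=1XTau} and the classical terms by Theorem~\ref{commutator}, Lemma~\ref{IntLaMuNu} and Lemma~\ref{New1xAlBe}), and then conclude by the toric case (Lemma~\ref{Axiom22P2}) together with Lemma~\ref{VanishingUniPoly}; you also correctly identify the essential new feature that no overall factor $\langle K_X,\alpha\rangle$ is available and that the contraction identity \eqref{Alpha=1XTau.3} is what reassembles the cross terms of $\tau_*1_X$ into $\langle K_X,K_X\rangle$. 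The only discrepancy is cosmetic: the paper's degree bound is $(n+1)/2$ rather than $n/2$ (to absorb the degree-one classical contributions from $\epsilon=K_X$), which is immaterial since Lemma~\ref{VanishingUniPoly} only needs some fixed bound.
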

\begin{proof}
We adopt the same notations and approaches as in the proof of Proposition~\ref{Axiom22A=2}.
By Lemma~\ref{Axiom22P2} and Lemma~\ref{VanishingUniPoly}, it suffices to prove that
\begin{eqnarray}   \label{Axiom22A=1X.1}
D^{1_X}_\beta(w_1, w_2; -1) = P(K_X, S_1, S_2)
\end{eqnarray}
where $P(K_X, S_1, S_2)$ is a universal polynomial in $\langle K_X, K_X \rangle$ 
of degree at most $(n+1)/2$ and of type $(u_1', u_2)$. This follows if we can prove that 
\begin{eqnarray}   \label{Axiom22A=1X.2}
D^{1_X}_\beta(w_1, w_2; q) = \sum_{d \ge 0} P(K_X, S_1, S_2; d) \, q^d
\end{eqnarray}
where $P(K_X, S_1, S_2; d)$ is a universal polynomial in $\langle K_X, K_X \rangle$ 
of degree at most $(n+1)/2$ and of type $(u_1', u_2)$. 
In the following, we will show that the contribution of every term in (\ref{Axiom22AltLeft})
is of the form $P(K_X, S_1, S_2; d)$ for a suitable $d \ge 0$. 

 First of all, when $d \ge 1$, we conclude from Lemma~\ref{Alpha=1XTau} that 
$$
\big \langle {\bf 1}_{-(n-j-1)} \mathfrak a_{-\lambda}(\tau_*1_X)\vac, 
   \mathfrak a_{-1}(\beta)w_1, w_2 \big \rangle_d        
- \big \langle {\bf 1}_{-(n-j-2)} \mathfrak a_{-\lambda}(\tau_*1_X)\vac, w_1, 
   \mathfrak a_{-1}(\beta)^\dagger w_2 \big \rangle_d
$$
is equal to $P_1(K_X, S_1, S_2; d)$ which is a universal polynomial in $\langle K_X, K_X \rangle$ 
of degree at most $n/2$ and of type $(u_1', u_2)$. 

Next, consider $\big \langle \mathfrak a_{\lambda}(\tau_{*}(\epsilon \alpha \beta)) w_1, 
w_2 \big \rangle = \big \langle \mathfrak a_{\lambda}(\tau_{*}(\epsilon \beta)) w_1, 
w_2 \big \rangle$ from (\ref{Axiom22AltLeft}), where ${\epsilon} \in \{K_X, K_X^2\}$. 
It is zero unless $\epsilon = K_X^2$ and $\beta = 1_X$ (when $|\beta| = 0$, we let $\beta = 1_X$), 
or $\epsilon = K_X$ and $|\beta| = 2$, or $\epsilon = K_X$ and $\beta = 1_X$. 
If $\epsilon = K_X^2$ and $\beta = 1_X$, then
$$
\big \langle \mathfrak a_{\lambda}(\tau_{*}(\epsilon \beta)) w_1, w_2 \big \rangle 
= \langle K_X, K_X \rangle \cdot \big \langle \mathfrak a_{\lambda}(x) w_1, w_2 \big \rangle  
= \langle K_X, K_X \rangle \cdot P_2(K_X, S_1, S_2; 0)
$$
by Theorem~\ref{commutator}~(i), where $P_2(K_X, S_1, S_2; 0)$ is a universal polynomial in 
$\langle K_X, K_X \rangle$ of degree $0$ and of type $(u_1', u_2)$. 
If $\epsilon = K_X$ and $|\beta| = 2$, then
$$
\big \langle \mathfrak a_{\lambda}(\tau_{*}(\epsilon \beta)) w_1, w_2 \big \rangle
= \langle K_X, \beta \rangle \cdot \big \langle \mathfrak a_{\lambda}(x) w_1, w_2 \big \rangle  
= \langle K_X, \beta \rangle \cdot P_3(K_X, S_1, S_2; 0)
$$
which is a universal polynomial in $\langle K_X, K_X \rangle$ of degree $0$ and of 
type $(u_1', u_2)$. If $\epsilon = K_X$ and $\beta = 1_X$, then we obtain
$\big \langle \mathfrak a_{\lambda}(\tau_{*}(\epsilon \beta)) w_1, w_2 \big \rangle 
= \big \langle \mathfrak a_{\lambda}(\tau_{*}K_X) w_1, w_2 \big \rangle$
which again is a universal polynomial in $\langle K_X, K_X \rangle$ of degree $0$ and of 
type $(u_1', u_2)$. 

Finally, let ${\epsilon} \in \{K_X, K_X^2\}$. We have $\tau_*(\epsilon\alpha) = \tau_*\epsilon$.
Let $I_\epsilon$ be the difference
$$
\big \langle {\bf 1}_{-(n-j-1)} \mathfrak a_{-\lambda}(\tau_*\epsilon)\vac, 
   \mathfrak a_{-1}(\beta)w_1, w_2 \big \rangle     
- \big \langle {\bf 1}_{-(n-j-2)} \mathfrak a_{-\lambda}(\tau_*\epsilon) \vac, 
   w_1, \mathfrak a_{-1}(\beta)^\dagger w_2 \big \rangle
$$
 from (\ref{Axiom22AltLeft}). When ${\epsilon} = K_X^2$, 
we see from Lemma~\ref{IntLaMuNu} that
\begin{eqnarray*}  
   I_\epsilon 
&=&\langle K_X, K_X \rangle \cdot \big \langle {\bf 1}_{-(n-j-1)} 
   \mathfrak a_{-\lambda}(x)\vac, \mathfrak a_{-1}(\beta)w_1, w_2 \big \rangle     \\
& &- \, \langle K_X, K_X \rangle \cdot \big \langle {\bf 1}_{-(n-j-2)} 
   \mathfrak a_{-\lambda}(x) \vac, w_1, \mathfrak a_{-1}(\beta)^\dagger w_2 \big \rangle \\
&=&\langle K_X, K_X \rangle \cdot P_4(K_X, S_1, S_2; 0)
\end{eqnarray*}
where $P_4(K_X, S_1, S_2; 0)$ is a universal polynomial in $\langle K_X, K_X \rangle$ of degree $0$ 
and of type $(u_1', u_2)$. When ${\epsilon} = K_X$, we see from Lemma~\ref{New1xAlBe} that
$$
I_\epsilon 
= \big \langle {\bf 1}_{-(n-j-1)} \mathfrak a_{-\lambda}(\tau_*K_X)\vac, 
   \mathfrak a_{-1}(\beta)w_1, w_2 \big \rangle      
-  \big \langle {\bf 1}_{-(n-j-2)} \mathfrak a_{-\lambda}(\tau_*K_X) \vac, 
   w_1, \mathfrak a_{-1}(\beta)^\dagger w_2 \big \rangle
$$
is a universal polynomial in $\langle K_X, K_X \rangle$ of degree at most $1$ 
and of type $(u_1', u_2)$. 
\end{proof}

\end{document}